\documentclass[11pt,reqno]{amsart}
\usepackage{amsmath,amsfonts,amsthm, color}
\usepackage{enumitem}
\usepackage{hyperref}
\allowdisplaybreaks
\usepackage{tensor}

\usepackage[nocompress]{cite}

\makeatletter 
\def\@tocline#1#2#3#4#5#6#7{\relax
  \ifnum #1>\c@tocdepth 
  \else
    \par \addpenalty\@secpenalty\addvspace{#2}%
    \begingroup \hyphenpenalty\@M
    \@ifempty{#4}{%
      \@tempdima\csname r@tocindent\number#1\endcsname\relax
    }{%
      \@tempdima#4\relax
    }%
    \parindent\z@ \leftskip#3\relax \advance\leftskip\@tempdima\relax
    \rightskip\@pnumwidth plus4em \parfillskip-\@pnumwidth
    #5\leavevmode\hskip-\@tempdima
      \ifcase #1
       \or\or \hskip 1em \or \hskip 2em \else \hskip 3em \fi%
      #6\nobreak\relax
    \hfill\hbox to\@pnumwidth{\@tocpagenum{#7}}\par
    \nobreak
    \endgroup
  \fi}
\makeatother

\newtheorem{theorem}{Theorem}[section]

\newtheorem{proposition}[theorem]{Proposition}
\newtheorem{lemma}[theorem]{Lemma}
\newtheorem{definition}[theorem]{Definition}

\newtheorem{corollary}[theorem]{Corollary}

\newcommand{\loja}{\L{}ojasiewicz}
\newcommand{\pa}{P_{g_0}}
\newcommand{\cf}{\mathcal{F}}
\newcommand{\dvol}{\hspace{1mm}dv_{g_0}}

\newcommand{\wtt}{W^{2,2}(M)}
\newcommand{\lone}{L^{\frac{2n}{n-4}}(M)}
\newcommand{\ltwo}{L^{\frac{2n}{n+4}}(M)}

\newcommand{\wgt}{u_\infty^{\frac{8}{n-4}}}
\newcommand{\uznu}{\bar{u}_{z_\nu}}
\DeclareMathAlphabet{\mathmybb}{U}{bbold}{m}{n}
\newcommand{\1}{\mathmybb{1}}

\newcommand{\ddiv}{\mathrm{div}}

\newcommand{\mfc}{\mathfrak{c}}
\newcommand{\tmfc}{\tilde{\mathfrak{c}}}

\def\tr{\mathrm{tr}}
\def\ric{\mathrm{Ric}}
\def\rm{\mathrm{Rm}}
\def\supp{\mathrm{supp}}
\def\contr{\mathrm{contr}}

\newcommand{\<}{\left\langle}
\renewcommand{\>}{\right\rangle}

\newcommand{\mci}{\mathcal{I}}
\newcommand{\mcz}{\mathcal{Z}}

\newcommand{\cL}{\mathcal{L}}
\newcommand{\vol}{\hspace{1mm}dv}

\newcommand{\RN}[1]{%
\textup{\uppercase\expandafter{\romannumeral#1}}%
}
\newcommand{\KN}{\mathbin{\bigcirc\mspace{-15mu}\wedge\mspace{3mu}}}

\makeatletter
\@namedef{subjclassname@2020}{%
\textup{2020} Mathematics Subject Classification}
\makeatother

\begin{document}

\title{Global Convergence of the Gursky-Malchiodi $Q$-curvature flow}

\author{Liuwei Gong}
\address[Liuwei Gong]{Department of Mathematics, Chinese University of Hong Kong, Shatin, NT, Hong Kong}
\email{lwgong@math.cuhk.edu.hk}

\author{Sanghoon Lee}
\address[Sanghoon Lee]{Department of Mathematics, Chinese University of Hong Kong, Shatin, NT, Hong Kong}
\email{sanghoonlee@cuhk.edu.hk}

\author{Juncheng Wei}
\address[Juncheng Wei]{Department of Mathematics, Chinese University of Hong Kong, Shatin, NT, Hong Kong}
\email{wei@math.cuhk.edu.hk}

\begin{abstract}

In their seminal work \cite{MR3420504}, Gursky and Malchiodi introduced a non-local conformal flow in dimensions $n \ge 5$ to resolve the constant $Q$-curvature problem. They proved sequential convergence of the flow for initial metrics with positive scalar curvature and $Q$-curvature, provided the energy was sufficiently small. 

In this paper, we prove the global convergence of the flow for arbitrary initial energy under the same positivity assumptions by establishing a non-local version of the Łojasiewicz-Simon inequality for the Paneitz-Sobolev quotient along the flow.

We construct test bubbles and estimate their Paneitz-Sobolev quotients, a strategy that was carried out in the celebrated work of Brendle \cite{MR2357502} in the context of the Yamabe flow. We develop a more geometric and systematic proof that addresses the algebraic and computational complexity inherent in the $Q$-curvature and the Paneitz operator. Along the way, we derive a stability inequality for the Paneitz-Sobolev quotient using a higher-order Koiso-Bochner formula established in recent work of Bahuaud, Guenther, Isenberg, and Mazzeo \cite{BGIMazzeo}.
\end{abstract}

\date{\today}
\subjclass[2020]{Primary: 53E40, Secondary: 53C18, 53C21, 35J30, 35R01}
\keywords{Total $Q$-curvature, Non-local geometric flow, Global convergence, \L ojasiewicz--Simon inequality, Higher-order Koiso-Bochner formula}
\maketitle
\tableofcontents

\section{Introduction}

\subsection{Background and main results}
Geometric flows in conformal geometry have been extensively studied over the past decades as powerful tools for addressing fundamental problems such as uniformization theorems and prescribed curvature problems, and they have played a central role in the development of both geometry and analysis. From a geometric perspective, such flows provide a systematic approach to selecting canonical representatives within a given conformal equivalence class. From an analytic perspective, their dynamical behavior has had broad influence on the study of many model partial differential equations, including semilinear and non-local heat equations.

The most well-known example of such flows is the Yamabe flow, introduced by Hamilton to solve the Yamabe problem. As the negative gradient flow of the normalized total scalar curvature, its long-time existence and convergence have been extensively studied. Global convergence was proved by Chow \cite{MR1168117} in the locally conformally flat case with positive Ricci curvature, by Ye \cite{MR1258912} for metrics with negative scalar curvature or in the locally conformally flat setting, and by Schwetlick and Struwe \cite{MR2011332} below the two-bubble energy threshold. A complete convergence theory was later established by Brendle \cite{MR2168505, MR2357502}. The convergence rate was subsequently investigated by Carlotto, Chodosh, and Rubinstein \cite{MR3352243}.

Beyond the Yamabe flow, a variety of conformal geometric flows have been introduced to address curvature prescription problems of different orders and nonlinearities. In the fractional setting, flows associated with the fractional conformal Laplacian and fractional $Q$-curvature have been studied by Jin and Xiong \cite{MR3276166} and Chan-Sire-Sun \cite{MR4288645} to treat non-local curvature equations motivated by scattering theory and conformal geometry on asymptotically hyperbolic manifolds. On the other hand, fully nonlinear conformal flows, such as those driven by symmetric functions of the Schouten tensor, have been developed by series of works Guan and Wang \cite{MR1978409}, Sheng-Trudinger-Wang \cite{MR2362323}, Ge and Wang \cite{MR2290138, MR2348093}, and Ge-Lin-Wang \cite{MR2629509}  to address problems related to fully nonlinear Yamabe-type equations. Substantial progress has been made on existence, regularity, and convergence for these flows under various structural and geometric assumptions. Together, these developments highlight the rich landscape of conformal geometric flows beyond the classical Yamabe setting and motivate the study of higher-order and non-local curvature flows. In this work, we focus on the conformal flow associated with the fourth-order $Q$-curvature and the Paneitz operator.

In conformal geometry, the $Q$-curvature of a $n$-dimensional Riemannian manifold $(M, g)$ is a curvature associated to the fourth order conformally covariant operator, called the Paneitz operator $P_g$. In the pioneering work by Paneitz \cite{Paneitz} and Branson \cite{MR832360}, they are defined respectively by 
\begin{align*}
Q_g =& -\frac{1}{2(n-1)} \Delta_g R_g -\frac{2}{(n-2)^2} |\ric_g|^2 + \frac{n^2(n-4) + 16(n-1)}{8(n-1)^2(n-2)^2} R_g^2,\\
P_g \phi= & \Delta_g^2 \phi - \ddiv_g \bigg[\bigg(\frac{(n-2)^2+4}{2(n-1)(n-2)} R_g g - \frac{4}{n-2} \ric_g  \bigg)d\phi \bigg] +\frac{n-4}{2} Q_g \phi,
\end{align*}
for any smooth function $\phi$ on $M$ when $n \ge 3$. Here, $R_g$ is the scalar curvature, $\ric_g$ is the Ricci curvature, $\Delta_g=\ddiv_g\nabla_g$ is the Laplace-Beltrami operator, and $\ddiv_g$ is minus of the adjoint of the exterior derivative $d$. 
The Paneitz operator and the $Q$-curvature exhibit the following conformal covariance property:
\begin{align*}
& Q_{\hat{g}} = e^{-4u} (P_g u + Q_g ) & &\text{for $n=4$ and $\hat{g} = e^{2u}g$} \\
& Q_{\hat{g}} = \frac{2}{n-4} u^{-\frac{n+4}{n-4}} P_g u & &\text{for $n \ne 4$ and $\hat{g} = u^{\frac{4}{n-4}}g$}.
\end{align*}

In dimension four, the $Q$-curvature is known to serve as a four-dimensional analogue of the Gaussian curvature of Riemannian surfaces. Indeed, just as the Gaussian curvature appears in the conformal transformation law of the Laplace-Beltrami operator in dimension two and governs the Gauss-Bonnet formula, the 
$Q$-curvature arises naturally in the conformal covariance of the Paneitz operator and enters the Chern-Gauss-Bonnet formula in dimension four. In particular, the total 
$Q$-curvature is a global conformal invariant paralleling the role of total Gaussian curvature on surfaces. For geometric and topological applications of the $Q$-curvature, especially to the conformal sphere theorem, we refer to Gursky \cite{MR1724863} and Chang-Gursky-Yang \cite{MR1923964, MR2031200}. For its relationship with isoperimetric inequalities, see Chang-Qing-Yang \cite{MR1763657} and Wang \cite{MR3366853}. Recent applications to the study of Poincar\'e-Einstein manifolds can be found in Chang-Ge \cite{MR3886176, fillin}, Chang-Ge-Qing \cite{MR4130465}, and Lee-Wang \cite{leewang}, and the references therein.

The existence of solutions to the constant $Q$-curvature equation was first studied in dimension $n=4$ by Chang and Yang \cite{MR1338677}, in connection with the existence of extremal metrics for the determinant of the Paneitz operator and a Moser-Trudinger inequality, under the assumptions that $\ker P_g = \mathbb{R}$, $P_g$ non-negative, and the total $Q$-curvature is less than $8\pi^2$, the total $Q$-curvature of the standard sphere. Gursky \cite{MR1724863} later showed that sufficient conditions for these assumptions to hold are the nonnegativity of the Yamabe constant and the total $Q$-curvature. The existence theorem was further extended by Djadli and Malchiodi \cite{MR2456884} to the case $\ker P_g = \mathbb{R}$ and when the total $Q$-curvature is not a positive integer multiple of that of the standard sphere, using the compactness result established by Malchiodi \cite{MR2248155}. Later, Li-Li-Liu \cite{MR2964636} settled the  case in which the total $Q$-curvature is exactly $8\pi^2$. 

For existence results for the constant $Q$-curvature equation in dimension $n=3$, we refer to Hang and Yang \cite{MR3465087} and the references therein. In dimensions $n \ge 5$, the first existence theorem was proved by Qing and Raske \cite{MR2232210} under the assumptions of local conformal flatness, positive Yamabe constant, and additional geometric conditions. Gursky and Malchiodi \cite{MR3420504} subsequently established existence under the positivity assumptions that the scalar curvature is nonnegative and the $Q$-curvature is semipositive, using a flow method. Soon after, Hang and Yang \cite{MR3518237}  provided a variational proof. Below, we describe the result of Gursky and Malchiodi in more detail.

One of the fundamental difficulties in the study of fourth-order elliptic operators is the lack of a maximum principle in general. Surprisingly, Gursky and Malchiodi \cite{MR3420504} established a strong maximum principle for the Paneitz operator $P_g$ under the assumptions  $Q_g$ is semi-positive---that is, $Q_g \ge 0$ with $Q_g > 0$ somewhere---and that the scalar curvature satisfies $R_g \ge 0$.  Moreover, under the same assumptions, they proved the positivity of the operator and of its Green's function, which implies that the flow defined below \eqref{uflowintro} is well defined. They also established the positive mass theorem for the Paneitz operator $P_g$ in dimensions $5 \le n \le 7$.
 These results enabled them to study the non-local flow  \eqref{uflowintro}, which is the negative $W^{2,2}$-gradient flow of the Paneitz-Sobolev quotient
\begin{equation}\label{eq:PS quotient def}
\cf[u]
= \frac{\displaystyle \int_M u P_{g_0} u \dvol}
{\left(\displaystyle \int_M |u|^{\frac{2n}{n-4}}  \dvol \right)^{\frac{n-4}{n}}}.
\end{equation}

The stationary points of this non-local flow correspond to conformal metrics with constant positive $Q$-curvature. Gursky and Malchiodi proved long-time existence of the flow and carefully constructed initial data satisfying the positivity conditions described above together with a small initial energy assumption, which enabled them to establish sequential convergence of the flow to a stationary point. However, the question of global convergence for arbitrary initial energy has remained open. The main result of this paper resolves this problem.

\begin{theorem}\label{thm:globalconv}
Let $(M,g_0)$ be a closed manifold of dimension $n \ge 5$ that is not conformally equivalent to the standard sphere $S^n$. Assume that $(M,g_0)$ has semi-positive $Q$-curvature and nonnegative scalar curvature. Then the non-local flow
\[
g_t(p) := u(t,p)^{\frac{4}{n-4}} g_0(p), \qquad p \in M,
\]
where $u$ satisfies
\begin{align} \label{uflowintro}
\left\{
\begin{array}{lll}
\displaystyle \frac{\partial u}{\partial t}
= -u + \mu[u]\, P_{g_0}^{-1}\!\left( |u|^{\frac{n+4}{n-4}} \right), \\[6pt]
u(0, \cdot) = 1,
\end{array}
\right.
\end{align}
with
\begin{align} \label{mudef}
\mu[u]
= \frac{ \displaystyle \int_M u\, P_{g_0} u \, \dvol }
{ \displaystyle \int_M |u|^{\frac{2n}{n-4}} \, \dvol },
\end{align}
exists for all time and converges to a metric with constant $Q$-curvature.
\end{theorem}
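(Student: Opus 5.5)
Long-time existence of the flow, positivity of $u$, uniform volume normalization, and monotonicity of $\cf[u(t)]$ are already established by Gursky--Malchiodi under the stated positivity assumptions. Our strategy is the Brendle-type scheme for the Yamabe flow. First, we use the energy identity
\[
\frac{d}{dt}\cf[u(t)] = -c_n\, \big\| P_{g_0}u - \mu[u]u^{\frac{n+4}{n-4}}\big\|_{W^{-2,2}}^2 ,
\]
that is, the $W^{2,2}$-norm of $\partial_t u$ up to constants, to obtain
\[
\int_0^\infty \|\partial_t u\|_{W^{2,2}}^2\,dt<\infty .
\]
Second, we run a profile decomposition (Struwe-type concentration-compactness for the Paneitz operator) along a sequence $t_\nu\to\infty$. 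It gives
\[
u(t_\nu)=u_\infty+\sum_{k=1}^m \bar u_{(x_{k,\nu},\varepsilon_{k,\nu})}+o(1) \quad\text{in } W^{2,2},
\]
where $u_\infty\ge 0$ solves $P_{g_0}u_\infty=\mu_\infty u_\infty^{\frac{n+4}{n-4}}$ and the $\bar u$ are suitably glued test bubbles.

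The core step is a non-local \L ojasiewicz--Simon inequality valid near every such configuration. We aim to show that there are $\gamma\in(0,1)$ and $C>0$ such that for all $t$ large,
\[
\big(\cf[u(t)]-\cf_\infty\big)^{\frac{1}{2-\gamma}}\ \le\ C\,\big\| P_{g_0}u - \mu[u]u^{\frac{n+4}{n-4}}\big\|_{L^{\frac{2n}{n+4}}} .
\]
Here $\cf_\infty$ is the limit energy. Combined with the energy identity, this gives $\int_{t_0}^\infty\|\partial_t u\|_{W^{2,2}}\,dt<\infty$, hence strong convergence of $u(t)$ in $W^{2,2}$. Parabolic bootstrapping then upgrades this to smooth convergence to a constant $Q$-curvature metric, provided that $m=0$ and $u_\infty>0$.

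To prove the inequality, we minimize the distance from $u(t)$ over the finite-dimensional family
\[
\bar u_{z}=u_\infty+ v_z+\sum_k \bar u_{(x_k,\varepsilon_k)} ,
\]
where $v_z$ parametrizes the kernel of the linearized operator at $u_\infty$ through a Lyapunov--Schmidt reduction. The error $w=u-\bar u_z$ is then orthogonal to the approximate kernel. We treat it in three parts.

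(i) \textbf{Coercivity.} We need coercivity of the second variation on the orthogonal complement, i.e.\ a stability inequality for $\cf$. The paper's route is the higher-order Koiso--Bochner formula of Bahuaud--Guenther--Isenberg--Mazzeo. For bubbles we instead use the nondegeneracy of the standard bubble on $\mathbb{R}^n$.

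(ii) \textbf{Reduced energy.} We expand $\cf[\bar u_z]$. The analytic dependence on $z$ gives a classical \L ojasiewicz inequality for the reduced functional.

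(iii) \textbf{Bubble energies.} We estimate $\cf$ on the test bubbles, where the interaction with $u_\infty$ and the local geometry lowers the energy.

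The main obstacle is (iii) together with ruling out bubbling, i.e.\ showing $m=0$. For this we construct Brendle-type test bubbles: at points where the Weyl tensor vanishes to high order, in dimensions $n\ge 8$ or in the locally conformally flat case, they are corrected by the conformal-normal-coordinate expansion. Elsewhere we use the Paneitz positive mass theorem, available for $5\le n\le7$ and in the Green's-function-based setting. The goal is to show that each bubble configuration has quotient strictly below
\[
\big(\cf_\infty^{\frac n4}+ m\, Y_4(S^n)^{\frac n4}\big)^{\frac4n}
\]
by a definite amount, which is incompatible with the energy being nonincreasing while converging to that level. We would first attempt this by organizing the fourth-order expansion of $\cf$ geometrically via the conformal covariance of $P_g$. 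This avoids raw coordinate computation: the error terms become curvature integrals weighted by the bubble. Since $M$ is not conformal to $S^n$, the mass or the Weyl term is nonzero, and this yields the strict sign.
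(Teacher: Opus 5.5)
Your architecture is the paper's: Struwe decomposition along $t_\nu\to\infty$, best approximation by a Lyapunov--Schmidt family at $u_\infty$ plus bubbles, coercivity via nondegeneracy of the bubble, a finite-dimensional \L ojasiewicz inequality, and upper bounds for test bubbles. But the step you call the main obstacle fails. You want to rule out $m\ge 1$ by showing that bubble configurations lie below $(\cf[u_\infty]^{n/4}+mY_4(S^n)^{n/4})^{4/n}$ by a definite amount, contradicting monotonicity.

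There is no definite gap. The deficit of a test bubble comes from the mass term $\mci(p,\delta)\epsilon^{n-4}$ or the Weyl term $\epsilon^{n-4}\int_{B_\delta}|W_g|^2(\epsilon+d)^{8-2n}$ in Theorem~\ref{thm:energy estimate}. The gain from interaction with $u_\infty$ is of order $\epsilon^{\frac{n-4}{2}}$. All of these tend to $0$ as the bubble concentrates.

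Nor would a deficit for $v_\nu$ contradict $\cf(t)\downarrow\cf_\infty$. The quantity $\cf(t_\nu)$ is the energy of $v_\nu+w_\nu$, not of $v_\nu$. The bubble-tree identity $\cf_\infty=(\cf[u_\infty]^{n/4}+mY_4(S^n)^{n/4})^{4/n}$ holds for every $m$. Expanding around $v_\nu$ only relates the vanishing quantities $\sum_k\epsilon_{k,\nu}^{\frac{n-4}{2}}$ and $\|f_\nu\|_{\wtt}$. Energy alone excludes bubbling only below $Y_4(S^n)$, which is Gursky--Malchiodi's small-energy regime.

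In the paper the bubble bounds do a different job:
\begin{itemize}
\item They give $\cf[\bar u_{(p,\epsilon)}]\le Y_4(S^n)$, using positive mass precisely at $p\in\mcz$ and the Weyl term at $p\notin\mcz$. This is the reverse of your assignment.
\item The interaction gain $-c\sum_k\epsilon_{k,\nu}^{\frac{n-4}{2}}$ absorbs the errors $\epsilon_{k,\nu}^{\frac{n-4}{2}(1+\gamma)}$ coming from the finite-dimensional step.
\end{itemize}
Together these give $\cf[v_\nu]\le\cf_\infty+C(\|f_\nu\|_{\wtt}^{1+\gamma}+(\mu(t_\nu)-\mu_\infty)^{1+\gamma})$. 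Hence the \L ojasiewicz inequality holds uniformly \emph{with} bubbles present.

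Bubbling is then excluded a posteriori. Finite length, $\int_0^\infty\|f\|_{\wtt}\,dt<\infty$, bounds the change of $\int_{B_r(x)}u^{\frac{2n}{n-4}}$ after time $t$ by $C\int_t^\infty\|f\|_{\wtt}\,dt$. So there is no volume concentration, and the blow-up/Liouville argument of Proposition~\ref{lem:globalC0bound} gives a uniform $C^0$ bound. Proposition~\ref{prop:C0impl} then gives smooth convergence, through elliptic estimates for $P_{g_0}(u+f)=\mu u^{\frac{n+4}{n-4}}$; the non-local flow has no parabolic smoothing. In particular, your proviso ``$m=0$'' is automatic once finite length is known.

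There is a second, smaller gap. Your \L ojasiewicz inequality has $\|P_{g_0}u-\mu u^{\frac{n+4}{n-4}}\|_{L^{\frac{2n}{n+4}}}=\|P_{g_0}f\|_{L^{\frac{2n}{n+4}}}$ on the right. The dissipation, however, is $\|f\|_{\wtt}^2$, the $W^{-2,2}$-norm of $P_{g_0}f$, which is weaker: $\|f\|_{\wtt}\le C\|P_{g_0}f\|_{L^{\frac{2n}{n+4}}}$, but not conversely. So $-\frac{d}{dt}(\cf-\cf_\infty)^\theta\ge c\|f\|_{\wtt}$ does not follow as you claim.

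The paper proves the non-local form $\cf(t)-\cf_\infty\le\|f(t)\|_{\wtt}^{1+\gamma}$ directly. Its estimates allow this because $P_{g_0}u-\mu u^{\frac{n+4}{n-4}}=-P_{g_0}f$ is only ever tested against $W^{2,2}$ functions, namely $w_\nu$ and $\psi_a$. Your version could be rescued with the inequality $G'\le -G+C\|f\|_{\wtt}$ for $G=\|P_{g_0}f\|_{L^{\frac{2n}{n+4}}}$, which follows from the computation in Proposition~\ref{lem:LnormPf}, but that argument has to be supplied.

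Finally, the paper does not use Koiso--Bochner for coercivity at $v_\nu$. Proposition~\ref{lem:secondv} is exactly the nondegeneracy/spectral argument you propose. Koiso--Bochner enters only in the $n\ge 8$ test-bubble bound, to extract the Weyl term.
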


We remark that the strong maximum principle plays a crucial role in preventing the solution $u$ from becoming non-positive. Without such positivity assumptions, both the long-time existence of the flow and the existence theory for the constant $Q$-curvature problem remain largely open.  

We further note that higher-order conformal flows, including the fourth-order $Q$-curvature flow and, more generally, flows whose leading operator is $(-\Delta_g)^{\frac{n}{2}}$  in even dimensions $n$, were studied by Brendle \cite{MR1999924}. He proved global convergence under the assumptions that the associated higher-order Paneitz operator is non-negative with trivial kernel and that the total higher-order $Q$-curvature is strictly less than that of the standard sphere. The critical case in which the total higher order $Q$-curvature equals that of the standard sphere was subsequently investigated by Bi and Li \cite{MR4687333}.  
The $Q$-curvature flow on the standard sphere $S^4$ was studied by Malchiodi and Struwe \cite{MR2217518}, while Xiong \cite{MR4752331} analyzed non-local integral flows, including the $Q$-curvature flow, on the standard sphere $S^n$. We also refer to \cite{MR4935608, MR2803837, MR4983363} and the references therein for further developments.

The analytic theory of the fourth-order Paneitz operator and $Q$-curvature is far richer than can be exhaustively surveyed here. For sharp Sobolev inequalities, we refer to Djadli and Hebey \cite{MR1769728}. Compactness results in dimension $n = 4$ were obtained by Malchiodi \cite{MR2248155}. In higher dimensions, compactness was established by Li and Xiong \cite{MR3899029} for $5 \le n \le 9$, and more recently by Gong, Kim, and Wei \cite{gong2025compactnessnoncompactnesstheoremsfourth} for $5 \le n \le 24$. In contrast, Wei and Zhao \cite{MR3016505} constructed examples exhibiting noncompactness for $n \ge 25$.  Uniqueness results were obtained by V\'etois \cite{MR4800966} and Case \cite{MR4803233}, while non-uniqueness phenomena were studied by Bettiol, Piccione, and Sire \cite{MR4251294}. We also refer to \cite{MR2241307,  MR3431611, MR2232210, MR4840445, MR3509928, MR2372906, MR4170788, MR3694645, MR4761862} and the references therein for further analytic developments.

\subsection{Novelties of the method}

In his seminal work, Simon \cite{MR727703} proved an infinite-dimensional version of the \loja{} inequality and applied it to establish a general convergence theorem for gradient flows of analytic functionals. In the context of conformal geometric flows, however, the relevant energy functionals are not a priori analytic unless a uniform positive lower bound along the flow is available. For the Yamabe flow, the beautiful and profound works of Brendle \cite{MR2168505, MR2357502} succeeded in establishing a \L ojasiewicz--Simon inequality even in the presence of possible bubble formation. In this paper, we prove a non-local version of the \loja{}--Simon  inequality for the Paneitz--Sobolev quotient along the flow.

In this article, $W^{2,2}(M)$-norm of a function $u$ is defined by 
\begin{equation*}
\| u \|_{W^{2,2}(M)} := \bigg(\int_M u \pa u \vol_{g_0} \bigg)^{\tfrac{1}{2}}
\end{equation*}
which is equivalent to the standard Sobolev norm due to the positivity of the Paneitz operator proved in \cite{MR3420504}. For a sequence of time slices $t_\nu \rightarrow \infty$, we apply the fourth-order version of Struwe’s concentration-compactness principle \cite{MR760051}, established by Hebey and Robert \cite{MR1867939}, to decompose $u_{t_\nu} = v_{t_\nu} + w_{t_\nu}$ where $v_{t_\nu}$ is approximately equal to $u_\infty$ plus a finite sum of test bubbles, and $w_{t_\nu}$ is a small error converging to $0$ strongly in $W^{1,2}(M)$. Here $u_\infty$ denotes the weak $W^{2,2}(M)$-limit of $u_{t_\nu}$. We further refine this decomposition by minimizing $W^{2,2}(M)$-norm of $w_{t_\nu}$, following the argument of Brendle  \cite{MR2168505}.  

The main ingredients of the proof are twofold. Firstly, we establish a uniform lower bound on the second variation of $\cf$, defined in \eqref{eq:PS quotient def}:
\begin{equation}\label{ineq7}
c \, \| w_{t_\nu} \|_{W^{2,2}(M)}^2
\le D^2 \cf_{v_{t_\nu}} \big[ w_{t_\nu}, w_{t_\nu} \big],
\end{equation}
that is, a coercivity estimate for the second variation of the Paneitz--Sobolev quotient at $v_{t_\nu}$ in the direction $w_{t_\nu}$. Secondly, we prove a uniform upper bound for the Paneitz--Sobolev quotient in the form of a \loja{}--Simon inequality:
\begin{equation}\label{ineq8}
\cf[v_{t_\nu}] - \cf_\infty
\le C \bigg\| \frac{\partial u}{\partial t}(t_\nu , \cdot) \bigg\|_{W^{2,2}(M)}^{1+\gamma},
\end{equation}
where $0 < \gamma < 1$. Combining these two ingredients with suitable algebraic inequalities yields the desired \loja{}--Simon inequality. We remark that the estimate \eqref{ineq8} is non-local.

The geometric difficulty in proving the estimate \eqref{ineq8} in dimensions $n \ge 8$ lies in constructing a family of test bubbles whose Paneitz--Sobolev quotient is strictly less than that of the standard sphere. This was accomplished by Brendle \cite{MR2357502} in the case of the Sobolev quotient. In the present work, we establish the following analogue for the Paneitz--Sobolev quotient:

\begin{theorem}\label{thm:energy estimate}
Let $n \ge 8$. There is a small constant $\delta_0>0$ such that for any $p\in M$, $0<2\epsilon\leq \delta\leq \delta_0$, there is a positive test function $v_{p,\epsilon,\delta}$ with the following properties:
\begin{enumerate}[label=(\alph*),leftmargin=*]
\item There is a positive constant $C$ depending only on $(M,g)$ such that
\begin{align*}
\int_{M} v_{p,\epsilon,\delta}P_g v_{p,\epsilon,\delta}\vol_g \leq & Y_4(S^n)\left(\int_M v_{p,\epsilon,\delta}^{\frac{2n}{n-4}}\vol_g\right)^{\frac{n-4}{n}}-\mci(p,\delta)\epsilon^{n-4}\\
&-\frac{1}{C} \epsilon^{n-4}\int_{B_{\delta}(p)}|W_g(\tilde{p})|^2(\epsilon+d(\tilde{p},p))^{8-2n}\vol_g\\
&+C\delta^{2d+6-n}\epsilon^{n-4}+C\left(\frac{\epsilon}{\delta}\right)^{n-4}\log^{-1}\left(\frac{\delta}{\epsilon}\right),
\end{align*}
where $\mci(p,\delta)$ can be seen in Definition \ref{def:mci}.
\item There exists a continuous function $\mci: \mcz \rightarrow \mathbb{R}$ such that 
$$
\sup_{p\in \mcz} |\mci(p,\delta)-\mci(p)| \rightarrow 0 \text{ as }\delta \rightarrow0.
$$ 
\item If $p \in \mcz$, then the manifold $(M\setminus\{p\}, \hat{g}:=G_p^{\frac{4}{n-4}}g)$ has a well-defined $Q$-mass $m(\hat{g})$, which can be seen in Definition \ref{def:Q-mass}. Moreover, 
$$
m(\hat{g})=\frac{4(n-1)}{n-4}\mci(p).
$$
\item Fix $\delta$ such that $0 < \delta \leq \delta_0$. As $\epsilon \rightarrow 0$, the functions $v_{p,\epsilon,\delta}$ converge to a ``standard bubble'' on $\mathbb{R}^n$ after rescaling. More precisely,
$$
\lim_{\epsilon\rightarrow0} \epsilon^{\frac{n-4}{2}}v_{p,\epsilon,\delta}(\exp_{p}(\epsilon x))=\left(\frac{1}{1+|x|^2}\right)^{\frac{n-4}{2}}.
$$
for any $p\in M$ and $x\in T_pM$.
\end{enumerate}
\end{theorem}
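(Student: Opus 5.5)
We would follow Brendle's construction for the Yamabe case, adapted to the fourth-order setting, and work throughout in conformal normal coordinates. Fix $p\in M$ and choose a conformal metric $g_p = \varphi_p^{\frac{4}{n-4}} g$ with $\det g_p = 1$ in $g_p$-normal coordinates around $p$. Expand $g_p = \exp(h)$ with $h_{ik}(x)=\sum_{2\le|\alpha|\le d} h_{ik,\alpha}x^\alpha + O(|x|^{d+1})$, where $d=[\frac{n-4}{2}]$ (as the error term $\delta^{2d+6-n}$ suggests), $\tr h=0$ and $x_i h_{ik}=0$. By conformal covariance of $P_g$ and invariance of the quotient it suffices to work with $g_p$.

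The test function will have the form $v_{p,\epsilon,\delta} = \chi_\delta(\bar u_\epsilon + w_\epsilon) + (1-\chi_\delta)\,\epsilon^{\frac{n-4}{2}} G_p$. Here $\bar u_\epsilon(x)=(\epsilon/(\epsilon^2+|x|^2))^{\frac{n-4}{2}}$ is the Euclidean bubble, and $w_\epsilon$ is a correction term. The correction solves the linearized Euclidean equation
$$\Delta^2 w - c_n \bar u_\epsilon^{\frac{8}{n-4}} w = -(P_{g_p}-\Delta^2)\bar u_\epsilon$$
modulo its leading terms linear in $h$, i.e.\ the fourth-order analogue of Brendle's $V$ with $\partial_i\partial_k(h_{ik}\,\cdot)$ structure. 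It can be found explicitly via a polynomial ansatz in $x$ times powers of $(\epsilon^2+|x|^2)$, using the orthogonal decomposition of $h$ into trace-free harmonic pieces.

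Part (d) is immediate from this form, since $w_\epsilon$ and the curvature corrections are lower order after rescaling. For (a), we compute $\int v P v$ by splitting $M$ into $B_\delta$, the annulus $B_{2\delta}\setminus B_\delta$, and the rest.

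\emph{On $B_\delta$.} We expand $P_{g_p}$ as $\Delta^2 + L_1(h) + L_2(h,h) + \dots$ using the Bochner/Koiso-type structure of the Paneitz operator. The linear term is cancelled by the choice of $w_\epsilon$. The quadratic term produces a positive-definite form in the coefficients $h_{ik,\alpha}$, which controls $\sum |\partial^\alpha W|^2$ by an algebraic lemma analogous to Brendle's Proposition 16. Summing over dyadic scales gives the Weyl term $-\frac1C\epsilon^{n-4}\int_{B_\delta}|W|^2(\epsilon+d)^{8-2n}$, and the Taylor remainder is bounded by $C\delta^{2d+6-n}\epsilon^{n-4}$.

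\emph{On the annulus.} Gluing to the Green's function leaves a boundary flux term. We define $\mci(p,\delta)$ as this flux integral over $\partial B_\delta$, which involves the expansion of $G_p$ minus the singular part $|x|^{4-n}$ and the $h$-corrections. The cutoff errors are $C(\epsilon/\delta)^{n-4}$ times a log gain, obtained from a logarithmic cutoff.

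\emph{Outside $B_{2\delta}$.} There $P G_p = 0$, so the only contribution is the boundary term.

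The denominator is handled by comparing with $Y_4(S^n)$ through the sharp Euclidean Sobolev identity for $\bar u_\epsilon$, together with the orthogonality $\int \bar u^{\frac{n+4}{n-4}} w_\epsilon = 0$, which is arranged when constructing $w_\epsilon$.

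For (b) and (c), let $\mcz$ be the set of points where $W$ vanishes to order $d$, so that $h = O(|x|^{d+1})$. There the flux integral converges as $\delta\to0$, uniformly in $p$, by continuity of the Green's function expansion, and this defines $\mci(p)$. We would then identify the limit with the asymptotic $Q$-mass of $\hat g = G_p^{\frac{4}{n-4}}g$ by comparing the definition of the $Q$-mass (Definition \ref{def:Q-mass}) with the flux in inverted coordinates $y = x/|x|^2$; the constant $\frac{4(n-1)}{n-4}$ comes out of this comparison.

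The main obstacle is the quadratic energy computation on $B_\delta$. We must show that the second-order expansion of $\int \bar u P \bar u$, after optimizing over $w_\epsilon$, is bounded below by a positive multiple of the Weyl norm. For the Paneitz operator this expansion involves many curvature terms, including $\Delta R$, $|\ric|^2$ and $R^2$ together with the $\ric$- and $R$-weighted gradient terms. We would organize it geometrically: write the second variation of the total Paneitz energy in the trace-free, divergence-free gauge, and use the higher-order Koiso–Bochner formula of Bahuaud–Guenther–Isenberg–Mazzeo to reduce it to $\int |\nabla^k h|^2$-type expressions weighted by powers of $(\epsilon^2+|x|^2)$. Positivity would then be checked degree by degree on spherical harmonics. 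If this fails for low degrees, we would fall back to explicit symbolic verification as Brendle did.
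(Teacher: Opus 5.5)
Your outer architecture matches the paper's. The test function is the same, $\chi_\delta(u_\epsilon+w)+(1-\chi_\delta)\epsilon^{\frac{n-4}{2}}G_p$; $\mci(p,\delta)$ arises as the gluing flux on $\partial B_\delta$; (c) comes from comparison with the $Q$-mass; and (d) from rescaling.

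\textbf{The central step is not proved.} The step that carries the theorem is the second-order bound on $B_\delta$ producing $-\frac1C\epsilon^{n-4}\int_{B_\delta}|W_g|^2(\epsilon+|x|)^{8-2n}$. You call it the main obstacle and offer a hedged plan: degree-by-degree positivity on spherical harmonics, with a symbolic fallback. The Euclidean expansion $\Delta^2+L_1(h)+L_2(h,h)$ you start from runs straight into $D^2P_{g_e}[H,H]$, whose complexity is exactly what the paper sets out to avoid.

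Positivity of the pieces would not even suffice. Coercivity of a lower-order piece such as $\int|\bar\nabla\bar T|^2dv_{g_c}$ only controls $\epsilon^{n-2}\sum_k|H^{(k)}|^2\int_{B_\delta}(\epsilon+|x|)^{2k+2-2n}dx$. That weight is too weak for the dichotomy on $\mcz$. The sharp weight $\epsilon^{n-4}(\epsilon+|x|)^{2k+4-2n}$ comes only from the top-order term $\int|DW_{g_c}[\bar T]|^2dv_{g_c}$.

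\textbf{The missing mechanism} is reduction to the round sphere $g_c=u_\epsilon^{\frac4{n-4}}g_e$.
\begin{itemize}
\item Use Brendle's splitting $\chi_\delta H=T+S$, with $\tr T=0$, $\ddiv(u_\epsilon^{\frac{2n}{n-4}}T)=0$ and $S=\cL_Xg_e-\frac2n(\ddiv X)g_e$.
\item The corrector is then not a polynomial ansatz but the infinitesimal conformal action $w=X_i\partial_iu_\epsilon+\frac{n-4}{2n}(\ddiv X)u_\epsilon$. Only $S$ needs correcting, because $DP_{g_e}[T]u_\epsilon=0$.
\item For $\bar g(t)=(u_\epsilon+tw)^{\frac4{n-4}}\exp(t\chi_\delta H)$ one has $\bar g'(0)=\bar T+\bar S$, where $\bar T$ is transverse-traceless for $g_c$ and $\bar S=\cL_Xg_c$.
\item In $I''(0)$, with $I(t)=\int_{B_\delta}Q_{\bar g(t)}dv_{\bar g(t)}$, diffeomorphism invariance kills the cross term up to boundary terms. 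It also reduces the $\bar S$ direction, together with the volume terms, to boundary terms plus $\frac{4\tmfc}{n-4}\int_{B_\delta}w^2u_\epsilon^{-2}dv_{g_c}$. H\"older's inequality and Brendle's volume estimate absorb that term into $Y_4(S^n)\big(\int_{B_\delta}(u_\epsilon+w)^{\frac{2n}{n-4}}\big)^{\frac{n-4}{n}}$.
\item The $\bar T$ direction is the Lin--Yuan second variation at an Einstein metric. The higher-order Koiso--Bochner identity rewrites $\|\Delta_{g_c}\bar T-8\bar T\|^2$ through $\|DW_{g_c}[\bar T]\|^2$. This yields $-\frac{1}{(n-2)(n-3)}\int|DW_{g_c}[\bar T]|^2dv_{g_c}$, plus a nonpositive multiple of $\int(|\bar\nabla\bar T|^2+8|\bar T|^2)dv_{g_c}$, plus boundary terms.
\item Since $DW_{g_c}[\cL_Xg_c]=0$, one gets $DW_{g_c}[\bar T]=u_\epsilon^{\frac4{n-4}}DW_{g_e}[H]$, to which Brendle's annular lower bound applies.
\item The true energy is compared with $I(0)+I'(0)+\frac12I''(0)$ via the algebraic lemma for functions quadratic in $s$. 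So $D^2P_{g_e}$ is never computed.
\end{itemize}

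\textbf{Secondary claims that are inaccurate.}

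The linear term is not cancelled. $I'(0)=\frac12Q_{g_c}\int_{B_\delta}\tr_{g_c}\bar S\,dv_{g_c}$ is a boundary flux of $X$ of size $C\sum_k|H^{(k)}|\delta^{k+4-n}\epsilon^{n-4}$. The gluing produces terms of the same form. These linear terms must be absorbed into the Weyl term by Young's inequality. When $2k=n-4$, $\int_{B_\delta}(\epsilon+|x|)^{-n}dx\sim\log(\delta/\epsilon)$, and that is where $(\epsilon/\delta)^{n-4}\log^{-1}(\delta/\epsilon)$ comes from. No logarithmic cutoff is involved, and no orthogonality $\int u_\epsilon^{\frac{n+4}{n-4}}w=0$ is imposed.

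For (b), continuity of the Green's function expansion is not enough. For $p\in\mcz$ one only has $h=O(|x|^{d+1})$. The naive bound $|\Delta^2G_p|=|(P_g-\Delta^2)G_p|\le C|x|^{d+1-n}$ makes $\mci(p,\delta)-\mci(p,\tilde\delta)=-\int_{B_\delta\setminus B_{\tilde\delta}}|x|^{4-n}\Delta^2G_p\,dx$ diverge as $\tilde\delta\to0$ for $n\ge8$. You need the cancellation in $\int|x|^{4-n}DP_{g_e}[h]G_p\,dx$ that comes from $x_ih_{ij}=0$ and $\tr h=O(|x|^{2d+2})$, as in the paper's estimate of $\int u_\epsilon DP_{g_e}[h]u_\epsilon$. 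After that, only the quadratic remainder survives, and it gives the uniform bound $C\delta^{2d+6-n}$.
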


Here, we denote by $Y_4(S^n)$ the Paneitz--Sobolev constant of the standard sphere, and we set $d=\lfloor \frac{n-4}{2}\rfloor$. Define $\mcz$ to be the set of all points $p\in M$ such that 
$$
\limsup_{\tilde{p}\rightarrow p} d(\tilde{p},p)^{2-d}|W_g(\tilde{p})|=0.
$$ 
The function $\mci$, defined on $\mcz$, can be interpreted as the mass of the Paneitz operator. Its positivity is known in the locally conformally flat case by Humbert and Raulot \cite{MR2558328}, for
$5 \le n \le 7$ by Gursky and Malchiodi \cite{MR3420504}, and for $n \ge 8$ in the form needed here by Avalos-Laurain-Lira \cite{MR4375794}. If $p \notin \mcz$, then the Weyl curvature term provides the dominant negative contribution. If $p \in \mcz$, the positive mass term instead guarantees that the Paneitz-Sobolev quotient of $v_{p, \epsilon, \delta}$ is strictly less than $Y_4(S^n)$. We remark that the positive mass theorem is available under our positivity assumptions and therefore does not need to be imposed as an additional assumption.

The idea of obtaining upper energy bounds by constructing suitable test functions can be traced back to the work of Aubin \cite{MR431287} and Schoen \cite{MR788292} in the context of the scalar curvature problem. For the Paneitz-Sobolev quotient, analogous estimates were proved by Esposito and Robert \cite{MR1942129} at non-locally conformally flat points in dimensions $n \ge 8$, and by Gursky and Malchiodi \cite{MR3420504} in the cases $5 \le n \le 7$ or when the manifold is locally conformally flat. Our theorem provides a complete geometric picture in dimensions $n \ge 8$, fully incorporating the background geometry, including the vanishing order of the Weyl curvature.

In the construction of the test bubbles, we add an auxiliary term solving a linearized equation that reflects the boundary geometry, and we estimate the energy using the second variation of the normalized total $Q$-curvature. The key difference from Brendle’s proof \cite{MR2357502} is that, due to the algebraic complexity of the first and second variations of the $Q$-curvature and the Paneitz operator, it is impossible to track the individual terms by hand. To overcome this difficulty and to further reveal the geometric structure of the $Q$-curvature, we develop a more geometric and systematic approach, which we now describe in more detail.

The starting point is the works of Lin and Yuan \cite{MR3529121, MR4380034}, where they proved stability results at Einstein manifolds for a suitably normalized total $Q$-curvature. In fact, rigidity and stability results for quadratic curvature functionals were done by Gursky and Viaclovsky \cite{MR3318510} prior to \cite{MR3529121, MR4380034}. These works extend the deformation theory of Einstein metrics for the total scalar curvature developed by Fischer and Marsden \cite{MR380907}. Roughly speaking, the normalized total $Q$-curvature is stable at the round sphere, in the sense that any perturbation of the metric decreases it in a neighborhood of the sphere. On the other hand, when testing the Paneitz-Sobolev quotient using the standard bubble, the conformally deformed metric induced by the standard bubble is no longer sufficiently close to the round sphere, and Weyl curvature terms intervene. As a result, the desired inequality holds only in low dimensions $5 \le n \le 7$ or when the background metric is locally conformally flat. This necessitates a much more delicate correction of the test bubble.

We use the fourth-order linearized PDE discovered by Gong-Kim-Wei \cite{gong2025compactnessnoncompactnesstheoremsfourth} (see Lemma~\ref{lem:correction term}) to construct a correction term, or equivalently a vector field $X$, which encodes the background geometry via the conformal Killing operator on vector fields. The first difficulty we encounter is computing the second variation of the total $Q$-curvature. This is accomplished by exploiting the diffeomorphism invariance of the $Q$-curvature (see Proposition~\ref{prop:D2Q S direction}) in a subtle way. Since we cannot use Ebin’s slicing theorem as in \cite{MR4380034}, we instead derive a full formula for the deformation of the total $Q$-curvature with respect to the corrected metric in order to obtain precise estimates for the error terms. 

These error terms are ultimately controlled by the Weyl curvature via the higher-order Koiso-Bochner formula established in the recent work of Bahuaud-Guenther-Isenberg-Mazzeo \cite{BGIMazzeo} (see Lemma \ref{lem:two koiso identities}). Koiso-Bochner formula was used to study the linear stability of the Einstein-Hilbert action at Einstein metrics. The stability, or more generally the index, of such critical metrics is determined by the spectral properties of the Lichnerowicz Laplacian. We remark that, since the positive $Q$-mass theorem requires the Weyl tensor to vanish up to lower order than in the positive ADM mass theorem, it is necessary to employ a higher-order Koiso--Bochner formula in order to obtain sharp coercivity estimates for the connection bi-Laplacian acting on symmetric two-tensors; see Proposition~\ref{prop:DW sphere lower bound}.

More precisely, assume a metric $g = \exp(h)$ in the conformal normal coordinate of the small ball $B_\delta(0)$ with radius $\delta$. Let $H$ denote the polynomial of degree $d$ approximating $h$. Then we can find a vector field $X$ such that
\begin{equation*}
H = T + S, \qquad
S = \mathcal{L}_X g_e - \frac{2}{n} (\operatorname{div} X)\, g_e,
\end{equation*}
where $g_e$ is the standard Euclidean metric and $\cL_X$ is the Lie derivative. If we lift these two symmetric tensors to $\bar T$ and $\bar S$ on the standard sphere via conformal transformation, then $\bar T$ is divergence-free and $\bar S$ corresponds to the deformation of the metric induced by the diffeomorphism generated by the vector field $X$, see Definition \ref{def:barg metric} and Lemma \ref{lem:barT barS}. In view of the stability result, the $\bar S$ component should not contribute to the interior integral over the small ball when computing the deformation of the normalized total $Q$-curvature, and therefore should appear only as a boundary term. The significance of Proposition \ref{prop:D2Q S direction} is that these procedures can be carried out in a very geometric and streamlined way, especially in the present $Q$-curvature setting, where the amount of computation is more than triple that in Brendle \cite[Proposition 5]{MR2357502}. By carefully examining the interior term, we arrive at the stability inequality stated in Proposition~\ref{prop:secondvonsphere}. In the end, with the help of the higher-order Koiso-Bochner formula, we obtain a sharp estimate on the interior term. The boundary terms can then be estimated relatively easily, as they are higher-order error terms.

To complete the global error estimate, we next require a careful analysis of the structure of the total $Q$-curvature in order to show that its first and second variations indeed dominate the global quantity. At this stage, we take advantage of the algebraic structure arising from the conformal covariance of the $Q$-curvature and the Paneitz operator. The key observation is that, after adding the correction term $w_{\epsilon,\delta}$, the conformally deformed total $Q$-curvature depends quadratically on $w_{\epsilon,\delta}$.
Lemma \ref{lem:algebraicspl} provides a systematic way to sharply isolate the effect of adding the correction term $w_{\epsilon,\delta}$, and offers a conceptual framework for organizing and classifying the remaining terms in the difference between the global total $Q$-curvature and its second-order approximation obtained in Proposition \ref{prop:secondvonsphere}. It turns out that these remaining terms are higher-order error terms, arising either from the smallness of $w_{\epsilon,\delta}$, from higher-order errors in the variation of Paneitz operator, or from errors related to the volume density in conformal normal coordinates. We emphasize that it is unnecessary to compute the second variation of the Paneitz operator for these estimates.

Hence, we make full use of the geometric structure of the $Q$-curvature and the Paneitz operator, together with their diffeomorphism invariance and conformal covariance properties.

Finally, an analytic difficulty in proving the estimate \eqref{ineq7} arises from the fact that the flow is non-local. In particular, there is no general maximum principle available in the analysis. Nevertheless, we are able to exploit the structure of this non-local PDE to derive several fundamental properties. For instance, we show that $u(t_\nu,\cdot)$ is a Palais--Smale sequence as $t_\nu \to \infty$, and obtain estimates for the energy of the bubbling component of $u$; see Proposition \ref{lem:LnormPf} and Lemma \ref{lem:proj} for examples.

A key observation is a simple but important identity
\begin{equation*}
\pa\!\left( \frac{\partial u}{\partial t} \right)
= - \pa u + \mu[u] \, u^{\frac{n+4}{n-4}},
\end{equation*}
or equivalently,
\begin{equation*}
\pa\!\left( \frac{\partial u}{\partial t} + u \right)
= \mu[u] \, u^{\frac{n+4}{n-4}},
\end{equation*}
which rewrites the original evolution equation as a seemingly local elliptic equation. This reformulation allows us to derive all the necessary estimates using techniques similar to those for local elliptic equations. Thus, although the resulting estimates are non-local, the methods used to obtain them are not fundamentally different from those for local parabolic equations. This reflects the fact that the \loja{} inequality we aim to establish is more elliptic in nature than dependent on the parabolicity of the flow.

\subsection{Organization of this paper}

In Section 2, we recall some properties of the flow---including long-time existence, monotonicity---and prove that the standard bubbles are the only possible blow-up profiles. 

In Section 3, we provide the second variations of total $Q$-curvature at Einstein metrics, both in transverse-traceless two-tensor directions and Lie derivative directions. Moreover, we recall higher-order Koiso-Bochner formula and use it to derive the second variations of the $L^2$-norm of Weyl curvature tensor.

In Section 4, for $n\geq 8$, we establish the upper bounds of the Paneitz--Sobolev quotient by constructing suitable test functions. These are obtained by modifying the standard bubbles by the solutions of the linearized equations and the Green's functions. 

More specifically, in Subsection 4.1 we introduce a one-parameter family of metrics passing through the round sphere metric; these provide the leading terms in our estimates. In Subsection 4.2, we apply the tools from Section 3 to control the leading terms, and in Subsection 4.3 we estimate the error terms. After modifying with Green’s functions in Subsection 4.4, we complete the proof of Theorem \ref{thm:energy estimate} in Subsection 4.5.

In Section 5, we construct standard test functions when $n=5,6,7$. We then apply Struwe's decomposition (concentration--compactness principle) to a sequence of diverging time slices of the non-local flow and write it into two parts $u_{t_\nu} = v_{t_\nu} + w_{t_\nu}$ for general dimensions.

In Section 6, we establish a coercivity estimate for the second variation of the Paneitz--Sobolev quotient at $v_{t_\nu}$ in the direction $w_{t_\nu}$, given by inequality \eqref{ineq7}. Combining with the estimates of the Paneitz--Sobolev quotient in Section 4, we obtain a uniform energy estimate for $v_{t_\nu}$, see inequality \eqref{ineq8}.

In Section 7, with the two ingredients above, we obtain a \loja{} type inequality and complete the proof of Theorem \ref{thm:globalconv}.

\subsection{Notations}

We use the following notations in this introduction and the rest part of this paper.
\begin{itemize}
\item[-] The flow notations:
$$ f = \frac{\partial u}{\partial t},\hspace{4mm} u_\nu =  u(t_\nu, \cdot), \hspace{4mm} f_\nu = f(t_\nu, \cdot);$$

$$\mu [u] = \frac{ \int u P_{g_0} u\ \vol_{g_0} }{\int |u|^{\frac{2n}{n-4}} \vol_{g_0} } , \hspace{4mm}\mu(t) = \mu[u(t, \cdot)];$$

$$\cf [u] = \frac{ \int u P_{g_0} u\ \vol_{g_0} }{\big(\int |u|^{\frac{2n}{n-4}} \vol_{g_0} \big)^{\frac{n-4}{n}}}, \hspace{4mm} \cf(t) = \cf[u(t, \cdot)];$$

$$\mu_\infty = \lim_{t\rightarrow \infty} \mu(t), \hspace{4mm} \cf_\infty = \lim_{t\rightarrow \infty} \cf(t).$$
\item[-] $g_e$ denotes the Euclidean metric.
\item[-] $\ddiv_g$, $\tr_g$ denote the trace and divergence with respect to metric $g$. For the Euclidean case, we simply write $\ddiv_{g_e}$ as $\ddiv$ and $\tr_{g_e}$ as $\tr$.
\item[-] $u_\epsilon(x) = \big(\frac{\epsilon}{\epsilon^2 + |x|^2} \big)^{\frac{n-4}{2}}$ on $\mathbb{R}^n$ and $g_c=u_\epsilon^{\frac{4}{n-4}} g_e$.
\item[-] Constants $\mfc(n)=(n-4)(n-2)n(n+2)$ and $\tmfc(n)=\frac{n+4}{n-4}\mfc(n)$.
\item[-] Constants $\alpha_n:=-\frac{1}{2(n-1)}$, $\beta_n:=-\frac{2}{(n-2)^2}$, $\gamma_n:=\frac{n^3-4n^2+16n-16}{8(n-2)^2(n-1)^2}$. So, $Q$-curvature is given by: for $n\geq 3$, 
$$
Q_g=\alpha_n\Delta_g R_g+\beta_n|\ric_g|_g^2+\gamma_n R_g^2.
$$
\item[-] The Paneitz--Sobolev constant of the standard sphere:
$$
Y_4(S^n)=\mfc(n)\left(\int_{\mathbb{R}^n}u_{\epsilon}^{\frac{2n}{n-4}}dx\right)^{\frac{4}{n}}.
$$
\item[-] Denote $S_2(M)$ to be the space of symmetric two-tensors and 
$$
S^{T,T}_{2,g}(M):=\{h\in S_2(M)\mid \ddiv_gh=0,\tr_g h=0\}
$$
to be the space of transverse-traceless tensors. 
\item[-] Repeated indices follow the Einstein summation convention. Commas denote differentiations in local coordinates, and semicolons denote covariant differentiations.
\item[-] Let $h,\bar{h}\in S_2(M)$. Denote cross product $(h\times_g \bar{h})_{ij}:=h\indices{_{i}^k}\bar{h}_{kj}$, and Kulkarni-Nomizu product:
\begin{align*}
(h\KN \bar{h})_{ijkl}&:=h_{il}\bar{h}_{jk}+h_{jk}\bar{h}_{il}-h_{ik}\bar{h}_{jl}-h_{jl}\bar{h}_{ik}.
\end{align*}
\item[-] Schouten tensor: $A_g = \frac{1}{n-2}(\ric_g - \frac{1}{2(n-1)}R_gg)$. Weyl tensor: $W_g=\rm_g-A_g\KN g$. Also, $\sigma_k(A_g)$ the $k$-th symmetric function of the eigenvalues of $A_g$.
\item[-] Let $d=\lfloor \frac{n-4}{2}\rfloor$, and $\mcz$ be the set of all points $p\in M$ such that 
$$
\limsup_{\tilde{p}\rightarrow p} d(\tilde{p},p)^{2-d}|W_g(\tilde{p})|=0.
$$
\item[-] Let $h \in S_2(M)$. Denote the Lichnerowicz Laplacian to be
\begin{align*}
(\Delta_L h)_{ij}:=h\indices{_{ij;k}^k}+2\rm_{iklj}h^{kl}-(\ric\times h)_{ij}-(h\times \ric)_{ij}.
\end{align*}
\item[-] Let $G_p$ be the Green’s function for the Paneitz's operator with pole at $p$. We normalize the Green's function such that 
$$
\lim_{d(\tilde{p},p)\rightarrow 0}d(\tilde{p},p)^{4-n} G_p(\tilde{p}) = 1.
$$
\end{itemize}

\section{Preliminaries}

Throughout this paper, we always assume the following positivity conditions:
$$\begin{cases}
& Q_{g_0} \ge 0 \text{ and } Q_{g_0}>0 \text{ somewhere,} \\
& R_{g_0} \ge 0.
\end{cases}$$
By Gursky and Malchiodi \cite[Proposition~B]{MR3420504}, the Paneitz operator $\pa$ is positive, and its Green’s function is positive by \cite[Proposition~C]{MR3420504}. By a standard argument, the positivity of the Paneitz operator implies that our $W^{2,2}(M)$-norm is equivalent to the standard Sobolev norm. 

\subsection{Elementary estimates}

In this subsection, we recall several properties of the flow established in \cite{MR3420504} and prove some additional analytic properties.

We define
\begin{align} \label{fdef}
f := \frac{\partial u}{\partial t} =  - u + \mu P_{g_0}^{-1} \big( u^{\frac{n+4}{n-4}} \big).
\end{align}

\begin{proposition} [{\cite[Proposition 3.4]{MR3420504}}]
The flow 
\begin{align*} 
\left\{ \begin{array}{lll} \displaystyle \frac{\partial u}{\partial t} =  - u + \mu[u] P_{g_0}^{-1} \big( u^{\frac{n+4}{n-4}} \big), \\
\\
u(0, \cdot) = 1,
\end{array}
\right.
\end{align*}
with
\begin{align*}
\mu[u] = \frac{ \int_M u P_{g_0} u \dvol }{\int_M |u|^{\frac{2n}{n-4}} \dvol }.
\end{align*}
exists for all time and stays $u>0$. 

\end{proposition}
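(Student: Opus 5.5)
We view the flow as an ODE in the Banach space $X=C^{0,\alpha}(M)$, or equivalently in $W^{2,2}(M)$, with vector field
\[
F(u) = -u + \mu[u]\, P_{g_0}^{-1}\big(|u|^{\frac{n+4}{n-4}}\big).
\]
The argument has three parts: local existence and positivity, uniform bounds, and continuation.

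\textbf{Local existence and positivity.} By the positivity of $P_{g_0}$ and of its Green's function (Gursky--Malchiodi, Propositions B and C), $P_{g_0}^{-1}$ is a bounded positive operator from $L^p$ into $W^{4,p}$. Hence $F$ is locally Lipschitz on the open set $\{u\in C^0 : u>0\}$. Here $\mu[u]$ is smooth wherever $\int |u|^{2n/(n-4)}>0$, and $t\mapsto |t|^{\frac{n+4}{n-4}}$ is $C^1$. Picard iteration then gives a unique local solution starting from $u(0)=1$.

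Positivity follows from the Duhamel formula
\[
u(t)=e^{-t}u(0)+\int_0^t e^{-(t-s)}\mu(s)\,P_{g_0}^{-1}\big(|u(s)|^{\frac{n+4}{n-4}}\big)\,ds .
\]
Since $G_p>0$ and $\mu>0$ (because $P_{g_0}$ is positive), the integral term is nonnegative. Therefore $u(t)\ge e^{-t}\min u(0)=e^{-t}>0$. So $u$ stays positive, and the absolute value in the nonlinearity is harmless.

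\textbf{Uniform bounds.} The energy structure supplies them. First, $\int_M u^{\frac{n+4}{n-4}}f\,dv$ is computed by pairing $P_{g_0}f=-P_{g_0}u+\mu u^{\frac{n+4}{n-4}}$ with $u$. Doing so shows that the volume $V(t)=\int u^{2n/(n-4)}$ satisfies
\[
\tfrac{d}{dt}V = \tfrac{2n}{n-4}\int u^{\frac{n+4}{n-4}}f=\tfrac{2n}{n-4}\Big(-V+\mu\int u^{\frac{n+4}{n-4}}P_{g_0}^{-1}u^{\frac{n+4}{n-4}}\Big)\cdot\frac{1}{1}.
\]
Combining this with $\mu V=\int uP_{g_0}u$ and Cauchy--Schwarz in the $P_{g_0}^{-1}$ inner product, we expect $V$ to be nondecreasing. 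The cleaner route is to show $\frac{d}{dt}\cf\le -c\|f\|_{W^{2,2}}^2\le0$ by direct differentiation. Then $\int uP_{g_0}u=\cf\, V^{(n-4)/n}$, and a Gronwall estimate on $V$ from $\frac{d}{dt}V\le C V$ gives a $W^{2,2}$ bound on every finite interval, together with $V\ge V(0)>0$ if $V$ is monotone. Hence $\mu(t)$ stays bounded above and below on finite intervals. Sobolev embedding gives $u\in L^{2n/(n-4)}$, and bootstrapping through Duhamel with $P_{g_0}^{-1}:L^p\to W^{4,p}$ upgrades this to $C^{0,\alpha}$ bounds on $[0,T]$ for every $T$. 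Since the time-derivative is bounded by a norm of $u$ itself, the growth is at most exponential and blow-up is ruled out.

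\textbf{Continuation.} Because the local existence time depends only on these norms and on the positive lower bound $e^{-T}$, the solution extends to all $t\ge0$.

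The main obstacle is the uniform control of the volume $V(t)$ and of $\mu(t)$, since the flow is non-local and the nonlinearity is critical. The critical exponent $\frac{n+4}{n-4}$ could in principle allow finite-time concentration. I would handle this by controlling $\|u\|_{L^\infty}$ directly through the Duhamel representation and positivity of $G_p$. With $\mu\le C$ and $V$ bounded, the fact that $u^{\frac{n+4}{n-4}}$ lies in $L^{\frac{2n}{n+4}}$ with bounded norm gives $P_{g_0}^{-1}u^{\frac{n+4}{n-4}}\in W^{4,\frac{2n}{n+4}}\subset L^{\frac{2n}{n-4}}$. We then iterate in time on short intervals, where the smallness of the time step compensates for the criticality, which is the standard ODE argument in Banach space.
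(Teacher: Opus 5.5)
Your positivity argument (Duhamel plus the positive Green's function, giving $u(t)\ge e^{-t}$) is correct. So is your Cauchy--Schwarz argument that $V$ is nondecreasing, since $\int u^{\frac{n+4}{n-4}}P_{g_0}^{-1}u^{\frac{n+4}{n-4}}\cdot\int uP_{g_0}u\ge V^2$.

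The gap is the continuation step. You run the ODE in $C^0$ or $C^{0,\alpha}$, so you need an a priori bound in that norm on each $[0,T]$. The bootstrap you invoke does not start. From $u\in L^{\frac{2n}{n-4}}$ you get $u^{\frac{n+4}{n-4}}\in L^{\frac{2n}{n+4}}$, and $P_{g_0}^{-1}$ returns you to $W^{4,\frac{2n}{n+4}}\subset L^{\frac{2n}{n-4}}$, exactly where you began, because the exponent is critical. Short time steps do not rescue it. In $C^0$ the Duhamel formula only yields $\|u(t)\|_{L^\infty}\le\|u(t_0)\|_{L^\infty}+C\int_{t_0}^t\|u(s)\|_{L^\infty}^{\frac{n+4}{n-4}}\,ds$. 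This is superlinear, so your remark that the growth is at most exponential is false in that norm. The admissible step length shrinks as the norm grows, and nothing prevents the steps from accumulating at a finite time. Note also that $C^{0,\alpha}$ and $W^{2,2}$ are not ``equivalent'' choices here: which norm you control is the whole issue. As written, finite-time blow-up in the space you chose is not excluded.

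The simplest repair, and the route indicated by Lemmas~\ref{W22invariance} and~\ref{volmon} (taken from the same source), is to work in $W^{2,2}(M)$ throughout. There the vector field is bounded and uniformly Lipschitz on sets of the form $\{\|u\|_{W^{2,2}}\le R,\ V\ge v_0\}$, because:
\begin{itemize}
\item $u\mapsto|u|^{\frac{n+4}{n-4}}$ is Lipschitz on bounded sets from $L^{\frac{2n}{n-4}}$ to $L^{\frac{2n}{n+4}}$, by H\"older with $|u|^{\frac{8}{n-4}}\in L^{\frac n4}$;
\item $P_{g_0}^{-1}$ maps $L^{\frac{2n}{n+4}}$ into $W^{2,2}$ by duality;
\item $\mu$ is Lipschitz on such sets.
\end{itemize}
The identity you overlooked then closes the argument at once: $\frac{d}{dt}\int uP_{g_0}u=2\int fP_{g_0}u=2\big(-\int uP_{g_0}u+\mu V\big)=0$ for $u>0$. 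So $\|u(t)\|_{W^{2,2}}$ is exactly conserved, and your Gronwall bound on $V$ is unnecessary; directly differentiating $\mathcal{F}$ would have exposed this. Together with $V(t)\ge V(0)$, the trajectory stays in such a set for all time, which gives global existence.

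If you prefer the $C^0$ setting, the missing idea is to treat the equation as linear with potential $a=u^{\frac{8}{n-4}}$, where $\|a\|_{L^{n/4}}=V^{4/n}$ is bounded. Hardy--Littlewood--Sobolev gives $\|P_{g_0}^{-1}(au)\|_{L^q}\le C_q\|a\|_{L^{n/4}}\|u\|_{L^q}$ for $\frac{n}{n-4}<q<\infty$. A linear Gronwall then bounds $\|u(t)\|_{L^q}$ on $[0,T]$. For $q>\frac{n(n+4)}{4(n-4)}$ this bounds $\|P_{g_0}^{-1}u^{\frac{n+4}{n-4}}\|_{L^\infty}$, and hence $\|u(t)\|_{L^\infty}$.
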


Next, we record the basic invariance and monotonicity properties of the flow.

\begin{lemma}[{\cite[Lemma 3.2]{MR3420504}}] \label{W22invariance} 
We have: 
\begin{align}  \label{dPdt}
\frac{d}{dt} \int u P_{g_0} u\ \vol_{g_0} = 0.
\end{align}
\end{lemma}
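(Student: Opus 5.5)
Differentiate $E(t):=\int_M uP_{g_0}u\dvol$ in time and use self-adjointness of $\pa$ together with the key identity from the introduction, $\pa f = -\pa u + \mu[u]\,u^{\frac{n+4}{n-4}}$.

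First, by the preceding proposition the solution exists for all time and stays positive, so $|u|^{\frac{n+4}{n-4}} = u^{\frac{n+4}{n-4}}$. Standard parabolic/ODE regularity in $W^{2,2}(M)$ makes $t\mapsto u(t,\cdot)$ a $C^1$ curve in $\wtt$, which justifies differentiating under the integral. Since $\pa$ is formally self-adjoint,
\begin{align*}
\frac{d}{dt}\int_M uP_{g_0}u\dvol = 2\int_M f\,\pa u \dvol = 2\int_M u\,\pa f\dvol .
\end{align*}
Applying $\pa$ to \eqref{fdef} gives $\pa f = -\pa u + \mu[u]\,u^{\frac{n+4}{n-4}}$. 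Substituting,
\begin{align*}
\frac{d}{dt}\int_M uP_{g_0}u\dvol = 2\left(-\int_M u\pa u\dvol + \mu[u]\int_M u^{\frac{2n}{n-4}}\dvol\right).
\end{align*}
This vanishes by the definition \eqref{mudef} of $\mu[u]$, which gives \eqref{dPdt}.

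The only point needing any care is the regularity that justifies the differentiation and the use of self-adjointness, namely that $f\in \wtt$. This holds because $\pa^{-1}$ maps $\ltwo$ into $\wtt$ by positivity of $\pa$, and $u^{\frac{n+4}{n-4}}\in\ltwo$ whenever $u\in \lone$. The algebra itself is immediate.
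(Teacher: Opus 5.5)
Your proof is correct and is essentially the standard argument. The paper does not prove this lemma itself but cites Gursky--Malchiodi, whose proof is the same computation: differentiate, use self-adjointness of $\pa$, substitute the flow equation, and let the definition of $\mu[u]$ cancel the two terms. You also rightly invoke positivity of $u$ to identify $u\,|u|^{\frac{n+4}{n-4}}$ with $u^{\frac{2n}{n-4}}$. That positivity comes from the positive Green's function, independently of this lemma, so there is no circularity.
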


\begin{lemma} [{\cite[Lemma 3.3]{MR3420504}}] \label{volmon}   The conformal volume satisfies
\begin{align} \label{dVdt}
\frac{d}{dt} V = \frac{d}{dt} \int u^{\frac{2n}{n-4}}\ \vol_{g_0} = \frac{2n}{n-4} \frac{1}{\mu} \int f P_{g_0} f\ \vol_{g_0} \geq 0.
\end{align}
In particular, the volume is increasing along the flow, while both $\mu$ and the Paneitz--Sobolev quotient are both decreasing:
\begin{align} \label{mudown} \begin{split}
\frac{d}{dt} \mu &= \frac{d}{dt} \Big(  \frac{ \int u P_{g_0} u\ \vol_{g_0}}{V} \Big) \leq 0, \\
\frac{d}{dt} \mathcal{F}_{g_0}[u] &= \frac{d}{dt} \Big( \frac{ \int u P_{g_0} u\ \vol_{g_0} }{ V^{\frac{n-4}{n}}}\Big) \leq 0.
\end{split}
\end{align}
Finally, the volume is bounded above:
\begin{align} \label{Vabove}
V \leq C_0(g_0).
\end{align}
\end{lemma}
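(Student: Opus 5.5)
We compute the time derivative of $V=\int u^{\frac{2n}{n-4}}\dvol$ directly from the flow and then rewrite it through the identity $\pa(f+u)=\mu\, u^{\frac{n+4}{n-4}}$, which follows by applying $\pa$ to \eqref{fdef}. Since $u>0$,
\begin{align*}
\frac{dV}{dt}=\frac{2n}{n-4}\int u^{\frac{n+4}{n-4}} f\dvol=\frac{2n}{n-4}\frac{1}{\mu}\int f\,\pa(f+u)\dvol
=\frac{2n}{n-4}\frac{1}{\mu}\Big(\int f\pa f\dvol+\int f\pa u\dvol\Big).
\end{align*}
Lemma~\ref{W22invariance} and the self-adjointness of $\pa$ give
\[
0=\frac{d}{dt}\int u\pa u\dvol=2\int f\pa u\dvol .
\]
So the cross term vanishes, and \eqref{dVdt} follows.

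The sign of $\frac{dV}{dt}$ comes from the positivity of $\pa$ (\cite[Proposition~B]{MR3420504}), which gives $\int f\pa f\ge0$, and from $\mu>0$. The latter holds because $u>0$ and the numerator $\int u\pa u$ is positive. Since that numerator is constant in time and $V$ is nondecreasing, both $\mu=\int u\pa u/V$ and $\cf=\int u\pa u/V^{\frac{n-4}{n}}$ are nonincreasing, which is \eqref{mudown}.

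For \eqref{Vabove}, we use the Sobolev inequality associated with the coercive form $\int u\pa u$, which is equivalent to the $W^{2,2}$ norm. It gives
\[
V^{\frac{n-4}{n}}\le C\int u\pa u\dvol = C\int 1\cdot \pa 1\dvol = C\,\frac{n-4}{2}\int Q_{g_0}\dvol ,
\]
where the middle equality uses conservation and $u(0)=1$. Hence $V\le C_0(g_0)$.

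The only delicate point is the justification: we need enough regularity in time of $u$ and $f$ to differentiate under the integral and to apply $\pa$ to $f$. This is provided by the long-time existence theory (\cite[Proposition~3.4]{MR3420504}), in which the right-hand side of the flow is a smooth map on $C^{4,\alpha}$ near positive functions.
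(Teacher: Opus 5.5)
Your proof is correct and follows the standard argument behind the cited result of Gursky--Malchiodi; the paper itself gives no proof and only cites it. You rewrite $u^{\frac{n+4}{n-4}}=\mu^{-1}\pa(u+f)$, kill the cross term $\int f\pa u\dvol$ via conservation of $\int u\pa u\dvol$, and bound $V$ by the Sobolev inequality together with $\int u\pa u\dvol=\int \pa 1\dvol=\frac{n-4}{2}\int Q_{g_0}\dvol$.
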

\vskip.2in

\begin{corollary}[{\cite[Corollary  3.3]{MR3420504}}] \label{STCor}  
We have the space-time estimates
\begin{align} \label{stestimate} \begin{split}
&\int_0^T  \| f \|_{W^{2,2}}^2\ dt \leq C_1(g_0), \\
&\int_0^T \Big( \int |f|^{\frac{2n}{n-4}}\ \vol_{g_0} \Big)^{\frac{n-4}{n}}\ dt \leq C_2(g_0).
\end{split}
\end{align}
\end{corollary}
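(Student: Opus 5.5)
Both estimates come from integrating the volume monotonicity formula \eqref{dVdt} of Lemma \ref{volmon} in time, using that $V$ is bounded above by \eqref{Vabove} and that $\mu$ stays bounded away from zero and infinity.

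\textbf{Bounds on $\mu$.} By Lemma \ref{W22invariance}, $E:=\int u\pa u\dvol$ is constant along the flow. Since $u(0,\cdot)=1$, we have $E=\int_M \pa 1\dvol = \frac{n-4}{2}\int_M Q_{g_0}\dvol>0$ by semi-positivity of $Q_{g_0}$. Then $\mu(t)=E/V(t)$ and $\mu$ is decreasing, so $\mu(t)\le \mu(0)$. For a lower bound, \eqref{Vabove} gives $\mu(t)\ge E/C_0(g_0)=:\mu_->0$.

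\textbf{First estimate.} Rewriting \eqref{dVdt} gives
\begin{equation*}
\|f\|_{W^{2,2}}^2=\int f\pa f\dvol = \frac{n-4}{2n}\,\mu(t)\,\frac{dV}{dt}\le \frac{n-4}{2n}\,\mu(0)\,\frac{dV}{dt}.
\end{equation*}
Here we used $dV/dt\ge 0$ and $\mu(t)\le\mu(0)$. Integrating from $0$ to $T$ yields
\begin{equation*}
\int_0^T\|f\|_{W^{2,2}}^2\,dt\le \frac{n-4}{2n}\,\mu(0)\,\big(V(T)-V(0)\big)\le \frac{n-4}{2n}\,\mu(0)\,C_0(g_0)=:C_1(g_0).
\end{equation*}
This bound is independent of $T$.

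\textbf{Second estimate.} The second estimate follows from the first via the Sobolev inequality
\begin{equation*}
\Big(\int |f|^{\frac{2n}{n-4}}\dvol\Big)^{\frac{n-4}{n}}\le C\,\|f\|_{W^{2,2}}^2 .
\end{equation*}
This holds with our $W^{2,2}$-norm because the Paneitz operator is positive (Gursky–Malchiodi, Proposition B), which makes that norm equivalent to the standard one. Integrating in $t$ gives the bound $C_2(g_0)=C\,C_1(g_0)$.

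There is no real obstacle here. The only care needed is the positive lower bound on $\mu$ (equivalently the upper bound on $V$), and \eqref{Vabove} supplies it; it in fact only enters the first estimate through the upper bound $\mu\le\mu(0)$. Thus both estimates hold uniformly in $T$.
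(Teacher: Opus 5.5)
Your argument is correct and is the standard proof behind the cited Gursky--Malchiodi corollary (the paper itself only cites it without proof): integrate \eqref{dVdt} in time using $\mu(t)\le\mu(0)$ and \eqref{Vabove}, then deduce the second bound from the first via the Sobolev inequality for the positive operator $\pa$. One expository point: the positive lower bound on $\mu$ that you derive is never used; only $\mu\le\mu(0)$ and $V\le C_0(g_0)$ enter the argument.
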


We begin with the following uniform estimates on the time derivatives of  $u$:
\begin{lemma}\label{lem:fint}
For the flow $u$, the following estimates hold:

(a) The quantities $\int f P_{g_0} f \dvol$, and $\int |f|^{\frac{2n}{n-4}} \dvol$ are uniformly bounded in $t$.

(b) $\big|\frac{d \mu}{dt}\big|$ is uniformly bounded in $t$.

(c) $\int \frac{\partial f}{\partial t} P_{g_0} (\frac{\partial f}{\partial t}) \dvol$, and $\int |\frac{\partial f}{\partial t}|^{\frac{2n}{n-4}} \dvol$ are uniformly bounded in $t$.
\end{lemma}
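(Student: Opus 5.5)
The plan is to derive everything from three facts: the explicit formula \eqref{fdef} for $f$, the conservation law \eqref{dPdt}, and the volume bounds of Lemma~\ref{volmon}. Throughout, set $E:=\int u\pa u\dvol$, which is constant in $t$ by Lemma~\ref{W22invariance}, so that $\|u\|_{W^{2,2}(M)}^2=E$ for all $t$. Also write $p=\frac{n+4}{n-4}$ and $2^\#=\frac{2n}{n-4}$. Recall $u>0$ for all time, so $|u|=u$.

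\emph{Two-sided bounds on $V$ and $\mu$.} By \eqref{dVdt} the volume $V(t)$ is nondecreasing, so $V(t)\ge V(0)=\mathrm{vol}(M,g_0)>0$. By \eqref{Vabove}, $V(t)\le C_0$. Since $\mu=E/V$, this gives $E/C_0\le\mu(t)\le E/V(0)$ uniformly in $t$. In particular $\mu$ is bounded above and bounded away from zero.

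\emph{Step (a).} Write $f=-u+\mu\,\pa^{-1}(u^p)$. For the second term, self-adjointness of $\pa^{-1}$ followed by H\"older and Sobolev (equivalently, the duality $W^{2,2}$ versus $L^{\frac{2n}{n+4}}$) gives
\[
\|\pa^{-1}(u^p)\|_{W^{2,2}}^2=\int u^p\,\pa^{-1}(u^p)\dvol\le C\|u^p\|_{L^{\frac{2n}{n+4}}}^2=C\,V^{\frac{n+4}{n}}\le C.
\]
Hence $\|f\|_{W^{2,2}}\le \sqrt{E}+C\mu\le C$. The bound on $\int|f|^{2^\#}$ then follows from the Sobolev embedding $W^{2,2}\hookrightarrow L^{2^\#}$, using the equivalence of norms coming from the positivity of $\pa$.

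\emph{Step (b).} Differentiate $\mu=E/V$ and use \eqref{dVdt}:
\[
\frac{d\mu}{dt}=-\frac{E}{V^2}\cdot\frac{2n}{n-4}\cdot\frac{1}{\mu}\int f\pa f\dvol .
\]
Each factor is uniformly bounded by the bounds above and part (a). Here the lower bound on $\mu$ and the lower bound on $V$ are what is needed.

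\emph{Step (c).} Differentiating \eqref{fdef} in $t$ gives
\[
\frac{\partial f}{\partial t}=-f+\mu'\,\pa^{-1}(u^p)+p\,\mu\,\pa^{-1}\big(u^{p-1}f\big).
\]
The first two terms are bounded in $W^{2,2}$ by (a) and (b) together with the estimate on $\pa^{-1}(u^p)$ already obtained. For the last term, the same duality argument gives $\|\pa^{-1}(u^{p-1}f)\|_{W^{2,2}}\le C\|u^{p-1}f\|_{L^{\frac{2n}{n+4}}}$. Apply H\"older with $u^{p-1}=u^{\frac{8}{n-4}}\in L^{n/4}$ and $f\in L^{2^\#}$; the exponents match because $\frac4n+\frac{n-4}{2n}=\frac{n+4}{2n}$. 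This yields
\[
\|u^{p-1}f\|_{L^{\frac{2n}{n+4}}}\le V^{\frac{8}{2n}}\|f\|_{L^{2^\#}}\le C .
\]
Sobolev embedding again gives the $L^{2^\#}$ bound for $\partial f/\partial t$.

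The only delicate point is this H\"older exponent bookkeeping and the use of the time-independent lower bound on $V$, which controls $\mu$ from above. Both are routine, so no step should be a genuine obstacle.
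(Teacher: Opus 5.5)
Your proof is correct and follows essentially the same route as the paper. Both arguments rest on the identity $\pa f=-\pa u+\mu u^{\frac{n+4}{n-4}}$ and its time derivative, H\"older and Sobolev, the conservation of $\int u\pa u$, and the bounds on $V$ and $\mu$. The only difference is packaging: the paper tests these identities against $f$ (resp.\ $\partial_t f$) and absorbs via Young's inequality, whereas you use the triangle inequality in $W^{2,2}$ together with the dual bound $\|\pa^{-1}\phi\|_{W^{2,2}}\le C\|\phi\|_{L^{\frac{2n}{n+4}}}$, which amounts to the same absorption step done once.
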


\begin{proof}

For (a), we compute 
\begin{align*}
\int f P_{g_0} f &= \int -f P_{g_0} u + \mu \int f u^{\frac{n+4}{n-4}} \\
&\le \frac{1}{4} \int f P_{g_0} f +  \int u P_{g_0} u + \mu \big( \int |f|^{\frac{2n}{n-4}} \big)^{\frac{n-4}{2n}} \cdot \big( \int |u|^{\frac{2n}{n-4}} \big)^{\frac{n+4}{2n}}    
\end{align*}
using Young's inequality.

Hence we have:
\begin{align*}
\int f P_{g_0} f & \le 4 \int u P_{g_0} u + 2\mu \big( \int |f|^{\frac{2n}{n-4}} \big)^{\frac{n-4}{2n}} \cdot \big( \int |u|^{\frac{2n}{n-4}} \big)^{\frac{n+4}{2n}} \\
& \le 4 \int u P_{g_0} u + 2 C_0 \mu \big( \int f P_{g_0} f \big)^{\frac{1}{2}} \cdot \big( \int |u|^{\frac{2n}{n-4}} \big)^{\frac{n+4}{2n}} 
\end{align*}
by the Sobolev inequality.
Since $\mu$ and the volume are bounded above, and $\int u P_{g_0} u$ is invariant along the flow (Lemma \ref{W22invariance}), this yields the desired bound.

(b) follows directly from Lemma \ref{volmon}, and (a).

For (c), we use the identity
$$\frac{\partial f}{\partial t} = - f + \frac{d\mu}{dt} P_{g_0}^{-1} \big( u^{\frac{n+4}{n-4}} \big) +  \mu \frac{(n+4)}{(n-4)} P_{g_0}^{-1} \big(  u ^{\frac{8}{n-4}} f \big)$$
to obtain
\begin{align*}
&\int \frac{\partial f}{\partial t} P_{g_0} ( \frac{\partial f}{\partial t} ) \\
=& \int  - \frac{\partial f}{\partial t} P_{g_0}f + \frac{d\mu}{dt} \frac{\partial f}{\partial t}  u^{\frac{n+4}{n-4}}  +  \mu \frac{(n+4)}{(n-4)} \frac{\partial f}{\partial t}  u ^{\frac{8}{n-4}} f \\
\le & \int \frac{1}{4} \frac{\partial f}{\partial t} P_{g_0}(\frac{\partial f}{\partial t})  +  f P_{g_0} f + \big| \frac{d\mu}{dt} \big| \big( \int |\frac{\partial f}{\partial t}|^{\frac{2n}{n-4}} \big)^{\frac{n-4}{2n}} \cdot \big( \int |u|^{\frac{2n}{n-4}} \big)^{\frac{n+4}{2n}}     \\
&+  \mu \frac{(n+4)}{(n-4)} \big( \int |\frac{\partial f}{\partial t}|^{\frac{2n}{n-4}} \big)^{\frac{n-4}{2n}} \cdot \big( \int |u|^{\frac{2n}{n-4}} \big)^{\frac{8}{2n}} \cdot \big( \int |f|^{\frac{2n}{n-4}} \big)^{\frac{n-4}{2n}}.
\end{align*}
All integrals involving $u$ and $f$ on the right-hand side are uniformly bounded by parts (a) and (b). Therefore, by the Sobolev inequality, we conclude that   $\int \frac{\partial f}{\partial t} P_{g_0} ( \frac{\partial f}{\partial t} )$ is uniformly bounded, which completes the proof.

\end{proof}

An immediate consequence of the above lemma is the following result, which plays an important role in showing that the standard bubbles are the only possible blow-up profiles.

\begin{corollary} \label{lem:flim}
$\int f P_{g_0} f$ and  $\int |f|^{\frac{2n}{n-4}} \rightarrow 0$ as $t \rightarrow \infty$.
\end{corollary}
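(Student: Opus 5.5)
The plan is the standard argument that an integrable function with bounded derivative tends to zero. Set
\[
\phi(t) := \int f P_{g_0} f \dvol = \|f(t,\cdot)\|_{W^{2,2}(M)}^2 .
\]
Corollary \ref{STCor} gives $\int_0^\infty \phi(t)\,dt \le C_1(g_0)$, with the constant independent of $T$. It therefore suffices to show that $\phi$ is uniformly Lipschitz in $t$; then $\phi(t)\to 0$ follows.

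For the Lipschitz bound, differentiate using the self-adjointness of $P_{g_0}$:
\[
\phi'(t) = 2\int \frac{\partial f}{\partial t} P_{g_0} f \dvol .
\]
By Cauchy--Schwarz with respect to the positive quadratic form $\int v P_{g_0} v$ (positivity is \cite[Proposition~B]{MR3420504}),
\[
|\phi'(t)| \le 2\Big(\int \frac{\partial f}{\partial t} P_{g_0} \frac{\partial f}{\partial t}\Big)^{1/2}\Big(\int f P_{g_0} f\Big)^{1/2}.
\]
Both factors are uniformly bounded in $t$ by Lemma \ref{lem:fint}(a) and (c). Hence $|\phi'|\le L$ for some constant $L$ independent of $t$.

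Now suppose $\phi(t_k)\ge 2\eta>0$ along a sequence $t_k\to\infty$ with $t_{k+1}-t_k \ge 1$. The Lipschitz bound gives $\phi\ge\eta$ on $[t_k, t_k+\min(1,\eta/L)]$. These intervals are disjoint, so summing over $k$ contradicts $\int_0^\infty\phi<\infty$. Thus $\phi(t)\to 0$.

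The $L^{2n/(n-4)}$ statement then follows from the Sobolev inequality, which holds because our $W^{2,2}$-norm is equivalent to the standard one:
\[
\Big(\int |f|^{\frac{2n}{n-4}}\Big)^{\frac{n-4}{n}} \le C\int fP_{g_0}f \to 0 .
\]

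The only delicate point is justifying the differentiation of $\phi$. This requires enough time regularity of $f$ in $W^{2,2}$, which the smooth flow from \cite[Proposition 3.4]{MR3420504} provides; in any case Lemma \ref{lem:fint}(c) already presupposes it. If one prefers to avoid this, one can instead write $\phi(t)-\phi(s)$ as an integral of $2\int \partial_t f\,P_{g_0}f$ and apply the same bound.
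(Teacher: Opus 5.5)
Your proof is correct and follows the paper's argument: you bound $\frac{d}{dt}\int f P_{g_0} f$ uniformly using Lemma \ref{lem:fint}(a),(c), then combine this with the time-integrability from Corollary \ref{STCor}. The only difference is in the second claim: you obtain the $L^{\frac{2n}{n-4}}$ decay directly from the Sobolev inequality, whereas the paper repeats the bounded-derivative argument for $\int |f|^{\frac{2n}{n-4}}$, using H\"older's inequality and the second space-time estimate. Your route is slightly shorter.
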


\begin{proof}
We observe that 
$$\big| \frac{d}{dt} \int f P_{g_0} f \big| = \big|  \int \frac{\partial f}{\partial t} P_{g_0} f \big| \le \int f P_{g_0} f + \int \frac{\partial f}{\partial t} P_{g_0} ( \frac{\partial f}{\partial t}  ) $$
which is uniformly bounded by Lemma~\ref{lem:fint}. 
Similarly, by H\"older's inequality, $\big| \frac{d}{dt} \int |f|^{\frac{2n}{n-4}} \big|$ is uniformly bounded in $t$.
Now, the assertion follows from  Corollary \ref{STCor}.

\end{proof}

The following result will be used to apply the concentration-compactness theorem of Hebey and Robert.
\begin{proposition}\label{lem:LnormPf}
We have
\begin{equation*}
\int_M \big|P_{g_0} u - \mu_\infty u^{\frac{n+4}{n-4}} \big|^{\frac{2n}{n+4}} \dvol = C(n) \int_M u^{\frac{2n}{n-4}} \big| Q(t) - \overline Q_\infty \big|^{\frac{2n}{n+4}} \dvol  \rightarrow 0
\end{equation*}

or equivalently,
\begin{equation*}
\int_M |P_{g_0} f|^{\frac{2n}{n+4}} \dvol  \rightarrow 0
\end{equation*}
as $t \rightarrow \infty$.
\end{proposition}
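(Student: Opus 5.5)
Apply $\pa$ to the definition \eqref{fdef} of $f$. This gives the local identity
\[
\pa f = -\pa u + \mu(t)\, u^{\frac{n+4}{n-4}},
\]
which we rewrite as
\[
\pa u - \mu_\infty u^{\frac{n+4}{n-4}} = -\pa f + (\mu(t)-\mu_\infty)\, u^{\frac{n+4}{n-4}}.
\]
The two displayed quantities in the statement are therefore equivalent modulo the term $(\mu(t)-\mu_\infty)u^{\frac{n+4}{n-4}}$. Its $L^{\frac{2n}{n+4}}$ norm is
\[
|\mu(t)-\mu_\infty|\Big(\int u^{\frac{2n}{n-4}}\Big)^{\frac{n+4}{2n}}.
\]
This tends to zero because $\mu(t)$ is monotone decreasing (Lemma \ref{volmon}) and bounded below by $0$, since $\pa$ is positive. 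Hence $\mu_\infty$ exists, and the volume is bounded by \eqref{Vabove}.

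The first equality in the statement is purely algebraic. The conformal covariance $Q_{g_t}=\frac{2}{n-4}u^{-\frac{n+4}{n-4}}\pa u$ gives
\[
\pa u - \mu_\infty u^{\frac{n+4}{n-4}} = \tfrac{n-4}{2}\, u^{\frac{n+4}{n-4}}\big(Q(t)-\overline Q_\infty\big),
\]
where $\overline Q_\infty:=\frac{2}{n-4}\mu_\infty$. Raising to the power $\frac{2n}{n+4}$ turns $u^{\frac{n+4}{n-4}\cdot\frac{2n}{n+4}}$ into $u^{\frac{2n}{n-4}}$, and the constant is $C(n)=(\frac{n-4}{2})^{\frac{2n}{n+4}}$.

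The main step is to show $\|\pa f\|_{L^{\frac{2n}{n+4}}}\to 0$. The bound we have is Corollary \ref{lem:flim}, which gives $\|f\|_{W^{2,2}}\to0$, and this only controls $\pa f$ in $W^{-2,2}$, which is the wrong space. I would instead estimate $\pa f$ directly through the identity above, but with $\mu(t)$ kept:
\[
\pa f = \mu(t)\, u^{\frac{n+4}{n-4}} - \pa u .
\]
This alone does not decay, so I would exploit the time derivative. Since
\[
\pa\Big(\frac{\partial f}{\partial t}+f\Big) = \frac{d\mu}{dt}\, u^{\frac{n+4}{n-4}} + \mu\,\frac{n+4}{n-4}\, u^{\frac{8}{n-4}} f,
\]
Hölder's inequality gives
\[
\Big\|\pa\Big(\frac{\partial f}{\partial t}+f\Big)\Big\|_{L^{\frac{2n}{n+4}}} \le \Big|\frac{d\mu}{dt}\Big|\,V^{\frac{n+4}{2n}} + C\,V^{\frac{4}{n}}\,\|f\|_{L^{\frac{2n}{n-4}}} .
\]
Both terms tend to zero. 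For the second this is Corollary \ref{lem:flim}. For the first, \eqref{dVdt} together with $\frac{d\mu}{dt}=-\mu V^{-1}\frac{dV}{dt}$ gives $|\frac{d\mu}{dt}|\le C\|f\|_{W^{2,2}}^2\to0$.

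Now set $\phi(t):=\pa f(t)$ in the Banach space $X=L^{\frac{2n}{n+4}}$. We have just shown $\phi'+\phi = o(1)$ in $X$. Duhamel's formula gives
\[
\phi(t)=e^{-(t-s)}\phi(s)+\int_s^t e^{-(t-\tau)}\,o(1)\,d\tau .
\]
This yields $\|\phi(t)\|_X\to0$, provided $\|\phi(s)\|_X$ is bounded. That bound holds because $\|\pa u\|_X\le\|\pa u\|_{L^2}$ is controlled, either by parabolic smoothing at each fixed time or simply because the ODE argument can be started at any fixed $s$.

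The step needing most care is justifying the differentiation in time in $X$. For this I would use the smoothness of the flow for each finite time, established in \cite{MR3420504}. It also suffices to run the estimate in integrated form, so no uniform bounds beyond Lemma \ref{lem:fint} are needed.
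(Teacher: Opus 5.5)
Your proof is correct and is essentially the paper's argument. Both rest on the evolution identity $\partial_t(\pa f) = -\pa f + \frac{d\mu}{dt}\,u^{\frac{n+4}{n-4}} + \frac{n+4}{n-4}\mu\, u^{\frac{8}{n-4}} f$, bound the forcing in $L^{\frac{2n}{n+4}}$ by H\"older and Corollary~\ref{lem:flim}, and let the damping term $-\pa f$ drive $\pa f$ to zero; the paper integrates the nonlinear inequality $F'\le -\frac{2n}{n+4}F+\epsilon F^{\frac{n-4}{2n}}$ for $F(t)=\int_M|\pa f|^{\frac{2n}{n+4}}\dvol$, while your linear Duhamel formula in $L^{\frac{2n}{n+4}}$ avoids that comparison step and can simply be started at $s=0$, where $\pa f(0)=\mu(0)-\pa 1$ is smooth.
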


\begin{proof}

By direct computation,
$$\int_M |P_{g_0} f|^{\frac{2n}{n+4}} \dvol =  \int_M \big|P_{g_0} u - \mu(t) u^{\frac{n+4}{n-4}} \big|^{\frac{2n}{n+4}} \dvol.$$
Since $\mu(t) \rightarrow \mu_\infty$, and $\int_M u^{\frac{2n}{n-4}} \dvol$ is uniformly bounded, it suffices to prove that 
\begin{equation*}
\int_M |P_{g_0} f|^{\frac{2n}{n+4}} \dvol  \rightarrow 0.
\end{equation*}

We compute
\begin{align*}
&\frac{d}{dt} \int_M |P_{g_0} f|^{\frac{2n}{n+4}} \dvol \\
=& \frac{2n}{n+4} \int_M |P_{g_0} f |^{-\frac{8}{n+4}}P_{g_0}f \big(-P_{g_0} f + \frac{d\mu}{dt} u^{\frac{n+4}{n-4}} + \mu \frac{n+4}{n-4} u^{\frac{8}{n-4}} f \big) \\ 
=& -\frac{2n}{n+4} \int_M |P_{g_0} f|^{\frac{2n}{n+4}} + \frac{2n}{n+4} \int_M |P_{g_0} f |^{-\frac{8}{n+4}}P_{g_0}f \big( \frac{d\mu}{dt} u^{\frac{n+4}{n-4}} + \mu \frac{n+4}{n-4} u^{\frac{8}{n-4}} f \big) \\
\le & -\frac{2n}{n+4} \int_M |P_{g_0} f|^{\frac{2n}{n+4}} +C(n) \big| \frac{d\mu}{dt} \big| \int_M |P_{g_0} f|^{\frac{n-4}{n+4}} u^{\frac{n+4}{n-4}} +C(n) \mu \int_M |P_{g_0} f|^{\frac{n-4}{n+4}} u^{\frac{8}{n-4}} |f|.
\end{align*}

By H\"older's inequality, we estimate
$$\int_M |P_{g_0} f|^{\frac{n-4}{n+4}} u^{\frac{n+4}{n-4}} \le \big(\int_M |P_{g_0} f|^{\frac{2n}{n+4}} \big)^\frac{n-4}{2n} \cdot \big( \int_M u^{\frac{2n}{n-4}} \big)^{\frac{n+4}{2n}}, $$
and
$$\int_M |P_{g_0} f|^{\frac{n-4}{n+4}} u^{\frac{8}{n-4}} |f| \le \big(\int_M |P_{g_0} f|^{\frac{2n}{n+4}} \big)^\frac{n-4}{2n} \cdot \big( \int_M u^{\frac{2n}{n-4}} \big)^{\frac{8}{2n}} \cdot \big(\int_M |f|^{\frac{2n}{n-4}} \big)^{\frac{n-4}{2n}}.$$

By Corollary \ref{lem:flim} and  Lemma \ref{volmon}, for any $\epsilon>0$, we have

$$\frac{d}{dt} \int_M |P_{g_0} f|^{\frac{2n}{n+4}} \le   -\frac{2n}{n+4} \int_M |P_{g_0} f|^{\frac{2n}{n+4}} + \epsilon \big(\int_M |P_{g_0} f|^{\frac{2n}{n+4}} \big)^\frac{n-4}{2n}$$
for all sufficiently large $t$. 

Let $F(t) = \int_M |P_{g_0} f|^{\frac{2n}{n+4}}$. Then the above inequality can be written as
$$F'(t) \le - \frac{2n}{n+4} F(t) + \epsilon F^{\frac{n-4}{2n}}. $$
Equivalently,
$$\big(e^{\frac{2n}{n+4}t} F \big)' \le \epsilon \big(e^{\frac{2n}{n+4}t} F \big)^{\frac{n-4}{2n}} e^t.$$
Integrating this differential inequality yields
$$e^{\frac{2n}{n+4}t} F(t) \le \epsilon (e^t -e^{t_0}) + e^{\frac{2n}{n+4}t_0} F(t_0)$$
for any sufficiently large $t_0$. This shows that $F(t) < \epsilon$ for all  sufficiently large  $t$.
\end{proof}

\subsection{Blow-up profile}

In this subsection, we prove that if the initial energy is sufficiently small, or if there is no volume concentration, then the flow converges globally. This is achieved by showing that the standard bubbles, which are the stationary solutions of the flow, are the only possible blow-up profiles.

We first observe that a uniform $C^0$ bound yields uniform lower bounds and  H\"older estimates for the flow. 

\begin{proposition}\label{prop:C0impl}
Assume that $\|u\|_{L^\infty([0, \infty) \times M )}<\infty$. Then, 
$$\sup_{t\in[0, \infty)}  \|u(t, \cdot) \|_{C^{5. \alpha}(M)}, \sup_{t\in[0, \infty)}  \|\frac{\partial u}{\partial t} (t, \cdot) \|_{C^{5. \alpha}(M)} <\infty $$
and 
$$\inf_{(t, x) \in [0, \infty) \times M } u(t, x) > 0$$
where $\alpha \in (0,1)$ depends on $n$.
\end{proposition}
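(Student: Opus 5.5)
The plan is to rewrite the flow as an elliptic equation at each fixed time. The key identity is
\[
\pa (f+u) = \mu(t)\, u^{\frac{n+4}{n-4}},
\]
where $f = \partial_t u$. Since $u$ is uniformly bounded by assumption, and $\mu(t)$ is bounded (it is decreasing and positive), the right-hand side is uniformly bounded in $L^\infty$. The Green's function of $\pa$ is positive and has the standard $d^{4-n}$ singularity. By elliptic $L^p$ theory for the fourth-order operator $\pa$, whose coefficients depend only on $g_0$, $w:=f+u$ is therefore bounded in $W^{4,p}$ for every $p<\infty$, hence in $C^{3,\alpha}$, uniformly in $t$.

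To get regularity of $u$ itself, solve the ODE in time. Since $\partial_t u + u = w$, we have
\[
u(t)=e^{-t}\cdot 1+\int_0^t e^{s-t} w(s)\,ds .
\]
A uniform $C^{k,\alpha}$ bound on $w$ transfers directly to $u$. We then bootstrap. If $u$ is bounded in $C^{k,\alpha}$ and bounded below by a positive constant, then $u^{\frac{n+4}{n-4}}$ is bounded in $C^{k,\alpha}$. Schauder theory gives $w\in C^{k+4,\alpha}$, and the ODE formula gives $u\in C^{k+4,\alpha}$. Two iterations reach $C^{5,\alpha}$. Then $f = w-u$ is bounded in $C^{5,\alpha}$ too. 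Without a lower bound, the power $\frac{n+4}{n-4}$ is already $C^{1,\beta}$ as a function since it exceeds $1$, which suffices for the first step anyway. Still, the lower bound is needed, and is cleaner to establish first.

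The main obstacle is the uniform positive lower bound for $u$, because there is no maximum principle for the flow itself. I would use positivity of the Green's function $G$ of $\pa$ (Gursky--Malchiodi, Proposition C), which gives $G(x,y)\ge c_0>0$ on $M\times M$. Then
\[
w(t,x)=\mu(t)\int_M G(x,y)\,u^{\frac{n+4}{n-4}}(t,y)\,dv_{g_0}(y)\ge c_0\,\mu(t)\int_M u^{\frac{n+4}{n-4}}\,dv_{g_0}.
\]
We need this to be bounded below. Recall that $\mu(t)\ge \mu_\infty>0$ and that $\int u P u$ is conserved (Lemma \ref{W22invariance}). Also, $\mu(t)=\int uPu/V$ with $V\le C_0$, so $\mu(t)$ is bounded below by $\int uPu / C_0 > 0$. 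The volume $V(t)$ is nondecreasing by Lemma \ref{volmon}, so $V(t)\ge V(0)>0$. With $\|u\|_\infty\le K$, Hölder's inequality gives
\[
\int u^{\frac{n+4}{n-4}} \ge K^{-1}\int u^{\frac{2n}{n-4}} = K^{-1}V\ge K^{-1}V(0).
\]
Hence $w\ge c_1>0$ uniformly. From the ODE representation, $u(t)\ge e^{-t}+c_1(1-e^{-t})\ge \min(1,c_1)>0$.

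With the lower bound in hand, the nonlinearity $s\mapsto s^{\frac{n+4}{n-4}}$ is smooth on the range $[\min(1,c_1),K]$, and the bootstrap above runs cleanly. Here $\alpha\in(0,1)$ is fixed via the initial Sobolev embedding $W^{4,p}\hookrightarrow C^{3,\alpha}$ for $p$ large depending on $n$. The bound on $\partial_t u=w-u$ follows.
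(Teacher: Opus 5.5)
Your proof is correct. The regularity part is essentially the paper's argument. The paper integrates $\partial_t(\pa u)+\pa u=\mu u^{\frac{n+4}{n-4}}$ in time, which is just $\pa$ applied to your Duhamel formula expressing $u$ through $w=f+u=\mu\,\pa^{-1}(u^{\frac{n+4}{n-4}})$.

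The genuine difference is how you get the positive lower bound, and the order of the steps.

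\textbf{The paper's route.} It first bootstraps to $C^{5,\alpha}$ without any lower bound. For large $n$ the map $s\mapsto s^{\frac{n+4}{n-4}}$ is only $C^{1,\frac{8}{n-4}}$ near $s=0$, which is what forces $\alpha$ to depend on $n$ there. It then argues by contradiction. If $u(t_i,p_i)\to 0$ with $t_i\to\infty$, Arzel\`a--Ascoli and $f(t_i)\to 0$ (Corollary \ref{lem:flim}) give a nonnegative limit solving $\pa u_\infty=\mu_\infty u_\infty^{\frac{n+4}{n-4}}$ with a zero. The strong maximum principle \cite[Theorem A]{MR3420504} then forces $u_\infty\equiv 0$, contradicting $V(t)\ge V(0)$.

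\textbf{Your route.} You feed positivity of the Green's function directly into the representation of $w$. Combined with conservation of $\int u\,\pa u$, $V\le C_0$, $V\ge V(0)$ and $u\le K$, this gives $w\ge c_1>0$. The Duhamel formula then yields $u\ge \min(1,c_1)$ for all $t\ge 0$.

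\textbf{What your route buys.} The bound is explicit and uniform in time. It needs no compactness, no use of $f\to 0$ or of the limiting equation, and no separate handling of bounded times. Because the lower bound comes first, the nonlinearity is smooth on $[\min(1,c_1),K]$. The bootstrap therefore gives $C^{k,\alpha}$ bounds for every $k$ and any $\alpha$, not just $C^{5,\alpha}$ with $\alpha=\alpha(n)$. The underlying input is the same in both proofs: Gursky--Malchiodi's positivity theory, whose Green's function positivity is itself derived from the strong maximum principle.

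\textbf{One detail to add.} Proposition C only gives $G>0$ off the diagonal. The uniform bound $\inf_{M\times M}G>0$ follows from three facts: $G$ is continuous on the compact set $\{d(x,y)\ge r_0\}$, it is positive there, and it has the $d(x,y)^{4-n}$ blow-up near the diagonal, uniformly in the pole.
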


\begin{proof}
Let $L := \| u\|_{L^\infty([0, \infty) \times M )}$. From the equation
\begin{equation}\label{eq:P4u}
\frac{\partial}{\partial t} P_{g_0} u = - P_{g_0} u + \mu u^{\frac{n+4}{n-4}},
\end{equation}
we obtain,
$$0 \le \frac{\partial}{\partial t} P_{g_0}u + P_{g_0} u \le \mu L^{\frac{n+4}{n-4}}.$$
Integrating this differential inequality yields
\begin{equation*}
P_{g_0} u(t, \cdot) = e^{-t} P_{g_0} u(0, \cdot) +   \int_0^t e^{s-t} \mu u(s, \cdot)^{\frac{n+4}{n-4}} ds .
\end{equation*}
Consequently,
\begin{equation*}
\|P_{g_0} u(t, \cdot) \|_{L^\infty(M)} \le e^{-t} \|P_{g_0} u(0, \cdot)\|_{L^\infty(M)} +   \int_0^t e^{s-t} \mu \|u(s, \cdot)^{\frac{n+4}{n-4}}\|_{L^\infty(M)} ds.
\end{equation*}
and similarly,
\begin{equation*}
\|P_{g_0} u(t, \cdot) \|_{C^{1, \alpha}(M)} \le e^{-t} \|P_{g_0} u(0, \cdot)\|_{C^{1, \alpha}(M)} +   \int_0^t e^{s-t} \mu \|u(s, \cdot)^{\frac{n+4}{n-4}}\|_{C^{1, \alpha}(M)} ds.
\end{equation*}

The first inequality shows that
$$\|P_{g_0} u \|_{L^\infty([0, \infty) \times M )}<\infty.$$
By the elliptic regularity, this implies that 
$\|u(t, \cdot)\|_{C^{3}(M)}$ is bounded uniformly. Applying a standard bootstrapping argument to the above inequality, we further obtain that $\|u(t, \cdot)\|_{C^{5,\alpha}(M)}$ is uniformly bounded for some $\alpha \in (0, 1)$ depending on the dimension $n$.

Recall that $f = \frac{\partial u}{\partial t}=- u + \mu P_{g_0}^{-1} \big( u^{\frac{n+4}{n-4}} \big)$. From the uniform $C^{5,\alpha}$-bound on $u$, we immediately obtain a uniform bound of $\|f(t, \cdot)\|_{C^{5,\alpha}(M)}$, which follows from the fact that
$$P_{g_0} f = - P_{g_0} u + \mu u^{\frac{n+4}{n-4}} \in L^\infty([0, \infty); C^{1,\alpha}(M) ).$$

For $\frac{\partial f}{\partial t}$, we compute
$$\frac{\partial f}{\partial t} = - f + \frac{d\mu}{dt} P_{g_0}^{-1} \big( u^{\frac{n+4}{n-4}} \big) +  \mu \frac{(n+4)}{(n-4)} P_{g_0}^{-1} \big(  u ^{\frac{8}{n-4}} f \big) \in L^\infty([0, \infty); C^{4, \alpha}(M) ). $$

We now show that $u$ is uniformly bounded below by a positive constant. Suppose, by contradiction, that this fails. Then there exists a sequence $(t_i,p_i) \in [0,\infty) \times M$ such that
\[
t_i \to \infty, \qquad p_i \to p \in M, \qquad u(t_i,p_i) \to 0.
\]
By the Arzel\`a--Ascoli theorem and the uniform $C^{5,\alpha}$-bounds, after passing to a subsequence, we have
\[
u(t_i,\cdot) \to u_\infty \quad \text{in } C^{5,\beta}(M), \qquad \beta < \alpha.
\]
It follows that $u_\infty \ge 0$, $u_\infty(p) = 0$, and
\[
P_{g_0} u_\infty = \mu_\infty u_\infty^{\frac{n+4}{n-4}} .
\]
By the strong maximum principle for the Paneitz operator \cite[Theorem A]{MR3420504}, we conclude that $u_\infty \equiv 0$. This contradicts the fact that the conformal volume is nondecreasing along the flow. Therefore, $u$ is uniformly bounded away from zero.

\end{proof}

\begin{proposition}\label{lem:globalC0bound}
Assume that either
\begin{equation*}
\cf(0) = \cf(1) < Y_4(S^n)
\text{ or } 
\limsup_{r \to 0}\,
\sup_{(t,p)\in[0,\infty)\times M}
\int_{B_r(p)} |u(t,x)|^{\frac{2n}{n-4}} \, \vol=0.
\end{equation*}
Then
\[
\sup_{(t,x)\in[0,\infty)\times M} u(t,x) < \infty .
\]
\end{proposition}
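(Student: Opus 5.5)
We argue by contradiction through a blow-up analysis. Suppose $\sup u=\infty$. Since $u(t,\cdot)$ is smooth for every finite $t$ by long-time existence, there are $t_i\to\infty$ and $p_i\in M$ with $M_i:=u(t_i,p_i)=\max_M u(t_i,\cdot)\to\infty$. Set $\lambda_i=M_i^{-\frac{2}{n-4}}\to 0$ and rescale in normal coordinates at $p_i$:
\[
\tilde u_i(x)=\lambda_i^{\frac{n-4}{2}}u(t_i,\exp_{p_i}(\lambda_i x)), \qquad \tilde f_i(x)=\lambda_i^{\frac{n-4}{2}}f(t_i,\exp_{p_i}(\lambda_i x)).
\]
Then $0<\tilde u_i\le 1$ and $\tilde u_i(0)=1$.

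The pointwise identity $\pa(f+u)=\mu u^{\frac{n+4}{n-4}}$ is scale-covariant. On balls $B_{R/\lambda_i}$ it rescales to
\[
P_i(\tilde u_i+\tilde f_i)=\mu(t_i)\,\tilde u_i^{\frac{n+4}{n-4}},
\]
where $P_i$ is the rescaled Paneitz operator, which converges smoothly to $\Delta^2$ on compact sets. Two facts control the error term $\tilde f_i$:
\begin{itemize}
\item the $\dot W^{2,2}$ and $L^{\frac{2n}{n-4}}$ norms are scale invariant, so Corollary \ref{lem:flim} gives $\tilde f_i\to 0$ in $L^{\frac{2n}{n-4}}_{loc}$ with $\|\Delta \tilde f_i\|_{L^2}\to 0$;
\item the $L^{\frac{2n}{n+4}}$ norm of $P_{g_0}f$ is likewise scale invariant, so Proposition \ref{lem:LnormPf} gives $P_i\tilde f_i\to 0$ in $L^{\frac{2n}{n+4}}$.
\end{itemize}
Hence $P_i\tilde u_i=\mu\tilde u_i^{\frac{n+4}{n-4}}-P_i\tilde f_i$. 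Its right-hand side is bounded in $L^\infty$ plus a term that is small in $L^{\frac{2n}{n+4}}$.

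This is the step I expect to be the main obstacle: obtaining local compactness of $\tilde u_i$ that is strong enough to keep $\tilde u_i(0)=1$ in the limit. The $L^{\frac{2n}{n+4}}$ smallness alone does not give $C^0$ control. My approach is to write
\[
\tilde u_i=v_i+z_i, \qquad P_iv_i=\mu\tilde u_i^{\frac{n+4}{n-4}}\ \text{on } B_{2R} \text{ with suitable boundary data}, \qquad P_iz_i=-P_i\tilde f_i.
\]
Interior $W^{4,q}$ estimates give $v_i$ uniformly bounded in $C^{3,\alpha}(B_R)$. Meanwhile $z_i\to0$ in $W^{2,2}_{loc}$ by elliptic estimates in the dual Sobolev space.

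To pin down the value at the origin, I would not rely on $z_i$ alone. Instead I use $\tilde u_i=(\tilde u_i+\tilde f_i)-\tilde f_i$. The function $\tilde u_i+\tilde f_i=\mu P_i^{-1}(\tilde u_i^{\frac{n+4}{n-4}})$ is locally $C^{3,\alpha}$ bounded, since the right-hand side of its equation is bounded by $\mu$. Since $\tilde f_i=\mu P_i^{-1}(\tilde u_i^{\frac{n+4}{n-4}})-\tilde u_i$, the function $\tilde f_i$ is bounded in $L^\infty$ on compact sets. Its derivatives are controlled through $\partial_t f$ using Lemma \ref{lem:fint}(c), and the smallness then forces $\tilde f_i\to0$ locally uniformly. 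If that last step resists, I would use the pointwise Green representation $u=\mu\int_0^t e^{s-t}P_{g_0}^{-1}(u^{\frac{n+4}{n-4}})(s)\,ds+e^{-t}$. Combined with positivity of the Green's function, this shows $\tilde u_i$ is locally equicontinuous, because $0\le\tilde u_i\le1$ on the time window.

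Passing to the limit, $\tilde u_i\to U$ in $C^0_{loc}(\mathbb{R}^n)$, where $U\ge0$, $U(0)=1$, $U\in \dot W^{2,2}\cap L^{\frac{2n}{n-4}}(\mathbb{R}^n)$, and
\[
\Delta^2U=\mu_\infty U^{\frac{n+4}{n-4}}.
\]
By the classification of Lin and Wei--Xu, $U$ is a standard bubble. Therefore
\[
\int U^{\frac{2n}{n-4}}=\Bigl(\frac{Y_4(S^n)}{\mu_\infty}\Bigr)^{\frac n4}, \qquad \int_{B_{r}(p_i)}u(t_i)^{\frac{2n}{n-4}}\ge \tfrac12\int U^{\frac{2n}{n-4}}>0
\]
for every fixed $r>0$ and large $i$. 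This contradicts the second hypothesis directly. Under the first hypothesis, by Fatou,
\[
\int u(t_i)^{\frac{2n}{n-4}}\ge\int U^{\frac{2n}{n-4}}-o(1),
\]
while $\cf(t_i)=\mu(t_i)V(t_i)^{\frac4n}$. Hence
\[
\cf_\infty\ge\mu_\infty\Bigl(\frac{Y_4(S^n)}{\mu_\infty}\Bigr)^{\frac4n}\mu_\infty^{\frac{4-n}{n}}\cdot\mu_\infty^{\frac{n-4}{n}}\ \ldots
\]
More cleanly, $\mu_\infty V_\infty^{\frac4n}\ge\mu_\infty^{1-\frac4n}\cdot\frac{Y_4(S^n)}{\mu_\infty^{1-\frac4n}}=Y_4(S^n)$. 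So $\cf_\infty\ge Y_4(S^n)$, which contradicts the monotonicity in Lemma \ref{volmon}, since $\cf_\infty\le\cf(0)<Y_4(S^n)$.
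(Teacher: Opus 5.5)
The gap is exactly at the step you flag. At a single time slice you cannot obtain $U(0)=1$, and your choice of $(t_i,p_i)$ does not support the fallback.

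What you legitimately have at time $t_i$ is the following:
\begin{itemize}
\item $\phi_i:=\tilde u_i+\tilde f_i$ is bounded in $C^{3,\alpha}_{loc}$, which is correct, since $0\le P_i\phi_i\le\mu$ and $\phi_i$ is bounded in $L^{\frac{2n}{n-4}}$;
\item $0<\tilde u_i\le1$;
\item $\tilde f_i$ is small in $L^{\frac{2n}{n-4}}$ and in $\dot W^{2,2}$, and $P_i\tilde f_i$ is small in $L^{\frac{2n}{n+4}}$.
\end{itemize}
This information does not exclude a spike. Suppose $\tilde u_i\approx 1$ on a ball of radius $r\ll1$ about the origin and is small nearby, with $\phi_i\approx0$ and hence $\tilde f_i\approx-\tilde u_i$ there. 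Then all three norms of $\tilde f_i$ are $O(r^{\frac{n-4}{2}})$, yet $\tilde f_i(0)\approx-1$ and the limit could be $U\equiv0$.

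Lemma \ref{lem:fint}(c) only bounds spatial integrals of $\partial_t f$ and gives no spatial modulus of continuity for $\tilde f_i$. So the claim that smallness forces $\tilde f_i\to0$ locally uniformly does not follow. The decomposition $\tilde u_i=v_i+z_i$ has the same defect: $z_i\to0$ in $W^{2,2}_{loc}$ says nothing about $z_i(0)$, since $W^{2,2}\not\subset C^0$ for $n\ge5$.

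Your Duhamel fallback is the right mechanism; it is what the paper does when it integrates $\partial_t \pa u+\pa u=\mu u^{\frac{n+4}{n-4}}$ in time. However, it needs $u(s)\le M_i$ for all $s\in[0,t_i]$. You chose $p_i$ only as a spatial maximum at time $t_i$, so ``$0\le\tilde u_i\le1$ on the time window'' is unjustified.

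The fix is the paper's selection: take $u(t_i,p_i)=\max_{[0,t_i]\times M}u$, for example with $t_i$ the first time the running maximum reaches $i$. Then each $\Phi(s)=P_{g_0}^{-1}(u(s)^{\frac{n+4}{n-4}})$ rescales to $\tilde\Phi_{i,s}$ with $0\le P_i\tilde\Phi_{i,s}\le1$ and uniformly bounded $L^{\frac{2n}{n-4}}$ norm. Hence the $\tilde\Phi_{i,s}$ are uniformly bounded in $C^{3,\alpha}_{loc}$. Integrating against $e^{s-t_i}$ in $u(t)=e^{-t}+\int_0^te^{s-t}\mu(s)\Phi(s)\,ds$ then bounds $\tilde u_i$ in $C^{3,\alpha}_{loc}$. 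It follows that $\tilde f_i=\phi_i-\tilde u_i\to0$ locally uniformly.

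From there your static limit goes through. It is slightly more economical than the paper's space--time limit, where $\partial_t w=0$ has to be proved separately via Fatou.

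One smaller point: the classifications of Lin and Wei--Xu assume $U>0$, and you only have $U\ge0$ with $U(0)=1$. Positivity follows because $U\in\dot W^{2,2}(\mathbb{R}^n)$ equals $c_n|x|^{4-n}*(\mu_\infty U^{\frac{n+4}{n-4}})$; the paper instead uses $\Delta w\le0$.

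Your final energy comparison is correct, and so is your treatment of the no-concentration hypothesis, which the paper leaves implicit.
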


\begin{proof}

Suppose for contradiction that there exist $(t_i,p_i)$ with $t_i \to \infty$, $p_i \to p\in M$, and
\[
u(t_i,p_i)\to \infty,
\qquad
u(t_i,p_i)=\sup_{(t,x)\in[0,t_i]\times M} u(t,x).
\]
Set $\lambda_i := u(t_i,p_i)$, so that $\lambda_i \to \infty$, and define the rescaled functions
\[
u_i(t,x) := \lambda_i^{-1}\, u(t_i+t,x),
\qquad (t,x)\in[-t_i,0]\times M.
\]
Then $u_i(0,p_i)=1$ and $0<u_i\le 1$ on $[-t_i,0]\times M$.

Let $g_i := \lambda_i^{\frac{4}{n-4}} g_0$. We recall the conformal covariance of the Paneitz operator: 
$$P_{\lambda^{\frac{4}{n-4}} g_0 } \phi = \lambda^{-\frac{n+4}{n-4}} P_{g_0} (\lambda \phi ) = \lambda^{-\frac{8}{n-4}} P_{g_0} \phi;$$

$$P_{g_0}^{-1} \phi = \lambda^{-\frac{8}{n-4}} P_{\lambda^{\frac{4}{n-4}} g_0 }^{-1} \phi;$$

\begin{equation*}
\frac{\partial u_{\lambda_i}}{\partial t} = - u_{\lambda_i} + \mu P_{g_i}^{-1} \big( u_{\lambda_i}^{\frac{n+4}{n-4}}\big).
\end{equation*}

With this notation, the rescaled function $u_i$ satisfies
\begin{equation}\label{eq:rescaled_flow}
\frac{\partial u_i}{\partial t}
= -u_i + \mu(t_i+t)\, P_{g_i}^{-1}\!\big(u_i^{\frac{n+4}{n-4}}\big)
\qquad \text{on } [-t_i,0]\times M.
\end{equation}
Moreover, the quantities $\int |u|^{\frac{2n}{n-4}}$, $\int uP_{g_0} u$, and $\mu$
are invariant under the above constant rescaling.

Fix a small geodesic ball $B_r(p) \subset (M,g_0)$ centered at $p$. In geodesic normal coordinates near $p$, under the rescaled metric $g_i = \lambda_i^{\frac{4}{n-4}} g_0$, the ball $B_r(p)$ is mapped to
\[
B_{\lambda_i^{\frac{2}{n-4}} r}(p) \subset (M,g_i).
\]
Letting $i \to \infty$, so that $\lambda_i \to \infty$, we obtain the pointed convergence
\[
\left( B_{\lambda_i^{\frac{2}{n-4}} r}(p), g_i, p \right)
\longrightarrow (\mathbb{R}^n, g_e, 0)
\quad \text{in the pointed } C^\infty \text{ topology}.
\]

We use the uniform bound $\sup_{[-t_i,0]\times M} u_i \le 1$ to derive uniform H\"older estimates on compact subsets of space--time. Fix any $L>0$. For sufficiently large $i$, the function $u_i$ is well defined on the cylinder
\[
[-3L,0] \times B_{3L}(0),
\]
and satisfies
\[
0 \le \frac{\partial}{\partial t} P_{g_i} u_i + P_{g_i} u_i
= \mu\, u_i^{\frac{n+4}{n-4}}
\le \mu .
\]

Integrating the above differential inequality yields
\begin{equation*}
P_{g_i} u_i(t, \cdot) = e^{-t-t_i} P_{g_i} u_i(-t_i, \cdot) +   \int_{-t_i}^t e^{s-t} \mu u_i(s, \cdot)^{\frac{n+4}{n-4}} ds.
\end{equation*}
Since $P_{g_i} u_i ( -t_i, \cdot) \rightarrow 0$ (because the rescaled manifold is expanding), it follows that $P_{g_i} u_i \ge 0$ and uniformly bounded. By standard $L^p$-estimate, this implies
$$\|u_i(t, \cdot)\|_{C^{1, \alpha}(B_{\frac{5}{2} L} (0) )} < C(L).$$

Next, we have  the estimate
\begin{align*}
&\|P_{g_i} u_i(t, \cdot) \|_{C^{1, \alpha}(M)} \\
\le&e^{-t-t_i} \|P_{g_0} u_i(-t_i, \cdot)\|_{C^{1, \alpha}(M)} +   \int_{-t_i}^t e^{s-t} \mu \|u(s, \cdot)^{\frac{n+4}{n-4}}\|_{C^{1, \alpha}(M)} ds.
\end{align*}
Since $u_i$ is uniformly bounded, the inequality above together with interior Schauder estimates (and the fact that $P_{g_i}$ converges to $\Delta_{g_e}^2$ on compact sets) yields
\[
\|u_i(t, \cdot)\|_{C^{5,\alpha}(B_{2L}(0))} \le C(L).
\]

We next estimate $f_i$. Note that $f_i$ scales in the same way as $u_i$. Therefore, by Lemma~\ref{lem:fint}, the quantities
\[
\int_M |f_i|^{\frac{2n}{n-4}} \, \vol_{g_i}
\qquad \text{and} \qquad
\int_M \Big| \frac{\partial f_i}{\partial t} \Big|^{\frac{2n}{n-4}} \, \vol_{g_i}
\]
are uniformly bounded.

The equation 
$$P_{g_i} f_i = - P_{g_i} u_i + \mu u_i^{\frac{n+4}{n-4}},$$ 
and the local $L^p$-Schauder estimate
\begin{align*}
&\|f_i(t, \cdot)\|_{C^{5, \alpha}(B_{L} (0) )} \\
\le& C(L) \bigg( \|f_i(t, \cdot) \|_{L^{\frac{2n}{n-4}}(B_{2L} (0) )}  + \|\big(- P_{g_i} u_i + \mu u_i^{\frac{n+4}{n-4}} \big)(t, \cdot) \|_{C^{1, \alpha}(B_{2L}(0)} \bigg),
\end{align*}
provide that
$$\|f_i(t, \cdot)\|_{C^{5, \alpha}(B_{L} (0) )} < C(L).$$

Now we estimate $\frac{\partial f}{\partial t}$ from the equation
$$P_{g_i} \big( \frac{\partial f}{\partial t} \big)  = - P_{g_i} f + \frac{d\mu}{dt}   u^{\frac{n+4}{n-4}}  +  \mu \frac{(n+4)}{(n-4)}    u ^{\frac{8}{n-4}} f $$
and Lemma \ref{lem:fint}. Using the local $L^p$-Schauder estimate, we prove that 
$$\big\|\frac{\partial f_i}{\partial t}(t, \cdot) \big \|_{C^{4, \alpha}(B_{L/2} (0) )} < C(L).$$

In conclusion, we have shown that there exists a function $w: (-\infty, 0] \times \mathbb{R}^n \rightarrow \mathbb{R}$ such that, after passing to a subsequence,
\[
u_i(t,\cdot) \longrightarrow w(t,\cdot)
\quad \text{in } C^{4,\beta}_{\mathrm{loc}}\big((-\infty,0]\times\mathbb{R}^n\big),
\]
and
\[
\partial_t u_i(t,\cdot) \longrightarrow \partial_t w(t,\cdot)
\quad \text{in } C^{4,\beta}_{\mathrm{loc}}\big((-\infty,0]\times\mathbb{R}^n\big),
\]
for some $0<\beta < \alpha<1$.

Moreover, the limit function $w$ satisfies
\[
\frac{\partial}{\partial t} (\Delta^2 w)
= - \Delta^2 w + \mu_\infty \, w^{\frac{n+4}{n-4}},
\qquad
\mu_\infty := \lim_{t\to\infty} \mu(t).
\]
On the other hand, by Fatou’s lemma and Lemma~\ref{lem:flim},
\[
\int_{\mathbb{R}^n} \Big| \frac{\partial w}{\partial t} \Big|^{\frac{2n}{n-4}} \, dx
\le \lim_{t\to\infty} \int_M \Big| \frac{\partial u}{\partial t} \Big|^{\frac{2n}{n-4}} \, \vol
= 0 .
\]
Therefore $\frac{\partial w}{\partial t} \equiv 0$, and $w$ is in fact a stationary solution. Consequently,
\begin{equation}\label{eq:blowup}
\Delta^2 w = \mu_\infty \, w^{\frac{n+4}{n-4}}
\qquad \text{on } \mathbb{R}^n .
\end{equation}

Note that $w(0) = 1$ and $w \ge 0$. Moreover, since the positivity of the scalar curvature is preserved along the flow, it follows that $\Delta w \le 0$. By the strong maximum principle, we conclude that $w > 0$ everywhere.

By the Liouville theorem \cite{MR1679783} applied to equation~\eqref{eq:blowup}, the limit profile $w$ must be a standard bubble of the form
\[
w(x) = C \left( \frac{\lambda}{1 + \lambda^2 |x|^2} \right)^{\frac{n-4}{2}} .
\]
Let $\mathcal{F}_{g_e}$ denote the Paneitz--Sobolev quotient with respect to the Euclidean metric. Then
\[
\mathcal{F}_{g_e}[w]
= \mu_\infty \left( \int_{\mathbb{R}^n} |w|^{\frac{2n}{n-4}} \, dx \right)^{\frac{4}{n}}
\le \mu_\infty V_\infty^{\frac{4}{n}}
= \mathcal{F}_\infty
\le \mathcal{F}(0)
< Y_4(S^n),
\]
where the inequality follows from Fatou’s lemma.

This contradicts the sharp Sobolev inequality on $\mathbb{R}^n$, which implies that any standard bubble achieves the optimal constant $Y_4(S^n)$. Therefore, the assumption that $u$ is unbounded from above is false, and $u$ must be uniformly bounded above.

\end{proof}

\begin{proposition}[\loja--Simon inequality]\label{lem:LSineq}
Under assumption of $C^0$ upper bound, and suppose we have $\mathcal{F}_\infty = \mathcal{F}(w)$ for some limiting function $w$. If $\|u(t) - w\|_{C^{4,\alpha}} < \delta$, for sufficiently small $\delta$, we have 
$$ \big\|\frac{\partial u}{\partial t} \big\|_{W^{2,2}(M)} \ge C |\mathcal{F}(u(t)) -\mathcal{F}_\infty |^{1-\theta}$$
for some $\theta \in (0, \frac{1}{2})$.
\end{proposition}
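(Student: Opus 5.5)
We use the classical Łojasiewicz--Simon inequality of Simon \cite{MR727703} for analytic functionals near a critical point, adapted to the functional $\cf$ on a $C^{4,\alpha}$ neighbourhood of the positive limit $w$. By Proposition~\ref{prop:C0impl}, the $C^0$ upper bound gives uniform $C^{5,\alpha}$ bounds and a uniform positive lower bound on $u$. Hence $w$ is smooth and positive, and it solves $\pa w=\mu_\infty w^{\frac{n+4}{n-4}}$, so $w$ is a critical point of $\cf$. For $\|u-w\|_{C^{4,\alpha}}<\delta$ with $\delta$ small, we have $u\ge \frac12\inf w>0$. On this open set of $C^{4,\alpha}(M)$ the map $u\mapsto u^{\frac{2n}{n-4}}$ is real analytic, and therefore $\cf$ is a real analytic functional there.

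The first step is to record the gradient. In the $W^{2,2}$ inner product $\langle \phi,\psi\rangle=\int\phi\pa\psi$, the $L^2$-gradient of $\cf$ is
\[
M(u)=\frac{2}{V^{\frac{n-4}{n}}}\Big(\pa u-\mu[u]u^{\frac{n+4}{n-4}}\Big), \qquad V=\int u^{\frac{2n}{n-4}}.
\]
By the key identity $\pa f=-\pa u+\mu u^{\frac{n+4}{n-4}}$, this equals $-2V^{-\frac{n-4}{n}}\pa f$. The linearization of $M$ at $w$ is $L=2V_w^{-\frac{n-4}{n}}\big(\pa-\tfrac{n+4}{n-4}\mu_\infty w^{\frac{8}{n-4}}\big)+(\text{rank-one terms})$. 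This is a compact perturbation of the positive self-adjoint elliptic operator $\pa$, so $L$ is Fredholm of index zero from $C^{4,\alpha}$ to $C^{0,\alpha}$, and also from $W^{2,2}$ to $W^{-2,2}$. Its kernel is finite dimensional. These are exactly the hypotheses of Simon's theorem: analytic energy, elliptic Fredholm linearization, and a gradient map that is analytic between the appropriate spaces.

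Applying Simon's Lyapunov--Schmidt reduction to the finite-dimensional kernel of $L$, together with the classical finite-dimensional Łojasiewicz inequality for the reduced analytic function, yields $\theta\in(0,\frac12)$ and $\delta>0$ such that
\[
\|M(u)\|_{W^{-2,2}}\ge c\,|\cf(u)-\cf(w)|^{1-\theta}\qquad\text{whenever }\|u-w\|_{C^{4,\alpha}}<\delta .
\]
Since $\cf(w)=\cf_\infty$ by assumption, it remains to compare the two norms. The $W^{-2,2}$ norm dual to our $W^{2,2}$ norm satisfies
\[
\|\pa f\|_{W^{-2,2}}=\sup_{\|\phi\|_{W^{2,2}}\le1}\int\phi\,\pa f=\|f\|_{W^{2,2}} .
\]
Also, $V$ is bounded above by Lemma~\ref{volmon} and bounded below since the volume is nondecreasing. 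Therefore $\|M(u)\|_{W^{-2,2}}\le C\|\partial_t u\|_{W^{2,2}}$, which gives the claim after adjusting constants.

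The main obstacle is verifying the functional-analytic setup of Simon's theorem, which is usually stated with the $L^2$ norm of the gradient on the left side. There are two routes. One can work directly in the Hilbert-space form (Chill's version): $\cf$ is analytic on a $W^{2,2}$-neighbourhood intersected with the $C^{4,\alpha}$-ball, and $L:W^{2,2}\to W^{-2,2}$ is Fredholm. Alternatively, one first obtains the inequality with $\|M(u)\|_{L^2}$ and then upgrades it. The upgrade uses the uniform $C^{5,\alpha}$ bounds and the splitting of $u-w$ into kernel and range components. On the range component $L$ is invertible, so the $L^2$ and $W^{-2,2}$ norms of $M(u)$ are comparable up to terms controlled by the reduced finite-dimensional function. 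I would follow the Hilbert-space route, checking the analyticity of $u\mapsto u^{\frac{n+4}{n-4}}$ as a map $W^{2,2}\cap\{\|u-w\|_{C^{4,\alpha}}<\delta\}\to W^{-2,2}$ using the positive lower bound on $u$.
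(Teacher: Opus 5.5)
Your proposal is correct and is essentially the paper's own argument: the paper likewise only notes that $\cf$ is real analytic on a H\"older neighbourhood where $u$ is bounded below and appeals to Simon's theorem, omitting the details. Your identification of the gradient as $-2V^{-\frac{n-4}{n}}\pa f$, together with $\|\pa f\|_{W^{-2,2}}=\|f\|_{W^{2,2}}$, supplies a step the paper leaves implicit.

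To make the $W^{-2,2}$ version rigorous, use analyticity only for the map $C^{4,\alpha}\to C^{0,\alpha}$. This gives the analytic local inverse $\Psi$ in the Lyapunov--Schmidt reduction, and hence an analytic reduced function $\xi\mapsto\cf(\Psi(\xi))$ on $K=\ker L$. Then obtain $\|u-\Psi(\Pi_K(u-w))\|_{W^{2,2}}\le C\|M(u)\|_{W^{-2,2}}$, with $\Pi_K$ the $L^2$-projection onto $K$, from the uniform invertibility of $L+\Pi_K$ and its $C^0$-small perturbations as maps $W^{2,2}\to W^{-2,2}$. Neither of your two shortcuts works as literally stated. $\cf$ is not analytic with respect to the $W^{2,2}$ norm, because of the critical nonlinearity. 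And the $L^2$ and $W^{-2,2}$ norms of the range component of $M(u)$ are not comparable: they behave like $\|u-v\|_{W^{4,2}}$ and $\|u-v\|_{W^{2,2}}$ respectively.
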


\begin{proof} 
Given that the flow stays inside the open subset of H\"older space:
$$\Omega := \{ v \in C^{5, \alpha(n)}(M) \mid \|v\|_{C^{5, \alpha(n)}(M)} < C_0, \inf v > 1/C_0 \},$$
the energy, or volume-normalized total $Q$-curvature is an analytic functional on this domain $\Omega$. Simon's general theorem on the uniqueness of the limit of the flow should imply that our flow converges to a unique limit. Since a  Łojasiewicz-Simon inequality for more general cases will be proven in the later sections, we omit the proof.

\end{proof}
Given that the \loja--Simon inequality holds, it is standard to prove that the flow converges globally.

\section{Variational theories for $Q$-curvature and Weyl tensor}

In this section, we compute the first and the second variations of the
$Q$-curvature, and also establish a fourth-order Bochner identity, which
plays a crucial role in the stability inequality in the next section.

\subsection{Deformations of $Q$-curvature}

\begin{definition}
For any geometric quantity or operator $F_g$ (such as the Ricci curvature,
volume density, Paneitz operator, etc.) depending on the metric $g$, and for
$h \in S_{2}(M)$, we define
\begin{align*}
DF_g[h] &= \frac{d}{dt}\bigg|_{t=0} F_{g+th}, \\
D^2 F_g[h,h] &= \frac{d^2}{dt^2}\bigg|_{t=0} F_{g+th}.
\end{align*}
For $h_1, h_2 \in S_2(M)$, we further define
\[
D^2 F_g[h_1,h_2]
= \frac{1}{2}\Big(
D^2 F_g[h_1+h_2,h_1+h_2]
- D^2 F_g[h_1,h_1]
- D^2 F_g[h_2,h_2]
\Big).
\]
\end{definition}

According to \cite{MR380907}, also see, \cite{MR3521090}, we have the following lemma. 
\begin{lemma}\label{lem:einstein R}
Let $h\in S_2(M)$, then
$$
DR_g[h]=\ddiv^2 h-\Delta(\tr h)-\< h,\ric\>.
$$
Moreover, if $h\in S_{2,g}^{T,T}(M)$, we have
\begin{align*}
D\ric_g[h]=&-\frac{1}{2}\Delta_Lh,\\
D^2R_g[h,h]=&-2DR_g[h \times h]-\Delta(|h|^2)-\frac{1}{2}|\nabla h|^2+\nabla_{i}h_{jk}\nabla^jh^{ik},
\end{align*}
\end{lemma}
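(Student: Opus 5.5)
The first formula is the classical first variation of scalar curvature, and I would rederive it rather than just cite \cite{MR380907}. Write $g_t=g+th$. The variation of the Christoffel symbols is
\[
D\Gamma^k_{ij}[h]=\tfrac12 g^{kl}\big(\nabla_i h_{jl}+\nabla_j h_{il}-\nabla_l h_{ij}\big),
\]
which is a tensor. The Palatini identity then gives
\[
D\ric_{ij}[h]=\nabla_k D\Gamma^k_{ij}-\nabla_j D\Gamma^k_{ik}.
\]
Expanding, and commuting covariant derivatives (this is where the curvature terms appear), gives the standard identity
\[
D\ric[h]=-\tfrac12\Delta_L h+\tfrac12\big(\nabla_i(\ddiv h)_j+\nabla_j(\ddiv h)_i\big)-\tfrac12\nabla^2(\tr h).
\]
Since $R=g^{ij}\ric_{ij}$ and $D g^{ij}=-h^{ij}$, we get $DR[h]=-\<h,\ric\>+g^{ij}D\ric_{ij}[h]$. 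Tracing the identity above, and noting that the curvature terms of $\Delta_L$ cancel in the trace, yields $\ddiv^2 h-\Delta\tr h-\<h,\ric\>$. If $h\in S^{T,T}_{2,g}(M)$, the last two terms of the $D\ric$ identity vanish, which gives $D\ric[h]=-\frac12\Delta_L h$. No Einstein assumption is needed for these first two formulas.

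For the second variation I differentiate $R_{g_t}=g_t^{ij}\ric_{ij}(g_t)$ twice. Using $D^2 g^{ij}[h,h]=2(h\times h)^{ij}$, this gives
\[
D^2R[h,h]=2\<h\times h,\ric\>-2\<h,D\ric[h]\>+g^{ij}D^2\ric_{ij}[h,h].
\]
For the last term I take the second derivative of the Palatini-type expression. Since $\ric$ is $\partial\Gamma-\partial\Gamma+\Gamma\Gamma-\Gamma\Gamma$, one obtains
\[
D^2\ric_{ij}=\nabla_k D^2\Gamma^k_{ij}-\nabla_j D^2\Gamma^k_{ik}+2\big(D\Gamma^k_{kl}D\Gamma^l_{ij}-D\Gamma^k_{jl}D\Gamma^l_{ik}\big).
\]
Here $D^2\Gamma^k_{ij}=-2h^{kl}\,(D\Gamma)_{lij}$ with the index lowered, because $D\Gamma$ is linear in $h$ apart from the factor $g^{-1}$.

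Next I trace and impose $\ddiv h=0$ and $\tr h=0$. Then $D\Gamma^k_{kl}=\frac12\nabla_l\tr h=0$, so the first quadratic term drops. The second quadratic term gives
\[
-\tfrac12\big(\nabla_j h_{il}+\nabla_l h_{ij}-\nabla_i h_{jl}\big)\big(\nabla^i h^{jl}+\ldots\big),
\]
which after expansion reduces to $-\frac12|\nabla h|^2+\nabla_i h_{jk}\nabla^j h^{ik}$ up to a multiple of these two invariants; I will fix the coefficients by direct bookkeeping. The divergence terms $\nabla^i\nabla_k(h^{kl}(D\Gamma)_{lij})$ and similar produce second-order expressions. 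I will regroup them as $-2\,\ddiv^2(h\times h)+\Delta|h|^2$ plus terms of the form $\<h,\nabla^2 h\>$.

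The final step is to recognize $-2DR[h\times h]$. Applying the first formula to $h\times h$ gives
\[
-2DR[h\times h]=-2\ddiv^2(h\times h)+2\Delta|h|^2+2\<h\times h,\ric\>.
\]
This term absorbs the $\<h\times h,\ric\>$ contributions. Meanwhile, $-2\<h,D\ric[h]\>=\<h,\Delta_L h\>$ combines with the leftover $\<h,\nabla^2h\>$ terms and the Riemann term. The identity $\Delta|h|^2=2\<h,\Delta h\>+2|\nabla h|^2$, together with commutation of derivatives on $\nabla_i h_{jk}\nabla^jh^{ik}$, should close the computation to $-\Delta|h|^2-\frac12|\nabla h|^2+\nabla_ih_{jk}\nabla^jh^{ik}$. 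The main obstacle is precisely this bookkeeping: tracking the Riemann and Ricci terms produced by commuting derivatives so that they cancel exactly against $\<h,\Delta_Lh\>$ and $2\<h\times h,\ric\>$. I would organize the computation in normal coordinates at a point and check the coefficients on the round sphere or flat space as a sanity test.
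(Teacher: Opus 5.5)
Your derivation of the first variation and of $D\ric[h]=-\frac12\Delta_Lh$ for $h\in S^{T,T}_{2,g}(M)$ is correct. The paper gives no proof of this lemma; it quotes it from Fischer--Marsden.

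The gap is in the second-variation formula. Your set-up is right: $D^2g^{ij}=2(h\times h)^{ij}$, the second-order Palatini formula, and $D^2\Gamma^k_{ij}=-2h^{kl}(D\Gamma)_{lij}$. But you never carry out the bookkeeping, and the way you expect the terms to fall out is wrong. So your step `recognize $-2DR[h\times h]$' is not actually supported.

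Done correctly for $h\in S^{T,T}_{2,g}(M)$ (no Einstein condition is needed), the pieces are as follows.

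\textbf{Divergence terms.} We have $g^{ij}D^2\Gamma^k_{ij}=-h^{kl}(2(\ddiv h)_l-\nabla_l\tr h)=0$ and $D^2\Gamma^k_{ik}=-\frac12\nabla_i|h|^2$. So these terms contribute only $\frac12\Delta|h|^2$. No $\ddiv^2(h\times h)$ and no leftover $\langle h,\nabla^2h\rangle$ terms appear.

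\textbf{Quadratic term.} The $\Gamma\Gamma$ term is $-2g^{ij}D\Gamma^k_{jl}D\Gamma^l_{ik}=\frac12|\nabla h|^2-\nabla_ih_{jk}\nabla^jh^{ik}$. This has the opposite sign of what you anticipated.

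\textbf{Ricci terms.} We have $2\langle h\times h,\ric\rangle-2\langle h,D\ric[h]\rangle=\langle h,\Delta h\rangle+2\rm_{iklj}h^{kl}h^{ij}$. So the Ricci terms already cancel at this stage; $-2DR[h\times h]$ does not absorb them, it introduces a new one.

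Altogether,
\[
D^2R_g[h,h]=\Delta|h|^2-\tfrac12|\nabla h|^2-\nabla_ih_{jk}\nabla^jh^{ik}+2\rm_{iklj}h^{kl}h^{ij}.
\]

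The missing idea is the identity that converts this into the stated form. By the first formula, $-2DR[h\times h]=-2\ddiv^2(h\times h)+2\Delta|h|^2+2\langle\ric,h\times h\rangle$. For divergence-free $h$, in an orthonormal frame, $\nabla_j\nabla_ih_{ik}=\nabla_j(\ddiv h)_k=0$. Hence
\[
\nabla_i\nabla_jh_{ik}=[\nabla_i,\nabla_j]h_{ik}=(\ric\times h)_{jk}-\rm_{jabk}h_{ab},
\]
and therefore
\[
\ddiv^2(h\times h)=\langle\ric,h\times h\rangle-\rm_{iklj}h^{kl}h^{ij}+\nabla_ih_{jk}\nabla^jh^{ik}.
\]
Substituting this shows that the right-hand side of the lemma equals the first display above.

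So the commutation of derivatives has to be done inside $\ddiv^2(h\times h)$, not on $\nabla_ih_{jk}\nabla^jh^{ik}$. This identity does three things: it cancels the Ricci term introduced by $-2DR[h\times h]$, it supplies the Riemann term that matches the one from $\Delta_L$, and it flips the sign of the $\nabla h*\nabla h$ terms. With it your approach closes; without it, the third formula is asserted rather than proved.
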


If there is no ambiguity, we will simply write $D\ric$, $D^2\ric$, $DR$, $D^2R$ without $g$ and $h$.

According to \cite{gong2025compactnessnoncompactnesstheoremsfourth}, we have the following lemma.
\begin{lemma}\label{lem:expansion of P}
Let $\phi $ be a function and $\tr h=0$. There exist positive constants $C$ depending only on $n$, such that
\begin{align*}
(P_{\exp(h)}-\Delta^2)\phi\leq& C\Bigg(\sum_{\substack{0 \le \alpha,\beta \le 4 \\ \alpha+\beta=4}} |\partial^{\alpha}h| |\partial^{\beta}\phi|\Bigg),\\
(P_{\exp(h)}-\Delta^2-DP_{g_e}[h]) \phi\leq& C\Bigg(\sum_{\substack{0 \le \alpha_1,\alpha_2,\beta \le 4 \\ \alpha_1+\alpha_2+\beta=4}} |\partial^{\alpha_1}h||\partial^{\alpha_2}h| |\partial^{\beta}\phi|\Bigg),
\end{align*}
and
\begin{align*}
&\bigg( P_{\exp(h)} - \Delta^2 - DP_{g_e}[h] - \frac{1}{2} D^2P_{g_e} [h, h]  
-  \frac{1}{2} DP_{g_e} [h\times h]\bigg)\phi\\
\leq &C\Bigg(\sum_{\substack{0 \le \alpha_1,\alpha_2,\alpha_3,\beta \le 4 \\ \alpha_1+\alpha_2+\alpha_3+\beta=4}} |\partial^{\alpha_1}h||\partial^{\alpha_2}h||\partial^{\alpha_3}h| |\partial^{\beta}\phi|\Bigg),
\end{align*}
where the linearization of Paneitz operator at the Euclidean metric is
\begin{align*}
DP_{g_e}[h] \phi&=-\Delta(\partial_i(h_{ij}\partial_j \phi))-\partial_i(h_{ij}\partial_j \Delta \phi)+\frac{4}{n-2}D\ric[h]_{ij}\partial^2_{ij}\phi\\
&-\alpha_n^1DR[h]\Delta \phi- \alpha_n^2\partial_k(DR[h])\partial_k \phi-\frac{n-4}{4(n-1)}\Delta (DR[h]) \phi,
\end{align*}
Here, $\alpha_n^1 = \frac{4+(n-2)^2}{2(n-2)(n-1)}$ and $\alpha_n^2 = \frac{n-6}{2(n-1)}$.
\end{lemma}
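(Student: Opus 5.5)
The proof is a Taylor expansion of $P_{\exp(h)}\phi$ in $h$, written in the coordinates in which $h$ is given and organized by total derivative count. We assume throughout that $|h|\le 1$, which is the regime in which the lemma is applied (conformal normal coordinates on a small ball), so that every analytic function of $h$ that appears is bounded by a constant $C(n)$.

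\textbf{Step 1: structure of $P_{\exp(h)}\phi$ in coordinates.} Put $g=\exp(h)$. The condition $\tr h=0$ gives $\det g=e^{\tr h}=1$, so $\vol_g$ is Lebesgue measure. Hence $\ddiv_g Y=\partial_i Y^i$ for any vector field $Y$, and $\Delta_g\phi=\partial_i(g^{ij}\partial_j\phi)$. Both $g_{ij}$ and $g^{ij}=\exp(-h)_{ij}$ are entire functions of $h$. The Christoffel symbols are bilinear in $g^{-1}$ and $\partial h$ up to analytic factors. Consequently $R_g$, $\ric_g$ and $Q_g$ are finite sums of terms of the form (analytic function of $h$) $\times\ \partial^{\alpha_1}h\cdots\partial^{\alpha_k}h$, with $\sum\alpha_j$ equal to $2$ for $R_g,\ric_g$ and to $4$ for $Q_g$. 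Every curvature term contains at least one derivative of $h$.

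Substituting into
\[
P_g\phi=\Delta_g^2\phi-\ddiv_g\big[(\alpha_n^1R_g g-\tfrac{4}{n-2}\ric_g)d\phi\big]+\tfrac{n-4}{2}Q_g\phi,
\]
we see that $P_g\phi$ is a finite sum of monomials $F(h)\,\partial^{\alpha_1}h\cdots\partial^{\alpha_k}h\,\partial^\beta\phi$ with $\sum\alpha_j+\beta=4$ and $\beta\ge 0$. Here we use $\Delta_g^2\phi=\partial_i\big(g^{ij}\partial_j\partial_k(g^{kl}\partial_l\phi)\big)$ and the homogeneity of $P$ of weight $4$ under dilations.

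\textbf{Step 2: Taylor expansion of the coefficients.} We Taylor-expand each coefficient $F(h)$ at $h=0$, using
\[
\exp(th)=g_e+th+\tfrac{t^2}{2}h\times h+O(t^3).
\]
This gives the successive expansions in the lemma.
\begin{itemize}
\item The zeroth-order term is $\Delta^2\phi$.
\item The first-order term in $t$ is $DP_{g_e}[h]$.
\item The second-order term, after the chain rule, is $\tfrac12 D^2P_{g_e}[h,h]+\tfrac12 DP_{g_e}[h\times h]$. The summand $DP_{g_e}[h\times h]$ arises because the second-order part of the metric perturbation is $\tfrac12 h\times h$.
\end{itemize}
Each remainder is a sum of monomials with at least $1$, $2$ or $3$ factors of $h$, respectively, and total derivative order $4$. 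Since $|h|\le 1$, factors of $h$ beyond the required number can be absorbed into $C$. This yields the three displayed inequalities.

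\textbf{Step 3: the explicit formula for $DP_{g_e}[h]$.}
\begin{itemize}
\item Since $\det g\equiv 1$, we have $D\Delta[h]\phi=-\partial_i(h_{ij}\partial_j\phi)$. Then $D(\Delta^2)=D\Delta\circ\Delta+\Delta\circ D\Delta$, which gives the first two terms of the formula.
\item All curvatures vanish at $g_e$, so in the divergence term only the variations of the curvatures survive. This produces
\[
-\alpha_n^1\big(DR\,\Delta\phi+\partial_kDR\,\partial_k\phi\big)+\tfrac{4}{n-2}\big(D\ric_{ij}\partial^2_{ij}\phi+\partial_iD\ric_{ij}\partial_j\phi\big).
\]
\item The linearized contracted Bianchi identity $\partial_iD\ric_{ij}=\tfrac12\partial_jDR$ turns the first-order terms into the coefficient
\[
-\alpha_n^1+\tfrac{2}{n-2}=-\frac{(n-2)^2+4-4(n-1)}{2(n-1)(n-2)}=-\frac{n-6}{2(n-1)}=-\alpha_n^2 .
\]
\item Finally, since $\ric_{g_e}=0$, we get $DQ=\alpha_n\Delta DR$. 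Multiplying by $\tfrac{n-4}{2}$ gives the last term $-\tfrac{n-4}{4(n-1)}\Delta(DR)\phi$.
\end{itemize}

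\textbf{Main obstacle.} The only delicate point is the bookkeeping in Step 2. One must check that the second-order Taylor coefficient is exactly $\tfrac12 D^2P[h,h]+\tfrac12 DP[h\times h]$, given that the definition of $D^2$ uses the linear path $g+th$ rather than $\exp(th)$. One must also check that every remainder term genuinely carries the stated number of $h$-factors with total derivative weight $4$. Both follow from the weight-$4$ homogeneity and the analyticity of the coefficients in $h$.
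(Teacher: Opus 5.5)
The paper does not prove this lemma; it imports it from Gong--Kim--Wei, so your argument has to stand on its own. Your Steps~1 and~3 are fine, including the Bianchi simplification to $-\alpha_n^2$ and the term $-\tfrac{n-4}{4(n-1)}\Delta(DR)\phi$. Your identification of the second Taylor coefficient along $\exp(th)$ as $\tfrac12 D^2P_{g_e}[h,h]+\tfrac12 DP_{g_e}[h\times h]$ is also correct.

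\textbf{The gap.} It is in the last sentence of Step~2. The bound $|h|\le 1$ only lets you absorb surplus \emph{undifferentiated} factors of $h$. The remainders also contain monomials with more \emph{differentiated} factors than the right-hand sides allow. One example is the $(\partial h)^4\phi$ part of $\tfrac{n-4}{2}\beta_n|\ric_g|^2\phi$, since $\ric_g$ contains terms quadratic in $\partial h$. Others are the $(\partial h)^2\partial^2\phi$ and $(\partial h)^3\partial\phi$ terms from $\Delta_g^2\phi$ and the divergence term. Both sides have the same total derivative weight $4$, so a term like $|\partial h|^4|\phi|$ is not dominated by any sum of the stated form, and no smallness of $h$ helps.

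\textbf{The statement fails as written.} Take $h=x_1B$ with $B=\mathrm{diag}(0,b,-b,0,\dots,0)$. Then $\tr h=0$ and $\exp(h)=dx_1^2+e^{bx_1}dx_2^2+e^{-bx_1}dx_3^2+\sum_{i\ge4}dx_i^2$ is $\mathrm{Sol}^3\times\mathbb{R}^{n-3}$. It has $R_g=-b^2/2$ and $|\ric_g|^2=b^4/4$, so
\[
Q_g=\tfrac{b^4}{4}(\beta_n+\gamma_n)=\frac{b^4\,(n^3-20n^2+48n-32)}{32(n-1)^2(n-2)^2}\neq 0,
\]
since the cubic has no integer roots. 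Now take $\phi\equiv\pm1$.
\begin{enumerate}
\item Every right-hand side vanishes identically. Indeed $\partial^\beta\phi=0$ for $\beta\ge1$, every $\beta=0$ term needs some $\alpha_i\ge2$, and $\partial^\alpha h=0$ for $\alpha\ge2$.
\item $\Delta^2\phi=0$.
\item $DP_{g_e}[h]\phi$, $D^2P_{g_e}[h,h]\phi$ and $DP_{g_e}[h\times h]\phi$ reduce to $\pm\tfrac{n-4}{2}$ times $DQ_{g_e}[h]$, $D^2Q_{g_e}[h,h]$ and $DQ_{g_e}[h\times h]$. These are constant-coefficient expressions of total weight $4$ in the linear $h$, respectively in the quadratic polynomial $h\times h$, hence zero.
\end{enumerate}
So all three left-hand sides equal $\pm\tfrac{n-4}{2}Q_g\neq0$ for every $b\neq0$, however small.

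\textbf{What you can prove instead.} Your Taylor argument really yields the version whose right-hand sides run over all products $|\partial^{\alpha_1}h|\cdots|\partial^{\alpha_j}h|\,|\partial^\beta\phi|$ with $\sum_i\alpha_i+\beta=4$. Here $j\ge1$, $j\ge2$ and $j\ge3$ respectively, and you keep $|h|\le1$ to bound the analytic coefficients. State and prove that version. It suffices for every use in the paper: there $h=O(|x|^2)$ in conformal normal coordinates with $|\partial^\alpha h|\le C|x|^{2-\alpha}$, so at fixed total weight each surplus factor costs only an extra $|x|^2$.
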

\noindent\textbf{Remark.}
The explicit formula of $D^2P_{g_e}[h, h] $ can be seen in \cite{gong2025compactnessnoncompactnesstheoremsfourth}. However, it is not needed in our proof. One of our main contributions is to shorten the computations of the second variation using $D^2Q_{g_c}$ instead of $D^2P_{g_e}$. 

\begin{lemma}\label{lem:einstein Q}
Let $g$ be Einstein such that $\ric_g=\lambda g$. Let $c(n)=\frac{2}{\alpha_n}(\beta_n+n\gamma_n)=-\frac{n^2-4}{2(n-1)}$. Then,
\begin{equation}\label{eq:DQ einstein}
DQ_g[h]=\alpha_n(\Delta+c(n)\lambda)DR_g[h].  
\end{equation}
Moreover, if $h\in S^{T,T}_{2,g}(M)$, then linearization vanishes
$$
DQ_g[h]=DR_g[h]=0,
$$
and the second variation is
\begin{equation}\label{eq:D2Q einstein}
\begin{aligned}
D^2Q_g[h,h]=&\alpha_n(\Delta+c(n)\lambda)D^2R\\
+&2\beta_n(\lambda^2|h|^2-2\lambda\< D\ric, h\>+|D\ric|^2).
\end{aligned}
\end{equation}
\end{lemma}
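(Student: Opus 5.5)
We expand $Q_g=\alpha_n\Delta_gR_g+\beta_n|\ric_g|^2_g+\gamma_nR_g^2$ along the path $g_t=g+th$ and collect terms of each order in $t$. At $t=0$ we have $\ric=\lambda g$ and $R=n\lambda$, which are constant, so $\nabla R=0$ and $\Delta R=0$.

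\emph{First variation.} Differentiating the Laplacian term gives
\[
D(\Delta_gR_g)[h]=(D\Delta[h])R+\Delta DR=\Delta DR,
\]
since $D\Delta[h]$ is a differential operator with no zeroth-order part and $R$ is constant. For the Ricci term, write $|\ric|^2_g=g^{ia}g^{jb}R_{ij}R_{ab}$. Then
\[
D|\ric|^2=-2\<h\times\ric,\ric\>+2\<D\ric,\ric\>=-2\lambda^2\tr h+2\lambda\tr(D\ric).
\]
Next, $DR=D(g^{ij}R_{ij})=-\<h,\ric\>+\tr D\ric$, so $\tr D\ric=DR+\lambda\tr h$. Substituting gives $D|\ric|^2=2\lambda DR$. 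Also $D(R^2)=2n\lambda DR$. Adding the three pieces,
\[
DQ=\alpha_n\Delta DR+2\lambda(\beta_n+n\gamma_n)DR=\alpha_n\bigl(\Delta+c(n)\lambda\bigr)DR,
\]
which is \eqref{eq:DQ einstein}. If $h\in S^{T,T}_{2,g}(M)$, Lemma \ref{lem:einstein R} gives $DR=\ddiv^2h-\Delta\tr h-\lambda\tr h=0$, and therefore $DQ=0$.

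\emph{Second variation for TT directions.} Differentiate $\Delta_{g_t}R_{g_t}$ twice:
\[
D^2(\Delta R)=(D^2\Delta[h,h])R+2(D\Delta[h])DR+\Delta D^2R=\Delta D^2R,
\]
because $R$ is constant and $DR=0$. For $R^2$,
\[
D^2(R^2)=2(DR)^2+2RD^2R=2n\lambda D^2R.
\]
The main work is $D^2|\ric|^2$. Write $|\ric|^2=\<g^{-1}\ric g^{-1},\ric\>$, use $Dg^{-1}=-g^{-1}hg^{-1}$ and $D^2g^{-1}=2g^{-1}hg^{-1}hg^{-1}$, and expand the second derivative of $g^{ia}g^{jb}R_{ij}R_{ab}$. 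This produces four kinds of terms.
\begin{enumerate}[label=(\roman*)]
\item Terms with both derivatives on inverse metrics: $2\cdot 2\lambda^2|h|^2$ from the $D^2g^{-1}$ factors, and $2\lambda^2|h|^2$ from the cross term $Dg^{-1}\cdot Dg^{-1}$.
\item Mixed terms $Dg^{-1}\cdot D\ric$: these give $-8\lambda\<h,D\ric\>$.
\item Terms with both derivatives on Ricci: $2|D\ric|^2+2\<D^2\ric,\ric\>=2|D\ric|^2+2\lambda\tr D^2\ric$.
\item The trace term $\tr D^2\ric$, computed by differentiating $DR=-\<h,\ric\>+\tr D\ric$ once more:
\[
D^2R=2\<h\times h,\ric\>-2\<h,D\ric\>+\tr D^2\ric .
\]
Hence $\tr D^2\ric=D^2R-2\lambda|h|^2+2\<h,D\ric\>$.
\end{enumerate}
Collecting these, the $\lambda D^2R$ term comes out as $2\lambda D^2R$, and the pure-curvature terms come out as $2(\lambda^2|h|^2-2\lambda\<D\ric,h\>+|D\ric|^2)$. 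Combined with the Laplacian and $R^2$ pieces, this gives
\[
D^2Q=\alpha_n\Delta D^2R+2\lambda(\beta_n+n\gamma_n)D^2R+2\beta_n\bigl(\lambda^2|h|^2-2\lambda\<D\ric,h\>+|D\ric|^2\bigr),
\]
which is \eqref{eq:D2Q einstein}.

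The main obstacle is the bookkeeping of numerical coefficients in step (iii)–(iv), i.e. checking that the $\lambda^2|h|^2$ and $\lambda\<h,D\ric\>$ coefficients collapse exactly to $2(\lambda^2|h|^2-2\lambda\<D\ric,h\>)$. I would verify this with $\tr h=0$ and $h\times h$ treated as a general symmetric tensor. As a cross-check, take $h=g$ formally: then $D\ric=0$ by scale invariance, and the resulting identity should balance.
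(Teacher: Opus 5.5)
Your proof is correct and is essentially the paper's argument. The paper quotes Lin--Yuan's general formulas for $DQ_g$ and $D^2Q_g$ and specializes them using $\ric=\lambda g$, $DR=0$ and the trace identities for $DR$ and $D^2R$. You instead obtain the same specialization directly from the product rule, which makes the proof self-contained; your expansion $D^2|\ric|^2=6\lambda^2|h|^2-8\lambda\<h,D\ric\>+2|D\ric|^2+2\lambda\,\tr D^2\ric$ is exactly what Lin--Yuan's $\beta_n$-bracket reduces to at an Einstein metric.

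The bookkeeping you flag does close. Substituting $\tr D^2\ric=D^2R-2\lambda|h|^2+2\<h,D\ric\>$ turns $6\lambda^2|h|^2$ into $2\lambda^2|h|^2$ and $-8\lambda\<h,D\ric\>$ into $-4\lambda\<h,D\ric\>$, as required. The only step you leave unchecked is the arithmetic $\beta_n+n\gamma_n=\frac{n^2-4}{8(n-1)^2}$ behind the closed form of $c(n)$.
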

\begin{proof}
Using $R_g=\tr_g\ric_g$, there are
\begin{align}
DR&=\tr D\ric-\<\ric,h\> , \label{eq:DR, DRic}\\
D^2R&=\tr D^2\ric-2\< D\ric,h\>+2\< \ric,h\times h\>. \label{eq:D2R D2Ric} 
\end{align}

According to \cite[Proposition 3.3]{MR3529121}, the linearization of $Q$-curvature is
\begin{align*}
DQ_g[h]&=\alpha_n\left(\Delta DR-\< h, \nabla^2 R \>-\< B[h], dR\> \right)
\\&+2\beta_n\< \ric, D\ric-\ric\times h\>+2 \gamma_n  R DR,
\end{align*}
where $B[h]_{i}:= \ddiv_gh-\frac{1}{2}d\tr_gh$. Combining with $\ric=\lambda g$ and \eqref{eq:DR, DRic}, we can obtain \eqref{eq:DQ einstein}.

According to \cite[Lemma 5.2]{MR3529121}, the second variation of $Q$-curvature is
\begin{align*}
D^2Q[h,h]&=\alpha_n\bigg[\Delta D^2R-2\< h, \nabla^2 DR \>-2\< B[h], d(DR)\> +2\< h\times h,\nabla^2 R\> \\
&\hspace{60pt}+h^{ij}(2h_{lj;i}-h_{ij;l})g^{lk}R_{,k}+h^{ij}(2h_{ik;l}-h_{kl;i})g^{lk}R_{,j}\bigg]\\
&+2\beta_n\bigg[\<\ric, D^2\ric\>+\< D\ric, D\ric-4\ric\times h\>+|\ric\times h|^2\\
&\hspace{60pt}+2\< \ric\times \ric,h\times h\>\bigg]+2 \gamma_n  [R D^2R+(DR)^2].
\end{align*}
Combining with $\ric=\lambda g$, \eqref{eq:D2R D2Ric}, and $h\in S^{T,T}_{2,g}(M)$, we obtain \eqref{eq:D2Q einstein}. 
\end{proof}

\begin{proposition}\label{prop:D2Q TT direction}
Let $g$ be Einstein with $\ric_g=\lambda g$, and $h\in S^{T,T}_{2,g}(M)$.
Let $U\subset M$ be a domain with smooth boundary, and let $\hat n$ denote the outward
unit normal vector field along $\partial U$. Then
\begin{align*}
\int_U D^2 Q_g[h, h] \vol_g=&\frac{n^2-4}{4(n-1)^2}\lambda\int_U-\frac{1}{2}|\nabla h|^2+\lambda |h|^2+\rm_{iklj}h^{kl}h^{ij}\vol_g\\
&-\frac{1}{(n-2)^2}\int_{U}|\Delta_L h+2\lambda h|^2\vol_g+\textup{BT},
\end{align*}
where the boundary terms $\textup{BT}$ are given by
\begin{align*}
\textup{BT}=&-\frac{1}{2(n-1)}\int_{\partial U}\partial_{\hat{n}} D^2R_g[h,h]dS_g\\
&+\frac{n^2-4}{4(n-1)^2}\lambda\int_{\partial U}\partial_{\hat{n}}|h|^2-\ddiv(h\times h)(\hat{n})dS_g.
\end{align*}
\end{proposition}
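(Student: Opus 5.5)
We integrate the formula \eqref{eq:D2Q einstein} of Lemma~\ref{lem:einstein Q} over $U$ and treat its two pieces separately. In the first piece, $\alpha_n(\Delta+c(n)\lambda)D^2R$, the Laplacian integrates by the divergence theorem to the boundary term $\alpha_n\int_{\partial U}\partial_{\hat n}D^2R\,dS_g=-\frac{1}{2(n-1)}\int_{\partial U}\partial_{\hat n}D^2R\,dS_g$. This leaves $\alpha_n c(n)\lambda\int_U D^2R$, and a short computation gives
\[
\alpha_n c(n)=\frac{n^2-4}{4(n-1)^2}.
\]
For $D^2R$ we use the formula of Lemma~\ref{lem:einstein R}, valid for $h\in S^{T,T}_{2,g}(M)$:
\[
D^2R=-2DR[h\times h]-\Delta|h|^2-\tfrac12|\nabla h|^2+\nabla_ih_{jk}\nabla^jh^{ik}.
\]
The plan is to rewrite each term as an interior integrand plus a divergence.

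For the first term, Lemma~\ref{lem:einstein R} gives $DR[k]=\ddiv^2k-\Delta\tr k-\langle k,\ric\rangle$ with $k=h\times h$. Since $\tr(h\times h)=|h|^2$ and $\ric=\lambda g$,
\[
-2DR[h\times h]=-2\ddiv^2(h\times h)+2\Delta|h|^2+2\lambda|h|^2 .
\]
Combined with $-\Delta|h|^2$, the Laplacian terms net to $+\Delta|h|^2$ and the divergence terms to $-2\ddiv^2(h\times h)$. For the last term, $\nabla_ih_{jk}\nabla^jh^{ik}$, we integrate by parts once:
\[
\nabla_ih_{jk}\nabla^jh^{ik}=\nabla^j\big(h_{jk}\nabla_i h^{ik}\big)\text{-type divergence}-h_{jk}\nabla^j\nabla_ih^{ik}.
\]
Because $h$ is divergence free, $\nabla_ih^{ik}=0$, so we may commute instead: $\nabla_i(h_{jk}\nabla^jh^{ik})$ minus $h_{jk}\nabla_i\nabla^jh^{ik}$. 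The Ricci identity together with $\ddiv h=0$ turns $\nabla_i\nabla^jh^{ik}$ into curvature terms of the form $\rm * h+\ric* h$. With $\ric=\lambda g$ this yields $\rm_{iklj}h^{kl}h^{ij}+\lambda|h|^2$, with signs to be fixed by convention. The divergence $\nabla_i(h_{jk}\nabla^jh^{ik})$ should assemble with $\ddiv^2(h\times h)$ and $\Delta|h|^2$ into the stated boundary term $\partial_{\hat n}|h|^2-\ddiv(h\times h)(\hat n)$. This is where the coefficients must be tracked carefully. I expect the $2\lambda|h|^2$ from $DR[h\times h]$ to combine with the $-\lambda|h|^2$ from the commutator to give the single $\lambda|h|^2$ in the statement.

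For the second piece, $2\beta_n\left(\lambda^2|h|^2-2\lambda\langle D\ric,h\rangle+|D\ric|^2\right)$, Lemma~\ref{lem:einstein R} gives $D\ric=-\frac12\Delta_Lh$. Then
\[
\lambda^2|h|^2+\lambda\langle\Delta_Lh,h\rangle+\tfrac14|\Delta_Lh|^2=\tfrac14|\Delta_Lh+2\lambda h|^2 .
\]
Since $2\beta_n=-\frac{4}{(n-2)^2}$, this is exactly $-\frac{1}{(n-2)^2}|\Delta_Lh+2\lambda h|^2$, with no integration by parts needed.

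The main obstacle is the bookkeeping in the $D^2R$ piece, which has three parts. First, the commutator $[\nabla_i,\nabla_j]$ acting on $h$ must produce precisely $\rm_{iklj}h^{kl}h^{ij}$ under the paper's sign convention, as fixed by its definition of $\Delta_L$. Second, the net $\lambda|h|^2$ coefficient must come out to $1$. Third, the divergence terms must collapse to the displayed boundary integrand. I would first verify this on the round sphere or check coefficients against Lin--Yuan \cite{MR3529121}, and then fix the signs.
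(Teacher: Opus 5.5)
Your proposal is correct and is essentially the paper's proof: reduce via Lemma~\ref{lem:einstein Q} and $D\ric[h]=-\frac12\Delta_Lh$ to $\alpha_n(\Delta+c(n)\lambda)D^2R+\frac12\beta_n|\Delta_Lh+2\lambda h|^2$, then integrate $D^2R$ by parts using $\ddiv h=0$ and the Ricci identity. The sign and coefficient checks you left open do come out as you expected: the boundary term of $\nabla_i(h_{jk}\nabla^jh^{ik})$ is exactly $\ddiv(h\times h)(\hat n)$ (again by $\ddiv h=0$), which turns $-2\ddiv(h\times h)(\hat n)$ into $-\ddiv(h\times h)(\hat n)$, and $-h_{jk}\nabla_i\nabla^jh^{ik}=-\lambda|h|^2+\rm_{iklj}h^{kl}h^{ij}$ in the paper's convention (your first displayed identity, with $\nabla^j(h_{jk}\nabla_ih^{ik})$, is not valid as written, but the one you then use is).
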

\begin{proof}
Since $g$ is Einstein and $h\in S^{T,T}_{2,g}(M)$, we have that
$$
DR[h]=0,\quad \text{ and } D\ric[h]=-\frac{1}{2}\Delta_Lh.
$$
Combining with Lemma \ref{lem:einstein Q}, we obtain
$$
D^2Q_g[h,h]=\alpha_n(\Delta+c(n)\lambda)D^2R[h,h]+\frac{1}{2}\beta_n|\Delta_Lh+2\lambda h|^2.
$$

Also, by Lemma \ref{lem:einstein R},
\begin{align*}
D^2R_g[h,h]&=-2\ddiv^2(h\times h)+\Delta(|h|^2)+2\lambda |h|^2-\frac{1}{2}|\nabla h|^2+\nabla_{i}h_{jk}\nabla^jh^{ik}.
\end{align*}
We can use divergence theorem, Ricci identity and $\ddiv h=0$, to obtain
\begin{align*}
\int_{U} D^2R[h,h] \vol_{g}=&\int_{U}-\frac{1}{2}|\nabla h|^2+\lambda |h|^2+\rm_{iklj}h^{kl}h^{ij}\vol_{g}\\
+&\int_{\partial U}-\ddiv(h\times h)(\hat{n})+\partial_{\hat{n}}|h|^2dS_g.
\end{align*}
Also,
$$
\int_{U}\Delta D^2R[h,h] \vol_{g}=\int_{\partial U}\partial_{\hat{n}}D^2R[h,h]dS_g.
$$
Combining the two integration by parts formulas above with the computation of $D^2Q_g[h,h]$, we can conclude this proposition.
\end{proof}

\subsection{Diffeomorphism invariance of curvatures}

Let $X$ be a vector field on $M$, and let $\{\phi_t\}_{t\in\mathbb{R}}$ be the associated one-parameter family of diffeomorphisms generated by $X$.

By diffeomorphism invariance of curvature, we have
$$Q_g (\phi_t(x) ) = Q_{\phi_t^* g}(x).$$
Differentiating at $t=0$ yields
\begin{equation*}
D{Q}_g [ \mathcal{L}_X g] = \mathcal{L}_X Q_g= \nabla_X Q.
\end{equation*}

More generally, for any metric-dependent geometric quantity or operator $F_g$, if we set $g(t)=\phi_t^*g$, then
\[
\frac{d}{dt}F_{g(t)}\Big|_{t=0}=DF_g[g'(0)],
\]
and
\begin{equation*}
\frac{d^2}{dt^2} \bigg|_{t= 0 }F_{g(t)} = D^2{F_g}[g'(0), g'(0) ] + DF_g[g''(0)].
\end{equation*}

In particular, since $g'(0)=\mathcal{L}_X g$ and $g''(0)=\mathcal{L}_X^2 g$, we obtain
\begin{equation*}
\cL_X^2 F_g = D^2 {F_g}[\cL_X g, \cL_X g] + DF_g[\cL_X^2 g].
\end{equation*}

\begin{lemma}
Let $g(t)$ be a one-parameter family of metrics with $g(0)=g$ and $g'(0)=h$. Then:
\begin{enumerate}[label=(\alph*),leftmargin=*]
\item $D\vol_g  [h] = \frac{1}{2} \tr_g h \vol_g;$
\item $D^2\vol_g[h, h] = \big( \frac{1}{4}(\tr_g h)^2 - \frac{1}{2} |h|^2_g \big) \vol_g ;$
\item ${\left(\vol_{g(t)}\right)}''(0) = \big( \frac{1}{4}(\tr_g h)^2 - \frac{1}{2} |h|^2_g  + \frac{1}{2} \tr_g g''(0) \big) \vol_g .$
\end{enumerate}

\end{lemma}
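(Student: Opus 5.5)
The plan is to compute in local coordinates, using the expression $\vol_g=\sqrt{\det g}\,dx^1\wedge\cdots\wedge dx^n$, and to differentiate $\sqrt{\det g(t)}$ in $t$. Everything reduces to Jacobi's formula for the derivative of a determinant together with the corresponding formula for the derivative of the inverse matrix. Write $g(t)_{ij}$ for the coefficient matrix and $a(t):=\log\sqrt{\det g(t)}=\frac12\log\det g(t)$. Then $\vol_{g(t)}=e^{a(t)}dx$, and hence
\[
\big(\vol_{g(t)}\big)'=a'(t)\vol_{g(t)},\qquad \big(\vol_{g(t)}\big)''=\big(a''(t)+a'(t)^2\big)\vol_{g(t)} .
\]

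First, Jacobi's formula gives $\frac{d}{dt}\log\det g(t)=g(t)^{ij}g'(t)_{ij}$, so $a'(t)=\frac12 g(t)^{ij}g'(t)_{ij}$. At $t=0$ this equals $\frac12\tr_g h$, which proves (a).

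Differentiating once more, and using $\frac{d}{dt}g^{ij}=-g^{ik}g'_{kl}g^{lj}$, we obtain
\[
a''(t)=-\tfrac12 g^{ik}g'_{kl}g^{lj}g'_{ij}+\tfrac12 g^{ij}g''_{ij}.
\]
At $t=0$ this becomes $a''(0)=-\frac12|h|_g^2+\frac12\tr_g g''(0)$. Substituting into the formula for the second derivative of $\vol_{g(t)}$ gives (c):
\[
\big(\vol_{g(t)}\big)''(0)=\Big(\tfrac14(\tr_g h)^2-\tfrac12|h|_g^2+\tfrac12\tr_g g''(0)\Big)\vol_g .
\]

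Finally, (b) is the special case of (c) for the linear path $g(t)=g+th$, which is exactly the family used in the definition of $D^2$. Along this path $g''(0)=0$, so the trace term drops out and we recover $D^2\vol_g[h,h]=\big(\frac14(\tr_g h)^2-\frac12|h|^2_g\big)\vol_g$.

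There is no real obstacle in this argument. The only point requiring care is the sign and the index contraction in the derivative of the inverse metric, since that is what produces $-\frac12|h|_g^2$ rather than a term involving $(\tr_g h)^2$. The computation is pointwise in coordinates, and the resulting expressions are tensorial, so the result is independent of the chart.
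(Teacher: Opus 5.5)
Your argument is correct: Jacobi's formula for $\frac{d}{dt}\log\det g(t)$, the formula for the derivative of the inverse metric, and $(e^{a})''=(a''+a'^2)e^{a}$ give (a) and (c) directly, and (b) follows from (c) on the linear path $g+th$, which is the path used in the paper's definition of $D^2$. The paper states this lemma without proof as a standard fact; it implicitly obtains (c) from (a), (b) and the chain-rule identity $\frac{d^2}{dt^2}F_{g(t)}=D^2F_g[g'(0),g'(0)]+DF_g[g''(0)]$ recorded just before the lemma, so your computation is the routine verification it omits, with (b) and (c) derived in the opposite order.
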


\begin{lemma}\label{lem:Lie derivatives of g}
Let $X$ be a vector field. Then the Lie derivatives of the metric satisfy:
\begin{enumerate}[label=(\alph*),leftmargin=*]
\item $(\cL_X g)_{ij} = \nabla_i X_j + \nabla_j X_i $;
\item $(\cL_X^2g)_{ij} = X^k \nabla_k (\nabla_i X_j + \nabla_j X_i) + \nabla_i X^k (\nabla_k X_j + \nabla_j X_k) + \nabla_j X^k (\nabla_k X_i + \nabla_i X_k) $;
\item $\tr_g(\cL_X^2 g) = 2 X(\ddiv X) +  |\cL_X g|^2$.
\end{enumerate}
\end{lemma}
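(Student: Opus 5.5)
Part (a) is immediate from the coordinate formula $(\cL_X g)(\partial_i,\partial_j) = X^k\partial_k g_{ij} + g_{kj}\partial_i X^k + g_{ik}\partial_j X^k$. Lowering indices with $g$ and replacing coordinate derivatives by covariant ones, the Christoffel symbols cancel because $\nabla g=0$. This gives $\nabla_i X_j+\nabla_j X_i$.

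For (b), I will apply (a) to the symmetric 2-tensor $h:=\cL_X g$ instead of to $g$. For any 2-tensor and torsion-free $\nabla$,
\[
(\cL_X h)_{ij}=X^k\nabla_k h_{ij}+h_{kj}\nabla_i X^k+h_{ik}\nabla_j X^k .
\]
This holds because $\cL_X = \nabla_X$ plus a term acting on each slot by $\nabla X$, and the torsion terms vanish. Substituting $h_{ij}=\nabla_iX_j+\nabla_jX_i$ yields exactly the stated formula
\[
(\cL_X^2g)_{ij} = X^k \nabla_k (\nabla_i X_j + \nabla_j X_i) + \nabla_i X^k (\nabla_k X_j + \nabla_j X_k) + \nabla_j X^k (\nabla_k X_i + \nabla_i X_k).
\]
The index placements need some care: $h_{kj}\nabla_iX^k$ becomes $\nabla_iX^k(\nabla_kX_j+\nabla_jX_k)$, and similarly for the other slot.

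For (c), I take the $g$-trace of (b). The first term gives $X^k\nabla_k(2\,\ddiv X)=2X(\ddiv X)$, since $\nabla g=0$ lets the trace commute with $\nabla_k$. The remaining two terms together give
\[
2\nabla^iX^k(\nabla_kX_i+\nabla_iX_k).
\]
Symmetrizing in $i,k$, this equals $\nabla^iX^k(\nabla_kX_i+\nabla_iX_k)+\nabla^kX^i(\nabla_iX_k+\nabla_kX_i)=(\nabla^iX^k+\nabla^kX^i)(\nabla_iX_k+\nabla_kX_i)=|\cL_Xg|^2$.

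No step is a real obstacle. The only thing needing attention is the bookkeeping of index positions and the symmetrization in the trace computation.
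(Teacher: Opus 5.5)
Your proof is correct and follows the paper's approach. The paper's proof just cites the identity $(\cL_X T)_{ij} = X^k\nabla_k T_{ij} + \nabla_i X^k T_{jk} + \nabla_j X^k T_{ik}$ for $(0,2)$-tensors, applied to $T=g$ and then to $T=\cL_X g$; your symmetrization for the trace in (c) fills in the one step the paper leaves implicit.
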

\begin{proof}
We use the general identity for any $(0,2)$-tensor $T$,
$$
(\cL_X T)_{ij} = X^k \nabla_k T_{ij} +   \nabla_i X^k T_{jk} +\nabla_j X^k T_{ik}.
$$
\end{proof}

\begin{lemma}\label{prop:lievol}
We have the following formulas for the derivatives or the volume density:
\begin{enumerate}[label=(\alph*),leftmargin=*]
\item $D{\vol}_g [ \cL_X g] = \cL_X ( \vol_g )= (\ddiv X )\vol_g;$
\item $\cL_X ( \cL_X \vol_g ) = X(\ddiv X) \vol_g + (\ddiv X)^2 \vol_g.$
\end{enumerate}
\end{lemma}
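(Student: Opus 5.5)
Both identities come from differentiating the flow $\phi_t$ of $X$, using $\phi_t^*\vol_g = \vol_{\phi_t^* g}$ and the standard definition of the Lie derivative of a tensor density.

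For (a), put $g(t)=\phi_t^*g$, so that $g'(0)=\cL_X g$. Then
\[
\cL_X(\vol_g)=\frac{d}{dt}\Big|_{t=0}\phi_t^*\vol_g=\frac{d}{dt}\Big|_{t=0}\vol_{g(t)}=D\vol_g[\cL_X g].
\]
The preceding lemma, part (a), gives $D\vol_g[h]=\tfrac12\tr_g h\,\vol_g$. By Lemma \ref{lem:Lie derivatives of g}(a),
\[
\tr_g(\cL_Xg)=2\nabla_iX^i=2\,\ddiv X,
\]
so $\cL_X(\vol_g)=(\ddiv X)\vol_g$. Alternatively, Cartan's formula gives $\cL_X\vol_g=d(\iota_X\vol_g)=(\ddiv X)\vol_g$.

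For (b), apply $\cL_X$ to the identity from (a). The Lie derivative satisfies the Leibniz rule on the product of the function $\ddiv X$ with the density $\vol_g$, and on functions it is $\cL_X f=X(f)$. Hence
\[
\cL_X(\cL_X\vol_g)=X(\ddiv X)\,\vol_g+(\ddiv X)\,\cL_X\vol_g=X(\ddiv X)\vol_g+(\ddiv X)^2\vol_g .
\]

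As a consistency check, one can compare with part (c) of the preceding lemma, using $h=\cL_Xg$ and $g''(0)=\cL_X^2g$:
\[
\tfrac14(2\ddiv X)^2-\tfrac12|\cL_Xg|^2+\tfrac12\big(2X(\ddiv X)+|\cL_Xg|^2\big)=(\ddiv X)^2+X(\ddiv X),
\]
where Lemma \ref{lem:Lie derivatives of g}(c) supplies the trace of $\cL_X^2g$. The $|\cL_Xg|^2$ terms cancel, and the result agrees with (b).

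No step here is a serious obstacle. The only point needing care is justifying that $\frac{d^2}{dt^2}\big|_{t=0}\phi_t^*\vol_g$ equals $\cL_X\cL_X\vol_g$. This holds because $\phi_t$ is a one-parameter group: $\frac{d}{dt}\phi_t^*=\phi_t^*\cL_X$, so differentiating once more gives $\phi_t^*\cL_X\cL_X$.
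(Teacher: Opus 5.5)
Your proof is correct and is the standard argument the paper has in mind; the paper itself only says ``The proof is standard.'' Part (a) follows from $\phi_t^*\vol_g=\vol_{\phi_t^*g}$ together with $\tr_g(\cL_Xg)=2\,\ddiv X$, part (b) from the Leibniz rule applied to $(\ddiv X)\vol_g$, and your cross-check against the second-derivative formula for the volume density, using $\tr_g(\cL_X^2g)=2X(\ddiv X)+|\cL_Xg|^2$, is also correct.
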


\begin{proof}
The proof is standard.
\end{proof}

\begin{proposition}\label{prop:D2Q S direction}
Let $X$ be a vector field on a closed manifold $M$, and let $U\subset M$ be a domain with smooth boundary.
Let $\hat n$ be the outward unit normal vector field along $\partial U$.
Assume that $g$ is Einstein with $\ric_g=4(n-1)\,g$.
\begin{enumerate}[label=(\alph*),leftmargin=*]
\item For the first differential of the $Q$-curvature, we have
\begin{equation*}
\int_U D{Q}_g[\cL_X g] \vol_g = 0. 
\end{equation*}
\item For the second differential of the $Q$-curvature, we have
\begin{equation*}
\begin{aligned}
\int_U D^2{Q}_g[\cL_X g, \cL_X g] \vol_g =&  4(n^2-4)\int_U - 2 (\ddiv X)^2 + |\cL_X g|^2 \vol_g +\textup{BT}
\end{aligned}
\end{equation*}
where the \textup{BT} (boundary terms) are
\begin{align*}
\textup{BT}=&\frac{n^2-4}{(n-1)^2}\int_{\partial U}\partial_{\hat{n}}\tr(\cL^2_X g)-\ddiv(\cL^2_X g)(\hat{n})dS_g\\
+&\frac{1}{2(n-1)}\int_{\partial U}\partial_{\hat{n}} DR_g[\cL_X^2 g]dS_g+8(n^2-4)\int_{\partial U}\ddiv X\<X,\hat{n}\>dS_g.
\end{align*}
\item If $h\in S_{2,g}^{T,T}(M)$, then we have
\begin{equation*}
\int_U D^2 Q_g[ \cL_X g, h] \vol_g = 0 
\end{equation*}
\item If $g$ is conformally flat, then
$$
DW_g[\cL_X g]=\cL_X W_g= 0.
$$
\end{enumerate}
\end{proposition}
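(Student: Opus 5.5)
The plan is to exploit the identity $\cL_X F_g = DF_g[\cL_X g]$ and its second-order version $\cL_X^2 F_g = D^2F_g[\cL_X g,\cL_X g] + DF_g[\cL_X^2 g]$, which were derived just before the statement. Since $\ric_g = 4(n-1)g$, we have $R_g = 4n(n-1)$ and $Q_g = \beta_n|\ric|^2 + \gamma_n R^2$, and both are constant. All Lie derivatives of $Q_g$ therefore vanish pointwise, and every statement reduces to first variations, which Lemma \ref{lem:einstein Q} makes explicit.

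\emph{Part (a).} We have $DQ_g[\cL_X g] = \nabla_X Q_g = 0$ pointwise, so the integral vanishes.

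\emph{Part (d).} This is the same argument applied to the Weyl tensor: $DW_g[\cL_X g] = \cL_X W_g = 0$ because $W_g \equiv 0$.

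\emph{Part (b).} Set $k := \cL_X^2 g$. Since $\cL_X^2 Q_g = 0$, we get $D^2Q_g[\cL_X g,\cL_X g] = -DQ_g[k] = -\alpha_n(\Delta + c(n)\lambda)\,DR_g[k]$ by \eqref{eq:DQ einstein}.
\begin{itemize}
\item The $\Delta$ part integrates to $-\alpha_n\int_{\partial U}\partial_{\hat n}DR_g[k]\,dS_g$, which is the stated term with coefficient $\frac{1}{2(n-1)}$.
\item For the remaining part, compute $\alpha_n c(n)\lambda = \frac{n^2-4}{n-1}$. Then substitute $DR_g[k] = \ddiv^2 k - \Delta \tr k - \lambda \tr k$ from Lemma \ref{lem:einstein R}.
\item The terms $\ddiv^2 k$ and $\Delta\tr k$ integrate to the boundary terms $\int_{\partial U}\ddiv k(\hat n)$ and $\int_{\partial U}\partial_{\hat n}\tr k$. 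I will carefully track the constant in front of these; it should reproduce the first boundary line of $\textup{BT}$.
\item The term $-\lambda\tr k$ contributes $+\frac{n^2-4}{n-1}\lambda\int_U \tr k = 4(n^2-4)\int_U \tr k$.
\item By Lemma \ref{lem:Lie derivatives of g}(c), $\tr k = 2X(\ddiv X) + |\cL_X g|^2$. Then $X(\ddiv X) = \ddiv(\ddiv X\,X) - (\ddiv X)^2$.
\item This yields the interior term $4(n^2-4)\int_U\big(-2(\ddiv X)^2 + |\cL_X g|^2\big)$ together with the boundary term $8(n^2-4)\int_{\partial U}\ddiv X\langle X,\hat n\rangle$.
\end{itemize}

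\emph{Part (c).} Polarize along a two-parameter family. For every metric $g_s = g + sh$ we have $DQ_{g_s}[\cL_X g_s] = X(Q_{g_s})$. Differentiating in $s$ at $s=0$ gives
\[
D^2Q_g[h,\cL_X g] + DQ_g[\cL_X h] = X(DQ_g[h]) = 0,
\]
since $DQ_g[h] = 0$ for transverse-traceless $h$ by Lemma \ref{lem:einstein Q}. Hence $D^2Q_g[\cL_X g,h] = -\alpha_n(\Delta + c(n)\lambda)DR_g[\cL_X h]$.
\begin{itemize}
\item Integrating, the $\Delta$, $\ddiv^2$ and $\Delta\tr$ pieces are divergences.
\item The remaining piece is $\lambda\int \tr(\cL_X h)$. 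Using $\tr h = 0$, this equals $2\lambda\int\langle\nabla X,h\rangle = -2\lambda\int\langle X,\ddiv h\rangle + \text{boundary} = \text{boundary}$.
\end{itemize}
The interior contribution therefore vanishes identically.

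The step I expect to be the main obstacle is part (c). The statement asserts exact vanishing over $U$, but the computation above produces boundary integrals. I would first check whether these boundary terms cancel identically. If they do not, the claim should be read on closed $M$, or with those boundary terms (which are of the same higher-order type as in (b)) recorded separately. In part (b), the other delicate point is bookkeeping the constants in the boundary terms.
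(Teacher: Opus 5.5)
Your treatment of (a), (b) and (d) is the paper's argument. Constancy of $Q_g$ (resp.\ $W_g\equiv 0$) kills the Lie-derivative side, and (b) reduces to $-\int_U DQ_g[\cL_X^2 g]\,dv_g$, which is integrated by parts via \eqref{eq:DQ einstein}, Lemma~\ref{lem:einstein R} and Lemma~\ref{lem:Lie derivatives of g}(c). When you track the constant, you will get $\alpha_n c(n)\lambda=\frac{n^2-4}{n-1}$ in front of $\int_{\partial U}\partial_{\hat n}\tr(\cL_X^2 g)-\ddiv(\cL_X^2 g)(\hat n)\,dS_g$, not the printed $\frac{n^2-4}{(n-1)^2}$. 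The printed value is a typo: the proof of Proposition~\ref{prop:secondvonsphere} itself uses $\frac{n^2-4}{n-1}$. It is harmless, because these boundary terms are only ever estimated in absolute value.

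For (c) you use the same polarization of diffeomorphism invariance as the paper, but you carry it out correctly and the paper does not. The paper's proof equates $\partial_s\partial_t Q_{\phi_t^*g(s)}$ at $s=t=0$ with $D^2Q_g[\cL_X g,h]$. By the chain rule, since $\partial_s\partial_t\phi_t^*g(s)=\cL_X h$, this mixed derivative is actually $D^2Q_g[\cL_Xg,h]+DQ_g[\cL_Xh]$, exactly as in your identity. So pointwise $D^2Q_g[\cL_Xg,h]=-DQ_g[\cL_Xh]$, and your computation correctly shows that its integral over $U$ reduces to boundary integrals.

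Your suspicion that these boundary integrals need not cancel is justified. If the integral vanished for every $U$, shrinking $U$ to small balls would force $DQ_g[\cL_Xh]\equiv 0$, and that is false in general. The paper's argument uses only that $g$ is Einstein and $h$ is transverse-traceless, so it would apply verbatim to flat $\mathbb{R}^n$. There, take $X=x_2\partial_1$ and the TT tensor $h=x_3^4(dx_1\otimes dx_2+dx_2\otimes dx_1)$. Then $\cL_Xh=2x_3^4\,dx_2\otimes dx_2$ and $DR_{g_e}[\cL_Xh]=-24x_3^2$, hence $D^2Q_{g_e}[\cL_Xg_e,h]=48\alpha_n\neq 0$ at every point. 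On the round sphere the same phenomenon occurs at leading order, e.g.\ for high-frequency TT eigentensors.

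Consequently, (c) holds exactly only when $\partial U=\emptyset$. For a domain with boundary it holds only modulo boundary terms of the type appearing in (b), which is the reading you proposed. That weaker version is all the later argument needs. The cross term $2D^2Q_{g_c}[\bar T,\bar S]$, dropped by appeal to (c) in Lemma~\ref{lem:first step in I''}, contributes boundary integrals of the same form and size as those estimated in the proof of Proposition~\ref{prop:secondvonsphere}. So the final estimate there is unaffected.
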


\begin{proof}

For (a), we have  
\begin{align*}
\int_U D{Q}_g[\cL_X g] \vol_g & = \int_U X \cdot \nabla Q_g \vol_g = 0.
\end{align*}

Next, for (b), we compute
\begin{align*}
& \int_U D^2{Q}_g[\cL_X g, \cL_X g] \vol_g  \\
=&  \int_U \cL_X ( \cL_X Q_g ) - DQ_g [ \cL_X^2 g ] \vol_g \\
=& 4(n^2-4)\int_U  tr_g(\cL_X^2 g) \vol_g +\frac{1}{2(n-1)}\int_{\partial U}\partial_{\hat{n}} DR_g[\cL_X^2 g]dS_g \\
& +\frac{n^2-4}{(n-1)^2}\int_{\partial U}\partial_{\hat{n}}\tr(\cL^2_X g)-\ddiv(\cL^2_X g)(\hat{n})dS_g.
\end{align*}
Moreover, by Lemma~\ref{lem:Lie derivatives of g},
\begin{align*}
\int_U  tr(\cL_X^2 g) \vol_g &= \int_U 2 X(\ddiv X) +  |\cL_X g|^2 \vol_g\\
&= \int_U - 2 (\ddiv X)^2 + |\cL_X g|^2 \vol_g + 2\int_{\partial U}\ddiv X\<X,\hat{n}\>dS_g.
\end{align*}
Combining the two identities yields (b).

We now prove (c). Let $g(s)$ be a one-parameter family of metrics with $g(0)=g$ and $g'(0)=h$.
From the identity
\begin{equation*}
Q_{\phi^*_t g(s)}(x) = Q_{g(s)} ( \phi_t(x))  
\end{equation*}
we obatin
\begin{align*}
\frac{d}{dt} \bigg|_{t=0} Q_{\phi_t^* g(s)} (x)= \cL_X Q_{g(s)}(x)
\end{align*}
Therefore,
\begin{align*}
\frac{d}{ds} \bigg|_{s=0} \frac{d}{dt} \bigg|_{t=0} Q_{\phi_t^* g(s)} (x) & = \frac{d}{ds} \bigg|_{s=0} \cL_X Q_{g(s)} (x) \\
& = \frac{d}{ds} \bigg|_{s=0} dQ_{g(s)} (X) = d \bigg( \frac{d}{ds} \bigg|_{s=0} Q_{g(s)} \bigg)(X) \\
& = \nabla_X  (D{Q}_g [ h])
\end{align*}
Consequently,
\begin{align*}
&\int_U D^2 Q_g[ \cL_X g, h] \vol_g  = \int_U \nabla_X (D{Q}_g [ h]) \vol_g \\
=& -\int_U \ddiv X (D{Q}_g [ h]) \vol_g + \int_{\partial U} X  \cdot \hat{n} (D{Q}_g [ h]) dS_g = 0 
\end{align*}
where the last equality follows from Lemma~\ref{lem:einstein Q}.

\end{proof}

\subsection{Higher-order Koiso-Bochner formula}

As in \cite{BGIMazzeo}, we define
\begin{align*}
(\nabla^Kh)_{ijk}:=&h_{ij;k}-h_{kj;i},\\
\mathcal{A}[h]_{ijkl}:=&\nabla_l(\nabla^Kh)_{ijk}=h_{ij;kl}-h_{kj;il}.
\end{align*}

The following lemma is well known for experts, see e.g. Besse's book \cite[Chapter 1.K]{MR867684}. 
\begin{lemma}
The linearization of Riemann tensor is
$$
D\rm_g[h]_{ijkl}=\frac{1}{2}(h_{ki;lj}+h_{lj;ki}-h_{li;kj}-h_{kj;li}+\rm\indices{_{ijk}^{p}}h_{pl}+\rm\indices{_{ijl}^p}h_{pk}).
$$
Moreover, if $h\in S_{2,g}^{T,T}(M)$ and  $g$ is with constant sectional curvature, i.e. $(\rm_g)_{ijkl}=c(g_{il}g_{jk}-g_{ik}g_{jl})$, then there are
\begin{align}
DA_g[h]=&-\frac{1}{2(n-2)}(\Delta -c n)h,\nonumber\\
DW_g[h]_{ijkl}=&\frac{1}{2}(\mathcal{A}[h]_{kilj}+\mathcal{A}[h]_{ljki})+\frac{1}{2(n-2)}((\Delta -cn)h\KN g)_{ijkl}.\label{eq:DW}
\end{align}
\end{lemma}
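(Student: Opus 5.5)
The first formula is the standard first variation of the Riemann tensor, and I will derive it from the variation of the Christoffel symbols. Writing $g(t)=g+th$, one has
\[
D\Gamma_{ij}^k[h]=\tfrac12 g^{kp}(h_{pi;j}+h_{pj;i}-h_{ij;p}).
\]
With the paper's sign convention, where $\ric$ is obtained by contracting $\rm_{iklj}$ as in the Lichnerowicz Laplacian, the Palatini identity expresses $D\rm\indices{_{ijk}^l}$ as a difference of covariant derivatives of $D\Gamma$. Lowering the last index with $g$ produces the extra terms $\rm\indices{_{ijk}^p}h_{pl}$. The term $\rm\indices{_{ijl}^p}h_{pk}$ appears after I antisymmetrize, commuting the two covariant derivatives in the Hessian terms via the Ricci identity. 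Of the six second-derivative terms, $h_{ij;kl}$-type pieces cancel pairwise, and four remain: $h_{ki;lj}+h_{lj;ki}-h_{li;kj}-h_{kj;li}$. The main source of risk is fixing signs and index positions consistently with the paper's convention. I would check the result by contracting against $g^{jk}$, or the appropriate pair, and comparing with $D\ric[h]=-\tfrac12\Delta_L h$ from Lemma \ref{lem:einstein R} in the TT case.

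Next I specialize to constant curvature $(\rm_g)_{ijkl}=c(g_{il}g_{jk}-g_{ik}g_{jl})$ and $h\in S^{T,T}_{2,g}$. In this case $\ric=c(n-1)g$, so $\lambda=c(n-1)$. From $\rm_{iklj}h^{kl}=c(g_{ij}\tr h-h_{ij})=-c\,h_{ij}$ I get
\[
\Delta_L h=\Delta h-2ch-2c(n-1)h=\Delta h-2cn\,h.
\]
Hence $D\ric=-\tfrac12\Delta h+cn\,h$. Since $DR=0$ by Lemma \ref{lem:einstein Q}, the Schouten tensor varies as
\[
DA=\frac{1}{n-2}\Big(D\ric-\frac{1}{2(n-1)}R\,h\Big).
\]
Using $R=cn(n-1)$, this gives $DA=\frac{1}{n-2}\big(-\tfrac12\Delta h+cn\,h-\tfrac{cn}{2}h\big)=-\frac{1}{2(n-2)}(\Delta-cn)h$, which is the stated formula.

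For the Weyl tensor, the background $W_g$ vanishes. Differentiating $W=\rm-A\KN g$ gives
\[
DW=D\rm-DA\KN g-A\KN h.
\]
With $A=\tfrac c2 g$, the last term is $-\tfrac c2\,g\KN h$. The constant-curvature terms in $D\rm$ are $\tfrac12(\rm\indices{_{ijk}^p}h_{pl}+\rm\indices{_{ijl}^p}h_{pk})$, which I expand explicitly as a multiple of $c\,(h\KN g)$. I then regroup the four second-derivative terms as $\tfrac12(\mathcal A[h]_{kilj}+\mathcal A[h]_{ljki})$. To do this I use $\mathcal A[h]_{kilj}=h_{ki;lj}-h_{li;kj}$ and $\mathcal A[h]_{ljki}=h_{lj;ki}-h_{kj;li}$, which reproduce exactly the four terms. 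Finally I combine all terms proportional to $c\,h\KN g$ with $-DA\KN g$. This is the step I expect to be the main obstacle: I must verify that the zeroth-order coefficients combine into exactly $\frac{1}{2(n-2)}\big((\Delta-cn)h\big)\KN g$ with no leftover $c\,h\KN g$ term. I would verify this by tracing both sides, since $DW$ must be trace-free. Tracing uses $\ddiv h=0$, $\tr h=0$ and the commutation $h_{ki;\,l}{}^{i}=(\ddiv h)_{k;l}+{}$curvature terms to evaluate $\tr\mathcal A$, and this fixes the coefficient.
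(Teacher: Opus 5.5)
You take the standard route: vary the Christoffel symbols, apply the Palatini identity and the Ricci identity, then specialize. The paper gives no argument of its own and only cites Besse. Your computation of $DA_g[h]$ is correct. The genuine problem is the sign of the second curvature term in $D\rm_g[h]$, and it is exactly what breaks the step you flag as the main obstacle.

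\textbf{The sign in the first formula.} Carry out your own steps. Lowering the upper index of $D\rm\indices{_{ijk}^{l}}$ contributes $\rm\indices{_{ijk}^{p}}h_{pl}$. The Palatini identity produces six Hessian terms. Four of them are the ones displayed in the statement. The remaining pair, $\tfrac12(h_{kl;ji}-h_{kl;ij})$, is converted by the Ricci identity into $-\tfrac12\big(\rm\indices{_{ijk}^{p}}h_{pl}+\rm\indices{_{ijl}^{p}}h_{pk}\big)$; here the conventions are those fixed by $\rm_{ijkl}=c(g_{il}g_{jk}-g_{ik}g_{jl})$ and by the definition of $\mathcal{A}$. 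Hence
\[
D\rm_g[h]_{ijkl}=\tfrac12\big(h_{ki;lj}+h_{lj;ki}-h_{li;kj}-h_{kj;li}\big)+\tfrac12\big(\rm\indices{_{ijk}^{p}}h_{pl}-\rm\indices{_{ijl}^{p}}h_{pk}\big).
\]
The minus sign is forced independently of conventions:
\begin{itemize}
\item $D\rm_{ijkl}$ is antisymmetric in $(k,l)$, and the four Hessian terms already are;
\item $\rm\indices{_{ijk}^{p}}h_{pl}+\rm\indices{_{ijl}^{p}}h_{pk}$ is symmetric in $(k,l)$ and nonzero.
\end{itemize}
So the first display as printed contains a sign typo. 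It is correct if the last term is read as $\rm\indices{_{ij}^{p}_{l}}h_{pk}$. Your proposal should not claim to produce the plus sign.

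The contraction check you propose would catch this. Take constant curvature and $h$ transverse-traceless, contract with $g^{il}$, and add $-h^{il}\rm_{ijkl}=c\,h_{jk}$. The plus version then gives $D\ric[h]=-\tfrac12\Delta h+c\,h$, instead of $-\tfrac12\Delta_L h=-\tfrac12\Delta h+cn\,h$.

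\textbf{The $DW$ step.} The same sign issue invalidates it. At constant curvature,
\[
\tfrac12\big(\rm\indices{_{ijk}^{p}}h_{pl}+\rm\indices{_{ijl}^{p}}h_{pk}\big)=\tfrac c2\big(h_{il}g_{jk}-h_{jl}g_{ik}+h_{ik}g_{jl}-h_{jk}g_{il}\big),
\]
which is symmetric in $(k,l)$ and is not a multiple of $h\KN g$. No trace argument can repair that, since tracing only pins down a coefficient once you already know the leftover is proportional to $h\KN g$.

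With the correct minus sign, the curvature part is exactly $\tfrac c2(h\KN g)_{ijkl}$. Since $A=\tfrac c2 g$, this cancels $-A\KN h=-\tfrac c2\,h\KN g$ identically, leaving
\[
DW_g[h]=\tfrac12\big(\mathcal{A}[h]_{kilj}+\mathcal{A}[h]_{ljki}\big)-DA_g[h]\KN g.
\]
By your $DA$ computation, this is the stated formula. So the obstacle you anticipated disappears. The trace-freeness $g^{il}DW_{ijkl}=0$ becomes merely a consistency check, which it passes.
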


Given a $k$-tensor $h$ with components $h_{\alpha}$ where $\alpha$ is a multi-index with $|\alpha|=k$. We denote the $L^2$-norm over an open domain $U\subset M$:
$$
\|h\|_U^2:=\int_{U}h_{\alpha}h^{\alpha}\vol_g.
$$
Moreover, we define three types of boundary integrals:
\begin{align*}
\textup{BT}_1:=&\int_{\partial U}\contr(g^{-1}\otimes g^{-1}\otimes \nabla h\otimes h\otimes \hat{n})dS_g\\
\textup{BT}_2:=&\int_{\partial U}\contr(g^{-1}\otimes g^{-1}\otimes g^{-1}\otimes \nabla^2 h\otimes \nabla h\otimes \hat{n})dS_g\\
\textup{BT}_3:=&\int_{\partial U}\contr(g^{-1}\otimes g^{-1}\otimes g^{-1}\otimes \nabla^3 h\otimes h\otimes \hat{n})dS_g,
\end{align*}
where $\otimes$ denotes tensor product and $\contr$ denotes full tensor contraction. Then, Koiso-Bochner formula and the higher order version can be written as the following.
\begin{lemma}[{\cite[Lemmas 4.3, 4.4]{BGIMazzeo}}]\label{lem:two koiso identities}
Let $h\in S_{2,g}^{T,T}(M)$ and $g$ be with constant sectional curvature. Then,
\begin{align}
\frac{1}{2}\|\nabla^Kh\|_U=&\|\nabla h\|_U^2+cn\|h\|_U^2+\textup{BT}_1,\label{eq:koiso identity 2nd order}\\
\frac{1}{2}\|\mathcal{A}[h]\|_U^2=&\|\Delta h\|_U^2+3c\|\nabla h\|_U^2-c^2n(n-3)\|h\|_U^2+\sum_{i=1}^3\textup{BT}_i.\label{eq:koiso identity 4th order}
\end{align}
\end{lemma}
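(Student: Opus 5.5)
The plan is to prove both identities by direct integration by parts on $U$, commuting covariant derivatives with the Ricci identity. Throughout I use four facts: $\ddiv_g h=0$, $\tr_g h=0$, $\nabla \rm_g=0$, and $(\rm_g)_{ijkl}=c(g_{il}g_{jk}-g_{ik}g_{jl})$. Every integration by parts on $U$ produces a boundary integral, and I record it only by type. One derivative moved off a $\nabla h\otimes h$ product gives $\textup{BT}_1$. Derivatives moved off $\nabla^2 h\otimes\nabla h$ or $\nabla^3 h\otimes h$ products give $\textup{BT}_2$ or $\textup{BT}_3$. I read the left side of \eqref{eq:koiso identity 2nd order} as $\frac12\|\nabla^K h\|_U^2$.

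\emph{Second-order identity.} Expanding the square gives
\[
|\nabla^K h|^2 = 2|\nabla h|^2 - 2\,h_{ij;k}h^{kj;i},
\]
so it suffices to show $\int_U h_{ij;k}h^{kj;i}\,\vol_g = -cn\|h\|_U^2+\textup{BT}_1$.
\begin{itemize}
\item Integrating by parts in the index $k$ gives $-\int_U h_{ij}\,h\indices{^{kj;i}_{k}} + \textup{BT}_1$.
\item I commute the last two derivatives: $h_{kj;ik}=h_{kj;ki}+(\rm*h)$. The first term vanishes because $\ddiv h=0$.
\item With constant curvature, the commutator is $\rm_{ikkp}h_{pj}+\rm_{ijkp}h_{kp}$ up to sign conventions. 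The first piece is a Ricci term equal to a multiple of $c(n-1)h_{ij}$. The second piece contracts to $c\,h_{ij}$ plus a term in $\tr h=0$.
\item Collecting gives the constant $cn$. I will fix the signs against the round sphere, where the conformal-Killing-type computation is standard.
\end{itemize}

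\emph{Fourth-order identity.} Similarly,
\[
|\mathcal A[h]|^2 = 2|\nabla^2 h|^2 - 2\,h_{ij;kl}h^{kj;il}.
\]
I handle the two terms separately.
\begin{itemize}
\item For $\int_U|\nabla^2 h|^2$, I integrate by parts twice ($l$ first, then $k$) to reach $\int_U h_{ij;kl}h\indices{^{ij;lk}}$. I then swap $l$ and $k$ on the third derivative. This yields $\|\Delta h\|_U^2$, plus a term of the form $c\|\nabla h\|_U^2$, plus a further commutator. That commutator, after one more integration by parts, becomes $c^2\|h\|_U^2$-type terms; this uses $\nabla\rm=0$ so that curvature passes through derivatives.
\item For the cross term $\int_U h_{ij;kl}h^{kj;il}$, I move $\nabla_l$ and then $\nabla_i$ off the second factor. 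I commute so that a divergence $h\indices{^{kj}_{;k}}=0$ appears, or so that the same structure as in the second-order case appears but one derivative higher. The principle is to apply the second-order argument to $\nabla h$, keeping track of the extra Ricci-identity terms that arise because $\nabla h$ is not itself divergence-free in all slots.
\item The leftover terms are of three kinds: $\|\Delta h\|^2$ contributions, $c\|\nabla h\|^2$ terms (total coefficient $3c$), and $c^2\|h\|^2$ terms (total $-c^2 n(n-3)$). In the $c^2\|h\|^2$ terms I use the first identity and $\tr h=0$ to simplify.
\end{itemize}

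The main obstacle is the bookkeeping of the curvature commutators in the fourth-order cross term. Third derivatives must be commuted in several slots, each producing $\rm*\nabla h$ terms with various index placements, and the coefficients $3c$ and $-c^2n(n-3)$ come from exact cancellations. I would first verify the constants on $S^n$ by computing both sides for a TT eigentensor of the rough Laplacian. On such a tensor, $\Delta h=-\kappa h$, and both sides become polynomials in $\kappa$ and $c$. I would then organize the general computation so that every commutator is written through $\rm_{ijkl}=c(g_{il}g_{jk}-g_{ik}g_{jl})$ before contracting.
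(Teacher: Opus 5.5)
Your method is right: integrate by parts on $U$ and commute derivatives with the Ricci identity for $(\mathrm{Rm}_g)_{ijkl}=c(g_{il}g_{jk}-g_{ik}g_{jl})$. The paper does not prove this lemma; it quotes it from Bahuaud--Guenther--Isenberg--Mazzeo, and a direct Bochner computation of this kind proves it. You are also right that the left side of \eqref{eq:koiso identity 2nd order} should read $\frac12\|\nabla^Kh\|_U^2$.

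Your second-order argument is complete, and no sign needs to be ``fixed against the sphere.'' With the paper's convention ($\mathrm{Ric}_g=c(n-1)g$), the Ricci identity together with $h_{kj;ki}=0$ and $\tr h=0$ gives $h_{kj;ik}=c(n-1)h_{ij}+c\,h_{ij}=cn\,h_{ij}$ (summed over $k$). Hence $\int_U h_{ij;k}h_{kj;i}\,dv_g=-cn\|h\|_U^2+\textup{BT}_1$.

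The gap is in the fourth-order identity, where the proposal stops before the actual content of the lemma. The coefficients $3c$ and $-c^2n(n-3)$ are asserted, not derived. Your proposed cross-check is circular: for a TT eigentensor with $\Delta h=-\kappa h$, only the right-hand side reduces to $(\kappa^2+3c\kappa-c^2n(n-3))\|h\|^2$. The left side $\|\mathcal A[h]\|^2$ must still be computed by the same integrations by parts.

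The missing computation is short once you record the commutator of $\Delta$ and $\nabla$ on TT tensors. Commuting twice and using $\ddiv h=0$, $\tr h=0$, $\nabla\mathrm{Rm}=0$ gives
\[
h_{ij;kll}=(\Delta h)_{ij;k}+c(n-1)h_{ij;k}+2c\,(h_{kj;i}+h_{ik;j}).
\]
Integrate $\|\nabla^2h\|_U^2$ by parts once in the outer index and insert this. Both mixed products $\int_U h_{ij;k}h_{kj;i}$ and $\int_U h_{ij;k}h_{ik;j}$ equal $-cn\|h\|_U^2+\textup{BT}_1$ by your second-order step, so
\[
\|\nabla^2h\|_U^2=\|\Delta h\|_U^2-c(n-1)\|\nabla h\|_U^2+4c^2n\|h\|_U^2+\textup{BT}_1+\textup{BT}_2 .
\]
For the cross term, integrate by parts in $l$ and apply the same formula to $h_{kj;ill}$. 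Then integrate the piece $-\int_U h_{ij;k}(\Delta h)_{kj;i}$ by parts in $i$, so that $h_{ij;ki}=cn\,h_{kj}$ appears. This gives
\[
\int_U h_{ij;kl}h_{kj;il}\,dv_g=-c(n+2)\|\nabla h\|_U^2+c^2n(n+1)\|h\|_U^2+\textup{BT}_1+\textup{BT}_2 .
\]
Since $\frac12|\mathcal A[h]|^2=|\nabla^2h|^2-h_{ij;kl}h_{kj;il}$, subtracting gives exactly $\|\Delta h\|_U^2+3c\|\nabla h\|_U^2-c^2n(n-3)\|h\|_U^2$ up to boundary terms.

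This route needs only third derivatives. Your alternative of moving both $\nabla_l$ and $\nabla_i$ onto the first factor also works. It is messier, though, because you must commute $\nabla_i$ through three derivatives, which is where $\textup{BT}_3$-type terms come from.
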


Then, we can study the second variation of $|W_g|^2$, which is one type of conformally variational Riemannian invariants (CVI) defined in \cite{MR3955546}.

\begin{proposition}\label{prop:DW koiso identity}
Let $h\in S_{2,g}^{T,T}(M)$ and $g$ be with constant sectional curvature. Then,
$$
\|DW_g[h]\|_U^2=\frac{n-3}{n-2}\left(\|\Delta h\|_U^2+c(n+2)\|\nabla h\|_U^2+2c^2n\|h\|_U^2\right)+\sum_{i=1}^3\textup{BT}_i.
$$
\end{proposition}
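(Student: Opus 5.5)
The plan is to expand $\|DW_g[h]\|_U^2$ directly from \eqref{eq:DW}, reduce every resulting integral to $\|\Delta h\|_U^2$, $\|\nabla h\|_U^2$, $\|h\|_U^2$ and $\|\mathcal{A}[h]\|_U^2$, and then eliminate $\|\mathcal{A}[h]\|_U^2$ with the fourth-order identity \eqref{eq:koiso identity 4th order} of Lemma~\ref{lem:two koiso identities}.

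Write $DW_g[h]=S+K$, where
\[
S_{ijkl}=\tfrac12\big(\mathcal{A}[h]_{kilj}+\mathcal{A}[h]_{ljki}\big),\qquad
K=\tfrac{1}{2(n-2)}\,(Lh)\KN g,\qquad L:=\Delta-cn .
\]
Then $\|DW_g[h]\|_U^2=\|S\|_U^2+2\langle S,K\rangle_U+\|K\|_U^2$, and I would handle the three pieces in order.

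\emph{The $K$-term.} Since $\tr_g h=0$, also $\tr_g Lh=0$. For any trace-free symmetric $k$ one has pointwise $|k\KN g|^2=4(n-2)|k|^2$. This gives
\[
\|K\|_U^2=\tfrac{1}{n-2}\,\|Lh\|_U^2=\tfrac{1}{n-2}\big(\|\Delta h\|_U^2+2cn\|\nabla h\|_U^2+c^2n^2\|h\|_U^2\big)+\textup{BT}_1 ,
\]
where one integration by parts is used on $\langle\Delta h,h\rangle$.

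\emph{The cross term.} Contracting $S$ against $(Lh)\KN g$ only involves traces of $\mathcal{A}[h]$ over index pairs containing a $g$. These traces are of two kinds:
\begin{itemize}
\item[(i)] $\Delta h$ itself;
\item[(ii)] $\nabla\nabla h$ with one derivative contracted into $h$, i.e. $\nabla_k\nabla^j h_{ij}$.
\end{itemize}
For type (ii), the Ricci identity with $\rm=c(g\KN g)/2$-type curvature, together with $\ddiv h=0$ and $\tr h=0$, converts it into $c$-multiples of $h$; for instance $h_{ij;k}{}^{j}=c\,n\,h_{ik}$ up to sign. Hence $\langle S,K\rangle_U$ becomes a combination of $\langle Lh,\Delta h\rangle_U$ and $c\langle Lh,h\rangle_U$. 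Integrating by parts once more expresses it through $\|\Delta h\|_U^2$, $\|\nabla h\|_U^2$, $\|h\|_U^2$, with boundary terms of types $\textup{BT}_1,\textup{BT}_2,\textup{BT}_3$.

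\emph{The $S$-term.} Here
\[
\|S\|_U^2=\tfrac12\|\mathcal{A}[h]\|_U^2+\tfrac12\langle\mathcal{A}[h]_{kilj},\mathcal{A}[h]_{ljki}\rangle_U .
\]
The second summand pairs two second derivatives with the roles of the index pairs swapped. I would integrate by parts to move one $\nabla$ across, then commute covariant derivatives. Every commutator produces $\rm*\nabla h$ or $\rm*h$, which in constant curvature is $c\nabla h$ or $c\,h$; the divergence and trace terms drop by the $TT$ condition. This should reduce the second summand to $\|\Delta h\|_U^2$ plus $c\|\nabla h\|_U^2$ and $c^2\|h\|_U^2$ multiples, up to $\sum_i\textup{BT}_i$. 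Finally I substitute \eqref{eq:koiso identity 4th order} for $\tfrac12\|\mathcal{A}[h]\|_U^2$ and collect coefficients.

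The main obstacle is this last bookkeeping: tracking signs and multiplicities in the commutators of the third- and fourth-order derivatives of $h$, and checking that every boundary contribution is genuinely of the form $\textup{BT}_1$, $\textup{BT}_2$ or $\textup{BT}_3$. As consistency checks I would use two things.
\begin{itemize}
\item The flat case $c=0$, where the answer must be $\frac{n-3}{n-2}\|\Delta h\|_U^2$. This matches the known identity $|W|^2=|\rm|^2-\frac{4}{n-2}|\ric|^2+\dots$ applied to the linearization.
\item The algebraic fact that $DW_g[h]$ is totally trace-free. Using it, $\|S+K\|^2=\|S\|^2-\frac{4}{n-2}\|\text{trace part of }S\|^2$, which gives an independent computation of the cross term.
\end{itemize}
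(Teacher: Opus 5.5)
Your proposal is correct and is essentially the paper's proof. Your $\|S\|_U^2$ is the paper's $\RN{1}+\RN{3}$, handled by the fourth-order Koiso identity for $\frac12\|\mathcal{A}[h]\|_U^2$ and by a double Ricci commutation plus the second-order identity \eqref{eq:koiso identity 2nd order} for the swapped pairing; your $2\langle S,K\rangle_U$ is $\RN{4}$, where the Ricci identity collapses the traces of $\mathcal{A}[h]$ to multiples of $(\Delta-cn)h$; and your $\|K\|_U^2$ is $\RN{2}$, the pointwise norm of $(\Delta-cn)h\KN g$. One small correction to your expectation: the swapped pairing $\int_U\mathcal{A}[h]_{kilj}\mathcal{A}[h]^{ljki}\vol_g$ has no $\|\Delta h\|_U^2$ component at all; it equals $2cn\|\nabla h\|_U^2+2c^2n^2\|h\|_U^2$ modulo $\textup{BT}_1+\textup{BT}_2$, which is exactly what your flat-case check forces.
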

\begin{proof}
To simplify the notations, we drop the subscript $U$ and write $\|\cdot\|:=\|\cdot\|_U$ in this proof. According to \eqref{eq:DW}, there is
\begin{align*}
\|DW_g[h]\|^2=&\frac{1}{2}\|\mathcal{A}\|^2+\frac{1}{4(n-2)^2}\|(\Delta -cn)h\KN g\|^2\\
&+\frac{1}{2}\int_U\mathcal{A}_{kilj}\mathcal{A}^{ljki}\vol_g+\frac{1}{n-2}\int_U\mathcal{A}_{kilj}((\Delta-cn)h\KN g)^{ijkl}\vol_g\\
:=&\RN{1}+\RN{2}+\RN{3}+\RN{4}.
\end{align*}

For term \RN{1}, we can directly use equation \eqref{eq:koiso identity 4th order}. For term \RN{2}, we just use the definition of Kulkarni-Nomizu product to compute:
\begin{align*}
\|(\Delta-cn)h\KN g\|^2=&4(n-2)(\|\Delta h\|^2+2cn\|\nabla h\|^2+c^2n^2\|h\|^2)+\textup{BT}_1.
\end{align*}

Using Ricci identity, we can deal with term \RN{4}.
$$
\int_U\mathcal{A}[h]_{kilj}((\Delta -cn)h\KN g)^{ijkl}\vol_g=-2(\|\Delta h\|^2+2cn\|\nabla h\|^2+c^2n^2\|h\|^2)+\textup{BT}_1.
$$
Applying Ricci identity twice, we have
\begin{align*}
h\indices{_{ik;jl}^k}=&c(n+1)h_{ij;l}+cnh_{il;j}.
\end{align*}
Combining \eqref{eq:koiso identity 2nd order} with the equation above, we can deal with term \RN{3}:
$$
\int_U\mathcal{A}[h]_{kilj}\mathcal{A}[h]^{ljki}\vol_g=2cn\|\nabla h\|^2+2c^2n^2\|h\|^2+\textup{BT}_1+\textup{BT}_2.
$$
Combining the four terms together, we obtain that
$$
\|DW_g[h]\|^2=\frac{n-3}{n-2}\left(\|\Delta h\|^2+c(n+2)\|\nabla h\|^2+2c^2n\|h\|^2\right)+\sum_{i=1}^3\textup{BT}_i.
$$
\end{proof}

\section{Construction of test bubbles: $n \ge 8$ case}

Consider a metric $g=\exp (h)$ in a small normal neighborhood $B_{\delta}(p)$, where $h$ is a symmetric 2-tensor satisfying $\tr h=O(|x|^{2d+2})$ and $x_ih_{ij}(x)=0$, for any $x$, $j$.

Define a positive function $u_\epsilon : \mathbb{R}^n \rightarrow \mathbb{R}$ by
$u_\epsilon (x) =  \big( \frac{\epsilon}{\epsilon^2 + |x|^2} \big)^{\frac{n-4}{2}}$.
It is straightforward to see that
\begin{equation}\label{eq:bubble pde}
\Delta^{2} u_{\epsilon} = \mfc(n) u_{\epsilon}^{\frac{n+4}{n-4}}, \quad \text{in } \mathbb{R}^n.
\end{equation}

Given $k=2,\ldots d$, a multi-index $\alpha$, we define
\begin{equation*}
H^{(k)}_{ij}(x) = \sum_{|\alpha|=k} h_{ij,\alpha} x_{\alpha}, \quad H_{ij}(x) = \sum_{k=2}^{d} H^{(k)}_{ij}(x),
\end{equation*}
and
\begin{equation*}
|H^{(k)}|^2 = \sum_{|\alpha|=k} |h_{ij,\alpha}|^2,
\end{equation*}
Then, 
$$
h_{ij}(x) = H_{ij}(x) + O(|x|^{d+1}),
$$
and $H_{ij}(x) = H_{ji}(x)$, $H_{ij}(x)x_j = 0$, and $ H_{ii}= 0$.

Fix a non-negative smooth function $\chi$, such that
$$
\chi(s)=\begin{cases}
1, & \text{when }s\leq \frac{4}{3}\\
0, & \text{when }s\geq \frac{5}{3}. 
\end{cases}
$$
Denote $\chi_{\delta}(x)=\chi(|x|/\delta)$.

\subsection{Splitting of $S_2$ on standard sphere}

Brendle's splitting can be summarized as the following lemma. 
\begin{lemma}[{\cite[Corollaries 22, 24]{MR2357502}}]\label{lem:brendle splitting}
Given any $\delta>0$, consider a traceless symmetric 2-tensor $\chi_\delta H$ on $\mathbb{R}^n$ with compact support $\supp (\chi_{\delta} H) \subseteq B_{2\delta}$, there exists a smooth vector field $X=X_{\epsilon,\delta}$ on $\mathbb{R}^n$ such that $\chi_{\delta}H=T+S$ and
\begin{align*}
\tr T&=0, \text{ and }\ddiv(u^{\frac{2n}{n-4}}_{\epsilon} T)=0,\\
\tr S&=0, \text{ and }S=\cL_Xg_e-\frac{2}{n}\ddiv X g_e.
\end{align*}
Moreover, for every multi-index $\alpha$, there is a positive constant $C=C(n,|\alpha|)$ such that
\begin{align*}
|\partial^{\alpha}X_{\epsilon, \delta} |(x)\leq C\sum_{k=2}^{d}|H^{(k)}|(\epsilon+|x|)^{k+1-|\alpha|} \text{ for all } x\in \mathbb{R}^n.
\end{align*}
\end{lemma}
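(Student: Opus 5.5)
The plan is to reduce the statement to a weighted elliptic problem for the vector field $X$, solved by a variational argument and followed by weighted estimates, in the spirit of Brendle's Propositions 21--24 in \cite{MR2357502}.

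\emph{Reduction to an equation for $X$.} Set $w:=u_\epsilon^{\frac{2n}{n-4}}$. The decomposition $\chi_\delta H=T+S$ with $S=\cL_Xg_e-\frac{2}{n}(\ddiv X)g_e$ and $\ddiv(wT)=0$ is equivalent to requiring that $X$ solve
\[
\ddiv\Big(w\big(\cL_Xg_e-\tfrac{2}{n}(\ddiv X)g_e\big)\Big)=\ddiv(w\,\chi_\delta H).
\]
We then put $T:=\chi_\delta H-S$. Both $T$ and $S$ are traceless, because $\chi_\delta H$ is traceless ($H_{ii}=0$) and $S$ is traceless by construction.

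\emph{Existence via a weighted energy.} Consider the quadratic form
\[
\mathcal{E}(Y)=\int_{\mathbb{R}^n} w\,\Big|\cL_Yg_e-\tfrac{2}{n}(\ddiv Y)g_e\Big|^2dx .
\]
Pull back to $S^n$ by stereographic projection, noting that $w\,dx$ is a multiple of the round volume and $u_\epsilon^{4/(n-4)}g_e$ is round. Then $\mathcal{E}$ becomes (up to conformal factors) the conformal Killing energy $\int_{S^n}|\bar{\mathcal{L}}Y|^2_{\bar g}$. This form is coercive on the orthogonal complement of the conformal Killing fields of $S^n$, by the standard Korn-type inequality on the sphere, which holds since the conformal Killing operator is overdetermined elliptic. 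The right-hand side $\ddiv(w\chi_\delta H)$, paired with a conformal Killing field $Z$, equals $-\tfrac12\int w\langle \chi_\delta H,\cL_Z g_e-\tfrac{2}{n}(\ddiv Z)g_e\rangle=0$, so the compatibility condition holds automatically. Lax--Milgram therefore gives a weak solution, and elliptic regularity makes it smooth.

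\emph{Decay estimates.} We use the scaling $x=\epsilon y$ to reduce to $\epsilon=1$; the bounds $(\epsilon+|x|)^{k+1-|\alpha|}$ are homogeneous in the correct way. Since the equation is linear, we treat each $H^{(k)}$ separately. The main obstacle is the pointwise weighted estimate
\[
|\partial^\alpha X|\le C|H^{(k)}|(1+|y|)^{k+1-|\alpha|}
\]
with $C$ uniform in $\delta$. Because the cutoff region is large, the right-hand side behaves like a homogeneous polynomial of degree $k$ times the weight.

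I would first look for an explicit approximate solution $X_0$ that is a polynomial of degree $k+1$ modulo lower-order terms. The operator is $\ddiv(w\,\cdot)$ with $w=(1+|y|^2)^{-n}$, and the orthogonality $y_iH_{ij}=0$ makes the radial derivative of $w$ interact nicely. This gives the sharp growth. For the remainder, I would argue by contradiction and blow-down: on dyadic annuli, rescale and use interior Schauder estimates for the elliptic system $\ddiv(w\,\mathcal{L}\cdot)$. Alternatively, on $S^n$ one can use the fact that the data vanish to order $k$ at the south pole, together with weighted Schauder theory. The $L^2$-to-pointwise step is where the constraint $k\le d$ enters: it guarantees that the weighted norms of $|y|^{k}$-type data are finite, since $2k+2-2n+n<0$ roughly, and that no obstruction arises from conformal Killing fields of matching growth.

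Higher derivatives then follow by differentiating the equation and applying local estimates on balls of radius comparable to $1+|y|$. Scaling back yields the stated bound.
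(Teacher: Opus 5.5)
The paper gives no proof of this lemma; it quotes Brendle's Corollaries 22 and 24. (Note that $u_\epsilon^{\frac{2n}{n-4}}=(\frac{\epsilon}{\epsilon^2+|x|^2})^n$ is the same weight as in the Yamabe setting.)

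Your reduction to $\partial_j\big(w\,(LX-\chi_\delta H)_{ij}\big)=0$, with $LX=\cL_Xg_e-\frac2n(\ddiv X)g_e$, is correct, and so is your existence argument. In fact $\int w|LY|^2dx=\int_{S^n}|\bar LY|^2_{g_c}dv_{g_c}$ exactly, compatibility with conformal Killing fields is automatic, and Korn, Lax--Milgram and elliptic regularity give a smooth $X$.

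The gap is in the part that carries the content of the lemma: the bound $|X(y)|\le C|H^{(k)}|(1+|y|)^{k+1}$ at $\epsilon=1$, uniformly in $R=\delta/\epsilon$. None of the mechanisms you name delivers it.
\begin{itemize}
\item The uniform energy bound (which is what $k<\frac n2$ buys), combined with local estimates on dyadic annuli, gives growth of order $|y|^{\frac n2+1}$ at best. That is worse than $|y|^{k+1}$, so there is no sharp ``$L^2$-to-pointwise'' step.
\item The polynomial approximate solution is not justified. As $|y|\to\infty$, $\partial_j\log w=-\frac{2ny_j}{1+|y|^2}$ has the same homogeneity as $\partial_j$, so the model operator is $X\mapsto|y|^{2n}\partial_j\big(|y|^{-2n}(LX)_{ij}\big)$, which does not preserve polynomial fields. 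The identity $y_iH_{ij}=0$ only simplifies the $H$-term. Any such $X_0$ would also leave a remainder needing the same weighted estimate.
\item The sphere alternative looks at the wrong pole. Near $y=0$ you only need $X$ bounded. The delicate point is the pole corresponding to $y=\infty$: there $|\bar H|_{g_c}=|\chi_RH|\le C|H^{(k)}|\rho^{-k}$, with $\rho$ the distance to that pole, and you must show $|X|_{g_c}\le C|H^{(k)}|\rho^{1-k}$ independently of the cutoff.
\end{itemize}

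The missing ingredient is a non-resonance (Liouville) argument together with a normalization. First fix $X$ modulo conformal Killing fields, e.g.\ by $L^2(S^n)$-orthogonality. Without this the estimate is false (add a translation). The condition passes to limits by dominated convergence, since $(1+|y|)^{k+1}(1+|y|)^{2}(1+|y|)^{-2n-4}$ is integrable. In the contradiction argument (normalize $\sup(1+|y|)^{-k-1}|X_m|=1$ with $|H_m|\to0$) you then need two Liouville statements.
\begin{itemize}
\item If the near-maximum points stay bounded, the limit solves the homogeneous equation on $\mathbb{R}^n$ and is $O(\rho^{1-k})$ at the pole. Since $1-k>2-n$ the singularity is removable, so the limit is a conformal Killing field and vanishes by the normalization.
\item If they escape, the rescaled limit solves the model equation on $\mathbb{R}^n\setminus\{0\}$ with $|X|\le|y|^{k+1}$. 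The model operator is the conformal Killing Laplacian of $|y|^{-4}|dy|^2$, so under the inversion $z=y/|y|^2$ the limit becomes a flat $L^*L$-harmonic field with $|X|\le C|z|^{1-k}$. It is removable at $0$ because $1-k>2-n$, and zero by Liouville because $1-k<0$.
\end{itemize}
This is where the hypotheses actually enter. $k\ge2$ keeps $1-k$ off the indicial root $0$, which is realized by the quadratic conformal Killing fields $|y|^2b-2\langle b,y\rangle y$. $k<n-1$, implied by $k\le d$, keeps it off the rate $2-n$ of the fundamental solution.

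A shorter route: write $X$ through the Green kernel $G$ of $\bar L^*\bar L$ on the complement of the conformal Killing fields and integrate by parts once. Then use $|\nabla G|\le C\,d^{1-n}$ together with $\int_{S^n}d(x,y)^{1-n}\rho(y)^{-k}dv(y)\le C\rho(x)^{1-k}$ for $1<k<n$.

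With this $C^0$ bound in hand, your scaled interior estimates on balls of radius comparable to $1+|y|$ give the derivative bounds, and undoing $x=\epsilon y$ gives the lemma.
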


According to \cite{gong2025compactnessnoncompactnesstheoremsfourth}, we have the following lemma.
\begin{lemma}\label{lem:correction term}
Given $S=\cL_Xg_e-\frac{2}{n}\ddiv X g_e$, the linearized equation
\begin{equation}\label{eq:corrector pde}
\Delta^{2} w_{\epsilon,\delta}[S] -\tmfc(n) u_{\epsilon}^{\frac{8}{n-4}} w_{\epsilon,\delta}[S]=-DP_{g_e}[S]u_{\epsilon}, \quad \text{in } \mathbb{R}^n,
\end{equation}
has a solution in the following form
\begin{equation}\label{eq:w[s] proof}
w_{\epsilon,\delta}[S]=X_i\partial_i u_{\epsilon}+\frac{n-4}{2n}\ddiv X u_{\epsilon}.
\end{equation}
We simply denote $w[S]=w_{\epsilon,\delta}[S]$.
\end{lemma}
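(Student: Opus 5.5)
The plan is to obtain \eqref{eq:w[s] proof} directly from two exact symmetries of the Paneitz operator, linearized at $g_e$: diffeomorphism invariance and conformal covariance. In particular no fourth-order PDE will actually be solved. Let $\phi_t$ be the flow generated by $X$. Naturality of $P$ gives the exact identity
\[
P_{\phi_t^*g_e}(u_\epsilon\circ\phi_t)=(P_{g_e}u_\epsilon)\circ\phi_t=\mfc(n)\,(u_\epsilon\circ\phi_t)^{\frac{n+4}{n-4}},
\]
where the second equality is \eqref{eq:bubble pde}.

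\textbf{Step 1 (diffeomorphism direction).} Differentiate this identity at $t=0$. We have $\frac{d}{dt}\big|_{t=0}\phi_t^*g_e=\cL_Xg_e$ and $\frac{d}{dt}\big|_{t=0}u_\epsilon\circ\phi_t=X_i\partial_iu_\epsilon$. This yields
\[
DP_{g_e}[\cL_Xg_e]u_\epsilon+\Delta^2(X_i\partial_iu_\epsilon)=\tmfc(n)\,u_\epsilon^{\frac{8}{n-4}}\,X_i\partial_iu_\epsilon .
\]

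\textbf{Step 2 (conformal direction).} Next, split $\cL_Xg_e=S+\tfrac{2}{n}(\ddiv X)g_e$ and use linearity of $h\mapsto DP_{g_e}[h]$. To compute $DP_{g_e}$ on the conformal part, set $\eta=\frac{n-4}{2n}\ddiv X$ and $\psi_s=(1+s\tfrac{2}{n}\ddiv X)^{\frac{n-4}{4}}$. Then $\psi_s^{\frac{4}{n-4}}g_e=g_e+s\tfrac{2}{n}(\ddiv X)g_e$ and $\frac{d}{ds}\big|_{s=0}\psi_s=\eta$. Conformal covariance gives
\[
P_{\psi_s^{4/(n-4)}g_e}\phi=\psi_s^{-\frac{n+4}{n-4}}\Delta^2(\psi_s\phi).
\]
Differentiating at $s=0$ and applying \eqref{eq:bubble pde} gives
\[
DP_{g_e}\big[\tfrac{2}{n}(\ddiv X)g_e\big]u_\epsilon=\Delta^2(\eta u_\epsilon)-\tfrac{n+4}{n-4}\,\eta\,\Delta^2u_\epsilon=\Delta^2(\eta u_\epsilon)-\tmfc(n)\,u_\epsilon^{\frac{8}{n-4}}\,\eta u_\epsilon .
\]

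\textbf{Step 3 (combine).} Substitute Step 2 into Step 1 and move the $S$-term to the right-hand side:
\[
\Delta^2\big(X_i\partial_iu_\epsilon+\eta u_\epsilon\big)-\tmfc(n)\,u_\epsilon^{\frac{8}{n-4}}\big(X_i\partial_iu_\epsilon+\eta u_\epsilon\big)=-DP_{g_e}[S]u_\epsilon .
\]
This is exactly \eqref{eq:corrector pde} for $w[S]=X_i\partial_iu_\epsilon+\frac{n-4}{2n}\ddiv X\,u_\epsilon$.

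\textbf{Main obstacle.} The only delicate point is justifying the differentiation in Step 1 on all of $\mathbb{R}^n$, since $X$ need not be compactly supported. Lemma~\ref{lem:brendle splitting} gives $X$ smooth with polynomial growth bounds. The identities in Steps 1 and 2 are pointwise and local, so they hold at each $x$ using the local flow of $X$ for small $t$, which exists on any compact set. I also need to check that the $DP_{g_e}$ in the statement is the same linear map as in Lemma~\ref{lem:expansion of P}. That lemma is stated for traceless $h$, while the conformal part $\tfrac{2}{n}(\ddiv X)g_e$ has trace, so I would take $DP_{g_e}[h]$ to mean the derivative $\frac{d}{dt}\big|_{t=0}P_{g_e+th}$, which is linear in $h$ for every $h$. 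Only its value on the traceless tensor $S$ enters the final equation, so the formula of Lemma~\ref{lem:expansion of P} applies there.
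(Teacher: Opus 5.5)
Your proof is correct. Both differentiations are right: $\frac{d}{ds}\big|_{s=0}\psi_s=\frac{n-4}{2n}\ddiv X$, and $\frac{n+4}{n-4}\mfc(n)=\tmfc(n)$. Your use of linearity of $h\mapsto DP_{g_e}[h]$ for non-traceless $h$ agrees with the paper's definition $DF_g[h]=\frac{d}{dt}\big|_{t=0}F_{g+th}$.

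The paper does not prove this lemma; it imports it from Gong--Kim--Wei. So the useful comparison is with the paper's own later machinery, and your route is essentially the operator-level form of an argument those tools provide.

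Differentiate the identity
\[
Q_{(u_\epsilon+tw)^{4/(n-4)}(g_e+tS)}=\tfrac{2}{n-4}(u_\epsilon+tw)^{-\frac{n+4}{n-4}}P_{g_e+tS}(u_\epsilon+tw)
\]
at $t=0$. This shows that \eqref{eq:corrector pde} is equivalent to $DQ_{g_c}[\bar S]=0$. Lemma~\ref{lem:barT barS} proves $\bar S=\cL_Xg_c$ using only the explicit formula for $w[S]$, not the PDE. Hence the lemma reduces to $DQ_{g_c}[\cL_Xg_c]=\nabla_XQ_{g_c}=0$, the diffeomorphism-invariance identity recorded before Proposition~\ref{prop:D2Q S direction}, because $Q_{g_c}$ is constant. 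You carry out the same idea for $P$ on $g_e$, splitting $\cL_Xg_e$ into its traceless and conformal parts instead of working on $g_c$.

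What your version buys over a direct check against the explicit $DP_{g_e}[S]$ of Lemma~\ref{lem:expansion of P} is threefold:
\begin{enumerate}
\item It never needs that formula, so you avoid tracking every fourth-order term in $X$ by hand.
\item It explains the coefficient $\frac{n-4}{2n}$ as the conformal weight $\frac{n-4}{4}$ times the trace coefficient $\frac{2}{n}$.
\item It transfers verbatim to any conformally covariant operator. For example, with the conformal Laplacian it yields the second-order corrector $X_i\partial_iu_\epsilon+\frac{n-2}{2n}\ddiv X\,u_\epsilon$.
\end{enumerate}

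Your stated obstacle about non-compactly supported $X$ is harmless. The infinitesimal naturality identity $X(P_{g}u)=DP_{g}[\cL_Xg]u+P_{g}(Xu)$ involves only finitely many jets of $X$ at a point. You may therefore replace $X$ by a compactly supported field that agrees with it near that point, which avoids local flows altogether.
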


Denote the sphere metric $g_c:=u_{\epsilon}^{\frac{4}{n-4}}g_{e}$ and the connection $\bar{\nabla}:=\nabla^{g_c}$. Then, the curvatures are
\begin{align*}
(\rm_{g_c})_{ijkl}&=4(g_c)_{il}(g_c)_{jk}-4(g_c)_{ik}(g_c)_{jl},\\
\ric_{g_c}&=4(n-1)g_c,\\
Q_{g_c}&=2n(n-2)(n+2).
\end{align*}

Recall that the metric $g=\exp(h)$ where $h_{ij}(x) = H_{ij}(x) + O(|x|^{d+1})$. It is nature to study the interpolation between the approximated metric $\exp(\chi_{\delta}H)$ and the Euclidean metric $g_e$. However, the computations with respect to $g_e$ are lengthy. So, we choose to define a interpolation between a modified metric and the sphere metric $g_c$. It is the motivation of the following definition.
\begin{definition}\label{def:barg metric}
Recall the splitting $\chi_{\delta}H=T+S$. Consider a family of metrics on $\mathbb{R}^n$:
$$
\bar{g}(t):=(u_{\epsilon}+tw[S])^{\frac{4}{n-4}}\exp(t \chi_{\delta} H),
$$
and the localized total $Q$-curvature
$$
I_{\epsilon,\delta}(t):=\int_{B_{\delta}}Q_{\bar{g}(t)}\vol_{\bar{g}(t)}.
$$
\end{definition}

Directly, we have that
\begin{align*}
\bar{g}(0)=&g_c,\\
\bar{g}'(0)=&u_{\epsilon}^{\frac{4}{n-4}}T+u_{\epsilon}^{\frac{4}{n-4}}S+\frac{4}{n-4}u_{\epsilon}^{\frac{8-n}{n-4}}w[S] g_{e}.
\end{align*}

\begin{lemma}\label{lem:barT barS}
Define
\begin{align*}
\bar{T}:=&u_{\epsilon}^{\frac{4}{n-4}}T,\\
\bar{S}:=&u_{\epsilon}^{\frac{4}{n-4}}S+\frac{4}{n-4}u_{\epsilon}^{\frac{8-n}{n-4}}w[S] g_{e}.
\end{align*}
Then, $\bar{g}'(0)=\bar{T}+\bar{S}$. Also,
$$
\bar{T}\in S^{T,T}_{2,g_c} \text{ and } \bar{S}=\cL_Xg_c.
$$
Consequently,
$$
DP_{g_e}[T]u_{\epsilon}=DQ_{g_c}[\bar{T}]=0.
$$
\end{lemma}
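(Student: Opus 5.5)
The first two assertions are direct computations from Lemma \ref{lem:brendle splitting} and Lemma \ref{lem:correction term}. The last assertion will follow from conformal covariance together with diffeomorphism invariance.

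\textbf{Step 1: $\bar T$ is transverse-traceless for $g_c$.} We have $\tr_{g_c}\bar T = \tr T = 0$. For the divergence we use the conformal change formula for $g_c = u_\epsilon^{\frac{4}{n-4}} g_e$. For a traceless tensor $T$ and $\bar T = e^{2\varphi}T$ with $e^{2\varphi} = u_\epsilon^{\frac{4}{n-4}}$, the Christoffel symbols of $g_c$ differ from the Euclidean ones by terms in $d\varphi$, and the trace terms drop out. This gives
\[
\ddiv_{g_c}\bar T = e^{-n\varphi}\,\ddiv\big(e^{n\varphi}T\big) = u_\epsilon^{-\frac{2n}{n-4}}\,\ddiv\big(u_\epsilon^{\frac{2n}{n-4}} T\big) = 0
\]
by Lemma \ref{lem:brendle splitting}. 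Hence $\bar T\in S^{T,T}_{2,g_c}$.

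\textbf{Step 2: $\bar S = \cL_X g_c$.} We compute the Lie derivative of $g_c$:
\[
\cL_X g_c = u_\epsilon^{\frac{4}{n-4}}\cL_X g_e + \frac{4}{n-4}u_\epsilon^{\frac{8-n}{n-4}}\,(X_i\partial_i u_\epsilon)\, g_e .
\]
Substituting $\cL_X g_e = S + \frac{2}{n}(\ddiv X)\, g_e$ gives
\[
\cL_X g_c = u_\epsilon^{\frac{4}{n-4}}S + \frac{4}{n-4}u_\epsilon^{\frac{8-n}{n-4}}\Big(X_i\partial_i u_\epsilon + \frac{n-4}{2n}(\ddiv X)\, u_\epsilon\Big) g_e .
\]
By \eqref{eq:w[s] proof}, the bracket is exactly $w[S]$. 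This proves the identity, and $\bar g'(0)=\bar T+\bar S$ is then immediate from the displayed formula for $\bar g'(0)$.

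\textbf{Step 3: $DP_{g_e}[T]u_\epsilon = DQ_{g_c}[\bar T] = 0$.} The second equality is Lemma \ref{lem:einstein Q}, since $g_c$ is Einstein and $\bar T$ is TT by Step 1.

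For the first equality, consider the one-parameter family $t\mapsto u_\epsilon^{\frac{4}{n-4}}\exp(tT)$. By conformal covariance,
\[
Q_{u_\epsilon^{\frac{4}{n-4}}\exp(tT)} = \frac{2}{n-4}\,u_\epsilon^{-\frac{n+4}{n-4}}\,P_{\exp(tT)}u_\epsilon .
\]
Differentiate at $t=0$. The metric variation on the left is $u_\epsilon^{\frac{4}{n-4}}T = \bar T$, so
\[
DQ_{g_c}[\bar T] = \frac{2}{n-4}\,u_\epsilon^{-\frac{n+4}{n-4}}\,DP_{g_e}[T]u_\epsilon .
\]
Since the left side vanishes, so does $DP_{g_e}[T]u_\epsilon$.

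\textbf{Main obstacle.} The only delicate point is the divergence computation in Step 1. There one must check that the weight $u_\epsilon^{\frac{2n}{n-4}}$ is exactly the one appearing in Brendle's splitting. This works because $e^{n\varphi} = u_\epsilon^{\frac{2n}{n-4}}$.
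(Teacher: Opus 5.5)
Your proposal is correct and follows the paper's proof essentially step for step. It uses the same conformal-change computation reducing $\ddiv_{g_c}\bar T$ to $u_\epsilon^{-\frac{2n}{n-4}}\ddiv\big(u_\epsilon^{\frac{2n}{n-4}}T\big)$, the same expansion of $\cL_X g_c$ that recovers $w[S]$, and the same differentiation of $Q_{u_\epsilon^{4/(n-4)}g(t)}=\frac{2}{n-4}u_\epsilon^{-\frac{n+4}{n-4}}P_{g(t)}u_\epsilon$ combined with Lemma~\ref{lem:einstein Q} (the diffeomorphism invariance you mention at the outset is not actually needed for this lemma).
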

\begin{proof}
In this proof, we abbreviate $u=u_{\epsilon}$.

Firstly, we will prove $\bar{S}=\cL_Xg_c$. Combining with Lemma \ref{lem:brendle splitting} and \eqref{eq:w[s] proof}, we have that
\begin{align*}
\bar{S}_{ij}=&u^{\frac{4}{n-4}} \left(X_{i,j}+X_{j,i}-\frac{2}{n}\ddiv X (g_e)_{ij}\right)\\
&+\frac{4}{n-4}u^{\frac{8-n}{n-4}}\left(X_l\partial_l u+\frac{n-4}{2n}\ddiv X u\right)(g_e)_{ij}\\
=&u^{\frac{4}{n-4}} \left(X_{i,j}+X_{j,i}\right)+X_l\partial_l(u^{\frac{4}{n-4}}(g_e)_{ij})\\
=&\bar{\nabla}_iX_j+\bar{\nabla}_jX_i=(\cL_Xg_c)_{ij}.
\end{align*}

Secondly, we will prove $\bar{T}\in S^{T,T}_{2,g_c}$. It is straightforward that $\tr_{g_c} \bar{T}=0$. Next, we just need to compute the divergence $\ddiv_{g_c} \bar{T}$. By direct computations, we have
\begin{equation}\label{eq: Qijk}
\begin{aligned}
\bar{\nabla}_{k}\bar{T}_{ij}=u^{\frac{8-n}{n-4}}\bigg(u T_{ij,k}\bigg.&-\frac{2}{n-4}u_{,j}T_{ik}+\frac{2}{n-4}u_{,l}T_{il}(g_{e})_{jk}\\
&-\frac{2}{n-4}u_{,i}T_{jk}+\bigg.\frac{2}{n-4}u_{,l}T_{jl}(g_{e})_{ik}\bigg).
\end{aligned}
\end{equation}
So, $\ddiv_{g_c} \bar{T}=u^{-\frac{2n}{n-4}}(u^{\frac{2n}{n-4}}T_{ij})_{,j}$, which vanishes by Lemma \ref{lem:brendle splitting}.

Thirdly, we will show that $DP_{g_e}[T]u=0$. Consider a family of metrics $g(t)$ with $g(0)=g_e$ and $g'(0)=T$. Doing conformal change, we have that
$$
Q_{u^{\frac{4}{n-4}}g(t)}=\frac{2}{n-4}u^{-\frac{n+4}{n-4}}P_{g(t)}u.
$$
Applying $\partial_t|_{t=0}$ to it, we have that
$$
DQ_{g_c}[\bar{T}]=\frac{2}{n-4}u^{-\frac{n+4}{n-4}}DP_{g_e}[T]u.
$$
Since $\bar{T}\in S^{T,T}_{2,g_c}$ and Lemma \ref{lem:einstein Q}, $DQ_{g_c}[\bar{T}]=0$. Combining with the equation above, we can conclude this lemma.
\end{proof}

To study the localized total $Q$-curvature, we can also do conformal change.
\begin{align*}
I_{\epsilon,\delta}(t)&=\frac{2}{n-4}\int_{B_{\delta}}(u_{\epsilon}+tw[S])P_{\exp (tH)}(u_{\epsilon}+tw[S]) dx.
\end{align*}
Apparently, we can compute $I_{\epsilon,\delta}(0)$ in both Euclidean coordinate and sphere coordinate.
\begin{align*}
I_{\epsilon,\delta}(0)&=\frac{2}{n-4}\int_{B_{\delta}}u_{\epsilon}\Delta^2u_{\epsilon}=\frac{2\mfc(n)}{n-4}\int_{B_{\delta}}u_{\epsilon}^{\frac{2n}{n-4}}\\
&=Q_{g_c}\int_{B_{\delta}}\vol_{g_c}=\frac{2\mfc(n)}{n-4}\int_{B_{\delta}}\vol_{g_c}.
\end{align*}

\begin{proposition}\label{prop: properties of barg}
We have
\begin{equation}\label{eq:barg''}
\bar{g}''(0)=u_{\epsilon}^{\frac{4}{n-4}}\left(\chi_{\delta}H\times H+\frac{8}{n-4}\frac{w[S]}{u_{\epsilon}}\chi_{\delta}H-\frac{4(n-8)}{(n-4)^2}\frac{w^2[S]}{u_{\epsilon}^2}g_e\right),
\end{equation}
\begin{equation}\label{eq:trace barS}
\ddiv_{g_c} X=\frac{1}{2}\tr_{g_c}\bar{S}=\frac{2n}{n-4}\frac{w[S]}{u_{\epsilon}}.
\end{equation}
\end{proposition}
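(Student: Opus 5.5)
The plan is to prove both formulas by direct computation from Definition \ref{def:barg metric}, writing $u=u_\epsilon$ and $w=w[S]$.

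\emph{Formula for $\bar g''(0)$.} Write $\bar g(t)=\phi(t)\,E(t)$ with $\phi(t)=(u+tw)^{\frac{4}{n-4}}$ and $E(t)=\exp(t\chi_\delta H)$. The Leibniz rule gives
\[
\bar g''(0)=\phi''(0)E(0)+2\phi'(0)E'(0)+\phi(0)E''(0).
\]
The ingredients are:
\begin{itemize}
\item $E(0)=g_e$, $E'(0)=\chi_\delta H$, and $E''(0)=(\chi_\delta H)\times(\chi_\delta H)$, the square of the matrix.
\item $\phi'(0)=\frac{4}{n-4}u^{\frac{8-n}{n-4}}w$.
\item $\phi''(0)=\frac{4}{n-4}\cdot\frac{8-n}{n-4}u^{\frac{12-2n}{n-4}}w^2=-\frac{4(n-8)}{(n-4)^2}u^{\frac{4}{n-4}}\frac{w^2}{u^2}$.
\end{itemize}
Factoring out $u^{\frac{4}{n-4}}$ turns the cross term $2\phi'(0)\chi_\delta H$ into $\frac{8}{n-4}\frac{w}{u}\chi_\delta H$, and the other two terms match \eqref{eq:barg''} directly.

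One point needs care. The stated formula has $\chi_\delta H\times H$ rather than $\chi_\delta^2\,H\times H$. On the region where $\chi_\delta=1$, which contains $B_\delta$ where $I_{\epsilon,\delta}$ is integrated, the two agree. So I would read the identity on that region. Alternatively, if the intended interpolation is $\exp(tH)$ inside $B_\delta$, as in the subsequent expression for $I_{\epsilon,\delta}(t)$, the identity holds there exactly.

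\emph{Formula for $\ddiv_{g_c}X$.} First, since $\bar S=\cL_Xg_c$ by Lemma \ref{lem:barT barS}, we have $\tr_{g_c}\bar S=2\,\ddiv_{g_c}X$. Second, compute the trace directly:
\begin{itemize}
\item $S$ is $g_e$-traceless, so $u^{\frac{4}{n-4}}S$ contributes nothing to $\tr_{g_c}\bar S$.
\item $\tr_{g_c}\big(\frac{4}{n-4}u^{\frac{8-n}{n-4}}w\,g_e\big)=u^{-\frac{4}{n-4}}\cdot n\cdot\frac{4}{n-4}u^{\frac{8-n}{n-4}}w=\frac{4n}{n-4}\frac{w}{u}$.
\end{itemize}
Halving gives $\frac{2n}{n-4}\frac{w}{u}$, which is \eqref{eq:trace barS}.

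As an independent check, use $\ddiv_{g_c}X=\ddiv X+X(\log u^{\frac{2n}{n-4}})=\ddiv X+\frac{2n}{n-4}\frac{X_i\partial_iu}{u}$. This agrees with $\frac{2n}{n-4}\frac{w}{u}$ by \eqref{eq:w[s] proof}.

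The only real obstacle is bookkeeping the exponents and the cutoff $\chi_\delta$, namely whether the $H\times H$ term carries $\chi_\delta$ or $\chi_\delta^2$. Everything else is routine.
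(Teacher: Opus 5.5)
Your computation is correct and is the same direct calculation the paper has in mind (the paper's proof only says it is straightforward). The Leibniz expansion of $(u_\epsilon+tw[S])^{\frac{4}{n-4}}\exp(t\chi_\delta H)$ gives \eqref{eq:barg''}, and tracing $\bar S=\cL_X g_c$ gives \eqref{eq:trace barS}. You are also right that the quadratic term is literally $\chi_\delta^2\,H\times H$; this is harmless because $\chi_\delta\equiv 1$ on $B_{4\delta/3}$, and \eqref{eq:barg''} is only used on $\overline{B_\delta}$ (for instance in Lemma \ref{lem:first step in I''}).
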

\begin{proof}
The proofs are straightforward.
\end{proof}

Also, we can compute $I_{\epsilon,\delta}'(0)$ in both Euclidean coordinate and sphere coordinate. For the Euclidean one, we need to use equations \eqref{eq:bubble pde} and \eqref{eq:corrector pde}. For the sphere one, we need to use Lemma \ref{lem:einstein Q}, Propositions \ref{prop:D2Q S direction} and \ref{prop: properties of barg}.
\begin{align*}
&I_{\epsilon,\delta}'(0)\\
=&\frac{2}{n-4}\int_{B_{\delta}}u_{\epsilon} DP_{g_e}[S]u_{\epsilon} + u_{\epsilon}\Delta^2 w[S] +w[S]\Delta^2 u_{\epsilon}=\frac{4n\mfc(n)}{(n-4)^2}\int_{B_{\delta}}w[S] u_{\epsilon}^{\frac{n+4}{n-4}}\\
=&\int_{B_\delta}DQ_{g_c}[\bar{T}+\bar{S}]+\frac{1}{2}Q_{g_c}\tr_{g_c}(\bar{T}+\bar{S})\vol_{g_c}=\frac{1}{2}Q_{g_c}\int_{B_\delta}\tr_{g_c}(\bar{S})\vol_{g_c}.
\end{align*}

\subsection{The second variation of total $Q$-curvature}

Due to the complexity of $D^2P_{g_e}[H,H]$, it becomes much harder to compute $I_{\epsilon,\delta}''(0)$ in Euclidean coordinate. In this subsection, we will compute the second derivative $I''_{\epsilon,\delta}(0)$ using sphere coordinate.

There are two direct integration by parts formulas.
\begin{lemma}
Let $\hat{\bar{n}}$ be the unit normal vector with respect to $g_c$.
\begin{align}
\int_{B_\delta}\tr_{g_c}(\bar{S})\vol_{g_c}=&2\int_{B_\delta} \ddiv_{g_c}X\vol_{g_c}=2\int_{\partial B_{\delta}}\<X,\hat{\bar{n}}\>_{g_c}dS_{g_c},\label{eq:trace S IBP}\\
\int_{B_\delta} \<\bar{T},\bar{S}\>_{g_c}\vol_{g_c}=&2\int_{B_\delta} \<\bar{T},\bar{\nabla}X\>_{g_c}\vol_{g_c}=2\int_{\partial B_{\delta}}\bar{T}(X,\hat{\bar{n}})dS_{g_c}. \label{eq:TS IBP}
\end{align}
\end{lemma}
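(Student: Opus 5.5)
Both identities are divergence-theorem statements on the ball $B_\delta$ equipped with the round metric $g_c=u_{\epsilon}^{\frac{4}{n-4}}g_e$. The inputs we need from earlier results are $\bar S=\cL_X g_c$ and $\bar T\in S^{T,T}_{2,g_c}$, both given by Lemma \ref{lem:barT barS}.

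For \eqref{eq:trace S IBP}, we use Lemma \ref{lem:Lie derivatives of g}(a) with respect to $g_c$, which gives $\bar S_{ij}=\bar\nabla_iX_j+\bar\nabla_jX_i$. Taking the $g_c$-trace yields $\tr_{g_c}\bar S=2\,\ddiv_{g_c}X$; this is consistent with \eqref{eq:trace barS} in Proposition \ref{prop: properties of barg}. The divergence theorem on $(B_\delta,g_c)$ then gives
\[
\int_{B_\delta}\ddiv_{g_c}X\vol_{g_c}=\int_{\partial B_\delta}\<X,\hat{\bar n}\>_{g_c}dS_{g_c},
\]
where $\hat{\bar n}$ is the $g_c$-unit outward normal.

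For \eqref{eq:TS IBP}, the first equality uses the symmetry of $\bar T$:
\[
\<\bar T,\bar S\>_{g_c}=\bar T^{ij}(\bar\nabla_iX_j+\bar\nabla_jX_i)=2\bar T^{ij}\bar\nabla_iX_j=2\<\bar T,\bar\nabla X\>_{g_c}.
\]
For the second equality, we write
\[
\bar T^{ij}\bar\nabla_iX_j=\bar\nabla_i(\bar T^{ij}X_j)-(\ddiv_{g_c}\bar T)^jX_j.
\]
The last term vanishes because $\ddiv_{g_c}\bar T=0$. Applying the divergence theorem to the vector field $\bar T(X,\cdot)^\sharp$ gives the boundary integral $\int_{\partial B_\delta}\bar T(X,\hat{\bar n})\,dS_{g_c}$.

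The only point that needs some care is conventions. The trace, divergence, normal, and boundary measure must all be taken with respect to $g_c$, whereas the vector field $X$ comes from Lemma \ref{lem:brendle splitting}, which is stated in Euclidean terms. This causes no problem, because the identities $\bar S=\cL_Xg_c$ and $\ddiv_{g_c}\bar T=0$ from Lemma \ref{lem:barT barS} are already written intrinsically in $g_c$. Everything is smooth on the closed ball, so no further regularity issue arises.
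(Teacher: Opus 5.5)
Your proof is correct and matches what the paper intends. The paper simply labels these as direct integration-by-parts formulas, and the argument behind them is yours: $\bar S=\cL_Xg_c$ gives $\tr_{g_c}\bar S=2\,\ddiv_{g_c}X$ (cf.\ \eqref{eq:trace barS}), and $\bar T$ is symmetric and $g_c$-divergence-free by Lemma \ref{lem:barT barS}, after which one applies the divergence theorem on $(B_\delta,g_c)$.
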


\begin{lemma}\label{lem:first step in I''}
We have
\begin{align*}
I''_{\epsilon,\delta}(0)= & \int_{B_{\delta}} D^2Q_{g_c} [\bar{T}, \bar{T}]+D^2Q_{g_c} [\bar{S}, \bar{S}] \vol_{g_c} \\
&-4(n^2-4)\int_{B_{\delta}}  |\bar{T}+\bar{S}|^2_{g_c}  -\frac{4n}{n-4} \frac{w^2[S]}{u_{\epsilon}^2}\vol_{g_c}\\
&+\frac{4n\tmfc(n)}{(n-4)^2}\int_{B_{\delta}} \frac{w^2[S]}{u_{\epsilon}^2}\vol_{g_c}+\textup{BT}
\end{align*}
where the \textup{BT} (boundary terms) are
\begin{align*}
\textup{BT}=&-\frac{1}{2(n-1)}\int_{\partial B_{\delta}}\partial_{\hat{\bar{n}}} DR_{g_c}[\bar{g}''(0)]dS_{g_c}\\
&-\frac{n^2-4}{(n-1)^2}\int_{\partial B_{\delta}}\partial_{\hat{\bar{n}}}\tr_{g_c}\bar{g}''(0)-\ddiv_{g_c}\bar{g}''(0)(\hat{n})dS_{g_c}.
\end{align*}
\end{lemma}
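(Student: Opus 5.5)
We compute $I''_{\epsilon,\delta}(0)$ by differentiating $I_{\epsilon,\delta}(t)=\int_{B_\delta}Q_{\bar g(t)}\vol_{\bar g(t)}$ twice at $t=0$ in the sphere picture. Writing $h:=\bar g'(0)=\bar T+\bar S$ and $k:=\bar g''(0)$, the chain rule gives
\begin{align*}
I''_{\epsilon,\delta}(0)=\int_{B_\delta}\Big(D^2Q_{g_c}[h,h]+DQ_{g_c}[k]+2DQ_{g_c}[h]\,\tfrac12\tr_{g_c}h+Q_{g_c}\big(\tfrac14(\tr_{g_c}h)^2-\tfrac12|h|^2_{g_c}+\tfrac12\tr_{g_c}k\big)\Big)\vol_{g_c}.
\end{align*}
We then simplify each group of terms, using only the earlier results.

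\emph{Quadratic $D^2Q$ term.} Expand $D^2Q_{g_c}[h,h]=D^2Q_{g_c}[\bar T,\bar T]+2D^2Q_{g_c}[\bar T,\bar S]+D^2Q_{g_c}[\bar S,\bar S]$. By Lemma \ref{lem:barT barS}, $\bar S=\cL_Xg_c$ and $\bar T\in S^{T,T}_{2,g_c}$. Proposition \ref{prop:D2Q S direction}(c) then kills the integrated cross term. This leaves the first line of the claimed formula.

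\emph{Terms linear in $DQ$.} By Lemma \ref{lem:einstein Q}, $DQ_{g_c}[\bar T]=0$. By diffeomorphism invariance, $DQ_{g_c}[\bar S]=\nabla_XQ_{g_c}=0$, since $Q_{g_c}$ is constant. Hence the mixed term $DQ[h]\tr h$ vanishes pointwise.

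For $DQ_{g_c}[k]$ we apply \eqref{eq:DQ einstein} with $\lambda=4(n-1)$. Since $c(n)\lambda=-2(n^2-4)$ and $\alpha_n=-\frac1{2(n-1)}$,
\[
DQ_{g_c}[k]=-\tfrac{1}{2(n-1)}\Delta_{g_c}DR_{g_c}[k]+\tfrac{n^2-4}{n-1}DR_{g_c}[k].
\]
The Laplacian piece integrates to the first boundary term. For the second piece we use Lemma \ref{lem:einstein R}:
\[
DR[k]=\ddiv^2k-\Delta\tr k-4(n-1)\tr k.
\]
The divergence terms become $-\frac{n^2-4}{n-1}\int_{\partial B_\delta}(\partial_{\hat{\bar n}}\tr k-\ddiv k(\hat{\bar n}))$. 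The normalization in the statement carries a coefficient $\frac{n^2-4}{(n-1)^2}$ on this term, so the constant has to be rechecked against the convention for $\alpha_n$; this is bookkeeping. The interior remainder is $-4(n^2-4)\int\tr_{g_c}k$.

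\emph{Volume and $Q$ terms.} These contribute $Q_{g_c}=2n(n^2-4)$ times $\int\big(\tfrac14(\tr h)^2-\tfrac12|h|^2+\tfrac12\tr k\big)$. The $\tr k$ pieces combine to
\[
\big(n(n^2-4)-4(n^2-4)\big)\int\tr k=(n-4)(n^2-4)\int\tr k .
\]
We evaluate $\tr_{g_c}k$ from \eqref{eq:barg''}. The piece $\chi_\delta H\times H$ has trace $|\chi_\delta H|^2$, which in $g_c$-norm is $|h|^2_{g_c}$ up to using $\chi_\delta^2$ versus $\chi_\delta$ on $B_\delta$, where $\chi_\delta\equiv1$. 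The piece $\frac{8}{n-4}\frac{w}{u}\chi_\delta H$ is traceless. The last piece gives $-\frac{4n(n-8)}{(n-4)^2}\frac{w^2}{u^2}$. Also $\tr_{g_c}h=\tr\bar S=\frac{4n}{n-4}\frac wu$ by \eqref{eq:trace barS}, so $(\tr h)^2=\frac{16n^2}{(n-4)^2}\frac{w^2}{u^2}$.

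Collecting the $|h|^2$ coefficients gives $-n(n^2-4)+(n-4)(n^2-4)=-4(n^2-4)$, which matches the second line. Collecting the $w^2/u^2$ coefficients should reproduce $\frac{16n(n^2-4)}{(n-4)^2}+\frac{4n\tmfc(n)}{(n-4)^2}$. Recall $\tmfc=\frac{n+4}{n-4}(n-4)(n-2)n(n+2)=(n+4)n(n^2-4)$.

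\emph{Main obstacle.} The only real difficulty is this final constant bookkeeping, which requires checking both the $w^2/u^2$ coefficient and the boundary constants. I would first verify the identity by an independent route. In the Euclidean picture, $I''(0)$ contains $\frac{2}{n-4}\cdot 2\int w\Delta^2w$, and \eqref{eq:corrector pde} brings in exactly the term $\tmfc\, u^{\frac8{n-4}}w^2$. This explains the $\tmfc$ coefficient and serves as a consistency check on the sphere-side computation.
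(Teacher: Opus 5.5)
Your skeleton is exactly the paper's proof: the chain rule for $I''_{\epsilon,\delta}(0)$; $DQ_{g_c}[\bar T]=DQ_{g_c}[\bar S]=0$ to drop the $DQ[\bar g'(0)]\,\tr_{g_c}\bar g'(0)$ term; Proposition~\ref{prop:D2Q S direction}(c) for the cross term; \eqref{eq:DQ einstein} with Lemma~\ref{lem:einstein R} to turn $\int_{B_\delta}DQ_{g_c}[\bar g''(0)]\vol_{g_c}$ into $-4(n^2-4)\int_{B_\delta}\tr_{g_c}\bar g''(0)\vol_{g_c}+\textup{BT}$; and \eqref{eq:barg''}, \eqref{eq:trace barS} for the traces. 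However, the lemma is an exact identity, and its only real content is the final constant bookkeeping. You leave that step at ``should reproduce'', and the identity you set up for it is wrong, so as written the computation does not close.

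\textbf{The error.} You claim that $\tr_{g_c}\big(u_\epsilon^{\frac{4}{n-4}}\chi_\delta H\times H\big)=|H|^2$ equals $|\bar T+\bar S|^2_{g_c}$. It does not. On $B_\delta$,
\[
\bar T+\bar S=u_\epsilon^{\frac{4}{n-4}}H+\frac{4}{n-4}\frac{w[S]}{u_\epsilon}g_c ,
\]
which has a pure-trace part. Since $\tr H=0$,
\[
|\bar T+\bar S|^2_{g_c}=|H|^2+\frac{16n}{(n-4)^2}\frac{w^2[S]}{u_\epsilon^2},
\qquad\text{hence}\qquad
\tr_{g_c}\bar g''(0)=|\bar T+\bar S|^2_{g_c}-\frac{4n}{n-4}\frac{w^2[S]}{u_\epsilon^2}.
\]
This is the identity the paper uses. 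Your version gives $-\frac{4n(n-8)}{(n-4)^2}$ instead, which makes the $w^2/u^2$ coefficient too large by $\frac{16n(n^2-4)}{n-4}$.

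\textbf{The target.} You also misread it: the second line contributes $+\frac{16n(n^2-4)}{n-4}$, not $\frac{16n(n^2-4)}{(n-4)^2}$.

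\textbf{Closing the computation.} With the correct trace it is immediate. The second line of the statement is literally $-4(n^2-4)\int_{B_\delta}\tr_{g_c}\bar g''(0)\vol_{g_c}$. In the volume terms, $-2|\bar g'(0)|^2_{g_c}+2\tr_{g_c}\bar g''(0)$ cancels the $|\bar T+\bar S|^2$ part, leaving
\[
\frac{Q_{g_c}}{4}\Big(\frac{16n^2}{(n-4)^2}-\frac{8n}{n-4}\Big)\frac{w^2[S]}{u_\epsilon^2}=\frac{n(n^2-4)}{2}\cdot\frac{8n(n+4)}{(n-4)^2}\frac{w^2[S]}{u_\epsilon^2}=\frac{4n\tmfc(n)}{(n-4)^2}\frac{w^2[S]}{u_\epsilon^2},
\]
using $\tmfc(n)=n(n+4)(n^2-4)$. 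This is the third line.

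\textbf{Smaller remarks.} Your boundary coefficient $\frac{n^2-4}{n-1}$ is correct. The $(n-1)^2$ in the statement is a misprint; Proposition~\ref{prop:secondvonsphere} uses $\frac{n^2-4}{n-1}$, and the discrepancy is harmless because these terms are only bounded in absolute value. Your Euclidean consistency check tests the coefficient $\frac{4\tmfc(n)}{n-4}$. That coefficient appears only after adding Proposition~\ref{prop:D2Q S direction}(b) for $D^2Q_{g_c}[\bar S,\bar S]$, so it does not directly check the lemma's $\frac{4n\tmfc(n)}{(n-4)^2}$.
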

\begin{proof}
Direct computation yields
\begin{align*}
I''_{\epsilon,\delta}(0)=&\int_{B_{\delta}} D^2Q_{g_c} [g'(0), g'(0)] +DQ_{g_c}[g''(0)] + DQ_{g_c} [g'(0)] \tr_{g_c} g'(0)  \vol_{g_c} \\
+&\frac{1}{4}Q_{g_c}\int_{B_{\delta}}  (\tr_{g_c} g'(0))^2 - 2|g'(0)|^2_{g_c}  + 2 \tr_{g_c} g''(0)\vol_{g_c}.
\end{align*}
By Lemma \ref{lem:einstein Q} and Proposition \ref{prop:D2Q S direction}, $DQ_{g_c} [\bar{T}]=DQ_{g_c} [\bar{S}]=0$. Combining with $g'(0)=\bar{T}+ \bar{S}$, we have
\begin{align*}
I''_{\epsilon,\delta}(0)= & \int_{B_{\delta}} D^2Q_{g_c} [\bar{T}+\bar{S}, \bar{T}+\bar{S}] +DQ_{g_c}[g''(0)]  \vol_{g_c} \\
&+\frac{1}{4}Q_{g_c}\int_{B_{\delta}}  (\tr_{g_c} \bar{S})^2 - 2|\bar{T}+\bar{S}|^2_{g_c}  + 2 \tr_{g_c} g''(0)\vol_{g_c}.
\end{align*}

Because of Lemma \ref{lem:einstein R} and equation \eqref{eq:DQ einstein}, we can do integration by parts, 
\begin{align*}
\int_{B_\delta} DQ_{g_c}[g''(0)]\vol_{g_c}&=\int_{B_\delta}\alpha_n(\Delta+c(n)\lambda)DR_{g_c}[g''(0)]\vol_{g_c}\\
&=-4(n^2-4)\int_{B_\delta}\tr_{g_c}g''(0)\vol_{g_c}+\textup{BT},
\end{align*}
where the boundary terms are the ones in the statement of this lemma.

Also, taking trace $\tr_{g_c}$ of equation \eqref{eq:barg''}, we have that
$$
\tr_{g_c}g''(0)=  |\bar{T}+\bar{S}|^2_{g_c}  -\frac{4n}{n-4} \frac{w^2[S]}{u_{\epsilon}^2}.
$$
Combining the three equations above and \eqref{eq:trace barS}, we obtain the conclusion.
\end{proof}

\begin{proposition}\label{prop:secondvonsphere}
We have
\begin{align*}
I''_{\epsilon,\delta}\leq & -\int_{B_{\delta}}\frac{n^2-4}{2(n-1)}(|\bar{\nabla} \bar{T}|_{g_c}^2+8|\bar{T}|_{g_c}^2)+\frac{1}{(n-2)^2}|\Delta_{g_c} \bar{T}- 8 {\bar{T}}|_{g_c}^2 \vol_{g_c} \\
&+\frac{4\tmfc(n)}{n-4}\int_{B_{\delta}} \frac{w^2[S]}{u_{\epsilon}^2}\vol_{g_c}+C \sum_{k=2}^d|H^{(k)}|^2\delta^{-n+2k+4}\epsilon^{n-4}.
\end{align*}
\end{proposition}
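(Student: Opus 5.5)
Starting from Lemma \ref{lem:first step in I''}, I will treat the two second-variation integrals separately, using $\lambda=4(n-1)$ and $c=4$ for $g_c$.

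\textbf{The $\bar T$ direction.} Since $\bar{T}\in S^{T,T}_{2,g_c}$ by Lemma \ref{lem:barT barS}, Proposition \ref{prop:D2Q TT direction} gives
\[
\int_{B_\delta} D^2Q_{g_c}[\bar T,\bar T]\vol_{g_c} = \frac{n^2-4}{n-1}\int_{B_\delta}\Big(-\tfrac12|\bar\nabla\bar T|^2+\lambda|\bar T|^2+\rm_{iklj}\bar T^{kl}\bar T^{ij}\Big) - \frac{1}{(n-2)^2}\|\Delta_L\bar T+2\lambda\bar T\|^2 + \textup{BT}.
\]
For constant curvature $4$ and trace-free $\bar T$ we have $\rm_{iklj}\bar T^{kl}\bar T^{ij}=-4|\bar T|^2$. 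Moreover $\Delta_L\bar T=\Delta\bar T+8n\bar T-8(n-1)\bar T$, so $\Delta_L\bar T+2\lambda\bar T=\Delta\bar T+8n\bar T$. I will match this with the stated combination $\Delta_{g_c}\bar T-8\bar T$ after fixing sign conventions carefully; this is a bookkeeping check and I will not expand it here.

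\textbf{The $\bar S$ direction and the leftover zeroth-order terms.} Proposition \ref{prop:D2Q S direction}(b) gives $4(n^2-4)\int(-2(\ddiv X)^2+|\bar S|^2)$ plus boundary terms. I will combine this with $-4(n^2-4)\int|\bar T+\bar S|^2$:
\begin{itemize}
\item the $|\bar S|^2$ terms cancel exactly;
\item the cross term $\langle\bar T,\bar S\rangle$ becomes a boundary term by \eqref{eq:TS IBP};
\item the cross term $D^2Q[\bar T,\bar S]$ is absent, by Proposition \ref{prop:D2Q S direction}(c).
\end{itemize}
What remains is $-4(n^2-4)\int|\bar T|^2$, together with $-8(n^2-4)\int(\ddiv X)^2$ and the $w^2/u^2$ terms. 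Using \eqref{eq:trace barS}, $(\ddiv X)^2=\frac{4n^2}{(n-4)^2}w^2/u^2$. Adding the $\frac{16n(n^2-4)}{n-4}$ and $\frac{4n\tmfc}{(n-4)^2}$ contributions, the total $w^2/u^2$ coefficient should simplify to $\frac{4\tmfc}{n-4}$; I will confirm this with $\tmfc=\frac{n+4}{n-4}(n-4)(n-2)n(n+2)$. The $|\bar T|^2$ coefficients likewise combine to $-\frac{n^2-4}{2(n-1)}\cdot 8$. Any discrepancy in the coercive part can be absorbed, since only an inequality is claimed.

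\textbf{Main obstacle: the boundary terms.} Every boundary term lives on $\partial B_\delta$, where $\chi_\delta=1$, so $T+S=H$ there. Each is a product of two factors from $X$, $\bar T$, $\bar S$, $\bar g''(0)$ and their derivatives, up to total order three, integrated against $dS_{g_c}$. Pulling back to Euclidean quantities, $g_c=u_\epsilon^{4/(n-4)}g_e$ with $u_\epsilon^{4/(n-4)}\sim\epsilon^2|x|^{-4}$ at $|x|=\delta\gg\epsilon$. The bound of Lemma \ref{lem:brendle splitting}, $|\partial^\alpha X|\le C\sum_k|H^{(k)}|\delta^{k+1-|\alpha|}$, also gives $T=H-S$ of size $\sum_k|H^{(k)}|\delta^k$. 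Inserting these into each boundary integrand (including the factors $w/u\sim X\cdot\partial\log u+\ddiv X$, and $\bar g''(0)$ from \eqref{eq:barg''}) with the conformal weights, I expect every term to be bounded by $\sum_k|H^{(k)}|^2\delta^{2k+4-n}\epsilon^{n-4}$, using Cauchy--Schwarz on mixed $k,l$ products. The delicate point is verifying that each conformal power of $u_\epsilon$ from $\hat{\bar n}$, $dS_{g_c}$ and $\bar\nabla$ produces exactly $\epsilon^{n-4}$ and the right power of $\delta$. I will handle this by scaling: $|x|\sim\delta$, each derivative costs $\delta^{-1}$, the area element contributes $\delta^{n-1}$, and the $u_\epsilon$ weights give $(\epsilon/\delta^2)^{n-4}$ overall.
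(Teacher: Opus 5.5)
Your route is the paper's. You start from Lemma~\ref{lem:first step in I''}, insert Proposition~\ref{prop:D2Q TT direction} for $\bar T$ and Proposition~\ref{prop:D2Q S direction}(b) for $\bar S=\cL_Xg_c$, and cancel the $|\bar S|^2_{g_c}$ terms. You then convert $\langle\bar T,\bar S\rangle_{g_c}$ to a boundary integral by \eqref{eq:TS IBP} and bound every boundary integral on $\partial B_\delta$ by scaling.

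Your coefficient bookkeeping is correct:
the $|\bar T|^2$ terms give $\frac{n^2-4}{n-1}\cdot 4(n-2)-4(n^2-4)=-\frac{4(n^2-4)}{n-1}$, and the $w^2[S]/u_\epsilon^2$ terms give $\frac{4\tmfc(n)}{n-4}$ because $\tmfc(n)=n(n+4)(n^2-4)$. Your weight count is also right for the worst term, the $\partial_{\hat{\bar n}}D^2R_{g_c}[\bar T,\bar T]$ integral. There $u_\epsilon^{-\frac{4}{n-4}}\,u_\epsilon^{-\frac{2}{n-4}}\,u_\epsilon^{\frac{2n-2}{n-4}}=u_\epsilon^{2}\approx(\epsilon/\delta^2)^{n-4}$ reproduces the paper's bound $\epsilon^{n-4}\delta^{2k+4-n}|H^{(k)}|^2$. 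The remaining boundary terms carry extra factors $(\epsilon/\delta)^2$ or $(\epsilon/\delta)^4$.

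The one step that is actually wrong is the Lichnerowicz Laplacian. Use the paper's definition of $\Delta_L$ and $\rm_{iklj}=4\big((g_c)_{ij}(g_c)_{kl}-(g_c)_{il}(g_c)_{kj}\big)$. Then for trace-free $\bar T$ the curvature term is $2\rm_{iklj}\bar T^{kl}=-8\bar T_{ij}$; this is forced by your own correct identity $\rm_{iklj}\bar T^{kl}\bar T^{ij}=-4|\bar T|^2$. Hence $\Delta_L\bar T=\Delta_{g_c}\bar T-8n\bar T$, and $\Delta_L\bar T+2\lambda\bar T=\Delta_{g_c}\bar T-8\bar T$ exactly.

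Your expression $\Delta\bar T+8n\bar T$ cannot be turned into this by any sign convention. Your fallback, that discrepancies can be absorbed because only an inequality is claimed, also fails here. This term enters with a minus sign, and up to boundary terms
$\|\Delta\bar T+8n\bar T\|^2-\|\Delta\bar T-8\bar T\|^2=16(n+1)\big(4(n-1)\|\bar T\|^2-\|\bar\nabla\bar T\|^2\big)$, which has no sign. With the correct value the interior part is an exact identity and nothing needs absorbing.

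A minor point you inherit from Proposition~\ref{prop:D2Q S direction}(c). Differentiating $DQ_{g+sh}[\cL_X(g+sh)]=X(Q_{g+sh})$ in $s$ gives $D^2Q_g[\cL_Xg,h]=X(DQ_g[h])-DQ_g[\cL_Xh]$. For transverse-traceless $h$, the integral of $DQ_{g_c}[\cL_Xh]$ over $B_\delta$ reduces to boundary integrals rather than to zero, via \eqref{eq:DQ einstein} and $\tr_{g_c}(\cL_Xh)=2\langle\bar\nabla X,h\rangle_{g_c}$. These integrals are quadratic in $(X,\bar T)$ and scale like your other boundary terms, so your estimate covers them. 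You should just replace ``the cross term is absent'' by ``the cross term is a boundary term''.
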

\begin{proof}
Combining Propositions \ref{prop:D2Q TT direction}, \ref{prop:D2Q S direction}, equation \eqref{eq:TS IBP} and Lemma \ref{lem:first step in I''} together, we have that
\begin{align*}
I''(0)=& -\int_{B_{\delta}}\frac{n^2-4}{2(n-1)}(|\bar{\nabla} \bar{T}|_{g_c}^2+8|\bar{T}|_{g_c}^2)+\frac{1}{(n-2)^2}|\Delta_{g_c} \bar{T}- 8 {\bar{T}}|_{g_c}^2 \vol_{g_c}\\
&+\frac{4\tmfc(n)}{n-4}\int_{B_{\delta}} \frac{w^2[S]}{u_{\epsilon}^2}\vol_{g_c}+\textup{BT},
\end{align*}
where the boundary terms are 
\begin{align*}
\textup{BT}=&-\frac{1}{2(n-1)}\int_{\partial B_{\delta}}\partial_{\hat{\bar{n}}}(D^2R_{g_c}[\bar{T},\bar{T}]-DR_{g_c}[\mathcal{L}^2_Xg_c]+DR_{g_c}[\bar{g}''(0)])dS_{g_c}\\
&+\frac{n^2-4}{n-1}\int_{\partial B_{\delta}}\partial_{\hat{\bar{n}}}(\tr_{g_c}(\bar{T}\times_{g_c}\bar{T}+\mathcal{L}^2_Xg_c+\bar{g}''(0)))dS_{g_c}\\
&-\frac{n^2-4}{n-1}\int_{\partial B_{\delta}}\ddiv_{g_c}(\bar{T}\times_{g_c}\bar{T}+\mathcal{L}^2_Xg_c+\bar{g}''(0))(\hat{\bar{n}})dS_{g_c}\\
&+8(n^2-4)\int_{\partial B_{\delta}}\ddiv_{g_c}X\<X,\hat{\bar{n}}\>_{g_c}-2\bar{T}(X,\hat{\bar{n}})dS_{g_c}\\
:=&\RN{1}+\RN{2}+\RN{3}+\RN{4}.
\end{align*}

Next, we will estimate the boundary terms one by one. Recall the conformal changes $\bar{T}=u_{\epsilon}^{\frac{4}{n-4}}T$, $dS_{g_c}=u_{\epsilon}^{\frac{2n-2}{n-4}}dS$ and $\hat{\bar{n}}=u_{\epsilon}^{-\frac{2}{n-4}}\hat{n}$. By direct computations, we have
$$
\left|\RN{4}\right|\leq C\epsilon^{n}\sum_{k=2}^d|H^{(k)}|^2\delta^{-n+2k}.
$$
According to Lemmas \ref{lem:brendle splitting}, \ref{lem:Lie derivatives of g}, and equation \eqref{eq:barg''}, we can obtain the following estimate,
\begin{align*}
|(\bar{T}\times_{g_c}\bar{T})_{ij}|+|(\mathcal{L}^2_Xg_c)_{ij}|+|\bar{g}''(0)_{ij}|\leq Cu_{\epsilon}^{\frac{4}{n-4}}\sum_{k=2}^d|H^{(k)}|^2(\epsilon+|x|)^{2k}.
\end{align*}
Consequently, there is
$$
|\RN{2}|+|\RN{3}|\leq C\epsilon^{n-2}\sum_{k=2}^d|H^{(k)}|^2\delta^{-n+2k+2}.
$$
We need to use Lemma \ref{lem:einstein R} to deal with $\RN{1}$ term. The new terms can be dealt with by the following new estimate
$$
|\Delta_{g_c}(|\bar{T}|_{g_c}^2)|+|\bar{\nabla} \bar{T}|_{g_c}^2+|\bar{\nabla}_{i}\bar{T}_{jk}\bar{\nabla}^j\bar{T
}^{ik}|\leq Cu_{\epsilon}^{-\frac{4}{n-4}}\sum_{k=2}^d|H^{(k)}|^2(\epsilon+|x|)^{2k-2}.
$$
So, we can obtain
$$
\left|\RN{1}\right|\leq C\epsilon^{n-4}\sum_{k=2}^d|H^{(k)}|^2\delta^{-n+2k+4}.
$$
Combining the boundary estimates with interior computations, we can obtain the conclusion.
\end{proof}

Combining with Proposition \ref{prop:DW koiso identity}, we can obtain a pivotal corollary.
\begin{corollary}\label{coro:second v on sphere}
We have
\begin{align*}
I''_{\epsilon,\delta}(0)\leq & -\frac{1}{(n-2)(n-3)}\int_{B_{\delta}}|DW_{g_c}[\bar{T}]|^2_{g_c}\vol_{g_c}\\
&-\frac{(n-4)(n^2+2n-4)}{2(n-1)(n-2)}\int_{B_{\delta}}|\bar{\nabla} \bar{T}|_{g_c}^2+8|\bar{T}|_{g_c}^2\vol_{g_c} \\
&+\frac{4\tmfc(n)}{n-4}\int_{B_{\delta}} \frac{w^2[S]}{u_{\epsilon}^2}\vol_{g_c}+C \sum_{k=2}^d|H^{(k)}|^2\delta^{-n+2k+4}\epsilon^{n-4}.
\end{align*}
\end{corollary}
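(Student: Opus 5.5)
The corollary should follow from Proposition \ref{prop:secondvonsphere} by rewriting its fourth-order interior term $\frac{1}{(n-2)^2}\|\Delta_{g_c}\bar T-8\bar T\|^2$ with Proposition \ref{prop:DW koiso identity}. I apply that proposition with $U=B_\delta$, $g=g_c$ and $h=\bar T$. This is legitimate because $g_c$ has constant sectional curvature $c=4$ and $\bar T\in S^{T,T}_{2,g_c}$ by Lemma \ref{lem:barT barS}. Throughout, $\|\cdot\|=\|\cdot\|_{B_\delta}$ is taken with respect to $g_c$.

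\textbf{Step 1: expand the square.} Integrating by parts once gives
\begin{equation*}
\|\Delta_{g_c}\bar T-8\bar T\|^2=\|\Delta_{g_c}\bar T\|^2+16\|\bar\nabla\bar T\|^2+64\|\bar T\|^2+\textup{BT}_1 .
\end{equation*}

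\textbf{Step 2: express $\|\Delta_{g_c}\bar T\|^2$ through the Weyl variation.} With $c=4$, Proposition \ref{prop:DW koiso identity} reads
\begin{equation*}
\frac{1}{(n-2)(n-3)}\|DW_{g_c}[\bar T]\|^2=\frac{1}{(n-2)^2}\Big(\|\Delta_{g_c}\bar T\|^2+4(n+2)\|\bar\nabla\bar T\|^2+32n\|\bar T\|^2\Big)+\sum_{i=1}^3\textup{BT}_i .
\end{equation*}
Subtracting this from Step 1 (divided by $(n-2)^2$), the remaining coefficients are $\frac{16-4(n+2)}{(n-2)^2}=-\frac{4}{n-2}$ on $\|\bar\nabla\bar T\|^2$ and $\frac{64-32n}{(n-2)^2}=-\frac{32}{n-2}$ on $\|\bar T\|^2$. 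Hence
\begin{equation*}
\frac{1}{(n-2)^2}\|\Delta_{g_c}\bar T-8\bar T\|^2=\frac{1}{(n-2)(n-3)}\|DW_{g_c}[\bar T]\|^2-\frac{4}{n-2}\big(\|\bar\nabla\bar T\|^2+8\|\bar T\|^2\big)+\sum_{i=1}^3\textup{BT}_i .
\end{equation*}

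\textbf{Step 3: collect coefficients.} Substituting into Proposition \ref{prop:secondvonsphere}, the coefficient of $-\big(\|\bar\nabla\bar T\|^2+8\|\bar T\|^2\big)$ becomes
\begin{equation*}
\frac{n^2-4}{2(n-1)}-\frac{4}{n-2}=\frac{n^3-2n^2-12n+16}{2(n-1)(n-2)}=\frac{(n-4)(n^2+2n-4)}{2(n-1)(n-2)} .
\end{equation*}
This is exactly the constant in the corollary. The $w[S]$ term and the $C\sum_k|H^{(k)}|^2\delta^{-n+2k+4}\epsilon^{n-4}$ term carry over unchanged.

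\textbf{Step 4: absorb the boundary terms.} This is the only non-algebraic part. I need $|\textup{BT}_1|+|\textup{BT}_2|+|\textup{BT}_3|\le C\sum_{k=2}^d|H^{(k)}|^2\delta^{-n+2k+4}\epsilon^{n-4}$. I would argue as in the boundary analysis in the proof of Proposition \ref{prop:secondvonsphere}:

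\begin{enumerate}[label=(\roman*),leftmargin=*]
\item Use the conformal relations $\bar T=u_\epsilon^{\frac{4}{n-4}}T$, $dS_{g_c}=u_\epsilon^{\frac{2n-2}{n-4}}dS$ and $\hat{\bar n}=u_\epsilon^{-\frac{2}{n-4}}\hat n$.
\item Bound the $\bar\nabla$-derivatives of $\bar T$ via formulas like \eqref{eq: Qijk}, combined with $|\partial^\alpha T|\le C\sum_k|H^{(k)}|(\epsilon+|x|)^{k-|\alpha|}$. The latter follows from $T=\chi_\delta H-S$ and Lemma \ref{lem:brendle splitting}. Together these give $|\bar\nabla^m\bar T|_{g_c}\le C u_\epsilon^{-\frac{2m}{n-4}}\sum_k|H^{(k)}|(\epsilon+|x|)^{k-m}$ for $m\le 3$.
\item Each boundary integrand $\bar\nabla^a\bar T\ast\bar\nabla^b\bar T$ with $a+b=3$ carries three factors of $g_c^{-1}$. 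On $|x|=\delta\gg\epsilon$, where $u_\epsilon\sim\epsilon^{\frac{n-4}{2}}\delta^{4-n}$, the worst case gives $\epsilon^{n-4}\delta^{-n+2k+4}$, matching the error already present in the statement.
\end{enumerate}

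I expect the main obstacle to be tracking these $u_\epsilon$ powers carefully for the third-derivative term $\textup{BT}_3$, since it is the borderline contribution. I expect it to produce precisely the $\epsilon^{n-4}$ order already appearing in term \RN{1} of Proposition \ref{prop:secondvonsphere}.
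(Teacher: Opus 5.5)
Your proposal is correct and follows the paper's route: the paper obtains this corollary simply by substituting Proposition~\ref{prop:DW koiso identity} (with $g=g_c$, $c=4$, $h=\bar T$) into the $\frac{1}{(n-2)^2}\|\Delta_{g_c}\bar T-8\bar T\|^2$ term of Proposition~\ref{prop:secondvonsphere}, without writing out further details. Your explicit coefficient bookkeeping checks out. Your scaling of $\textup{BT}_1,\textup{BT}_2,\textup{BT}_3$ on $\partial B_\delta$ fills in what the paper leaves implicit: $\textup{BT}_2$ and $\textup{BT}_3$ are of order $\epsilon^{n-4}\delta^{-n+2k+4}$, and $\textup{BT}_1$ is of order $\epsilon^{n-2}\delta^{-n+2k+2}$.
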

\noindent\textbf{Remark.} This corollary establishes the relation between the second variations of the following two total quadratic curvatures at the standard sphere:
$$
\int_{B_{\delta}}Q_g\vol_g \text{ and }\int_{B_{\delta}}|W_g|^2\vol_g.
$$

\begin{lemma}[{\cite[Proposition 8, 14]{MR2357502}}]
We have
\begin{align}
\int_{B_{5r/3}\setminus B_{4r/3}}|DW_{g_e}[H]|^2dx\geq & \frac{1}{C}\sum_{k=2}^d|H^{(k)}|^2r^{2k+n-4}, \text{ for all }r>0,\label{eq:DW eulidean lower bound}
\end{align}
and
\begin{equation}\label{eq:volume normalization}
\begin{aligned}
&\int_{B_{\delta}}\left(u_{\epsilon}^2+\frac{n+4}{n-4}w^2[S]\right)^{\frac{n}{n-4}}dx\\
\leq &\int_{B_{\delta}}\left(u_{\epsilon}+w[S]\right)^{\frac{2n}{n-4}}dx+C\epsilon^{n-2} \sum_{k=2}^d|H^{(k)}|^3\int_{B_{\delta}}(\epsilon+|x|)^{-2n+3k+4}dx\\
&+C\sum_{k=2}^d|H^{(k)}|\delta^{-n+k}\epsilon^n.
\end{aligned}
\end{equation}
\end{lemma}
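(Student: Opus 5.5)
Both inequalities are taken from Brendle \cite{MR2357502}, and I would reprove them in his way. The first is a scaling plus finite-dimensional injectivity argument. The second is a pointwise Taylor expansion in which the linear term turns out to be a pure divergence.

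\textbf{Proof of \eqref{eq:DW eulidean lower bound}.} The map $H\mapsto DW_{g_e}[H]$ is linear with constant coefficients and involves exactly two derivatives. Hence $DW_{g_e}[H^{(k)}]$ is a homogeneous polynomial tensor of degree $k-2$.

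First I reduce to $r=1$ by the change of variables $x=ry$. Under it, $H^{(k)}(ry)=r^kH^{(k)}(y)$, so
\[
\int_{B_{5r/3}\setminus B_{4r/3}}|DW_{g_e}[H]|^2dx=r^{n-4}\int_{B_{5/3}\setminus B_{4/3}}|DW_{g_e}[\tilde H]|^2dy ,
\]
where $\tilde H=\sum_k r^kH^{(k)}$.

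Next, let $\mathcal V$ be the finite-dimensional space of polynomial tensors $H=\sum_{k=2}^dH^{(k)}$ with $H_{ii}=0$ and $x_iH_{ij}=0$. It then suffices to show that
\[
H\mapsto \Big(\int_{B_{5/3}\setminus B_{4/3}}|DW_{g_e}[H]|^2\Big)^{1/2}
\]
is a norm on $\mathcal V$. By equivalence of norms on a finite-dimensional space, this gives the bound $\frac1C\sum_k|H^{(k)}|^2$ at $r=1$, and hence $\frac1C\sum_k|H^{(k)}|^2r^{2k}$ in general.

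The only issue is injectivity. Since $DW_{g_e}[H]$ is polynomial, vanishing on the annulus forces vanishing on all of $\mathbb{R}^n$. In dimension $n\ge 4$ the linearized Weyl tensor vanishes only on infinitesimally conformally flat deformations, so $H=\cL_Yg_e+\varphi g_e$ for some vector field $Y$ and function $\varphi$. Tracelessness forces $\varphi=-\frac2n\ddiv Y$, so $H$ is a conformal Killing operator applied to $Y$, and $Y$ can be taken polynomial of degrees $3,\dots,d+1$.

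I then impose the radial gauge $x_iH_{ij}=0$, degree by degree. Contracting with $x$ and using Euler's identity gives an ODE-type relation along rays. This relation forces $Y$ to be a conformal Killing field, and conformal Killing fields have degree at most $2$. Therefore $Y$ contributes nothing in degrees $\ge 3$, and $H=0$. This injectivity step is Brendle's Proposition~8 argument, and I expect it to be the main obstacle; I would follow his homogeneous-degree induction.

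\textbf{Proof of \eqref{eq:volume normalization}.} Write $u=u_\epsilon$ and $w=w[S]$.

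Step 1 (pointwise size of $w$). From Lemma~\ref{lem:brendle splitting} and \eqref{eq:w[s] proof}, together with $|\partial u|\le Cu/(\epsilon+|x|)$, I get
\[
|w|\le C\sum_k|H^{(k)}|(\epsilon+|x|)^k\,u .
\]
In particular $|w|\le \frac12 u$ on $B_\delta$ once $\delta_0$ is small.

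Step 2 (Taylor expansion). For $|s|\le\frac12$, expanding in $s$ gives
\[
(1+s)^{\frac{2n}{n-4}}=\big(1+\tfrac{n+4}{n-4}s^2\big)^{\frac{n}{n-4}}+\tfrac{2n}{n-4}s+O(|s|^3),
\]
since the quadratic coefficients agree: both equal $\frac{n(n+4)}{(n-4)^2}$. Applying this with $s=w/u$ yields
\[
\Big(u^2+\tfrac{n+4}{n-4}w^2\Big)^{\frac{n}{n-4}}\le (u+w)^{\frac{2n}{n-4}}-\tfrac{2n}{n-4}u^{\frac{n+4}{n-4}}w+Cu^{\frac{2n}{n-4}-3}|w|^3 .
\]

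Step 3 (linear term). Since $\ddiv(u^{\frac{2n}{n-4}}X)=\frac{2n}{n-4}u^{\frac{n+4}{n-4}}w$, the linear term is a pure divergence. Its integral over $B_\delta$ equals the boundary flux $\int_{\partial B_\delta}u^{\frac{2n}{n-4}}\<X,\hat n\>$. Using $|X|\le C|H^{(k)}|\delta^{k+1}$ and $u\le C\epsilon^{\frac{n-4}2}\delta^{4-n}$ on $\partial B_\delta$, this flux is $O(|H^{(k)}|\delta^{k-n}\epsilon^n)$.

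Step 4 (cubic term). By Step 1, the cubic error is bounded by $C|H|^3(\epsilon+|x|)^{3k}u^{\frac{2n}{n-4}}$. Since $u^{\frac{2n}{n-4}}\le C\epsilon^n(\epsilon+|x|)^{-2n}$, this is at most $C\epsilon^n(\epsilon+|x|)^{3k-2n}|H|^3$.

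Matching the power $\epsilon^{n-2}(\epsilon+|x|)^{3k+4-2n}$ in the statement is the one point I have not settled. It requires $\epsilon^2\le C(\epsilon+|x|)^4$, which fails when $\epsilon+|x|<\epsilon^{1/2}$. I would therefore first carry out Step 4 with Brendle's sharper pointwise bound on $w$ from his Proposition~14, rather than the crude bound of Step 1, and check that it produces exactly this weight. Integrating Steps 2–4 over $B_\delta$ then gives \eqref{eq:volume normalization}.
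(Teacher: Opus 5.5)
Since the paper only cites Brendle for both inequalities, your argument for \eqref{eq:DW eulidean lower bound} serves as a self-contained substitute, and it is sound. The rescaling $x=ry$ handles the $r$-dependence. Because $DW_{g_e}$ lowers polynomial degree by exactly two, injectivity reduces to homogeneous pieces. Equivalence of norms on the gauge-fixed space $\mathcal V$ then gives exactly the weight $r^{2k+n-4}$. The one nontrivial input is that for $n\ge4$ the kernel of $DW_{g_e}$ consists of tensors $\cL_Yg_e+\varphi g_e$, i.e.\ local exactness of the linearized conformal deformation complex; you should cite it. The gauge step you flag as the main obstacle is a short computation. For $Y$ homogeneous of degree $m$, put $f=x\cdot Y$ and $g=\ddiv Y$. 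The gauge condition reads $x_iH_{ij}=(m-1)Y_j+\partial_jf-\frac2n g\,x_j=0$. It gives $f=\frac{1}{mn}|x|^2g$ and $Y_j=\frac{2}{mn}g\,x_j-\frac{1}{m(m-1)n}|x|^2\partial_jg$. Taking the divergence yields $|x|^2\Delta g=-(m-1)(m-2)(n-2)g$. Since $|x|^2\Delta$ has nonnegative eigenvalues on homogeneous polynomials, this forces $g=0$, hence $Y=0$, for $m\ge3$.

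For \eqref{eq:volume normalization}, your Steps 1--4 are correct, but what they prove is
\[
\begin{aligned}
\int_{B_\delta}\Big(u_\epsilon^2+\tfrac{n+4}{n-4}w^2\Big)^{\frac{n}{n-4}}dx\le{}&\int_{B_\delta}(u_\epsilon+w)^{\frac{2n}{n-4}}dx+C\epsilon^{n}\sum_{k=2}^d|H^{(k)}|^3\int_{B_\delta}(\epsilon+|x|)^{3k-2n}dx\\
&+C\sum_{k=2}^d|H^{(k)}|\delta^{k-n}\epsilon^n .
\end{aligned}
\]
The step you leave open cannot be closed the way you propose, because no sharper pointwise bound on $w$ exists.

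\begin{itemize}
\item The splitting problem $\ddiv(u_\epsilon^{\frac{2n}{n-4}}T)=0$ is covariant under $x=\epsilon y$. Hence $w/u_\epsilon=\epsilon^kF(y)$, with $F$ independent of $\epsilon$ up to the cutoff at $|y|\sim\delta/\epsilon$. So your Step 1 bound $|w|\le C|H^{(k)}|(\epsilon+|x|)^ku_\epsilon$ is already sharp on $B_\epsilon$.
\item The cubic Taylor coefficient is $\frac{p(p-1)(p-2)}{6}$ with $p=\frac{2n}{n-4}$, and it is nonzero since $p-2=\frac{8}{n-4}$. So $u_\epsilon^{p-3}|w|^3$ has integral of order $\epsilon^{3k}$ over $B_\epsilon$.
\item The printed term is $O(\epsilon^{3k+2})$ when $3k+4<n$. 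Reaching it would require a cancellation in $\int u_\epsilon^{p-3}w^3$, and nothing forces one.
\item Under $x\mapsto\lambda x$, $\epsilon\mapsto\lambda\epsilon$, $\delta\mapsto\lambda\delta$, $H^{(k)}\mapsto\lambda^{-k}H^{(k)}$, every term of \eqref{eq:volume normalization} is invariant except the printed cubic one, which acquires a factor $\lambda^2$. This indicates a misprint. Scale-consistent forms are your weight, or the weaker $\epsilon^{n-4}(\epsilon+|x|)^{3k+4-2n}$.
\end{itemize}

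So prove the inequality with your weight, and check its only use, Proposition \ref{prop:I+I'+1/2I''}. There the cubic term is absorbed into $-\frac1C\epsilon^{n-4}\sum_k|H^{(k)}|^2\int_{B_\delta}(\epsilon+|x|)^{2k+4-2n}dx$. Pointwise it is smaller by the factor $C|H^{(k)}|\epsilon^4(\epsilon+|x|)^{k-4}\le C\epsilon^2$.
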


\begin{proposition}\label{prop:DW sphere lower bound}
We have
\begin{align}
\int_{B_{2r}\setminus B_{r}}|DW_{g_c}[\bar{T}]|^2_{g_c}\vol_{g_c}\geq & \frac{1}{C}\epsilon^{n-4}\sum_{k=2}^d|H^{(k)}|^2r^{-n+2k+4}, \text{ for all }r>\epsilon.\label{eq:DW sphere lower bound}
\end{align}
Consequently, for all $\delta\geq 2\epsilon$, we obtain
$$
\int_{B_{\delta}}|DW_{g_c}[\bar{T}]|^2_{g_c}\vol_{g_c}\geq \frac{1}{C}\epsilon^{n-4} \sum_{k=2}^d|H^{(k)}|^2\int_{B_{\delta}}(\epsilon+|x|)^{-2n+2k+4}dx. 
$$
\end{proposition}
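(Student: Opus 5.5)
The plan is to reduce \eqref{eq:DW sphere lower bound} to the Euclidean estimate \eqref{eq:DW eulidean lower bound} via conformal invariance of the Weyl tensor. Since $g_c=u_\epsilon^{4/(n-4)}g_e$, the Weyl tensor $W^{\,\cdot}{}_{\cdot\cdot\cdot}$ (one index raised) is conformally invariant. Hence for the family $u_\epsilon^{4/(n-4)}\exp(sT)$ we get $DW_{g_c}[\bar T]_{ijkl}=u_\epsilon^{4/(n-4)}DW_{g_e}[T]_{ijkl}$ for the $(0,4)$ tensors; here we use $W_{g_e}=0$, so differentiating the conformal factor contributes nothing. Therefore $|DW_{g_c}[\bar T]|^2_{g_c}\vol_{g_c}=u_\epsilon^{-8/(n-4)}u_\epsilon^{2n/(n-4)}|DW_{g_e}[T]|^2dx=u_\epsilon^{2}|DW_{g_e}[T]|^2dx$. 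On the annulus $B_{2r}\setminus B_r$ with $r>\epsilon$ we have $u_\epsilon\sim\epsilon^{(n-4)/2}r^{4-n}$. It thus suffices to show
\[
\int_{B_{2r}\setminus B_r}|DW_{g_e}[T]|^2dx\ \geq\ \frac1C\sum_{k=2}^d|H^{(k)}|^2r^{2k+n-4},
\]
because multiplying by $\epsilon^{n-4}r^{8-2n}$ gives exactly $\epsilon^{n-4}\sum|H^{(k)}|^2r^{-n+2k+4}$.

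Next we replace $T$ by $\chi_\delta H$. Since $T=\chi_\delta H-S$ and $S=\cL_Xg_e-\frac2n(\ddiv X)g_e$ is an infinitesimal conformal diffeomorphism deformation of the flat metric, Proposition \ref{prop:D2Q S direction}(d) (applied to $g_e$, conformally flat; the trace part is conformal and leaves $W$ unchanged) gives $DW_{g_e}[S]=0$. Hence $DW_{g_e}[T]=DW_{g_e}[\chi_\delta H]$, which is independent of $\epsilon$. Where $\chi_\delta\equiv1$, i.e.\ for $2r\le 4\delta/3$, this is $DW_{g_e}[H]$.

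We then apply \eqref{eq:DW eulidean lower bound} at a rescaled radius. The annulus $B_{5r'/3}\setminus B_{4r'/3}$ lies in $B_{2r}\setminus B_r$ for $r'=r\cdot\frac{6}{5}$ (we have $4r'/3=1.6r\ge r$ and $5r'/3=2r$), which gives the bound with $r'^{2k+n-4}\sim r^{2k+n-4}$. The main obstacle is the range where the annulus meets or exceeds the cutoff region ($r\gtrsim\delta$), where $\chi_\delta H$ differs from $H$ or vanishes. There the lower bound is needed only for $r$ up to about $\delta$. I would first try handling it by restricting to $r\le 2\delta/3$ (the applications integrate over $B_\delta$ only) and treating larger $r$ by comparison with a smaller annulus, at the cost of constants. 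If the statement is truly needed for all $r$, one would have to argue via the $\epsilon$-independent structure more carefully, and I expect this to be the delicate point.

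For the consequence, we decompose $B_\delta\setminus B_\epsilon$ into dyadic annuli $B_{2^{j+1}\epsilon}\setminus B_{2^j\epsilon}$ with $2^{j+1}\epsilon\le\delta$, which is possible since $\delta\ge2\epsilon$. On each annulus $r^{-n+2k+4}\sim\int_{B_{2r}\setminus B_r}(\epsilon+|x|)^{-2n+2k+4}dx$. The inner ball $B_\epsilon$ contributes at most a constant multiple of the first annulus, because there $(\epsilon+|x|)^{-2n+2k+4}\sim\epsilon^{-2n+2k+4}$ and the integral is $\sim\epsilon^{-n+2k+4}$. Summing the annular bounds yields the stated inequality.
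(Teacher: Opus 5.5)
Your proposal is correct and is essentially the paper's argument. Conformal covariance of $W$ together with $W_{g_e}=0$ gives $DW_{g_c}[\bar T]=u_\epsilon^{\frac{4}{n-4}}DW_{g_e}[T]$, and the $S$-part drops out by Proposition~\ref{prop:D2Q S direction}(d); the paper applies this to $\bar S=\cL_Xg_c$ on the sphere, you apply it directly to $S$ at $g_e$, and the two are equivalent. Brendle's annulus bound \eqref{eq:DW eulidean lower bound} then finishes the proof, and your weight computation $|DW_{g_c}[\bar T]|^2_{g_c}\vol_{g_c}=u_\epsilon^2|DW_{g_e}[T]|^2dx$ and dyadic summation supply details the paper leaves implicit.

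Your worry about large $r$ reflects a flaw in the statement, not a gap in your argument. The paper silently writes $H=T+S$ where in fact $\chi_\delta H=T+S$. Since $DW_{g_e}[\chi_\delta H]$ vanishes for $|x|>5\delta/3$, \eqref{eq:DW sphere lower bound} fails for $r\ge5\delta/3$ whenever $H\neq0$. It should be read for $\epsilon<r\le c\,\delta$, which is all the consequence needs (annuli with $2r\le\delta$), so you do not need to salvage larger $r$.
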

\noindent\textbf{Remark.} Using the arguments from \cite[Proposition 9]{MR2357502} and equation \eqref{eq: Qijk}, we can obtain that
$$
\int_{B_{\delta}}|\bar{\nabla} \bar{T}|_{g_c}^2\vol_{g_c}\geq \frac{1}{C}\epsilon^{n-2} \sum_{k=2}^d|H^{(k)}|^2\int_{B_{\delta}}(\epsilon+|x|)^{-2n+2k+2}dx. 
$$
It is not sharp enough to start the dichotomy argument (whether $p\in \mcz$ or not). This is the reason why we use the higher-order Koiso-Bochner formula.
\begin{proof}
Firstly, we will find the relation between $DW_{g_c}[\bar{T}]$ and $DW_{g_e}[H]$. According to the decomposition $H=T+S$, we have that
$$
DW_{g_e}[H]=DW_{g_e}[T]+DW_{g_e}[S].
$$
In the $T$ direction, consider a family of metric $g(t)$ with $g(0)=g_e$ and $g'(0)=T$. According to $W_{u_\epsilon^{\frac{4}{n-4}}g(t)}=u_\epsilon^{\frac{4}{n-4}}W_{g(t)}$, we have that
$$
DW_{g_c}[\bar{T}]=u_\epsilon^{\frac{4}{n-4}}DW_{g_e}[T].
$$
In the $S$ direction, consider $g(t)$ with $g(0)=g_e$ and $g'(0)=S$. According to $W_{(u_\epsilon+tw[S])^{\frac{4}{n-4}}g(t)}=(u_\epsilon+tw[S])^{\frac{4}{n-4}}W_{g(t)}$ and $W_{g_e}=0$, we have that
$$
DW_{g_c}[\bar{S}]=u_\epsilon^{\frac{4}{n-4}}DW_{g_e}[S].
$$
On the other hand, according to Proposition \ref{prop:D2Q S direction} and $W_{g_c}=0$, we obtain
$$
DW_{g_c}[\bar{S}]=DW_{g_c}[\cL_Xg_c]=0.
$$
Combining the four equations above, we can obtain that
\begin{equation}\label{eq:DW T and DW H}
DW_{g_c}[\bar{T}]=u_{\epsilon}^{\frac{4}{n-4}}DW_{g_e}[H].  
\end{equation}

Then, we can combine \eqref{eq:DW eulidean lower bound} and \eqref{eq:DW T and DW H} to obtain equation \eqref{eq:DW sphere lower bound}.
\end{proof}

\begin{proposition}\label{prop:I+I'+1/2I''}
We have
\begin{align*}
&\frac{n-4}{2}\left(I_{\epsilon,\delta}(0)+I_{\epsilon,\delta}'(0)+\frac{1}{2}I_{\epsilon,\delta}''(0)\right)\\
\leq & Y_4(S^n)\left(\int_{B_\delta}\left(u_{\epsilon}+w[S]\right)^{\frac{2n}{n-4}}dx\right)^{\frac{n-4}{n}}\\
-&\frac{1}{C}\epsilon^{n-4} \sum_{k=2}^d|H^{(k)}|^2\int_{B_{\delta}}(\epsilon+|x|)^{-2n+2k+4}dx+C \sum_{k=2}^d|H^{(k)}|\delta^{-n+k+4}\epsilon^{n-4}.
\end{align*}
\end{proposition}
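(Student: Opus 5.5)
We combine the three pieces already computed for $I_{\epsilon,\delta}(0)$, $I'_{\epsilon,\delta}(0)$ and $I''_{\epsilon,\delta}(0)$, and then compare the result with the volume normalization \eqref{eq:volume normalization}.

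\textbf{Zeroth and first order.} From the formulas in Subsection 4.1,
\[
\tfrac{n-4}{2}\bigl(I(0)+I'(0)\bigr)=\mfc(n)\int_{B_\delta}u_\epsilon^{\frac{2n}{n-4}}dx+\tfrac{2n\mfc(n)}{n-4}\int_{B_\delta}w[S]\,u_\epsilon^{\frac{n+4}{n-4}}dx .
\]

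\textbf{Second order.} We use Corollary \ref{coro:second v on sphere}. Both $|\bar\nabla\bar T|^2$ and $|\bar T|^2$ enter with negative sign, so we discard them. The term $-\frac{1}{(n-2)(n-3)}\int|DW_{g_c}[\bar T]|^2$ is bounded above by Proposition \ref{prop:DW sphere lower bound}, which gives
\[
-\tfrac1C\epsilon^{n-4}\sum_k|H^{(k)}|^2\int_{B_\delta}(\epsilon+|x|)^{-2n+2k+4}dx .
\]
The remaining interior term is $\frac{4\tmfc(n)}{n-4}\int \frac{w^2}{u_\epsilon^2}\vol_{g_c}$. Since $\vol_{g_c}=u_\epsilon^{\frac{2n}{n-4}}dx$, multiplying by $\frac{n-4}{4}$ turns it into $\frac{n+4}{n-4}\mfc(n)\int w^2u_\epsilon^{\frac{8}{n-4}}dx$. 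The boundary error $C\sum|H^{(k)}|^2\delta^{-n+2k+4}\epsilon^{n-4}$ is absorbed into $C\sum|H^{(k)}|\delta^{-n+k+4}\epsilon^{n-4}$, using that $|H^{(k)}|$ is bounded and $\delta\le\delta_0$ is small.

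\textbf{Collecting terms.} This yields
\[
\mfc(n)\int_{B_\delta}\Bigl(u_\epsilon^{\frac{2n}{n-4}}+\tfrac{2n}{n-4}w\,u_\epsilon^{\frac{n+4}{n-4}}+\tfrac{n(n+4)}{(n-4)^2}w^2u_\epsilon^{\frac{8}{n-4}}\Bigr)dx
\]
minus the Weyl term, plus errors. (The quadratic coefficient should be checked against the expansion of $(u^2+\frac{n+4}{n-4}w^2)^{\frac{n}{n-4}}$.)

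\textbf{Comparison with the volume.} The linear term $\frac{2n}{n-4}\int w u_\epsilon^{\frac{n+4}{n-4}}$ must vanish to leading order. By Lemma \ref{lem:correction term}, $w[S]$ is the infinitesimal action of $X$ on $u_\epsilon$, i.e. $\frac{n-4}{2n}u_\epsilon^{-\frac{n+4}{n-4}}\ddiv(u_\epsilon^{\frac{2n}{n-4}}X)$. Hence
\[
\int_{B_\delta}w\,u_\epsilon^{\frac{n+4}{n-4}}dx=\tfrac{n-4}{2n}\int_{\partial B_\delta}u_\epsilon^{\frac{2n}{n-4}}\<X,\hat n\>\,dS ,
\]
which is $O\bigl(\sum_k|H^{(k)}|\delta^{-n+k}\epsilon^n\bigr)$ by the bound $|X|\le C\sum_k|H^{(k)}|(\epsilon+|x|)^{k+1}$ from Lemma \ref{lem:brendle splitting}. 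This is dominated by the error term in the statement.

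For the remaining terms, pointwise Taylor expansion gives
\[
u^{\frac{2n}{n-4}}+\tfrac{n(n+4)}{(n-4)^2}w^2u^{\frac{8}{n-4}}\le\Bigl(u^2+\tfrac{n+4}{n-4}w^2\Bigr)^{\frac{n}{n-4}} ,
\]
since the convexity correction is nonnegative. Then \eqref{eq:volume normalization} bounds the right side by $\int(u_\epsilon+w)^{\frac{2n}{n-4}}$ plus errors. The cubic error $\epsilon^{n-2}\sum|H^{(k)}|^3\int(\epsilon+|x|)^{-2n+3k+4}$ is absorbed by half of the Weyl term, using that $|H^{(k)}|$ is small relative to the weight $(\epsilon+|x|)^{k-2}\epsilon^{-2}$. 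This is the main obstacle: it needs a case check on the sign of $3k+4-n$, with the $\log$ case treated separately. The error $\sum|H^{(k)}|\delta^{-n+k}\epsilon^n$ is bounded by $\sum|H^{(k)}|\delta^{-n+k+4}\epsilon^{n-4}$.

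\textbf{Normalization.} Finally, set $V=\int_{B_\delta}(u_\epsilon+w)^{\frac{2n}{n-4}}dx$ and use $\mfc(n)V=\mfc(n)V^{\frac4n}V^{\frac{n-4}{n}}\le Y_4(S^n)V^{\frac{n-4}{n}}$. This holds because $V\le\int_{\mathbb R^n}u_\epsilon^{\frac{2n}{n-4}}+o$, and $Y_4(S^n)=\mfc(n)\bigl(\int_{\mathbb R^n}u_\epsilon^{\frac{2n}{n-4}}\bigr)^{4/n}$. Any excess of $V$ over $\int_{\mathbb R^n}u_\epsilon^{\frac{2n}{n-4}}$ is controlled by the same $\partial B_\delta$ and $w$-estimates, so it is again absorbed into the stated errors. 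This gives the claimed inequality.
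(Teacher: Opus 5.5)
\textbf{There is a genuine gap in your final Normalization step.} Your treatment of $I_{\epsilon,\delta}(0)$, $I'_{\epsilon,\delta}(0)$ and $I''_{\epsilon,\delta}(0)$ is correct; the divergence identity for $w[S]$ is the Euclidean form of the paper's use of \eqref{eq:trace S IBP}.

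The inequality $\mfc(n)V\le Y_4(S^n)V^{\frac{n-4}{n}}$ is equivalent to $V\le V_0:=\int_{\mathbb{R}^n}u_\epsilon^{\frac{2n}{n-4}}dx$. But $V-V_0$ contains the positive term $\frac{n(n+4)}{(n-4)^2}\int_{B_\delta}w^2u_\epsilon^{\frac{8}{n-4}}dx$. By Lemma \ref{lem:brendle splitting}, $w^2u_\epsilon^{\frac{8}{n-4}}$ is at most the Weyl integrand $\epsilon^{n-4}|H^{(k)}|^2(\epsilon+|x|)^{-2n+2k+4}$ times $\epsilon^4(\epsilon+|x|)^{-4}$. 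Both integrals concentrate at $|x|\sim\epsilon$, so this term is in general of order $\sum_k|H^{(k)}|^2\epsilon^{2k}$, the same order as the Weyl gain.

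When $2k<n-4$ (e.g.\ $k=2$ for every $n\ge 9$), this is far larger than the admissible error $C|H^{(k)}|\delta^{-n+k+4}\epsilon^{n-4}$. It also cannot be absorbed by the Weyl term, because the constant in Proposition \ref{prop:DW sphere lower bound} bears no quantitative relation to it. So the loss $\frac{4}{n}\mfc(n)(V-V_0)\approx\frac{4(n+4)}{(n-4)^2}\mfc(n)\int_{B_\delta}w^2u_\epsilon^{\frac{8}{n-4}}dx$ destroys the estimate.

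The loss originates in your pointwise step. Bounding $u_\epsilon^{\frac{2n}{n-4}}+\frac{n+4}{n-4}w^2u_\epsilon^{\frac{8}{n-4}}$ by $(u_\epsilon^2+\frac{n+4}{n-4}w^2)^{\frac{n}{n-4}}$ inflates the $w^2$ coefficient by the factor $\frac{n}{n-4}$. The coefficient $\frac{n+4}{n-4}$ produced by $I''$ is exactly critical, so there is no slack to pay for this. Also, in your Collecting terms step the quadratic coefficient should be $\frac{n+4}{n-4}$, as you computed one line earlier, not $\frac{n(n+4)}{(n-4)^2}$.

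The fix, which is the paper's route, is to keep the coefficient $\frac{n+4}{n-4}$ and use H\"older instead of a pointwise bound:
\[
\int_{B_\delta}\Bigl(u_\epsilon^2+\tfrac{n+4}{n-4}w^2\Bigr)u_\epsilon^{\frac{8}{n-4}}dx\le\Bigl(\int_{B_\delta}\Bigl(u_\epsilon^2+\tfrac{n+4}{n-4}w^2\Bigr)^{\frac{n}{n-4}}dx\Bigr)^{\frac{n-4}{n}}\Bigl(\int_{B_\delta}u_\epsilon^{\frac{2n}{n-4}}dx\Bigr)^{\frac4n}.
\]
Then $\mfc(n)\bigl(\int_{B_\delta}u_\epsilon^{\frac{2n}{n-4}}dx\bigr)^{\frac4n}\le Y_4(S^n)$ holds trivially, and \eqref{eq:volume normalization} is applied inside the $\frac{n-4}{n}$-th power. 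This is sharp to first order in $w^2$: the $\frac{n-4}{n}$-th power turns the volume coefficient $\frac{n(n+4)}{(n-4)^2}$ back into $\frac{n+4}{n-4}$, so no excess appears.

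With this change the rest of your argument goes through. The cubic error in \eqref{eq:volume normalization} needs no case analysis: its integrand is the Weyl integrand times $\epsilon^2|H^{(k)}|(\epsilon+|x|)^k\le C\delta^{k+2}$, so it is absorbed for $\delta$ small.
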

\begin{proof}
We can use \eqref{eq:trace S IBP} to write $I_{\epsilon,\delta}'(0)$ as a boundary term. So,
$$
I_{\epsilon,\delta}'(0)\leq C \sum_{k=2}^d|H^{(k)}|\delta^{-n+k+4}\epsilon^{n-4}.
$$
Combining with Corollary \ref{coro:second v on sphere} and Proposition \ref{prop:DW sphere lower bound}, we have
\begin{align*}
&\frac{n-4}{2}\left(I_{\epsilon,\delta}(0)+I_{\epsilon,\delta}'(0)+\frac{1}{2}I_{\epsilon,\delta}''(0)\right)\\
\leq & \mfc(n)\int_{B_{\delta}}\left(u_{\epsilon}^2+\frac{n+4}{n-4}w^2[S]\right)u_{\epsilon}^{\frac{8}{n-4}}dx\\
-&\frac{1}{C}\epsilon^{n-4} \sum_{k=2}^d|H^{(k)}|^2\int_{B_{\delta}}(\epsilon+|x|)^{-2n+2k+4}dx+C \sum_{k=2}^d|H^{(k)}|\delta^{-n+k+4}\epsilon^{n-4}.
\end{align*}

By H\"older's inequality, there is
\begin{align*}
&\int_{B_{\delta}}\left(u_{\epsilon}^2+\frac{n+4}{n-4}w^2[S]\right)u_{\epsilon}^{\frac{8}{n-4}}dx\\
\leq &\left(\int_{B_{\delta}}\left(u_{\epsilon}^2+\frac{n+4}{n-4}w^2[S]\right)^{\frac{n}{n-4}}dx\right)^{\frac{n-4}{n}}\left(\int_{B_{\delta}}u_{\epsilon}^{\frac{2n}{n-4}}dx\right)^{\frac{4}{n}}.
\end{align*}
Using $Y_4(S^n)=\mfc(n)\left(\int_{\mathbb{R}^n}u_{\epsilon}^{\frac{2n}{n-4}}dx\right)^{\frac{4}{n}}$ and \eqref{eq:volume normalization}, we can obtain the conclusion.
\end{proof}

\subsection{Auxiliary estimates}

We prove basic estimates on curvature approximations and elliptic estimates $X$ in this subsection. We fix the background metric $g=\exp(h)$ to be in the conformal normal coordinate at a point $p \in M$. In this subsection, we abbreviate $w = w_{\epsilon, \delta}[S]$.

\begin{lemma}\label{lem:auxiliary lem1}
For $x\in B_{\delta}$, we have
\begin{align*}
\bigg | w \big(P_{\exp{h}} - \Delta^2 - DP_{g_e}[H] \big) u_\epsilon \bigg|+\bigg | u_\epsilon \big(P_{\exp{h}} - \Delta^2 - DP_{g_e}[H] \big) w \bigg| \\
\le \frac{1}{C} \epsilon^{n-4} \sum_{k=2}^d    |H^{(k)}|^2 (\epsilon + |x|)^{-2n+2k+4} + C \epsilon^{n-4} \delta^{-2n +2d +6}.
\end{align*}
\end{lemma}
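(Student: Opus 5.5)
The plan is a pointwise expansion: split the operator difference into a quadratic remainder in the true perturbation $h$ plus a linear term in the Taylor remainder $h-H$, bound each factor by powers of $(\epsilon+|x|)$, and absorb all $H$-dependence into the diagonal weight $\epsilon^{n-4}|H^{(k)}|^2(\epsilon+|x|)^{-2n+2k+4}$ with a \emph{small} coefficient. The smallness will come from $\delta\le\delta_0$. We fix $\delta_0$ small enough that every extra factor $(\epsilon+|x|)$, which is at most $2\delta_0$ on $B_\delta$, beats the large constants. This is how the $\frac1C$ in front of the $H$-quadratic term is to be read.

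Step 1 (splitting). Write
\[
\big(P_{\exp h}-\Delta^2-DP_{g_e}[H]\big)\phi=\big(P_{\exp h}-\Delta^2-DP_{g_e}[h]\big)\phi+DP_{g_e}[h-H]\phi ,\qquad \phi\in\{u_\epsilon,w\}.
\]
Here $\tr h=O(|x|^{2d+2})$. Its trace part is absorbed into the remainder $h-H$, so the second estimate of Lemma \ref{lem:expansion of P} applies. That lemma bounds the first piece by $C\sum|\partial^{\alpha_1}h||\partial^{\alpha_2}h||\partial^\beta\phi|$ with $\alpha_1+\alpha_2+\beta=4$. The explicit linear formula for $DP_{g_e}$ in the same lemma bounds the second piece by $C\sum_{a+\beta=4}|\partial^a(h-H)||\partial^\beta\phi|$.

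Step 2 (pointwise ingredients). We use the following bounds on $B_\delta$:
\[
|\partial^a h|\le C\sum_{k=2}^d|H^{(k)}|(\epsilon+|x|)^{k-a}+C|x|^{d+1-a},\qquad |\partial^a(h-H)|\le C|x|^{d+1-a},
\]
\[
|\partial^\beta u_\epsilon|\le C\epsilon^{\frac{n-4}2}(\epsilon+|x|)^{4-n-\beta}.
\]
From Lemma \ref{lem:brendle splitting} and formula \eqref{eq:w[s] proof} we also get
\[
|\partial^\beta w|\le C\epsilon^{\frac{n-4}2}\sum_k|H^{(k)}|(\epsilon+|x|)^{k+4-n-\beta}.
\]

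Step 3 (quadratic piece). Multiply the quadratic remainder for $\phi=u_\epsilon$ by $|w|$, and the one for $\phi=w$ by $u_\epsilon$. Both products are bounded by
\[
\epsilon^{n-4}\sum_{k,l,m}|H^{(k)}||H^{(l)}||H^{(m)}|(\epsilon+|x|)^{k+l+m+4-2n},
\]
plus mixed terms in which some $H$-factors are replaced by the $|x|^{d+1}$ remainder. For the pure cubic terms we apply Young, $|H^{(k)}||H^{(l)}|(\epsilon+|x|)^{k+l}\le \tfrac12\sum_{j\in\{k,l\}}|H^{(j)}|^2(\epsilon+|x|)^{2j}$. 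The leftover factor $|H^{(m)}|(\epsilon+|x|)^m$ is at most $C\delta_0^2$, since $|H^{(m)}|$ is bounded by the geometry of $(M,g)$ and $m\ge2$. Shrinking $\delta_0$ makes this $\le\frac1C\epsilon^{n-4}|H^{(k)}|^2(\epsilon+|x|)^{-2n+2k+4}$.

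Step 4 (remainder terms). The terms carrying $|x|^{d+1}$ come from the linear piece $DP_{g_e}[h-H]$ and from the mixed quadratic terms. Each has the shape
\[
\epsilon^{n-4}|H^{(k)}|(\epsilon+|x|)^{k}\,|x|^{d+1}(\epsilon+|x|)^{4-2n}\quad\text{(up to extra factors }(\epsilon+|x|)^{\ge 0}).
\]
We split these by Young with a small parameter $\eta$: one part is $\eta\,\epsilon^{n-4}|H^{(k)}|^2(\epsilon+|x|)^{-2n+2k+4}$, which is absorbed into the $\frac1C$ term. The other is $C_\eta\epsilon^{n-4}|x|^{2d+2}(\epsilon+|x|)^{4-2n}$, and this must be converted into $\epsilon^{n-4}\delta^{-2n+2d+6}$.

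The main obstacle is exactly this last conversion, because $2d+6-2n<0$. The crude Young split above only gives a bound by $(\epsilon+|x|)^{2d+6-2n}$, which is not pointwise dominated by $\delta^{2d+6-2n}$ for small $|x|$. To overcome it I would first try to distribute the Young weights unevenly. The aim is to put every negative power of $(\epsilon+|x|)$ on the $|H^{(k)}|^2$ side, which is affordable because that term carries the small coefficient. The pure remainder side should keep only the nonnegative surplus $|x|^{d+1}$ relative to the $H$-weight, and that surplus would then be bounded via $|x|\le\delta$. If the exponent bookkeeping does not close pointwise, the fallback is to verify the inequality in integrated form over $B_\delta$, which is all that the later energy estimate uses.
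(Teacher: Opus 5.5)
Your argument is the same as the paper's. You split off $DP_{g_e}[h-H]$, control the quadratic remainder by Lemma~\ref{lem:expansion of P}, bound $w$ through the estimate on $X$ in Lemma~\ref{lem:brendle splitting}, and close with Young. Steps~1--3 are fine, with one small correction. Lemma~\ref{lem:expansion of P} requires $\tr h=0$, so rather than ``absorbing'' the trace into $h-H$, write $\exp(h)=e^{\tr h/n}\exp(h-\tfrac{\tr h}{n}g_e)$ and use conformal covariance. Since $\tr h=O(|x|^{2d+2})$, this costs only higher-order terms, and the paper skips the point as well.

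The obstacle you isolate in Step~4 is genuine, and the paper does not resolve it either. Its proof reaches $C\epsilon^{n-4}|H^{(k)}|(\epsilon+|x|)^{-2n+k+d+5}$ and then simply asserts a bound by $\frac1C\epsilon^{n-4}|H^{(k)}|^2(\epsilon+|x|)^{-2n+2k+4}+C\epsilon^{n-4}\delta^{-2n+2d+6}$. That is exactly the conversion you question.

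Your first repair, redistributing the Young weights, cannot succeed. Put $r=\epsilon+|x|$ and $a=|H^{(k)}|$. The inequality
\[
a\,r^{k+d+5-2n}\le \eta\,a^2r^{2k+4-2n}+C\delta^{2d+6-2n}
\]
is a quadratic condition in $a$. It holds for every $a\ge 0$ if and only if its discriminant is nonpositive, i.e. $(\delta/r)^{n-d-3}\le 2\sqrt{\eta C}$. Since $n-d-3\ge \frac{n-2}{2}>0$, this fails as soon as $r\ll\delta$ (take $x=0$ and $\epsilon\ll\delta$). The offending values are $a\approx r^{d+1-k}/(2\eta)$, which are small.

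Pushing the negative powers of $r$ onto the $|H^{(k)}|^2$ side forces a coefficient of size $(\delta/r)^{\mathrm{positive}}$ there, which destroys the small constant. Moreover, $|H^{(k)}|$ can be small (for instance near points of $\mcz$) while all constants must be uniform in $p$. So no pointwise inequality of the stated form can be extracted from these majorants.

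Your fallback is the correct fix, and it is all that is needed. Because $2d\ge n-5$, we have $2d+6-n\ge 1$, hence
\[
\int_{B_\delta}\epsilon^{n-4}(\epsilon+|x|)^{2d+6-2n}\,dx\le C\,\epsilon^{n-4}\delta^{2d+6-n},
\]
which is a constant multiple of $\int_{B_\delta}\epsilon^{n-4}\delta^{-2n+2d+6}\,dx$. Your Young split therefore yields the lemma integrated over $B_\delta$. That is exactly the form in which the estimate is used in Lemma~\ref{lem:rhs}, whose conclusion is itself integrated. Drop the pointwise version and state and prove the integrated one.
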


\begin{proof}
According to Lemma \ref{lem:expansion of P}, $h=O(|x|^2)$ and $h=H+O(|x|^{d+1})$, 
\begin{align*}
&(P_{\exp(h)}-\Delta^2-DP_{g_e}[h]) u_{\epsilon}\\
\leq& C\Bigg(\sum_{\substack{0 \le \alpha_1,\alpha_2,\beta \le 4 \\ \alpha_1+\alpha_2+\beta=4}} |\partial^{\alpha_1}h||\partial^{\alpha_2}h| |\partial^{\beta}u_{\epsilon}|\Bigg)\\
\leq& C\Bigg(\sum_{\substack{0 \le \alpha_1,\alpha_2,\beta \le 4 \\ \alpha_1+\alpha_2+\beta=4}} |x|^{2-\alpha_1}(|\partial^{\alpha_2}H|+|x|^{d+1-\alpha_2}) |\partial^{\beta}u_{\epsilon}|\Bigg).
\end{align*}
Hence, we have
\begin{equation*}
\bigg | \big(P_{\exp{h}} - \Delta^2 - DP_{g_e}[h] \big) u_\epsilon \bigg| \le C \epsilon^{\frac{n-4}{2}} \sum_{k=2}^d    |H^{(k)}|(\epsilon + |x|)^{-n+k+2} + C\epsilon^{\frac{n-4}{2}}  (\epsilon + |x|)^{-n+3+d}.
\end{equation*}
Next, noting that $d \ge 2$, we have 
\begin{equation*}
DP_{g_e}[h-H] = \sum_{k=0}^4 O(|\partial^{4-k}(h-H)|) \partial^k = \sum_{k=0}^4 O(|x|^{\max\{0, d+k-3\}}  )\partial^k.
\end{equation*}
So that 
\begin{equation*}
\big | DP_{g_e}[h-H] u_\epsilon \big|  \le C\epsilon^{\frac{n-4}{2}}  (\epsilon + |x|)^{-n+d+1}.
\end{equation*}
Hence, we have
\begin{equation*}
\bigg | \big(P_{\exp{h}} - \Delta^2 - DP_{g_e}[H] \big) u_\epsilon \bigg| \le C \epsilon^{\frac{n-4}{2}} \sum_{k=2}^d    |H^{(k)}|(\epsilon + |x|)^{-n+k+2} + C\epsilon^{\frac{n-4}{2}}  (\epsilon + |x|)^{-n+1+d}.
\end{equation*}
Finally, combining with Lemma \ref{lem:brendle splitting} and Young's inequality, we estimate
\begin{align*}
&\bigg |w \big(P_{\exp{h}} - \Delta^2- DP_{g_e}[H] \big) u_\epsilon \bigg| \\
\le& C \epsilon^{n-4} \sum_{k=2}^d  \bigg(   |H^{(k)}|^2 (\epsilon + |x|)^{-2n+2k+6} + |H^{(k)}|  (\epsilon + |x|)^{-2n+k+d+5} \bigg) \\
\le& \frac{1}{C} \epsilon^{n-4} \sum_{k=2}^d    |H^{(k)}|^2 (\epsilon + |x|)^{-2n+2k+4} +  C \epsilon^{n-4} \delta^{-2n +2d +6}.
\end{align*}

A similar logic applies to $\big|u_\epsilon \big(P_{\exp{h}} - \Delta^2 - DP_{g_e}[H] \big) w_\epsilon \big|$.
\end{proof}

\begin{lemma}\label{lem:auxiliary lem2}
For $x\in B_{\delta}$, we have
\begin{align*}
&\bigg| u_\epsilon \bigg( P_{\exp(h)} - \Delta^2 - DP_{g_e}[h] - \frac{1}{2} D^2P_{g_e} [H, H]  
-  \frac{1}{2} DP_{g_e} [H\times H]\bigg) u_\epsilon \bigg| \nonumber \\
\le & \frac{1}{C} \epsilon^{n-4} \sum_{k=2}^d    |H^{(k)}|^2 (\epsilon + |x|)^{-2n+2k+4} +  C \epsilon^{n-4} \delta^{-2n +2d +6}.
\end{align*}
\end{lemma}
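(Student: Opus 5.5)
We follow the proof of Lemma \ref{lem:auxiliary lem1} closely, now using the third (cubic-remainder) estimate of Lemma \ref{lem:expansion of P} instead of the quadratic one. We split
\begin{align*}
&P_{\exp(h)} - \Delta^2 - DP_{g_e}[h] - \tfrac{1}{2} D^2P_{g_e}[H,H] - \tfrac{1}{2} DP_{g_e}[H\times H]\\
&= \Big(P_{\exp(h)} - \Delta^2 - DP_{g_e}[h] - \tfrac12 D^2P_{g_e}[h,h] - \tfrac12 DP_{g_e}[h\times h]\Big)\\
&\quad + \tfrac12\big(D^2P_{g_e}[h,h]-D^2P_{g_e}[H,H]\big) + \tfrac12 DP_{g_e}[h\times h - H\times H].
\end{align*}
Here we use that $\tr h = O(|x|^{2d+2})$. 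The trace part of $h$ can be absorbed in the same error, or handled by applying Lemma \ref{lem:expansion of P} to the traceless part and estimating the trace contribution directly through Lemma \ref{lem:expansion of P}'s first inequality; this gives a term of order $|x|^{2d+2-\alpha}$ times derivatives of $u_\epsilon$, which is harmless.

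For the first (cubic) bracket we plug $|\partial^{\alpha}h|\le C|x|^{2-\alpha}$ into one factor and $|\partial^{\alpha}h|\le C(|\partial^\alpha H| + |x|^{d+1-\alpha})$ into the other two. We also use $|\partial^\beta u_\epsilon|\le C\epsilon^{\frac{n-4}{2}}(\epsilon+|x|)^{4-n-\beta}$ and $|\partial^\alpha H|\le C\sum_k|H^{(k)}||x|^{k-\alpha}$. Multiplying by $u_\epsilon$ gives a bound
\[
C\epsilon^{n-4}\sum_{k}|H^{(k)}|^2(\epsilon+|x|)^{-2n+2k+6} + C\epsilon^{n-4}(\epsilon+|x|)^{-2n+2d+6}.
\]
The extra factor $|x|^2\le\delta^2$ lets the first piece be absorbed into $\frac1C\epsilon^{n-4}\sum|H^{(k)}|^2(\epsilon+|x|)^{-2n+2k+4}$ once $\delta_0$ is small. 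The second piece is handled as in Lemma \ref{lem:auxiliary lem1}.

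For the two difference terms, $D^2P_{g_e}$ is bilinear with structure $\sum_{\alpha_1+\alpha_2+\beta=4}\partial^{\alpha_1}\cdot\,\partial^{\alpha_2}\cdot\,\partial^\beta$, by the form of the expansion in Lemma \ref{lem:expansion of P}. Hence
\[
D^2P_{g_e}[h,h]-D^2P_{g_e}[H,H]=D^2P_{g_e}[h-H,h+H].
\]
We bound it using $|\partial^\alpha(h-H)|\le C|x|^{d+1-\alpha}$ and $|\partial^\alpha(h+H)|\le C|x|^{2-\alpha}$; the term $DP_{g_e}[h\times h-H\times H]$ is treated similarly. After multiplying by $u_\epsilon$, both contribute at most $C\epsilon^{n-4}(\epsilon+|x|)^{-2n+d+7}$. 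This is bounded by $C\epsilon^{n-4}\delta^{-2n+2d+6}$ whenever the exponent is comparable. Otherwise we apply Young's inequality as in Lemma \ref{lem:auxiliary lem1}: split $|x|^{d+1}|x|^{2}$ into $|H^{(k)}||x|^k$-type factors plus a pure remainder.

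The main obstacle is purely bookkeeping. We must ensure that every mixed term $|H^{(k)}|(\epsilon+|x|)^{-2n+k+d+5}$ is split by Young's inequality into $\frac1C|H^{(k)}|^2(\epsilon+|x|)^{-2n+2k+4}$ plus $C(\epsilon+|x|)^{-2n+2d+6}$. We also need the negative powers bounded by the $\delta$-power stated in the lemma; here we use $|x|\le\delta$ and note that the relevant exponent comparison is the same as in Lemma \ref{lem:auxiliary lem1}. We would check the worst case $\alpha_1=4$ derivatives on one $h$ factor, where the decay in $|x|$ is weakest.
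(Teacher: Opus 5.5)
Your decomposition is the same as the paper's. You take the cubic remainder of Lemma~\ref{lem:expansion of P} for $h$ plus the two difference terms, insert $h=O(|x|^2)$ and $h=H+O(|x|^{d+1})$, and finish with Young's inequality. Your handling of the cubic bracket is fine.

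The step that fails is your estimate of the difference terms. The identity $D^2P_{g_e}[h,h]-D^2P_{g_e}[H,H]=D^2P_{g_e}[h-H,h+H]$ is correct. But bounding $|\partial^\alpha(h+H)|\le C|x|^{2-\alpha}$ throws away the factor $|H^{(k)}|$, and the resulting term $C\epsilon^{n-4}(\epsilon+|x|)^{-2n+d+7}$ cannot be absorbed.

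\begin{itemize}
\item Since $d\ge 2$ we have $d+7<2d+6$, so this term is larger than the pure remainder $(\epsilon+|x|)^{-2n+2d+6}$ throughout $B_\delta$. There is no case in which ``the exponent is comparable''.
\item Because it carries no $H$ factor, it also cannot be absorbed into the $|H^{(k)}|^2$ term, for instance when $H$ is small.
\item Integrated over $B_\delta$ it is of order $\epsilon^{4}\delta$ for $n=8$, which is already weaker than the claimed $\epsilon^4\delta^2$. It is of order $\epsilon^{n-4}\log(\delta/\epsilon)$ for $n=9,10$, and of order $\epsilon^{d+3}\gg\epsilon^{n-4}$ for $n\ge 11$. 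For $n\ge 9$ this would swamp both the Weyl term and the mass term in Theorem~\ref{thm:energy estimate}.
\end{itemize}

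Once the $H$-information has been replaced by $|x|^2$, a later Young's inequality cannot recover it. So your ``otherwise'' branch must be the argument from the start. Expand
\begin{align*}
D^2P_{g_e}[h,h]-D^2P_{g_e}[H,H]&=2D^2P_{g_e}[h-H,H]+D^2P_{g_e}[h-H,h-H],\\
h\times h-H\times H&=(h-H)\times H+H\times(h-H)+(h-H)\times(h-H).
\end{align*}
This gives the paper's bound $\sum_{k=0}^4 O(|x|^{d+k-3}|H|+|x|^{2d+k-2})\partial^k$. The cross terms carry $|\partial^\alpha H|\le C\sum_k|H^{(k)}||x|^{k-\alpha}$. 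After multiplying by $u_\epsilon$ they produce exactly the mixed terms $\epsilon^{n-4}|H^{(k)}|(\epsilon+|x|)^{-2n+d+k+5}$ from your last paragraph, and Young splits these as you describe. The $(h-H)^2$ terms give $\epsilon^{n-4}(\epsilon+|x|)^{-2n+2d+6}$ directly.

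One more correction: you cannot use $|x|\le\delta$ to bound $(\epsilon+|x|)^{-2n+2d+6}$ by $\delta^{-2n+2d+6}$, because the exponent is negative, so the inequality goes the other way. Pointwise you only get $C\epsilon^{n-4}(\epsilon+|x|)^{-2n+2d+6}$. Since $2d+6-n\in\{1,2\}$, its integral over $B_\delta$ is at most $C\epsilon^{n-4}\delta^{2d+6-n}$, which is the form in which the lemma is used in Lemma~\ref{lem:rhs}. The paper's own proof also writes this $\delta$-power as if it were pointwise, so the constant term in the statement should be read in this integrated sense.
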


\begin{proof}
According to Lemma \ref{lem:expansion of P}, $h=O(|x|^2)$ and $h=H+O(|x|^{d+1})$, 
\begin{align*}
&\bigg( P_{\exp(h)} - \Delta^2 - DP_{g_e}[h] - \frac{1}{2} D^2P_{g_e} [h, h]  
-  \frac{1}{2} DP_{g_e} [h\times h]\bigg)u_{\epsilon}\\
\leq &C\Bigg(\sum_{\substack{0 \le \alpha_1,\alpha_2,\alpha_3,\beta \le 4 \\ \alpha_1+\alpha_2+\alpha_3+\beta=4}} |x|^{2-\alpha_1}(|\partial^{\alpha_2}H|+|x|^{d+1-\alpha_2}) (|\partial^{\alpha_3}H|+|x|^{d+1-\alpha_3})|\partial^{\beta}u_{\epsilon}|\Bigg).
\end{align*}
Hence, we have 
\begin{align*}
&  \bigg| u_\epsilon \bigg( P_{\exp(h)} - \Delta^2 - DP_{g_e}[h] - \frac{1}{2} D^2P_{g_e} [h, h]  
-  \frac{1}{2} DP_{g_e} [h\times h]\bigg) u_\epsilon \bigg| \\
\le&  C \epsilon^{n-4} \sum_{k=2}^d    |H^{(k)}|^2 (\epsilon + |x|)^{-2n+2k+6} +  C \epsilon^{n-4} \delta^{-2n +2d +8}.
\end{align*}

Next, using the algebraic estimate
\begin{align*}
&D^2 P_{g_e} [h, h] -  D^2 P_{g_e} [H, H] + DP_{g_e}[h\times h-H\times H ]\\
= &  \sum_{k=0}^4 O(|x|^{d+k-3} |H| +|x|^{2d+k-2}) \partial^k  
\end{align*}
and Young's inequality, we have 
\begin{align*}
& \bigg | u_\epsilon \bigg( D^2 P_{g_e} [h, h] -  D^2 P_{g_e} [H, H] + DP_{g_e}[h\times h-H\times H ] \bigg)  u_\epsilon \bigg|  \\
\le& C\epsilon^{n-4}  \sum_{k=2}^d   |H^{(k)}| (\epsilon + |x|)^{-2n+d+ k+5} +  C \epsilon^{n-4} \delta^{-2n +2d +6}\\
\leq &\frac{1}{C} \epsilon^{n-4} \sum_{k=2}^d    |H^{(k)}|^2 (\epsilon + |x|)^{-2n+2k+4} +  C \epsilon^{n-4} \delta^{-2n +2d +6}.
\end{align*}
This proves the assertion.
\end{proof}

\begin{lemma}
We have
\begin{equation}\label{eq:DP sphere vanish}
\int_{B_\delta} u_\epsilon DP_{g_e}[h] u_\epsilon, \hspace{2mm}  \int_{B_\delta} u_\epsilon DP_{g_e}[H] u_\epsilon \le C \epsilon^{n-4} \delta^{2d+6- n}.
\end{equation}
\end{lemma}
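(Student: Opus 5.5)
We need to bound $\int_{B_\delta} u_\epsilon DP_{g_e}[H]u_\epsilon\,dx$ (and the same with $h$) by $C\epsilon^{n-4}\delta^{2d+6-n}$. The plan is to show the integrand integrates to a pure boundary term, in the same spirit as the sphere computation of $I'_{\epsilon,\delta}(0)$.

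\textbf{Step 1: the $H$ integral.} Use the conformal covariance trick from the proof of Lemma~\ref{lem:barT barS}. For a family $g(t)$ with $g(0)=g_e$ and $g'(0)=H$, differentiating $Q_{u_\epsilon^{4/(n-4)}g(t)}=\frac{2}{n-4}u_\epsilon^{-\frac{n+4}{n-4}}P_{g(t)}u_\epsilon$ at $t=0$ gives
\[
u_\epsilon\, DP_{g_e}[H]u_\epsilon=\tfrac{n-4}{2}\,DQ_{g_c}[u_\epsilon^{\frac{4}{n-4}}H]\,u_\epsilon^{\frac{2n}{n-4}} .
\]
Since $dv_{g_c}=u_\epsilon^{2n/(n-4)}dx$, this means
\[
\int_{B_\delta}u_\epsilon DP_{g_e}[H]u_\epsilon\,dx=\tfrac{n-4}{2}\int_{B_\delta}DQ_{g_c}[\hat H]\vol_{g_c},\qquad \hat H:=u_\epsilon^{\frac{4}{n-4}}H.
\]

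\textbf{Step 2: reduce to a boundary integral.} By \eqref{eq:DQ einstein}, $DQ_{g_c}[\hat H]=\alpha_n(\Delta_{g_c}+c(n)\lambda)DR_{g_c}[\hat H]$ with $\lambda=4(n-1)$. The $\Delta_{g_c}$ part integrates to $\int_{\partial B_\delta}\partial_{\hat{\bar n}}DR_{g_c}[\hat H]\,dS_{g_c}$. By Lemma~\ref{lem:einstein R}, $DR_{g_c}[\hat H]=\ddiv_{g_c}^2\hat H-\Delta_{g_c}\tr_{g_c}\hat H-\lambda\tr_{g_c}\hat H$. Since $\tr H=0$, this is $\ddiv^2_{g_c}\hat H$, whose integral is also a boundary term, $\int_{\partial B_\delta}(\ddiv_{g_c}\hat H)(\hat{\bar n})\,dS_{g_c}$.

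\textbf{Step 3: estimate the boundary terms.} Rather than track the $g_c$ conformal factors, it is simplest to note the conversion $\ddiv_{g_c}\hat H=u_\epsilon^{-2n/(n-4)}\ddiv(u_\epsilon^{2n/(n-4)}H)$ from \eqref{eq: Qijk}. Moreover, $x_iH_{ij}=0$ together with $\tr H=0$ kills the $u_\epsilon'$ terms, so $\ddiv_{g_c}\hat H$ reduces to $\partial_iH_{ij}=O(|x|^{k-1})$ per homogeneous piece. On $|x|=\delta\ge2\epsilon$ we have $u_\epsilon\sim\epsilon^{(n-4)/2}\delta^{4-n}$, each derivative costs $\delta^{-1}$, and $H=O(\delta^2)$. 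Counting powers in the Euclidean representation $\int_{\partial B_\delta}(\ldots)\,dS$ then gives a boundary contribution of order $\epsilon^{n-4}\delta^{-n+k+2}$, which is at most $\epsilon^{n-4}\delta^{-n+6}$ for $k\ge2$.

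\textbf{Main obstacle: matching the claimed rate.} The power count in Step 3 gives $\delta^{6-n}$, not $\delta^{2d+6-n}$. To reach the stated bound I will exploit the exact structure of conformal normal coordinates, not just the size of $H$. Specifically, $x_iH_{ij}=0$ and $\tr h=O(|x|^{2d+2})$ imply that the radial contractions appearing in the boundary integrals vanish identically up to order $2d+2$. For example, $\hat{\bar n}\propto x$, so $(\ddiv\hat H)(\hat{\bar n})$ involves $x_j\partial_iH_{ij}=\partial_i(x_jH_{ij})-H_{ii}=0$. The same reasoning should dispose of the $\partial_{\hat{\bar n}}DR$ term, since $DR_{g_e}[H]=\partial_i\partial_jH_{ij}=0$ by the same identity. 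For the $h$ version, $\partial_i\partial_jh_{ij}$ and $\Delta\tr h$ are controlled by $\tr h=O(|x|^{2d+2})$, so only the error $|x|^{2d+2}$ survives. This yields $\epsilon^{n-4}\delta^{2d+6-n}$ after the scaling count, and the same count gives the bound in the $H$ case. The delicate point is verifying carefully that every surviving term carries a factor of $\tr h$ or $x_ih_{ij}$, and is therefore zero or of order $|x|^{2d+2}$.
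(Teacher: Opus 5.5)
Your Steps 1--2 are correct, and they give a genuinely different route from the paper. The paper stays in Euclidean coordinates: it kills the first two terms of $DP_{g_e}[h]$ using $x_ih_{ij}=0$ and reduces $D\ric_{ij}[h]x_ix_j$ to $O(|x|^{2d+2})$.

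The gap is the claim in your last paragraph that $DR_{g_e}[H]=\partial_i\partial_jH_{ij}=0$ ``by the same identity''. Differentiating $x_iH_{ij}=0$ only controls contractions with $x$, such as $x_j\partial_iH_{ij}=-H_{ii}$. It says nothing pointwise about the double divergence. For example, take $v=|x|^2e_1-x_1x$ and $H_{ij}=v_iv_j-\frac{|x|^2-x_1^2}{n-1}\big(|x|^2\delta_{ij}-x_ix_j\big)$. This tensor satisfies $x_iH_{ij}=0$ and $H_{ii}=0$, yet $\partial_i\partial_jH_{ij}=\frac{(n-2)(n+1)}{n-1}\big(nx_1^2-|x|^2\big)\not\equiv0$.

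Now write $\hat h:=u_\epsilon^{\frac{4}{n-4}}h$ and consider the boundary term $\int_{\partial B_\delta}\partial_{\hat{\bar{n}}}DR_{g_c}[\hat h]\,dS_{g_c}$. Its main part is $u_\epsilon^{\frac{2n-4}{n-4}}\int_{\partial B_\delta}\partial_r\big(u_\epsilon^{-\frac{4}{n-4}}h_{ij,ij}\big)\,dS$, which carries no pointwise factor of $\tr h$ or $x_ih_{ij}$. Your planned term-by-term verification therefore fails exactly here. A pointwise count on the degree-$k$ part of $H$ only gives $\epsilon^{n-4}\delta^{k+4-n}$, not $\epsilon^{n-4}\delta^{2d+6-n}$.

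The missing idea is that the cancellation holds only for spherical means, which is the content of \eqref{eq:53}--\eqref{eq:54} in the paper. Since $u_\epsilon$ is radial, the term above is a combination of radial factors times $\int_{\partial B_\delta}h_{ij,ij}\,dS$ and $\int_{\partial B_\delta}\partial_r(h_{ij,ij})\,dS$. By the divergence theorem, $\int_{B_r}h_{ij,ij}\,dx=\frac1r\int_{\partial B_r}x_jh_{ij,i}\,dS=-\frac1r\int_{\partial B_r}\tr h\,dS=O(r^{n+2d})$. Hence $\int_{\partial B_r}h_{ij,ij}\,dS=O(r^{n+2d-1})$ and $\int_{\partial B_r}\partial_r(h_{ij,ij})\,dS=O(r^{n+2d-2})$, using that $\tr h$ vanishes to order $2d+2$ together with its derivatives. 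This yields a contribution of size $u_\epsilon(\delta)^2\delta^{n+2d-2}\sim\epsilon^{n-4}\delta^{2d+6-n}$, so this is the term that saturates the bound. For $H$ it vanishes identically, since $\tr H\equiv0$.

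With this replacement your argument closes:
\begin{itemize}
\item The $(\ddiv_{g_c}\hat h)(\hat{\bar{n}})$ and $\partial_{\hat{\bar{n}}}\tr h$ boundary terms are $O(\epsilon^{n-2}\delta^{2d+4-n})$.
\item In the $h$ case you must also keep the interior term $-\lambda\int_{B_\delta}\tr h\,\vol_{g_c}=O(\epsilon^{2d+2})$. It comes from $DR_{g_c}[\hat h]=\ddiv_{g_c}^2\hat h-\Delta_{g_c}\tr h-\lambda\tr h$ and is also bounded by $C\epsilon^{n-4}\delta^{2d+6-n}$.
\end{itemize}
What your route buys is that only $DR$ ever appears: there is no term-by-term expansion of $DP_{g_e}$ and no identities for $h_{ij,kk}x_ix_j$ or $h_{ik,jk}x_ix_j$. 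It also shows that $\int_{B_\delta}u_\epsilon DP_{g_e}[H]u_\epsilon\,dx=0$ exactly.
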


\begin{proof}
By $ x_ih_{ij} = 0$, we observe that $-\Delta(\partial_i(h_{ij}\partial_j u_\epsilon))-\partial_i (h_{ij}\partial_j \Delta u_\epsilon) = 0 $. Combining with Lemma \ref{lem:expansion of P} and $\tr h = O(|x|^{2d+2})$, we have
\begin{align*}
&u_\epsilon DP_{g_e}[h] u_\epsilon\\
= & u_\epsilon \bigg[ \frac{4}{n-2}D(\ric_{g_e})_{ij}[h] \partial^2_{ij} u_\epsilon - \alpha_n^1DR_{g_e}[h] \Delta u_\epsilon- \alpha_n^2\partial_kDR_{g_e}[h]\partial_k u_\epsilon \\
&-\frac{n-4}{4(n-1)}\Delta (DR_{g_e}[h]) u_\epsilon\bigg] + C \epsilon^{n-4} O( (\epsilon + |x|)^{-2n +2d +6 }),
\end{align*}
where the linearizations of the curvatures are
\begin{align*}
u_\epsilon D(\ric_{g_e})_{ij}\partial^2_{ij} u_\epsilon = & u_\epsilon \big[ h_{jk, ik} - \frac{1}{2} h_{ij, kk} + O(|x|^{2d})\big]  \partial^2_{ij} u_\epsilon,\\
DR_{g_e}[h]= &h_{ij, ij} +  O(|x|^{2d}).
\end{align*}

Moreover, we need the following identities, which can be derived by differentiating the identity $x_i h_{ij} = 0$:
\begin{align*}
h_{ij, k} x_i  &= -h_{jk},\\
h_{ij, kk}x_ix_j =  2 h_{kk}, \hspace {3mm} &h_{ik,jk}x_ix_j = h_{kk}-x_i h_{kk,i},\\
h_{ij,ikk}x_j &=  - h_{ii, jj} - 2h_{ij, ij}.
\end{align*}
Also, we can compute
\begin{equation}\label{eq:53}
\int_{B_r} h_{ij,ij} =  \frac{1}{r}\int_{\partial B_r} h_{ij,i}x_j = - \frac{1}{r} \int_{\partial B_r} h_{ii} = O(r^{n+2d}),
\end{equation}
and
\begin{equation}\label{eq:54}
\int_{\partial B_r} h_{ij,ij}   = O(r^{n+2d-1}).
\end{equation}

Then, the assertion follows by applying integration by parts. For example, by (\ref{eq:53}) and (\ref{eq:54}),
\begin{align*}
&\int_{B_\delta}  u_\epsilon \Delta(DR_{g_e}[h]) u_\epsilon\\ 
= & \int_{B_\delta} h_{ij,ijkk} u_\epsilon^2 + O(|x|^{2d-2})  u_\epsilon^2 \\
\le & \int_{B_\delta} - h_{ij, ikk} \partial_j u_\epsilon^2 + \int_{\partial B_\delta} h_{ij, ikk} \frac{x_j}{\delta} u_\epsilon^2 + C \epsilon^{n-4} \delta^{2d+6 - n } \\
\le & 2 (-n+4)  \epsilon^{n-4} \int_{B_\delta} h_{ij, ij} (\epsilon^2 +|x|^2)^{-n+3} - \frac{2}{\delta} \int_{\partial B_\delta} h_{ij, ij} u_\epsilon^2 + C \epsilon^{n-4} \delta^{2d+6 - n } \\
= & C \epsilon^{n-4} \delta^{2d+6 - n }.
\end{align*} 
The same method works for $H$ and this proves the assertion.
\end{proof}

Recall that when $x\in B_{\delta}$, we define $\bar{g}(t)=(u_\epsilon + tw )^{\frac{4}{n-4}} \exp(tH)$ and our background metric is $g=\exp(h)$. So, in the rest of this subsection, we will estimates the errors from higher order terms and the errors caused by changing $H$ to $h$. One of the tricky parts is that the cubic order term of the following interpolation is difficult to keep track on.
\begin{align*}
&(u_\epsilon + tw )^{\frac{4}{n-4}} \exp(th ) \\
=&  g_c + t\bigg[\frac{4}{n-4} \frac{w}{u_\epsilon} +  u_\epsilon^{\frac{4}{n-4}} h   \bigg] + \frac{1}{2} t^2 u_\epsilon^{\frac{4}{n-4}} \bigg[h^2 + \frac{2(-n+8)}{(n-4)^2} \frac{w^2}{u_\epsilon^2} + \frac{8}{n-4} \frac{w}{u_\epsilon} h  \bigg] \\
&+ O(\text{cubic order of $w$ and $h$}). 
\end{align*}
So, we need to use one more property of the total $Q$-curvature. For any metric $\hat{g}$ defined on the set $B_\delta$, define $$E_\delta[\hat{g}] = \int_{B_\delta} Q_{\hat{g}} \vol_{\hat{g}}.$$ 
Notice that $E_\delta [(u_\epsilon +  sw )^{\frac{4}{n-4}} \exp(t H ) ]$ is quadratic with respect to $sw$. So, we define the total $Q$-curvature of a two-parameter family of metrics:
\begin{align*}
I_{\epsilon, \delta}(s, t):=&E_\delta [(u_\epsilon +  sw )^{\frac{4}{n-4}} \exp( tH ) ] \\
=& \frac{2}{n-4}\int_{B_\delta}  (u_\epsilon +sw  ) P_{ \exp( tH )} (u_\epsilon+sw )  \vol_{\exp(tH) }.
\end{align*}

There is a basic algebraic lemma.
\begin{lemma}\label{lem:algebraicspl}
Let $F(s, t)$ be a smooth function in two variables which is quadratic w.r.t to $s$ variable, i.e.
$$F(s, t) = F(0, t) + s \partial_s F(0, t) + \frac{1}{2} s^2 \partial_s^2 F(0, t).$$
Then we have
\begin{align*}
&F(1, 1) - F(0, 0) - \frac{d}{dt} F(t, t) \bigg|_{t = 0} -\frac{1}{2}\frac{d^2}{dt^2} F(t, t) \bigg|_{t=0} \\
= & \bigg (F(0, 1) -F(0, 0) -  \partial_t F (0, 0) -\frac{1}{2} \partial_t^2 F(0, 0) \bigg) \\
& + \bigg(\partial_s F(0, 1)  - \partial_s F(0,0)- \partial_t \partial_s F(0, 0) \bigg) \\
& + \frac{1}{2} \bigg( \partial_s^2 F(0, 1) - \partial_s^2 F(0, 0) \bigg).
\end{align*}
\end{lemma}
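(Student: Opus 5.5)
The plan is to expand everything in terms of the four quantities $F(0,0)$, $\partial_t F(0,0)$, $\partial_s F(0,0)$ and the second-order derivatives at the origin, and then compare both sides directly. Because $F$ is quadratic in $s$, we have $F(s,t)=F(0,t)+s\,\partial_sF(0,t)+\tfrac12 s^2\partial_s^2F(0,t)$ for every $t$. Setting $s=t$, the diagonal function becomes
\[
\phi(t):=F(t,t)=F(0,t)+t\,\partial_sF(0,t)+\tfrac12 t^2\partial_s^2F(0,t).
\]

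First I will compute $\phi(1)$, $\phi(0)$, $\phi'(0)$ and $\phi''(0)$ from this identity by the product rule:
\[
\phi(0)=F(0,0),\qquad \phi'(0)=\partial_tF(0,0)+\partial_sF(0,0),
\]
\[
\phi''(0)=\partial_t^2F(0,0)+2\,\partial_t\partial_sF(0,0)+\partial_s^2F(0,0),
\]
\[
\phi(1)=F(0,1)+\partial_sF(0,1)+\tfrac12\partial_s^2F(0,1).
\]
Only the terms with at most the needed number of $t$-factors survive at $t=0$.

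Next I will substitute these into $\phi(1)-\phi(0)-\phi'(0)-\tfrac12\phi''(0)$ and group the result by order in $s$. The order-zero group is $F(0,1)-F(0,0)-\partial_tF(0,0)-\tfrac12\partial_t^2F(0,0)$. The order-one group is $\partial_sF(0,1)-\partial_sF(0,0)-\partial_t\partial_sF(0,0)$, where the coefficient $\tfrac12\cdot 2=1$ on the mixed derivative is what makes this work. The order-two group is $\tfrac12\bigl(\partial_s^2F(0,1)-\partial_s^2F(0,0)\bigr)$. These three groups are exactly the three brackets in the statement.

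The argument is purely algebraic. The only point needing care is the bookkeeping of the mixed term, so there is no real obstacle: it suffices to check that each derivative at the origin appears with matching coefficients on both sides.
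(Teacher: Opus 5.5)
Your proposal is correct: the values $\phi(0)$, $\phi'(0)$, $\phi''(0)$ and $\phi(1)$ are computed correctly, and regrouping by powers of $s$ gives exactly the three brackets, with the factor $\tfrac12\cdot 2$ on $\partial_t\partial_s F(0,0)$ handled correctly. The paper states this lemma without proof as a basic algebraic fact, so your direct verification along the diagonal $s=t$ is precisely the check the paper leaves implicit.
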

According to this lemma, we have:
\begin{equation}\label{eq:main term and error term}
\frac{2}{n-4}\int_{B_{\delta}}(u_{\epsilon}+w_{\epsilon,\delta})P_{g}(u_{\epsilon}+w_{\epsilon,\delta})\vol_g - I_{\epsilon,\delta}(0) - I_{\epsilon, \delta}'(0) - \frac{1}{2} I''_{\epsilon, \delta}(0)=\RN{1}+\RN{2}+\RN{3},
\end{equation}
where
\begin{align*}
\RN{1}:=&\frac{1}{2} \bigg(\partial_s^2 I_{\epsilon, \delta}(0, 1) - \partial_s^2 I_{\epsilon, \delta}(0, 0) \bigg)\\
&+\frac{2}{n-4}\left(\int_{B_\delta}  w  P_{ \exp( h )} w \vol_{\exp(h) } - \int_{B_\delta}  w  P_{ \exp( H )} w   \vol_{\exp(H) }\right),\\
\RN{2}:=& \partial_s I_{\epsilon, \delta}(0, 1) - \partial_s I_{\epsilon, \delta}(0, 0) - \partial_t \partial_s I_{\epsilon, \delta}(0, 0)\\
&+\frac{2}{n-4} \bigg( \int_{B_\delta}    w P_{ \exp( h )}  u_\epsilon   \vol_{\exp(h) } + \int_{B_\delta}    u_\epsilon P_{ \exp( h )} w   \vol_{\exp(h) } \\
& \hspace{1.5cm}- \int_{B_\delta}   w P_{ \exp( H )} u_\epsilon \vol_{\exp(H) } - \int_{B_\delta}    u_\epsilon P_{ \exp( H )}w  \vol_{\exp(H) }\bigg),\\
\RN{3}:=& I_{\epsilon, \delta}(0, 1) - I_{\epsilon, \delta}(0, 0) - \partial_t I_{\epsilon, \delta}(0, 0) - \frac{1}{2}\partial_t^2 I_{\epsilon, \delta}(0, 0)\\
&+ \frac{2}{n-4}\bigg(\int_{B_{\delta}} u_\epsilon  P_{\exp(h)} u_\epsilon  \vol_{\exp(h)} - \int_{B_{\delta}} u_\epsilon  P_{\exp(H)} u_\epsilon  \vol_{\exp(H)}  \bigg).
\end{align*}

\begin{lemma}\label{lem:rhs}
We have
\begin{align*}
|\RN{1}|+|\RN{2}|+|\RN{3}|&\leq \frac{1}{C} \epsilon^{n-4} \sum_{k=2}^{d} |H^{(k)}|^2\int_{B_{\delta}}  ( \epsilon + |x|)^{-2n+2k+4}  dx + C \epsilon^{n-4} \delta^{2d+6-n}.
\end{align*}
\end{lemma}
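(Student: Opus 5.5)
We estimate the three groups separately, always reducing to pointwise bounds of the form already obtained in Lemmas \ref{lem:auxiliary lem1} and \ref{lem:auxiliary lem2}, and then integrating over $B_\delta$. Throughout we use three inputs. First, Lemma \ref{lem:brendle splitting} together with \eqref{eq:w[s] proof}, which give
\[
|\partial^\beta w|\le C\epsilon^{\frac{n-4}{2}}\sum_{k=2}^d|H^{(k)}|(\epsilon+|x|)^{k+4-n-\beta}.
\]
Second, the volume form in conformal normal coordinates, $\vol_{\exp(h)}=(1+O(|x|^{2d+2}))dx$ and likewise for $\exp(H)$, since $\tr h,\tr H=O(|x|^{2d+2})$. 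Third, the difference $h-H=O(|x|^{d+1})$. All bounds are closed off with Young's inequality in the form
\[
|H^{(k)}|(\epsilon+|x|)^{a}\le \eta |H^{(k)}|^2(\epsilon+|x|)^{-2n+2k+4}+C_\eta(\epsilon+|x|)^{2a+2n-2k-4}.
\]
Here $\eta$ is chosen small, and every remainder power is integrable up to give $\epsilon^{n-4}\delta^{2d+6-n}$, using $2d+6-n\le 1<0$ or $\ge0$ as appropriate, and $\delta\le\delta_0$.

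\textbf{Term III.} This concerns $s=0$ only, i.e.\ $u_\epsilon$ alone. Write $I(0,1)-I(0,0)-\partial_tI(0,0)-\tfrac12\partial_t^2I(0,0)$ as $\frac{2}{n-4}\int u_\epsilon(P_{\exp H}-\Delta^2-DP_{g_e}[H]-\tfrac12D^2P_{g_e}[H,H]-\tfrac12DP_{g_e}[H\times H])u_\epsilon$, plus the corresponding volume-density corrections, which are $O(|x|^{2d+2})$ and harmless. The second bracket of III, which exchanges $h$ for $H$, combines with this to give the expression of Lemma \ref{lem:auxiliary lem2} with $h$ in place of $H$, up to $DP_{g_e}[h]-DP_{g_e}[H]$ applied to $u_\epsilon$. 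That last piece is handled by \eqref{eq:DP sphere vanish}, which bounds both $\int u_\epsilon DP_{g_e}[h]u_\epsilon$ and $\int u_\epsilon DP_{g_e}[H]u_\epsilon$ by $C\epsilon^{n-4}\delta^{2d+6-n}$. Integrating Lemma \ref{lem:auxiliary lem2} over $B_\delta$ then gives the claim.

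\textbf{Term II.} Here $\partial_sI(0,t)=\frac{2}{n-4}\int (wP_{\exp tH}u_\epsilon+u_\epsilon P_{\exp tH}w)\vol$. The combination $\partial_sI(0,1)-\partial_sI(0,0)-\partial_t\partial_sI(0,0)$, together with the $h$/$H$ exchange, becomes $\frac{2}{n-4}\int [w(P_{\exp h}-\Delta^2-DP_{g_e}[H])u_\epsilon+u_\epsilon(P_{\exp h}-\Delta^2-DP_{g_e}[H])w]$, plus volume errors of size $|w||\Delta^2u_\epsilon||x|^{2d+2}$. This is exactly what Lemma \ref{lem:auxiliary lem1} bounds pointwise.

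\textbf{Term I.} This is genuinely quadratic in $w$, so it is small without any cancellation. By the second part of Lemma \ref{lem:expansion of P}, we have $|w(P_{\exp h}-\Delta^2)w|\le C\sum|\partial^\alpha h||w||\partial^\beta w|$. Since $|\partial^\alpha h|\le C|x|^{2-\alpha}$, this is bounded by $C\epsilon^{n-4}\sum_k|H^{(k)}|^2(\epsilon+|x|)^{-2n+2k+6}$. The extra factor $(\epsilon+|x|)^2\le\delta^2$ makes it at most $\eta$ times the main term once $\delta_0$ is small. The same bound applies with $H$ in place of $h$, and the $s$-second-derivative difference is exactly of this form.

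\textbf{Main obstacle.} The delicate point is the cross term in II and III coming from $h-H=O(|x|^{d+1})$ paired with the linear operator $DP_{g_e}$ acting on $u_\epsilon$ against $u_\epsilon$. Pointwise, this is only $\epsilon^{n-4}(\epsilon+|x|)^{-2n+d+5}$, which is not integrable to the required order. For this reason it must be treated via the integrated cancellation \eqref{eq:DP sphere vanish}, which uses $x_ih_{ij}=0$, rather than pointwise. Checking that every leftover appears in that integrated form is the step I expect to need the most care.
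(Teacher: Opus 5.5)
Your proposal is correct and follows the paper's proof essentially step for step. You write I, II, III as volume-density corrections plus the operators bounded pointwise in Lemmas \ref{lem:auxiliary lem1} and \ref{lem:auxiliary lem2}, absorb the leftover $\int u_\epsilon(DP_{g_e}[h]-DP_{g_e}[H])u_\epsilon$ in III by \eqref{eq:DP sphere vanish}, and make Term I small by shrinking $\delta$. Two minor slips: the bound you use in Term I is the first, not the second, estimate of Lemma \ref{lem:expansion of P}, and $2d+6-n\in\{1,2\}$ is always positive.
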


\begin{proof}
According to Lemmas \ref{lem:expansion of P}, \ref{lem:brendle splitting}, and $h=O(|x|^2)$,
\begin{align*}
\RN{1}= &  \int_{B_\delta}  w  P_{ \exp( h )} w   \vol_{\exp(h) } -   \int_{B_\delta}  w  \Delta^2w  \vol_{g_e } \\
\le & C \epsilon^{n-4} \int_{B_\delta}  \sum_{k=2}^{d} ( \epsilon + |x|)^{-2n+2k+6} |H^{(k)}|^2 dx.
\end{align*} 
Letting $\delta$ to be small enough, we can obtain the estimate for term \RN{1}.

For term \RN{2}, we have
\begin{align*}
\RN{2}= &  \int_{B_\delta}    u_\epsilon P_{ \exp( h )} w   ( \vol_{\exp(h) } - dx ) + \int_{B_\delta}    u_\epsilon \bigg( P_{ \exp( h )} - \Delta^2- DP_{g_e} [H] \bigg)w dx  \\
+&\int_{B_\delta}    w P_{ \exp( h )} u_\epsilon  ( \vol_{\exp(h) } - dx ) + \int_{B_\delta}    w \bigg( P_{ \exp( h )} - \Delta^2- DP_{g_e} [H] \bigg) u_{\epsilon} dx.
\end{align*}
Combining with Lemma \ref{lem:auxiliary lem1} and $\tr h = O(|x|^{2d+2})$, we can obtain the conclusion for term \RN{2}.

Lastly, we have
\begin{align*}
\RN{3}= & \int_{B_{\delta}} u_\epsilon  P_{\exp(h)} u_\epsilon  (\vol_{\exp(h)} -dx)\\
+&\int_{B_{\delta}} u_\epsilon \bigg(P_{\exp(h)}- \Delta^2 - DP_{g_e}[H] -\frac{1}{2} D^2P_{g_e} [H, H]  
-\frac{1}{2} DP_{g_e} [H\times H]\bigg) u_\epsilon  dx.
\end{align*}
Combining with Lemma \ref{lem:auxiliary lem2}, \eqref{eq:DP sphere vanish}, and $\tr h = O(|x|^{2d+2})$, we can obtain the the estimate for term \RN{3}.

This proves the assertion.
\end{proof}

Finally, we can combine Lemma \ref{lem:rhs} and Proposition \ref{prop:I+I'+1/2I''} with equation \eqref{eq:main term and error term} to obtain the following localized estimate.
\begin{proposition}\label{prop:localized estimate}
We have
\begin{align*}
&\int_{B_{\delta}}(u_{\epsilon}+w_{\epsilon,\delta})P_g(u_{\epsilon}+w_{\epsilon,\delta})\vol_g\\
\leq & Y_4(S^n)\left(\int_{B_\delta}(u_{\epsilon}+w_{\epsilon,\delta})^{\frac{2n}{n-4}}\vol_g\right)^{\frac{n-4}{n}}-\frac{1}{C}\epsilon^{n-4} \sum_{k=2}^d|H^{(k)}|^2\int_{B_{\delta}}(\epsilon+|x|)^{-2n+2k+4}dx\\
&+C \sum_{k=2}^d|H^{(k)}|\delta^{-n+k+4}\epsilon^{n-4}+C\delta^{2d+6-n}\epsilon^{n-4}.
\end{align*}
\end{proposition}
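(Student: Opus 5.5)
The plan is to combine the identity \eqref{eq:main term and error term} with Lemma \ref{lem:rhs} and Proposition \ref{prop:I+I'+1/2I''}. Write $v=u_\epsilon+w_{\epsilon,\delta}$ with $w_{\epsilon,\delta}=w[S]$. First I will check that Lemma \ref{lem:algebraicspl} applies to $F(s,t)=I_{\epsilon,\delta}(s,t)$, which is quadratic in $s$. I also need that $F(t,t)$ agrees with $I_{\epsilon,\delta}(t)$ of Definition \ref{def:barg metric} on $B_\delta$ (there $\chi_\delta\equiv1$). Then \eqref{eq:main term and error term} gives
\begin{align*}
\frac{2}{n-4}\int_{B_\delta} vP_g v\vol_g
= I_{\epsilon,\delta}(0)+I_{\epsilon,\delta}'(0)+\tfrac12 I_{\epsilon,\delta}''(0)+\RN{1}+\RN{2}+\RN{3}.
\end{align*}
Multiplying by $\frac{n-4}{2}$, I bound the main bracket by Proposition \ref{prop:I+I'+1/2I''} and the remainder $\RN{1}+\RN{2}+\RN{3}$ by Lemma \ref{lem:rhs}.

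The key point is the sign bookkeeping. The coefficient of the Weyl-type negative term in Proposition \ref{prop:I+I'+1/2I''} is a fixed constant $-\frac{1}{C_1}$. The $\frac1C$ appearing in Lemma \ref{lem:rhs} (and in Lemmas \ref{lem:auxiliary lem1}, \ref{lem:auxiliary lem2}) comes from Young's inequality, so it can be made arbitrarily small at the cost of enlarging the $\delta^{2d+6-n}\epsilon^{n-4}$ constant. In the proof of Lemma \ref{lem:rhs}, term $\RN{1}$ carries an extra factor $(\epsilon+|x|)^2\le C\delta^2$, which is small once $\delta_0$ is small. I will choose these so the error coefficient is at most $\frac{1}{2C_1}$. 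Half of the negative term then survives and absorbs it, leaving
\begin{align*}
-\frac{1}{2C_1}\epsilon^{n-4}\sum_{k=2}^d|H^{(k)}|^2\int_{B_\delta}(\epsilon+|x|)^{-2n+2k+4}dx .
\end{align*}

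It remains to replace $\int_{B_\delta}v^{\frac{2n}{n-4}}dx$ by $\int_{B_\delta}v^{\frac{2n}{n-4}}\vol_g$. In conformal normal coordinates, $\det g=1+O(|x|^{2d+2})$ because $\tr h=O(|x|^{2d+2})$. Also $v\le Cu_\epsilon$ on $B_\delta$, since $|w|\le C\sum_k|H^{(k)}|(\epsilon+|x|)^{k}u_\epsilon$ by Lemma \ref{lem:brendle splitting} and \eqref{eq:w[s] proof}. Hence the difference is at most
\begin{align*}
C\epsilon^n\int_{B_\delta}(\epsilon+|x|)^{2d+2-2n}dx\le C\epsilon^{n-4}\delta^{2d+6-n},
\end{align*}
using $\epsilon\le\delta$ and $2d+2<n$. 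Because the $\frac{n-4}{n}$ power is concave and the volume is bounded below by a positive constant times $\epsilon^0$, this change costs only $C\delta^{2d+6-n}\epsilon^{n-4}$. The remaining positive term $C\sum_k|H^{(k)}|\delta^{-n+k+4}\epsilon^{n-4}$ is carried over unchanged from Proposition \ref{prop:I+I'+1/2I''}.

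The main obstacle is the absorption step. I need to confirm that every error in Lemmas \ref{lem:auxiliary lem1}, \ref{lem:auxiliary lem2} and \ref{lem:rhs} that is quadratic in $|H^{(k)}|$ really has a freely small coefficient. This must hold uniformly in $p$, using the uniform bounds on $|H^{(k)}|$ in conformal normal coordinates. The worry is the terms where Young's inequality is applied to $|H^{(k)}|(\epsilon+|x|)^{-2n+k+d+5}$. I would check each one explicitly, trading a small coefficient $\eta$ on the quadratic term for $C_\eta\delta^{2d+6-n}$.
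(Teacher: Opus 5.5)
Your proposal is correct and follows the paper's route: the paper's proof is simply the combination of \eqref{eq:main term and error term}, Lemma \ref{lem:rhs} and Proposition \ref{prop:I+I'+1/2I''}. What you add is the bookkeeping the paper leaves implicit. You absorb the quadratic errors of Lemma \ref{lem:rhs}, whose coefficients are small by Young's inequality and small $\delta$, into the negative Weyl-type term. You also replace $dx$ by $\vol_g$ in the volume term, at cost $C\epsilon^{2d+2}\le C\epsilon^{n-4}\delta^{2d+6-n}$.
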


\subsection{Green’s function modifications}

We give the definitions of the modified test bubbles and the boundary integrals of Green's functions.
\begin{definition}[Modified test bubbles]\label{def:test bubble Green}
\begin{equation}\label{testfunction2}
v_{p,\epsilon,\delta}:=\chi_\delta(u_{\epsilon}+w_{\epsilon,\delta})+(1-\chi_{\delta})\epsilon^{\frac{n-4}{2}}G_p.
\end{equation}
\end{definition}

\begin{definition}\label{def:mci}
\begin{align*}
\mci(p,\delta):=&-\int_{\partial B_{\delta}}|x|^{4-n}\partial_{\hat{n}}\Delta G_p(x)-\partial_{\hat{n}}(|x|^{4-n}) \Delta G_p(x)dS\\
&-\int_{\partial B_{\delta}}\Delta(|x|^{4-n})\partial_{\hat{n}} G_p(x)-\partial_{\hat{n}}\Delta(|x|^{4-n})G_p(x)dS. 
\end{align*}
Apparently, when $\delta>\tilde{\delta}>0$, there is
$$
\mci(p,\delta)-\mci(p,\tilde{\delta})=-\int_{B_{\delta}\setminus B_{\tilde{\delta}}}|x|^{4-n}\Delta^2 G_p(x)dx.
$$
\end{definition}

\begin{lemma}\label{lem:thm1.2 (b)}
There is a positive constant $C$ such that for $\delta>\tilde{\delta}>0$,
$$
\sup_{p\in \mcz} |\mci(p,\delta)-\mci(p,\tilde{\delta})| \leq C\delta^{2d+6-n}.
$$
Consequently, there exists a continuous function $\mci: \mcz \rightarrow \mathbb{R}$ such that 
$$
\sup_{p\in \mcz} |\mci(p,\delta)-\mci(p)| \rightarrow 0 \text{ as }\delta \rightarrow0.
$$ 
\end{lemma}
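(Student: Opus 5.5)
The plan is to start from the identity recorded in Definition \ref{def:mci},
$$
\mci(p,\delta)-\mci(p,\tilde{\delta})=-\int_{B_{\delta}\setminus B_{\tilde{\delta}}}|x|^{4-n}\Delta^2 G_p(x)\,dx ,
$$
which follows from Green's second identity for $\Delta^2$ applied to $|x|^{4-n}$ and $G_p$ on the annulus, using $\Delta^2|x|^{4-n}=0$ there. The whole task then reduces to a pointwise bound on $\Delta^2 G_p$ in conformal normal coordinates at $p\in\mcz$, uniform in $p$. Since $P_g G_p=0$ away from $p$, we can write
$$
\Delta^2 G_p=-(P_g-\Delta^2)G_p ,
$$
where $\Delta$ is the Euclidean Laplacian in these coordinates. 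It therefore suffices to show
$$
|(P_g-\Delta^2)G_p|(x)\le C|x|^{2d+2-2n+4}\quad\text{on } B_{\delta_0}\setminus\{0\}.
$$
The exponent is calibrated so that
$$
\int_{B_\delta\setminus B_{\tilde\delta}}|x|^{4-n}|x|^{2d+6-2n}\,dx\le C\delta^{2d+6-n},
$$
and this converges because $2d+6-n>0$; indeed $d=\lfloor\frac{n-4}{2}\rfloor$ gives $2d+6-n\in\{1,2\}$.

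To obtain the bound, I would first recall the expansion of the Green's function in conformal normal coordinates. For $p\in\mcz$ the Weyl tensor vanishes to order $d-2$ at $p$, so in conformal normal coordinates $h=\log g$ satisfies $h=O(|x|^{d+1})$ with $\tr h=O(|x|^{2d+2})$ and $x_ih_{ij}=0$. This uses the standard fact that the Taylor coefficients of $h$ up to order $d$ are determined by $W$ and its derivatives, and that $p\in\mcz$ kills $H$, i.e.\ $H^{(k)}=0$ for all $k\le d$, uniformly on the compact set $\mcz$. Next, I would establish $G_p=|x|^{4-n}+O(|x|^{4-n+d+1})$, with derivatives behaving correspondingly up to fourth order. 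This comes from the standard Green's function expansion (as in Lee--Parker for the Yamabe case, or Gursky--Malchiodi/Avalos--Laurain--Lira for the Paneitz case): solve $P_g(|x|^{4-n}+\psi)=\text{smooth}$ iteratively, bounding the error of the ansatz by the first-order expansion in Lemma \ref{lem:expansion of P}.

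With these in hand, I would split
$$
(P_g-\Delta^2)G_p=DP_{g_e}[h]|x|^{4-n}+E ,
$$
where $E$ collects the quadratic terms in $h$ and the contribution of $(P_g-\Delta^2)$ applied to the correction $G_p-|x|^{4-n}$. By Lemma \ref{lem:expansion of P}, $E$ is bounded by $|x|^{2(d+1)-4+4-n}$ times derivatives, which after counting gives $O(|x|^{2d+2-n-4+4})\cdot|x|^{-4}$, i.e.\ the required rate. For the linear term, the key simplification is the one already used in the proof of \eqref{eq:DP sphere vanish}. The radial structure $x_ih_{ij}=0$ kills $-\Delta\partial_i(h_{ij}\partial_j|x|^{4-n})-\partial_i(h_{ij}\partial_j\Delta|x|^{4-n})$, and likewise kills the $D\ric_{ij}\partial_{ij}|x|^{4-n}$ contribution up to trace terms of order $O(|x|^{2d})$. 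What remains are the $DR[h]$ terms, with $DR[h]=h_{ij,ij}+O(|x|^{2d})$.

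The main obstacle is that $h_{ij,ij}$ is pointwise only $O(|x|^{d-1})$, which is not good enough for a pointwise bound. I would handle this through integration rather than pointwise estimates. The difference $\mci(p,\delta)-\mci(p,\tilde\delta)$ only requires the integral $\int |x|^{4-n}DP_{g_e}[h]|x|^{4-n}$ over the annulus. I would integrate by parts exactly as in the proof of \eqref{eq:DP sphere vanish}, using \eqref{eq:53} and \eqref{eq:54}, which state that spherical averages of $h_{ij,ij}$ are $O(r^{2d})$ relative to the volume element. This yields $C\delta^{2d+6-n}$ uniformly in $p\in\mcz$, since all constants depend only on bounds for the metric in conformal normal coordinates, and these vary continuously with $p$.

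Finally, the Cauchy criterion gives the existence of the limit $\mci(p)=\lim_{\delta\to0}\mci(p,\delta)$, uniformly on $\mcz$. Each $\mci(\cdot,\delta)$ is continuous in $p$, since $G_p$ and conformal normal coordinates depend continuously on $p$. A uniform limit of continuous functions is continuous, so $\mci$ is continuous on $\mcz$.
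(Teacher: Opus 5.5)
Your proposal is correct and is essentially the paper's proof: you write the difference as $\int_{B_\delta\setminus B_{\tilde\delta}}|x|^{4-n}(P_g-\Delta^2)G_p\,dx$, bound the part quadratic in $h$ pointwise using $h=O(|x|^{d+1})$ and $G_p=|x|^{4-n}+O(|x|^{d+5-n})$, and handle the linear part by the integration by parts behind \eqref{eq:DP sphere vanish} rather than pointwise. Applying $DP_{g_e}[h]$ to $|x|^{4-n}$ instead of $G_p$, with $(P_g-\Delta^2)(G_p-|x|^{4-n})$ moved into $E$, is only a harmless variant. Two pieces of bookkeeping need fixing: the pointwise rate you need, and in fact obtain for $E$, is $|x|^{2d+2-n}$ rather than $|x|^{2d+6-2n}$, because it is the product $|x|^{4-n}\cdot|x|^{2d+2-n}=|x|^{2d+6-2n}$ that integrates to $C\delta^{2d+6-n}$ over the annulus; and the $\delta_{ij}$ part of $\partial_{ij}|x|^{4-n}$ pairs with $\tr D\ric[h]=DR[h]$, which is not $O(|x|^{2d})$ but simply joins the $DR$ terms you integrate by parts.
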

\begin{proof}
By Definition \ref{def:mci}, we have
\begin{align*}
&\mci(p,\delta)-\mci(p,\tilde{\delta})\\
=&\int_{B_{\delta}\setminus B_{\tilde{\delta}}}|x|^{4-n}(P_g-\Delta^2-DP_{g_e}[h])G_p(x)-|x|^{4-n}DP_{g_e}[h]G_p(x)dx.
\end{align*}
Since $p\in \mcz$, there is $G_p(x)=|x|^{4-n}+O(|x|^{d+5-n})$. Combining with Lemma \ref{lem:expansion of P}, we can obtain
$$
\left|\int_{B_{\delta}\setminus B_{\tilde{\delta}}}|x|^{4-n}(P_g-\Delta^2-DP_{g_e}[h])G_p(x)dx\right|\leq C\delta^{2d+6-n}.
$$
Using the same proof as equation \eqref{eq:DP sphere vanish}, we know that 
$$
\left|\int_{B_{\delta}\setminus B_{\tilde{\delta}}}|x|^{4-n}DP_{g_e}[h]G_p(x)dx\right|\leq C\delta^{2d+6-n}.
$$
Combining the three equations above, we can obtain the conclusion.
\end{proof}

Now, we are ready to do global analysis.
\begin{proposition}\label{prop:global estimate}
We have
\begin{align*}
\int_{M\setminus B_{\delta}}v_{p,\epsilon,\delta}P_gv_{p,\epsilon,\delta}\vol_g\leq &-\epsilon^{n-4}\mci(p,\delta)+C \sum_{k=2}^d|H^{(k)}|\delta^{-n+k+4}\epsilon^{n-4}\\
&+C\delta^{2d+6-n}\epsilon^{n-4}+C\left(\frac{\epsilon}{\delta}\right)^{n-2}.
\end{align*}
\end{proposition}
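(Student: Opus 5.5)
Split $M\setminus B_\delta$ into the transition annulus $A_\delta:=B_{5\delta/3}\setminus B_\delta$, where $\chi_\delta$ varies, and the exterior $M\setminus B_{5\delta/3}$, where $v_{p,\epsilon,\delta}=\epsilon^{\frac{n-4}{2}}G_p$. On $B_{4\delta/3}$ we have $\chi_\delta\equiv1$, so there $v=u_\epsilon+w_{\epsilon,\delta}$.

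\textbf{Step 1: integrate by parts to a boundary term.} Since $P_gG_p=0$ away from $p$, apply Green's formula for $P_g$ on $M\setminus B_\delta$. Write $P_g=\Delta_g^2+\text{lower order}$. Then
\[
\int_{M\setminus B_\delta}vP_gv\,\vol_g=\int_{M\setminus B_\delta}v\,P_g(v-\epsilon^{\frac{n-4}{2}}G_p)\,\vol_g .
\]
The remaining contributions are boundary integrals over $\partial B_\delta$ of the bilinear form $\mathcal{B}(v,\epsilon^{\frac{n-4}{2}}G_p)$, built from $\partial_{\hat n}\Delta$, $\Delta$ and $\partial_{\hat n}$ terms together with the lower-order coefficient terms. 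The interior piece is supported in $A_\delta$, because $v-\epsilon^{\frac{n-4}{2}}G_p=\chi_\delta(u_\epsilon+w-\epsilon^{\frac{n-4}{2}}G_p)$.

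\textbf{Step 2: expand the bubble near $\partial B_\delta$ and in $A_\delta$.} For $|x|\ge\delta\ge2\epsilon$ we have
\[
u_\epsilon=\epsilon^{\frac{n-4}{2}}|x|^{4-n}\bigl(1+O(\epsilon^2/|x|^2)\bigr).
\]
By Lemma~\ref{lem:correction term} and Lemma~\ref{lem:brendle splitting}, $|\partial^\alpha w|\le C\epsilon^{\frac{n-4}{2}}\sum_k|H^{(k)}||x|^{k+4-n-|\alpha|}$. Substitute this into $\mathcal{B}$ on $\partial B_\delta$.
\begin{itemize}
\item The leading part $\epsilon^{n-4}\mathcal{B}(|x|^{4-n},G_p)$, with the Euclidean bi-Laplacian replacing $P_g$, is precisely $-\epsilon^{n-4}\mci(p,\delta)$ by Definition~\ref{def:mci}.
\item The metric corrections $P_g-\Delta^2$ in the boundary form involve $h=O(|x|^2)$ with $x_ih_{ij}=0$ and $\tr h=O(|x|^{2d+2})$. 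They are controlled exactly as in Lemma~\ref{lem:thm1.2 (b)} and \eqref{eq:DP sphere vanish}, which gives $C\delta^{2d+6-n}\epsilon^{n-4}$.
\item The $w$-contribution is bounded by $C\sum_k|H^{(k)}|\delta^{k+4-n}\epsilon^{n-4}$.
\item The $O(\epsilon^2/|x|^2)$ correction of $u_\epsilon$ gives $C(\epsilon/\delta)^{n-2}$.
\end{itemize}

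\textbf{Step 3: the annulus.} In $A_\delta$, write $u_\epsilon+w-\epsilon^{\frac{n-4}{2}}G_p=:\epsilon^{\frac{n-4}{2}}\psi$. Expand $G_p=|x|^{4-n}+(\text{correction})$ and use the difference from $u_\epsilon$. This gives the bound
\[
|\partial^\alpha\psi|\le C\Bigl(\sum_k|H^{(k)}|\delta^{k+4-n-|\alpha|}+\delta^{d+5-n-|\alpha|}+\epsilon^2\delta^{2-n-|\alpha|}\Bigr).
\]
The correction of $G_p$ is of order $O(|x|^{d+5-n})$ when $p\in\mcz$. For $p\notin\mcz$, the Weyl terms are absorbed in the $|H^{(k)}|$ terms, and I expect the expansion of $G_p$ in conformal normal coordinates to still give a remainder of this size up to $H$-dependent terms. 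Pairing with $P_gv$ over a region of volume $\sim\delta^n$, with $|P_gv|\lesssim\epsilon^{\frac{n-4}{2}}\delta^{-n}(\ldots)$, gives the same error types. Products of two $H$-terms or two remainders are handled by Cauchy–Schwarz, since $\delta$ is small. Summing Steps 2 and 3 gives the stated bound.

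\textbf{Main obstacle.} The delicate point is Step 3's cross term $\int_{A_\delta}\epsilon^{\frac{n-4}{2}}|x|^{4-n}\,P_g(\chi_\delta\psi)$. A crude bound would only give $\epsilon^{n-4}\delta^{-n+\ldots}$ with the wrong power. To handle it, I would integrate by parts once more, moving $P_g$ onto $|x|^{4-n}$. Then $P_g|x|^{4-n}=(P_g-\Delta^2)|x|^{4-n}$ is small, by the computation behind \eqref{eq:DP sphere vanish}, and the boundary terms on $\partial B_\delta$ are reorganized into $\mci(p,\delta)$.
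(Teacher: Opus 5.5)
Your ingredients match the paper's proof: Green's formula with $P_gG_p=0$, the Euclidean boundary form on $\partial B_\delta$ producing $-\epsilon^{n-4}\mci(p,\delta)$, and pointwise bounds on $v-\epsilon^{\frac{n-4}{2}}G_p$ in the annulus. However, Step~1 is wrong as written, and it is what creates your ``main obstacle''. Since $P_g(\epsilon^{\frac{n-4}{2}}G_p)=0$ pointwise on $M\setminus B_\delta$, your display $\int_{M\setminus B_\delta}vP_gv=\int_{M\setminus B_\delta}vP_g(v-\epsilon^{\frac{n-4}{2}}G_p)$ is already an exact identity. There are no remaining boundary integrals to add.

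If you keep the interior term in this form, it contains the cross term $\epsilon^{n-4}\int_{A_\delta}|x|^{4-n}P_g(\chi_\delta\psi)$. As you noticed, its crude bound is of order $\epsilon^{n-4}\delta^{d+5-n}$, which is too large. Your remedy, moving $P_g$ onto $|x|^{4-n}$, produces on $\partial B_\delta$ the Green boundary form of the pair $(v,\,v-\epsilon^{\frac{n-4}{2}}G_p)$. By antisymmetry this equals the boundary form of $(\epsilon^{\frac{n-4}{2}}G_p,\,v)$. That is exactly the boundary term you already evaluated in Step~2, and its leading part is again $-\epsilon^{n-4}\mci(p,\delta)$. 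Doing both, as your ``summing Steps 2 and 3'' together with the main-obstacle fix suggests, counts the mass term twice.

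The consistent bookkeeping is
\[
\int_{M\setminus B_\delta}vP_gv=\int_{M\setminus B_\delta}\bigl(v-\epsilon^{\frac{n-4}{2}}G_p\bigr)P_gv+\int_{M\setminus B_\delta}\epsilon^{\frac{n-4}{2}}G_p\,P_gv ,
\]
and only the second integral is converted, via Green's formula and $P_gG_p=0$, into the boundary term of Step~2. On the support of the first integral, $v-\epsilon^{\frac{n-4}{2}}G_p=\epsilon^{\frac{n-4}{2}}\chi_\delta\psi$ and $P_gv=\epsilon^{\frac{n-4}{2}}P_g(\chi_\delta\psi)$. So the first integral is quadratic in $\psi$, and your crude annulus bound already gives $C\sum_k|H^{(k)}|^2\delta^{2k+4-n}\epsilon^{n-4}+C\delta^{2d+6-n}\epsilon^{n-4}+C(\epsilon/\delta)^n$. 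This is what your Step~3 (``pairing with $P_gv$'') actually estimates, and it is the paper's term~I; with it the main obstacle disappears.

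The paper additionally splits $P_g=\Delta^2+(P_g-\Delta^2)$ before applying Green's formula. The boundary term is then exactly the Euclidean form of Definition~\ref{def:mci} (term~II). The metric corrections become interior terms III and IV, each carrying the small factor $v-\epsilon^{\frac{n-4}{2}}G_p$.

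In your version, which uses the full $P_g$ boundary form, the lower-order part evaluated on $(\epsilon^{\frac{n-4}{2}}|x|^{4-n},\epsilon^{\frac{n-4}{2}}G_p)$ contains individual terms of size $\epsilon^{n-4}\delta^{d+5-n}$. So your second bullet genuinely needs a cancellation. The simplest is the pointwise antisymmetry of the boundary form, which lets you replace $v$ by $v-\epsilon^{\frac{n-4}{2}}G_p$ in that part.
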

\begin{proof}
For simplicity, we write $v_p=v_{p,\epsilon,\delta}$ in this proof. Since $v_{p}=\epsilon^{\frac{n-4}{2}}G_p$ and $P_gG_p\equiv0$ in $M\setminus B_{2\delta}$, we just need to estimate the integral in the annulus region. We write it as four parts.
\begin{align*}
&\int_{B_{2\delta}\setminus B_{\delta}}v_{p}P_gv_{p}\vol_g\\
=&\int_{B_{2\delta}\setminus B_{\delta}}(v_{p}-\epsilon^{\frac{n-4}{2}}G_p)P_gv_{p}\vol_g+\int_{B_{2\delta}\setminus B_{\delta}}\epsilon^{\frac{n-4}{2}}(G_{p}\Delta^2v_{p}-v_p\Delta^2G_p)\vol_g\\
&+\int_{B_{2\delta}\setminus B_{\delta}}\epsilon^{\frac{n-4}{2}}G_{p}(P_g-\Delta^2)(v_{p}-\epsilon^{\frac{n-4}{2}}G_p)\vol_g\\
&+\int_{B_{2\delta}\setminus B_{\delta}}\epsilon^{\frac{n-4}{2}}(v_{p}-\epsilon^{\frac{n-4}{2}}G_p)(\Delta^2-P_g)G_{p}\vol_g\\
:=&\RN{1}+\RN{2}+\RN{3}+\RN{4}.
\end{align*}

In general, there are $\vol_g=(1+O(|x|^{d+1}))dx$ and 
$$
|G_p(x)-|x|^{4-n}|\leq C\sum_{k=2}^d|H^{(k)}||x|^{4-n+k}+C|x|^{d+5-n}.
$$
For $x\in M\setminus B_{\delta}$, there is
\begin{align}
&|v_p(x)-\epsilon^{\frac{n-4}{2}}G_p(x)|+\delta^4|P_gv_p(x)|\nonumber\\
\leq &C \sum_{k=2}^d|H^{(k)}|\delta^{4-n+k}\epsilon^{\frac{n-4}{2}}+C\delta^{d+5-n}\epsilon^{\frac{n-4}{2}}+C\delta^{2-n}\epsilon^{\frac{n}{2}}.\label{eq:v and Green's function estimate}
\end{align}
So, we can deal with term \RN{1} and term \RN{2}.
\begin{align*}
|\RN{1}|\leq& C \sum_{k=2}^d|H^{(k)}|^2\delta^{4-n+2k}\epsilon^{n-4}+C\delta^{2d+6-n}\epsilon^{n-4}+C\delta^{-n}\epsilon^{n},\\
|\RN{2}+\epsilon^{n-4}\mci(p,\delta)|\leq &C\sum_{k=2}^d|H^{(k)}|\delta^{4-n+k}\epsilon^{n-4}+C\delta^{2d+6-n}\epsilon^{n-4}+C\delta^{2-n}\epsilon^{n-2}.
\end{align*}

Combining Lemma \ref{lem:expansion of P} with \eqref{eq:v and Green's function estimate}, we have
$$
|\RN{3}|+|\RN{4}|\leq C \sum_{k=2}^d|H^{(k)}|^2\delta^{4-n+2k}\epsilon^{n-4}+C\delta^{2d+6-n}\epsilon^{n-4}+C\delta^{2-n}\epsilon^{n-2}.
$$

Adding the four terms together, we can obtain the conclusion.
\end{proof}

\begin{corollary}\label{cor:thm1.2 (a)}
We have
\begin{align*}
\int_{M}v_{p,\epsilon,\delta}P_gv_{p,\epsilon,\delta}\vol_g\leq & Y_4(S^n)\left(\int_{M}v_{p,\epsilon,\delta}^{\frac{2n}{n-4}}\vol_g\right)^{\frac{n-4}{n}}-\epsilon^{n-4}\mci(p,\delta)\\
&-\frac{1}{C}\epsilon^{n-4} \int_{B_{\delta}}|W_g(x)|^2(\epsilon+|x|)^{8-2n}dx\\
&+C\delta^{2d+6-n}\epsilon^{n-4}+C\left(\frac{\epsilon}{\delta}\right)^{n-4}\log^{-1}\left(\frac{\delta}{\epsilon}\right).
\end{align*}
\end{corollary}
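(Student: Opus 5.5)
The plan is to add the localized estimate of Proposition \ref{prop:localized estimate} to the exterior estimate of Proposition \ref{prop:global estimate}, and then deal with three remaining issues. First, on $B_\delta$ we have $\chi_\delta\equiv 1$, so $v_{p,\epsilon,\delta}=u_\epsilon+w_{\epsilon,\delta}$ there, and the two integrals combine to give the full $\int_M v P_g v$. Second, the $L^{\frac{2n}{n-4}}$-norm must be enlarged from $B_\delta$ to $M$. Third, the Euclidean quantity $\sum_k|H^{(k)}|^2\int_{B_\delta}(\epsilon+|x|)^{-2n+2k+4}dx$ must be converted into the Weyl integral, and the linear errors $C\sum_k|H^{(k)}|\delta^{-n+k+4}\epsilon^{n-4}$ must be absorbed.

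For the volume term, note that $\big(\int_{B_\delta}v^{\frac{2n}{n-4}}\big)^{\frac{n-4}{n}}\le\big(\int_{M}v^{\frac{2n}{n-4}}\big)^{\frac{n-4}{n}}$ trivially, since $v>0$. The leftover $C(\epsilon/\delta)^{n-2}$ from Proposition \ref{prop:global estimate} is already of lower order than $(\epsilon/\delta)^{n-4}\log^{-1}(\delta/\epsilon)$.

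For the Weyl term, I would use conformal normal coordinates, where $h=H+O(|x|^{d+1})$. Then $W_g=DW_{g_e}[H]+O(|x|^{d-1})+O(|H||x|^{\cdot}\cdot|\partial^2H|)$, so pointwise
\[
|W_g(x)|^2\le C\sum_{k=2}^d|H^{(k)}|^2|x|^{2k-4}+C|x|^{2d-2}.
\]
Multiplying by $(\epsilon+|x|)^{8-2n}$ and integrating over $B_\delta$ gives
\[
\int_{B_\delta}|W_g|^2(\epsilon+|x|)^{8-2n}\le C\sum_k|H^{(k)}|^2\int_{B_\delta}(\epsilon+|x|)^{-2n+2k+4}+C\delta^{2d+6-n},
\]
where the exponent check is $2d-2+8-2n+n=2d+6-n$. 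The last piece is an admissible error. So the negative term of Proposition \ref{prop:localized estimate} dominates $\frac1C\epsilon^{n-4}\int_{B_\delta}|W_g|^2(\epsilon+|x|)^{8-2n}$ up to $C\delta^{2d+6-n}\epsilon^{n-4}$.

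The main obstacle I expect is the linear error $C\sum_k|H^{(k)}|\delta^{-n+k+4}\epsilon^{n-4}$, which is not obviously small compared to the quadratic gain. My approach is to split the gain: keep half of the negative term $-\frac1C\epsilon^{n-4}|H^{(k)}|^2\int_{B_\delta}(\epsilon+|x|)^{-2n+2k+4}$ for the Weyl bound and use the other half to absorb the linear term. When $2k+4<n$, the integral is comparable to $\epsilon^{-n+2k+4}$, which exceeds $\delta^{-n+2k+4}$ by a large factor. More relevantly, the loss can be bounded by Young's inequality as
\[
|H^{(k)}|\delta^{-n+k+4}\le\eta|H^{(k)}|^2\delta^{-n+2k+4}+C_\eta\delta^{-n+4},
\]
but $\delta^{4-n}\epsilon^{n-4}=(\epsilon/\delta)^{n-4}$ is too large without the logarithm. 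The way I would get the logarithmic improvement is Brendle's trick of averaging over the cutoff radius, replacing $\delta$ by $r\in[\delta/\Lambda,\delta]$ dyadically, or of choosing $\delta$-dependent weights. I would follow Brendle \cite[Proposition 16/Corollary]{MR2357502}. Here the term $|H^{(k)}|^2\int_{B_\delta}(\epsilon+|x|)^{-2n+2k+4}$, in the critical case $2k+4=n$, is comparable to $|H^{(k)}|^2\log(\delta/\epsilon)$. With that, Young's inequality gives
\[
|H^{(k)}|\le\eta|H^{(k)}|^2\log(\delta/\epsilon)+C\log^{-1}(\delta/\epsilon).
\]
Multiplying by $\delta^{-n+k+4}\epsilon^{n-4}$ produces exactly the $(\epsilon/\delta)^{n-4}\log^{-1}(\delta/\epsilon)$ error. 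For $2k+4<n$ the integral is even larger, so the same absorption works with room to spare, after using $|H^{(k)}|\le C$ and $\delta\le\delta_0$. Collecting all terms yields the stated inequality.
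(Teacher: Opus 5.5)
Your proposal follows the paper's proof essentially step for step: add Propositions~\ref{prop:localized estimate} and~\ref{prop:global estimate}, dominate the Weyl integral via $|W_g(x)|\le\sum_k|H^{(k)}||x|^{k-2}+C|x|^{d-1}$ by the Euclidean quadratic gain plus $C\delta^{2d+6-n}$, and absorb $C|H^{(k)}|\delta^{-n+k+4}\epsilon^{n-4}$ by Young's inequality against $\epsilon^{n-4}|H^{(k)}|^2\int_{B_\delta}(\epsilon+|x|)^{-2n+2k+4}dx$; the logarithm arises only when $2k+4=n$, so no averaging over cutoff radii is needed. One bookkeeping correction: in that critical case the Young step must carry the $\delta$-weights, i.e.\ $|H^{(k)}|\delta^{-\frac{n-4}{2}}\le\eta|H^{(k)}|^2\log(\delta/\epsilon)+C_\eta\delta^{-(n-4)}\log^{-1}(\delta/\epsilon)$ with $\eta$ independent of $\delta$, because your unweighted version, once multiplied by $\delta^{-\frac{n-4}{2}}\epsilon^{n-4}$, leaves a factor $\delta^{-\frac{n-4}{2}}$ in front of the quadratic term that cannot be absorbed uniformly in $\delta$.
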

\begin{proof}
Combining Propositions \ref{prop:localized estimate} and \ref{prop:global estimate}, we can obtain that
\begin{align*}
&\int_{M}v_{p,\epsilon,\delta}P_gv_{p,\epsilon,\delta}\vol_g\\
\leq & Y_4(S^n)\left(\int_{M}v_{p,\epsilon,\delta}^{\frac{2n}{n-4}}\vol_g\right)^{\frac{n-4}{n}}-\epsilon^{n-4}\mci(p,\delta)\\
&-\frac{1}{C}\epsilon^{n-4} \sum_{k=2}^d|H^{(k)}|^2\int_{B_{\delta}}(\epsilon+|x|)^{-2n+2k+4}dx\\
&+C \sum_{k=2}^d|H^{(k)}|\delta^{-n+k+4}\epsilon^{n-4}+C\delta^{2d+6-n}\epsilon^{n-4}+C\left(\frac{\epsilon}{\delta}\right)^{n-2}.
\end{align*}

According to an estimate $|W_g(x)|\leq \sum_{k=2}^d |H^{(k)}||x|^{k-2}+C|x|^{d-1}$, there is
\begin{align*}
&\int_{B_{\delta}}|W_g(x)|^2(\epsilon+|x|)^{8-2n}dx\\
\leq & C\sum_{k=2}^d|H^{(k)}|^2\int_{B_{\delta}}(\epsilon+|x|)^{-2n+2k+4}dx+C\delta^{2d+6-n}.
\end{align*}

We set $\theta_k=1$ if $k=\frac{n-4}{2}$ and $\theta_k=0$ otherwise. By Young's inequality, for $2\leq k\leq d$, there are
\begin{align*}
C |H^{(k)}|\delta^{-n+k+4}\epsilon^{n-4}\leq &\frac{1}{C}|H^{(k)}|^2\epsilon^{n-4}\int_{B_{\delta}}(\epsilon+|x|)^{-2n+2k+4}dx\\
&+C\left(\frac{\epsilon}{\delta}\right)^{2n-8-2k}\log^{-\theta_k}\left(\frac{\delta}{\epsilon}\right).
\end{align*}

Combining the three estimates above, we can obtain the conclusion.
\end{proof}

\subsection{Proof of Theorem \ref{thm:energy estimate}}

Before the proof of Theorem \ref{thm:energy estimate}, let us recall the definition of $Q$-mass and the corresponding positive mass theorem.

\begin{definition}\label{def:Q-mass}
Define an asymptotically flat manifold $(M^n,\hat{g})$ with decay rate $\tau>0$ to be $M=M_0\cup M_{\infty}$, where $M_0$ is compact and $M_{\infty}$ is diffeomorphic to $\mathbb{R}^n\setminus \overline{B_1}$ such that $\hat{g}_{ij}(y)-\delta_{ij}=O^{(4)}(|y|^{-\tau})$ as $|y|\to \infty$. Moreover, if $\tau>\frac{n-4}{2}$ and $Q_{\hat{g}} \in L^1(M,\hat{g})$, we can define $Q$-mass to be
\begin{equation*}
m(\hat{g}):=\lim_{r\to\infty} \int_{\partial B_r}(\hat{g}_{ii,jjk}-\hat{g}_{ij,ijk})(y)\frac{y_k}{|y|}dS.
\end{equation*}
\end{definition}

\begin{proposition}[{\cite[Theorem A]{MR4375794}}]\label{PMT2}
Assume that $(M^n,\hat{g})$ is an asymptotically flat manifold with decay rate $\tau>\frac{n-4}{2}$ satisfying
\begin{itemize}
\item[(a)] $n\ge 5$;
\item[(b)] $Y_{\hat{g}}>0$, $Q_{\hat{g}}\in L^1(M,\hat{g})$, and $Q_{\hat{g}}\geq 0$.
\end{itemize}
Then the fourth-order mass $m(\hat{g})$ is nonnegative. Moreover, $m(\hat{g})=0$ if and only if $(M,\hat{g})$ is isometric to the Euclidean space $\mathbb{R}^n$.
\end{proposition}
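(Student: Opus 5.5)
The plan is to reduce the nonnegativity of the fourth-order mass to a flux identity for the scalar curvature, combined with a positivity argument for $R_{\hat g}$. Since the statement is quoted from \cite[Theorem A]{MR4375794}, I will not reproduce that argument in full. This sketch follows the Gursky--Malchiodi and Humbert--Raulot philosophy and leaves one step unresolved (the third step below).

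\emph{First step: the mass as a flux of $\nabla R$.} In the asymptotic chart, linearize $R_{\hat g}$ around $\delta_{ij}$: $R_{\hat g}=\hat g_{ij,ij}-\hat g_{ii,jj}+O(|y|^{-2\tau-2})$. Hence
\[
\partial_k R_{\hat g}\,\tfrac{y_k}{|y|}=-(\hat g_{ii,jjk}-\hat g_{ij,ijk})\tfrac{y_k}{|y|}+O(|y|^{-2\tau-3}).
\]
Because $\tau>\frac{n-4}{2}$, the error integrates to $o(1)$ over $\partial B_r$. This gives
\[
m(\hat g)=-\lim_{r\to\infty}\int_{\partial B_r}\partial_\nu R_{\hat g}\,dS=-\lim_{r\to\infty}\int_{B_r}\Delta R_{\hat g}.
\]
Next I write $Q$ in Schouten form, $Q=-\Delta J+\frac n2 J^2-2|A|^2$ with $J=\tr A=\frac{R}{2(n-1)}$. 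Then
\[
-\Delta J=Q-\tfrac n2J^2+2|A|^2=Q+2|\mathring A|^2+\tfrac{4-n}{2n}J^2 .
\]
The volume integral of $-\Delta J$ therefore equals $\int Q+2\int|\mathring A|^2-\frac{n-4}{2n}\int J^2$. The $J^2$ term has the wrong sign, so this identity alone does not close.

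\emph{Second step: positivity of $R_{\hat g}$.} I would use $Y_{\hat g}>0$, $Q_{\hat g}\ge0$ and the decay to show $R_{\hat g}>0$, as in \cite[Proposition B]{MR3420504}. Consider the path of conformal metrics interpolating (via the conformal Laplacian) to a positive scalar curvature metric. Along it, the equation $-\Delta J+\frac{n-4}{2n}J^2\ge 0$ (which follows from $Q\ge0$) prevents $J$ from touching zero, by the strong maximum principle. Decay of $R$ at infinity handles noncompactness.

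\emph{Third step: absorbing the $J^2$ term.} This is the main obstacle. With $R>0$, I would not integrate $-\Delta J$ directly. Instead I would use the conformal metric $\tilde g=J^{\frac{4}{n-4}\cdot\frac{n-4}{2}\cdot s}\hat g$, or equivalently test against a weight $J^{-a}$, integrating $-J^{-a}\Delta J$ by parts. This produces $+a\int J^{-a-1}|\nabla J|^2$ and lets a Kato-type inequality for $\mathring A$ control the negative $J^2$ contribution. The boundary flux at infinity still recovers $m$, because $J\to0$ at a rate compatible with the chosen exponent.

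If that route fails, the fallback is to compare with the ADM mass. Write $\hat g=G^{\frac{4}{n-2}}$-type conformally, so that $m(\hat g)$ becomes a positive multiple of the ADM mass of a scalar-flat conformal metric (existence guaranteed by $Y_{\hat g}>0$) plus a nonnegative integral of $Q$ and $|\mathring A|^2$. Then invoke the classical positive mass theorem.

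\emph{Rigidity.} If $m(\hat g)=0$, every nonnegative term must vanish: $Q\equiv0$, $\mathring A\equiv0$ and $\nabla J\equiv0$. Combined with $J\to0$ at infinity this gives $R\equiv0$ and $\ric\equiv0$. A Ricci-flat asymptotically flat manifold with zero ADM mass is Euclidean space, by the rigidity part of the classical positive mass theorem.
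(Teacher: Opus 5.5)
The paper gives no proof of this statement; it quotes \cite[Theorem A]{MR4375794}. Your proposal therefore has to stand on its own, and it does not: the step you leave open is the load-bearing one.

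\textbf{The third step does not produce an inequality.} Testing $-\Delta J$ against $J^{-a}$ yields $a\int J^{-a-1}|\nabla J|^2$, but you never show this dominates $\frac{n^2-4}{2n}\int J^{2-a}$. A Kato inequality for $\mathring A$ compares $|\nabla|\mathring A||$ with $|\nabla\mathring A|$ and says nothing about $J$. Moreover, since $J\to0$ the weight $J^{-a}$ blows up at infinity. The boundary term $\int_{\partial B_r}J^{-a}\partial_\nu J$ is then not the flux defining $m(\hat g)$: for $J\approx c|y|^{2-n}$ it is of order $r^{a(n-2)}$ times that flux.

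\textbf{The fallback is only asserted.} With $\tau>\frac{n-4}{2}$ alone, the ADM mass of $\hat g$, or of a scalar-flat metric conformal to it, need not even be defined; that requires $\tau>\frac{n-2}{2}$. No identity of the form $m(\hat g)=c\,m_{\mathrm{ADM}}+\int(\text{nonnegative})$ is derived.

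\textbf{Rigidity inherits the gap.} Your rigidity paragraph reads off $Q\equiv0$, $\mathring A\equiv0$, $\nabla J\equiv0$ from this nonexistent identity. There is also a slip: the correct identity is $-\Delta J=Q+2|\mathring A|^2-\frac{n^2-4}{2n}J^2$, not one with coefficient $\frac{4-n}{2n}$.

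\textbf{What is missing.} Your first step is correct, and no integral identity is needed: a pointwise sign of $R_{\hat g}$ plus that step already gives the result.
\begin{itemize}
\item Let $\bar R(r)$ be the Euclidean mean of $R_{\hat g}$ over $\{|y|=r\}$, and let $\Phi(r)=\int_{\{|y|=r\}}\partial_r R_{\hat g}\,dS$. Then $\bar R'(r)=\Phi(r)/(\omega_{n-1}r^{n-1})$.
\item Your first step gives $\Phi(r)\to-m(\hat g)$, and $\bar R(r)\to0$.
\item Integrating from $r$ to $\infty$ yields $r^{n-2}\bar R(r)\to m(\hat g)/((n-2)\omega_{n-1})$. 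Hence $R_{\hat g}\ge0$ forces $m(\hat g)\ge0$.
\end{itemize}

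For rigidity, $J\ge0$ satisfies $-\Delta J+VJ\ge0$ with $V=\frac{n^2-4}{2n}J\ge0$, so $J>0$ or $J\equiv0$.
\begin{itemize}
\item If $J>0$, take $\epsilon(|y|^{2-n}+|y|^{2-n-\eta})$ with $0<\eta<\tau$; this is a subsolution of $-\Delta_{\hat g}+V$ for large $|y|$. Comparing $J$ with it on the end gives $R_{\hat g}\ge c|y|^{2-n}$, hence $m(\hat g)>0$.
\item If $J\equiv0$, then $0\le Q_{\hat g}=-\frac{2}{(n-2)^2}|\ric|^2$, so $\hat g$ is Ricci-flat. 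Bishop--Gromov with asymptotic volume ratio $1$ then gives $\mathbb{R}^n$; no ADM mass is needed.
\end{itemize}

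\textbf{The second step, as written, also fails.} All the real content is now in your second step, which is where $Y_{\hat g}>0$ must be used. The inequality $-\Delta J+cJ^2\ge0$ holds only for $\hat g$ itself. It does not hold for the intermediate metrics of an arbitrary conformal path, whose $Q$-curvature has no sign. The continuity argument of \cite{MR3420504} keeps $Q\ge0$ because its path is linear in the $P$-conformal factor and starts from a metric already satisfying $R\ge0$ and $Q\ge0$. Deriving $R_{\hat g}\ge0$ from $Y_{\hat g}>0$ on a noncompact asymptotically flat manifold needs a genuine argument of that type, with control at infinity.
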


According to \cite[Proposition 8.3]{gong2025compactnessnoncompactnesstheoremsfourth} and the definition of $\mci(p,\delta)$ (Definition \ref{def:mci}), we can find the relation between the $Q$-mass and the limit of the boundary integral.

\begin{proposition}\label{prop:thm1.2 (c)}
Let $n\geq 5$ and $(M^n,g)$ be a closed manifold such that $\ker P_g = 0$. Suppose that $p \in \mcz\subset M$. Then, $(M\setminus\{p\}, \hat{g}:=G_g^{\frac{4}{n-4}}g)$ is an asymptotically flat manifold with decay rate $\tau >\frac{n-4}{2}$ such that $Q_{\hat{g}}=0$.
Moreover,
$$
m(\hat{g})=\frac{4(n-1)}{n-4}\mci(p).
$$
\end{proposition}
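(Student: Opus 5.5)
The plan is to prove the two assertions separately. Asymptotic flatness comes from an expansion of the Green's function $G_p$ in conformal normal coordinates at $p$, combined with the inversion $y=x/|x|^2$. The mass identity comes from rewriting $\mci(p,\delta)$, which is a Green-type boundary pairing of $|x|^{4-n}$ with $G_p$, in the inverted coordinates and comparing it with the flux defining $m(\hat g)$.

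\textbf{Step 1 (expansion of $G_p$).} Since $\ker P_g=0$, $G_p$ exists. In conformal normal coordinates with $g=\exp(h)$, $\tr h=O(|x|^{2d+2})$ and $x_ih_{ij}=0$, I would solve $P_g G_p=\delta_p$ iteratively. The first approximation is $|x|^{4-n}$, and by Lemma \ref{lem:expansion of P} the error $(P_g-\Delta^2)|x|^{4-n}$ is controlled by $|h|$ and its derivatives. For $p\in\mcz$ the metric satisfies $h=O(|x|^{d+1})$ in the relevant sense: the polynomial part $H$ is Weyl-determined and vanishes to the required order, which is exactly the definition of $\mcz$. Moreover, the linear part $DP_{g_e}[h]|x|^{4-n}$ is essentially a radial contraction that vanishes by $x_ih_{ij}=0$, as in the proof of \eqref{eq:DP sphere vanish}. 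This yields $G_p=|x|^{4-n}(1+\psi)$ with $\psi=O^{(4)}(|x|^{d+1})$, modulo the usual logarithmic correction in the borderline case, which is precisely the expansion used in Lemma \ref{lem:thm1.2 (b)}.

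\textbf{Step 2 (asymptotic flatness).} In the coordinates $y=x/|x|^2$ we have $\hat g_{ij}(y)=(1+\psi)^{4/(n-4)}(\delta_{ij}+O^{(4)}(|y|^{-d-1}))$. The decay rate is therefore $\tau=d+1>\frac{n-4}{2}$, since $d=\lfloor\frac{n-4}{2}\rfloor$. Because $P_gG_p=0$ away from $p$, the conformal covariance law gives $Q_{\hat g}=\frac{2}{n-4}G_p^{-\frac{n+4}{n-4}}P_gG_p=0$, so $Q_{\hat g}\in L^1$ trivially.

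\textbf{Step 3 (mass formula).} I would invoke \cite[Proposition 8.3]{gong2025compactnessnoncompactnesstheoremsfourth}. That result expresses $m(\hat g)$ as a limit of boundary integrals over small spheres $\partial B_\delta$ around $p$, involving $G_p$, $\Delta G_p$ and their normal derivatives paired against $|x|^{4-n}$. In Definition \ref{def:mci}, $\mci(p,\delta)$ is the Green-identity boundary pairing of $|x|^{4-n}$ with $G_p$ for $\Delta^2$. I would substitute $G_p=|x|^{4-n}+\phi$. The purely $|x|^{4-n}$ pairing cancels by antisymmetry, so only the contribution of the regular part $\phi$ survives. Since $\tr h$ is negligible, the flux $\hat g_{ii,jjk}-\hat g_{ij,ijk}$ at infinity reduces, after differentiating $(1+\psi)^{4/(n-4)}$ through the inversion, to $\frac{4}{n-4}$ times a radial third derivative of the expansion. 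Up to the normalizing factor $n-1$ this is the same quantity as the boundary pairing, which gives $m(\hat g)=\frac{4(n-1)}{n-4}\lim_{\delta\to0}\mci(p,\delta)$. Lemma \ref{lem:thm1.2 (b)} ensures that this limit exists and equals $\mci(p)$.

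\textbf{Main obstacle.} The hard part is Step 3: matching constants and confirming that the $h$-dependent cross terms vanish in the limit. The $O(\delta^{2d+6-n})$ bounds from Lemma \ref{lem:thm1.2 (b)} handle the $h$-terms. For the constant, I would first check it on the model case where $\phi$ is constant, i.e. $G_p=|x|^{4-n}+A$, for which both sides can be computed explicitly.
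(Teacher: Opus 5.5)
Your plan matches the paper's route. The paper gives no separate argument and simply combines \cite[Proposition 8.3]{gong2025compactnessnoncompactnesstheoremsfourth} with Definition \ref{def:mci}, which is exactly what your Step 3 invokes. Your heuristic for the constant is also consistent: the Kelvin transform sends $|x|^{4-n}$ to $1$ and $G_p$ to $\tilde G=1+\psi(x(y))$, so $\mci(p,\delta)=\int_{\partial B_{1/\delta}}\partial_r\Delta\tilde G\,dS$, while the conformally flat part $\tilde G^{\frac{4}{n-4}}\delta_{ij}$ contributes $(n-1)\,\partial_r\Delta$ of the conformal factor to the mass flux, giving the factor $\frac{4(n-1)}{n-4}$.

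Two details in your sketch are inaccurate, though neither affects the conclusion since $h=O(|x|^{d+1})$ for $p\in\mcz$. First, $DP_{g_e}[h]|x|^{4-n}$ does not vanish pointwise; only its first two terms do. Second, the $h$-contributions to the flux at infinity vanish because of $\tr h=O(|x|^{2d+2})$ and $x_ih_{ij}=0$ (which the inversion preserves), not because of Lemma \ref{lem:thm1.2 (b)}.
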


\begin{proof}[Proof of Theorem \ref{thm:energy estimate}]

(a) is from Corollary \ref{cor:thm1.2 (a)}.

(b) is from Lemma \ref{lem:thm1.2 (b)}.

(c) is from Proposition \ref{prop:thm1.2 (c)}.

(d) is from Definition \ref{def:test bubble Green}, Lemma \ref{lem:brendle splitting}, and equation \eqref{eq:w[s] proof}.
\end{proof}

\section{Setup for the blow-up analysis}

In this section, we prepare the framework for the blow-up analysis carried out in the next section.

\subsection{Estimates on test bubbles in dimensions $5 \le n \le 7$}
In this subsection, we assume that $5 \le n \le 7$, and that the metric $g \in [g_0]$ is chosen so that it induces conformal normal coordinates at a point $p \in M$. In these coordinates, the metric satisfies
\[
\det(g) = 1 + O(r^{N}),
\]
where $r$ denotes the distance function from $p$, and $N$ is a sufficiently large positive integer.

The test bubble constructed in this case is defined by
\begin{equation}\label{testfunction}
v_{p,\epsilon} (y) = \big(\frac{\mfc(n)}{\mu_\infty} \big)^{\frac{n-4}{8}} \epsilon^{\frac{n-4}{2}} \bigg( \chi_\delta    (\epsilon^2 + r^2 )^{-\frac{n-4}{2}} + ( 1- \chi_\delta )  G_p \bigg)
\end{equation}
where $\mfc(n) = n(n^2-4)(n-4)$, $\chi : \mathbb{R} \to [0,1]$ is a cutoff function satisfying $\chi = 1$ on $[0,1]$ and $\chi = 0$ on $[2,\infty)$, and $\chi_\delta(r) = \chi(r/\delta)$. Here $G_p$ denotes the Green’s function of the Paneitz operator with pole at $p$, and $\delta>0$ is chosen so that $\epsilon \ll \delta$.

We recall the following properties of the Green’s function: for $0 \le k \le 3$,
\begin{equation*}
\big| \nabla_g^k \big( G_p - r^{4-n} - A_p \big) \big| \le C r^{1-k},
\end{equation*}
as shown in \cite[Proposition~2.5]{MR3420504}. The constant $A_p$ depends continuously on $p$, and in particular admits a positive lower bound uniformly in $p$.

We also define
\[
u_\epsilon
:= \left( \frac{\epsilon}{\epsilon^2 + r^2} \right)^{\frac{n-4}{2}} .
\]

\begin{lemma}
In conformal normal coordinates, for any radial function $f=f(r)$, we have
\begin{equation*}
P_g f
= \Delta^2 f
+ O(r^2)\, f''
+ O(r)\, f'
+ O(|Q_g|)\, f .
\end{equation*}
Moreover, for the metric $g$ in these coordinates,
\[
Q_g(p) = C(n)\, |W_g(p)|^2 .
\]
\end{lemma}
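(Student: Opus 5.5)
The plan is to expand each piece of the Paneitz operator separately in conformal normal coordinates centered at $p$, using only the Lee--Parker properties of these coordinates:
\[
\det g = 1+O(r^N), \qquad g_{ij}x^j = x_i, \qquad \ric_g(p)=0, \qquad \nabla R_g(p)=0, \qquad \Delta_g R_g(p) = -\tfrac16 |W_g(p)|^2 .
\]
From these I will read off the bounds $R_g=O(r^2)$, $\ric_g = O(r)$ and $\ric_g(x,x)=O(r^3)$. The last bound comes from the symmetrized vanishing of the covariant derivatives of $\ric_g$ at $p$; this is the one fact I expect to have to cite rather than rederive.

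\textbf{Leading term.} Write $f=f(r)$ and recall $\partial_i f = f'(r)\, x_i/r$. Since $g^{ij}x_j=x^i$, the Laplacian of any radial function $\phi$ is
\[
\Delta_g\phi = \phi'' + \frac{n-1}{r}\phi' + \partial_r\big(\log\sqrt{\det g}\big)\,\phi' = \Delta\phi + O(r^{N-1})\,\phi' .
\]
I will apply this twice: first to $\phi = f$, and then to the radial function $\Delta f$, which gives $\Delta_g^2 f = \Delta^2 f$ plus errors carrying a factor $r^{N-1}$. Because $N$ may be chosen as large as we like, these errors are dominated by the admissible $O(r^2)f''+O(r)f'$ terms. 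For the functions actually used later (bubbles and Green's functions), each extra derivative costs at most a factor $r^{-1}$. I expect this bookkeeping, i.e. making precise that the third- and fourth-order remainders are genuinely absorbed, to be the main (though mild) obstacle. If needed, I would simply state the lemma for functions with $|f^{(k)}|\lesssim r^{-k}|f|$.

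\textbf{Second-order term.} Set $E := a_n R_g\, g - b_n \ric_g$. Then $E(df) = f'\, E(\partial_r)$, and expanding the divergence gives
\[
\ddiv_g\big(E(df)\big) = E(\partial_r,\partial_r)\, f'' + \Big(\ddiv_g E(\partial_r) + \tfrac{1}{r}\big(\tr E - E(\partial_r,\partial_r)\big)\Big) f' .
\]
\begin{itemize}
\item The $f''$ coefficient satisfies $E(\partial_r,\partial_r) = a_n R_g - b_n r^{-2}\ric_g(x,x) = O(r^2)$.
\item The $f'$ coefficient satisfies $|\nabla E| + r^{-1}|E| = O(r)$, since $E$ vanishes at $p$ and $\nabla R_g(p)=0$.
\end{itemize}
The zeroth-order term $\frac{n-4}{2}Q_g f$ is already of the form $O(|Q_g|)f$. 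Collecting the three contributions gives the first claim.

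\textbf{Value of $Q_g(p)$.} Since $\ric_g(p)=0$ and $R_g(p)=0$, the quadratic terms $\beta_n|\ric_g|^2+\gamma_n R_g^2$ vanish at $p$. Hence
\[
Q_g(p) = \alpha_n\,\Delta_g R_g(p) = -\frac{1}{2(n-1)}\cdot\Big(-\frac16 |W_g(p)|^2\Big) = \frac{1}{12(n-1)}\,|W_g(p)|^2,
\]
so the second claim holds with $C(n) = \frac{1}{12(n-1)}$.
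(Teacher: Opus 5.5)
Your route is the paper's. The paper simply cites \cite[Lemma~4.2]{MR3420504} for the expansion of $\Delta_g$ and $\Delta_g^2$ on radial functions, and for $\ric_g(\partial_r,\partial_r)=O(r^2)$, $|R_g|=O(r^2)$, $|\nabla_g R_g|=O(r)$, and then reads off the lemma.

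\textbf{The gap: your bound on the $f'$ coefficient is false.} You claim $|\nabla E|+r^{-1}|E|=O(r)$ because $E(p)=0$ and $\nabla R_g(p)=0$. But $E(p)=0$ only gives $|E|=O(r)$, hence $r^{-1}|E|=O(1)$. Moreover $\nabla E(p)=-b_n\nabla\ric_g(p)$, which does not vanish in general: conformal normal coordinates only kill the full symmetrization $\nabla_{(i}R_{jk)}(p)$. If all of $\nabla\ric_g(p)$ vanished, then (since $\nabla R_g(p)=0$) the Cotton tensor of $g$ would vanish at $p$. Yet at a point with $W_g(p)=0$ the Cotton tensor is conformally invariant, and it need not vanish there. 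So $\ric_g=O(r)$ is sharp, and both $r^{-1}|E|$ and $|\nabla E|$ are only $O(1)$.

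\textbf{How to fix it.} The conclusion is still true, but it comes from the structure of the coefficient, not from norms of $E$. Precisely, the $f'$ coefficient is $(\ddiv_g E)(\partial_r)+\langle E,\nabla^2 r\rangle$.
\begin{itemize}
\item The contracted Bianchi identity gives $\ddiv_g E=(a_n-\tfrac{b_n}{2})\,dR_g=O(r)$.
\item In normal coordinates $\nabla^2 r=r^{-1}(g-dr\otimes dr)+O(r)$, so $\langle E,\nabla^2 r\rangle=r^{-1}\bigl(\tr_g E-E(\partial_r,\partial_r)\bigr)+O(r)|E|$.
\item Here $\tr_g E=(na_n-b_n)R_g=O(r^2)$ and $E(\partial_r,\partial_r)=O(r^2)$, so this term is $O(r)$.
\end{itemize}
This uses exactly the three curvature facts the paper lists, and nothing about $\ric_g$ beyond $|\ric_g|=O(r)$.

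\textbf{A slip in the $f''$ coefficient.} The bound $\ric_g(x,x)=O(r^3)$ would only give $E(\partial_r,\partial_r)=O(r)$. What $\ric_g(p)=0$ and $\nabla_{(i}R_{jk)}(p)=0$ actually give is $\ric_g(x,x)=O(r^4)$, because the cubic Taylor term $\nabla_iR_{jk}(p)x^ix^jx^k$ vanishes. That is the bound you need.

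\textbf{The remaining parts are fine.}
\begin{itemize}
\item Your caveat about the $O(r^{N-1})f'''$ term is justified: the cited expansion contains it, and the lemma's statement drops it.
\item The second application of $\Delta_g$ also differentiates the non-radial factor $\partial_r\log\sqrt{\det g}$. This produces $O(r^{N-2})f''+O(r^{N-3})f'$, which is still harmless for large $N$.
\item Your value $Q_g(p)=\frac{1}{12(n-1)}|W_g(p)|^2$ is correct. It is more explicit than the paper, which does not argue this part.
\end{itemize}
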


\begin{proof}
We recall from \cite[Lemma~4.2]{MR3420504} that, for a radial function $f$,
\[
\Delta_g f = \Delta f + O(r^{N-1})\, f',
\]
\[
\Delta_g^2 f
= \Delta^2 f
+ O(r^{N-1})\, f'''
+ O(r^{N-2})\, f''
+ O(r^{N-3})\, f',
\]
and that the curvature quantities satisfy
\[
\ric_g\!\left(\frac{\partial}{\partial r},\frac{\partial}{\partial r}\right)=O(r^2), 
\qquad
|R_g| = O(r^2), 
\qquad
|\nabla_g R_g| = O(r).
\]

\end{proof}

\begin{lemma}\label{lem:uepsilon}
In $B_{2\delta}(p)$, we have
\begin{equation*}
P_g u_\epsilon - \mfc(n) u_\epsilon^{\frac{n+4}{n-4}}
= O(1)\, u_\epsilon .
\end{equation*}
\end{lemma}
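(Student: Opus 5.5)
The plan is to apply the previous lemma with $f=u_\epsilon$, which is radial in conformal normal coordinates. It gives
\[
P_g u_\epsilon=\Delta^2 u_\epsilon+O(r^2)\,u_\epsilon''+O(r)\,u_\epsilon'+O(|Q_g|)\,u_\epsilon .
\]
The Euclidean bubble equation \eqref{eq:bubble pde} says $\Delta^2u_\epsilon=\mfc(n)u_\epsilon^{\frac{n+4}{n-4}}$ (the Euclidean Laplacian in $r$, which is what the lemma produces). The leading term therefore cancels exactly, and what remains is to bound the three error terms by $C u_\epsilon$ uniformly on $B_{2\delta}(p)$, independently of $\epsilon$.

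The bounds come from differentiating $u_\epsilon=\epsilon^{\frac{n-4}{2}}(\epsilon^2+r^2)^{-\frac{n-4}{2}}$ directly:
\[
u_\epsilon'=-(n-4)\,\frac{r}{\epsilon^2+r^2}\,u_\epsilon,\qquad |u_\epsilon''|\le \frac{C}{\epsilon^2+r^2}\,u_\epsilon .
\]
Since $r\le(\epsilon^2+r^2)^{1/2}$, it follows that $r|u_\epsilon'|\le (n-4)u_\epsilon$ and $r^2|u_\epsilon''|\le C u_\epsilon$. Because $M$ is compact and $g$ is smooth, $Q_g$ is bounded, so $O(|Q_g|)u_\epsilon=O(1)u_\epsilon$. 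Adding these three bounds gives the claim.

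The only step needing care is making sure the $O(r^2)$ and $O(r)$ coefficients really are uniform on $B_{2\delta}(p)$. This should follow from the curvature bounds quoted in the previous proof: $\ric_g(\partial_r,\partial_r)=O(r^2)$, $R_g=O(r^2)$, $\nabla R_g=O(r)$, plus the $O(r^{N-k})$ corrections to $\Delta_g$ and $\Delta_g^2$. These enter the lower-order part of $P_g$ through the divergence term $\ddiv_g[(a_nR_g g-b_n\ric_g)df]$ with $df=f'\,dr$. Expanding that divergence for radial $f$ produces the $f''$, $f'$ and $f'''$ terms. The higher-order terms $O(r^{N-1})u_\epsilon'''$ and similar are harmless for $N$ large, because $r^{N-1}|u_\epsilon'''|\le C r^{N-4}u_\epsilon$.
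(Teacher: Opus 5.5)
Your proposal is correct and follows essentially the paper's route. Both arguments use the Gursky--Malchiodi expansion of $P_g$ on radial functions in conformal normal coordinates, cancel $\Delta^2 u_\epsilon$ against the Euclidean bubble equation, and bound what remains by $C u_\epsilon$. The only cosmetic difference is that the paper writes the $O(r^{N-k})$ remainders as $O(\epsilon^{-4}r^{N-2}(\epsilon^2+r^2)^3)\,u_\epsilon^{\frac{n+4}{n-4}}$ and then observes this is $O(1)\,u_\epsilon$, whereas you bound $r^{k}|u_\epsilon^{(k)}|$ by $u_\epsilon$ directly; the two amount to the same estimate.
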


\begin{proof}
In Euclidean space, the standard bubble satisfies
\[
\Delta^2 u_\epsilon - \mfc(n) u_\epsilon^{\frac{n+4}{n-4}} = 0 .
\]
By \cite[Lemma~4.2]{MR3420504}, we have the expansion
\begin{align*}
P_g u_\epsilon - \mfc(n) u_\epsilon^{\frac{n+4}{n-4}}
&= O\!\left( \epsilon^{-4} r^{N-2} (\epsilon^2 + r^2)^3 \right)
u_\epsilon^{\frac{n+4}{n-4}}
+ O(1)\, u_\epsilon .
\end{align*}

Observe that
\[
\epsilon^{-4} r^{N-2} (\epsilon^2 + r^2)^3
\, u_\epsilon^{\frac{8}{n-4}}
= r^{N-2} (\epsilon^2 + r^2)^{-1}
\le C ,
\]
uniformly in $B_{2\delta}(p)$, provided $\delta$ is chosen sufficiently small. Therefore,
\[
P_g u_\epsilon - \mfc(n) u_\epsilon^{\frac{n+4}{n-4}}
= O(1)\, u_\epsilon ,
\]
as claimed.
\end{proof}

\begin{lemma} \label{lem:uepg}
In $B_{2\delta}(p) \setminus B_{\delta} (p)$, we have
\begin{align*}
&P_{g} \big( \chi_\delta ( u_{\epsilon} - \epsilon^{\frac{n-4}{2}} G_p) \big) + \epsilon^{\frac{n-4}{2}} A_p   P_{g}  \chi_\delta  \\
= &  \mfc (n) \chi_\delta u_{\epsilon}^{\frac{n+4}{n-4}}  + O(1) u_{\epsilon} +  O(\epsilon^{\frac{n}{2}} \delta^{-n-2} ) +O(\epsilon^{\frac{n-4}{2}}\delta^{-3}).
\end{align*}
\end{lemma}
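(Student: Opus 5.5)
We expand $P_g$ applied to the product by the Leibniz rule and compare with the pieces we already control. Write $\psi := u_\epsilon - \epsilon^{\frac{n-4}{2}} G_p$. Since $P_g$ is a fourth-order operator with zeroth-order term $\frac{n-4}{2}Q_g$, we split
\[
P_g(\chi_\delta \psi) = \chi_\delta P_g \psi + \big(P_g(\chi_\delta\psi) - \chi_\delta P_g\psi\big).
\]
The commutator involves only derivatives of $\chi_\delta$ of order $1$ to $4$ paired with derivatives of $\psi$ of order $3$ down to $0$. The term with $\psi$ undifferentiated is exactly $P_g\chi_\delta - \frac{n-4}{2}Q_g\chi_\delta$ times $\psi$.

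\textbf{Step 1 (the main part $\chi_\delta P_g\psi$).} Lemma \ref{lem:uepsilon} gives $P_g u_\epsilon = \mfc(n) u_\epsilon^{\frac{n+4}{n-4}} + O(1)u_\epsilon$ on $B_{2\delta}$. Also $P_g G_p = 0$ away from $p$. Hence
\[
\chi_\delta P_g\psi = \mfc(n)\chi_\delta u_\epsilon^{\frac{n+4}{n-4}} + O(1)u_\epsilon ,
\]
which produces the first two terms on the right-hand side.

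\textbf{Step 2 (the commutator).} Here we use the expansion of the Green's function, $|\nabla^k(G_p - r^{4-n} - A_p)| \le C r^{1-k}$ for $k \le 3$. On the annulus $\delta \le r \le 2\delta$,
\[
u_\epsilon = \epsilon^{\frac{n-4}{2}}(\epsilon^2+r^2)^{-\frac{n-4}{2}} = \epsilon^{\frac{n-4}{2}}\big(r^{4-n} + O(\epsilon^2 r^{2-n})\big),
\]
with matching bounds on derivatives. Therefore
\[
\psi = -\epsilon^{\frac{n-4}{2}}A_p + \rho, \qquad |\nabla^k\rho| \le C\epsilon^{\frac{n-4}{2}} r^{1-k} + C\epsilon^{\frac{n}{2}} r^{2-n-k}.
\]

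\emph{The constant part.} Since $-\epsilon^{\frac{n-4}{2}}A_p$ is constant, all its derivatives vanish. Its contribution to $P_g(\chi_\delta\psi)$ is just $-\epsilon^{\frac{n-4}{2}}A_p P_g\chi_\delta$. This is cancelled by the term $+\epsilon^{\frac{n-4}{2}}A_p P_g\chi_\delta$ on the left-hand side, and that cancellation is the reason for adding it there.

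\emph{The remainder $\rho$.} The commutator applied to $\rho$ consists of terms $\nabla^j\chi_\delta * \nabla^{4-j}\rho$ with $j \ge 1$, plus lower-order curvature terms. Using $|\nabla^j\chi_\delta| \le C\delta^{-j}$ and $r \sim \delta$:
\begin{itemize}
\item the $\epsilon^{\frac{n-4}{2}} r^{1-k}$ parts give at worst $\delta^{-j}\delta^{1-(4-j)} = \delta^{-3}$, i.e. $O(\epsilon^{\frac{n-4}{2}}\delta^{-3})$;
\item the $\epsilon^{\frac{n}{2}}$ parts give $\epsilon^{\frac{n}{2}}\delta^{2-n-4} = \epsilon^{\frac{n}{2}}\delta^{-n-2}$.
\end{itemize}

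\textbf{Step 3 (bookkeeping for $P_g$).} The lower-order coefficients of $P_g$ (Ricci and scalar curvature, and $Q_g$) are bounded, and $\delta$ is small. So their contributions are dominated by the same two error terms. The remaining zeroth-order piece, $\frac{n-4}{2}Q_g\chi_\delta\psi$, is already absorbed in Step 1, because there $P_g$ acts on $\psi$ as a whole. Collecting Steps 1–3 gives the stated identity.

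The only delicate point is that the expansion of $G_p$ is controlled only up to third derivatives. This is not a problem: in the commutator at most three derivatives fall on $\rho$, and the fourth-order term $\chi_\delta P_g\psi$ never requires differentiating $\rho$ four times. The main obstacle is therefore just making sure the $A_p$ constant cancels exactly, which requires expanding $u_\epsilon$ and $G_p$ to consistent orders on the annulus.
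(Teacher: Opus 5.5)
Your proposal is correct and follows essentially the same route as the paper: a Leibniz expansion of $P_g(\chi_\delta U_\epsilon)$, the splitting $U_\epsilon = -\epsilon^{\frac{n-4}{2}}A_p + (\text{remainder})$ so that the $A_p$-part of $U_\epsilon P_g\chi_\delta$ cancels against the added term, the bounds $|\nabla^j\chi_\delta|\le C\delta^{-j}$ together with the third-order expansion of $G_p$ for the cross terms, and Lemma~\ref{lem:uepsilon} plus $P_gG_p=0$ for $\chi_\delta P_g U_\epsilon$. One small bookkeeping point is that inside your commutator the constant actually contributes $-\epsilon^{\frac{n-4}{2}}A_p\big(P_g\chi_\delta - \frac{n-4}{2}Q_g\chi_\delta\big)$, so an $O(\epsilon^{\frac{n-4}{2}})$ term survives the cancellation, but it is harmlessly absorbed into $O(\epsilon^{\frac{n-4}{2}}\delta^{-3})$.
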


\begin{proof}

Set $U_{\epsilon} = u_{\epsilon} -    \epsilon^{\frac{n-4}{2}} G_p$. 
On $B_{2\delta}(x) \setminus B_{\delta} (x)$, we compute
\begin{align*}
P_{g} \big( \chi_\delta U_{\epsilon}\big) {(y)} = & \chi_\delta (P_{g} - P_{g}1) u_{\epsilon}   +   U_{\epsilon}   P_{g} \chi_\delta \\
+ &\sum_{i=1}^3 O(|\nabla^{4-i} \chi_\delta \cdot \nabla^i   U_{\epsilon}|  )  + O( |\nabla \chi_\delta \cdot \nabla U_{\epsilon}|).  
\end{align*}

Using $|\nabla^i \chi_\delta |  = O(\delta^{-i})$ and  $\epsilon \ll \delta$, we observe that
\begin{align*}
U_{\epsilon} = &\epsilon^{\frac{n-4}{2}} ( r^2 +\epsilon^2 )^{-\frac{n-4}{2}} - \epsilon^{\frac{n-4}{2}} \big( r^{4-n}  + A_p + O(r) \big) \\
=& O(\epsilon^{\frac{n}{2}} \delta^{2-n} )  -A_p\epsilon^{\frac{n-4}{2}}  + O(\epsilon^{\frac{n-4}{2}}\delta); 
\end{align*}
$$ |\nabla^i ( u_{\epsilon} - \epsilon^{\frac{n-4}{2}} r^{4-n} ) | \le C \epsilon^{\frac{n}{2}} \delta^{2-n-i} + C \epsilon^{\frac{n-4}{2}} \delta^{1-i} \text{ for $i \le 4$};$$
$$P_{g} \chi_\delta = O(\delta^{-4}).$$

Collecting these estimates yields
\begin{align*}
&P_{g} \big( \chi_\delta U_{\epsilon} \big)+\epsilon^{\frac{n-4}{2}} A_p   P_{g}  \chi_\delta \\
= &\chi_\delta P_{g} U_{\epsilon} + O(\epsilon^{\frac{n}{2}} \delta^{-n-2} ) +O(\epsilon^{\frac{n-4}{2}}\delta^{-3}) \\ =& \mfc(n) \chi_\delta u_{\epsilon}^{\frac{n+4}{n-4}}  + O(1) u_{\epsilon} + O(\epsilon^{\frac{n}{2}} \delta^{-n-2} ) +O(\epsilon^{\frac{n-4}{2}}\delta^{-3}).
\end{align*}

\end{proof}

\begin{proposition} \label{prop:approx}
We have
\begin{align*}
& \bigg| P_{g} v_{p,\epsilon}  -\mu_\infty v_{p,\epsilon} ^{\frac{n+4}{n-4}} + \bigg( \frac{\mfc(n)}{\mu_\infty} \bigg)^{\frac{n-4}{8}} \epsilon^{\frac{n-4}{2}} A_p \cdot P_{g}  \chi_\delta  \bigg| \\
\le & C \bigg(\frac{\epsilon}{\epsilon^2 + r^2} \bigg)^{\frac{n-4}{2}}  \1_{\{r \le 2\delta \}} +C  \epsilon^{\frac{n-4}{2}} \delta^{-3} \1_{\{\delta \le r \le 2\delta \}}  +  C\bigg(\frac{\epsilon}{\epsilon^2 + r^2} \bigg)^{\frac{n+4}{2}} \1_{\{r \ge \delta \}}.
\end{align*}
\end{proposition}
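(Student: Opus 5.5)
We split $M$ into three regions, $\{r\le\delta\}$, $\{\delta\le r\le 2\delta\}$ and $\{r\ge 2\delta\}$, and estimate the residual
\[
E:=P_g v_{p,\epsilon}-\mu_\infty v_{p,\epsilon}^{\frac{n+4}{n-4}}+\Big(\tfrac{\mfc(n)}{\mu_\infty}\Big)^{\frac{n-4}{8}}\epsilon^{\frac{n-4}{2}}A_p\,P_g\chi_\delta
\]
separately on each. Write $\kappa:=(\mfc(n)/\mu_\infty)^{\frac{n-4}{8}}$. Note first that $\epsilon^{\frac{n-4}{2}}(\epsilon^2+r^2)^{-\frac{n-4}{2}}=u_\epsilon$, so $v_{p,\epsilon}=\kappa(\chi_\delta u_\epsilon+(1-\chi_\delta)\epsilon^{\frac{n-4}{2}}G_p)$. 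The normalization gives $\mu_\infty\kappa^{\frac{n+4}{n-4}}=\kappa\,\mfc(n)$, since $\kappa^{\frac{8}{n-4}}=\mfc(n)/\mu_\infty$. This converts the nonlinear term $\mu_\infty v^{\frac{n+4}{n-4}}$ into $\kappa\,\mfc(n)(\cdot)^{\frac{n+4}{n-4}}$.

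\textbf{Inner region $r\le\delta$.} Here $\chi_\delta\equiv 1$, so $v=\kappa u_\epsilon$ and $P_g\chi_\delta=P_g 1=\frac{n-4}{2}Q_g$. This last term is bounded, and $\epsilon^{\frac{n-4}{2}}\le C u_\epsilon$ for $r\le 2\delta$, so it is absorbed into $Cu_\epsilon$. Lemma~\ref{lem:uepsilon} gives $P_g u_\epsilon-\mfc(n)u_\epsilon^{\frac{n+4}{n-4}}=O(1)u_\epsilon$, so $|E|\le Cu_\epsilon$, which is the first term on the right.

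\textbf{Outer region $r\ge 2\delta$.} Here $v=\kappa\epsilon^{\frac{n-4}{2}}G_p$ and $P_gG_p=0$. The term $P_g\chi_\delta$ vanishes, because $\chi_\delta\equiv 0$ near such points and $P_g$ is a differential operator. What remains is $\mu_\infty v^{\frac{n+4}{n-4}}\le C\epsilon^{\frac{n+4}{2}}G_p^{\frac{n+4}{n-4}}$. Since $G_p\le Cr^{4-n}$ and $M$ is compact, this is at most $C\epsilon^{\frac{n+4}{2}}r^{-(n+4)}\le C(\epsilon/(\epsilon^2+r^2))^{\frac{n+4}{2}}$, using $r\ge\delta\gg\epsilon$ and $r$ bounded. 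This is the third term.

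\textbf{Annulus $\delta\le r\le 2\delta$.} This step is the main one. Write $v=\kappa(\chi_\delta U_\epsilon+\epsilon^{\frac{n-4}{2}}G_p)$ with $U_\epsilon=u_\epsilon-\epsilon^{\frac{n-4}{2}}G_p$. Linearity and $P_gG_p=0$ give $P_gv+\kappa\epsilon^{\frac{n-4}{2}}A_pP_g\chi_\delta=\kappa\big(P_g(\chi_\delta U_\epsilon)+\epsilon^{\frac{n-4}{2}}A_pP_g\chi_\delta\big)$. By Lemma~\ref{lem:uepg}, this equals
\[
\kappa\,\mfc(n)\chi_\delta u_\epsilon^{\frac{n+4}{n-4}}+O(u_\epsilon)+O(\epsilon^{\frac n2}\delta^{-n-2})+O(\epsilon^{\frac{n-4}{2}}\delta^{-3}).
\]
It then remains to compare $\kappa\,\mfc(n)\chi_\delta u_\epsilon^{\frac{n+4}{n-4}}$ with $\mu_\infty v^{\frac{n+4}{n-4}}$. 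On the annulus, $v$ and $\kappa u_\epsilon$ are both $O(\epsilon^{\frac{n-4}{2}}\delta^{4-n})$, so each of the two terms is $O(\epsilon^{\frac{n+4}{2}}\delta^{-n-4})$, which is $\le C(\epsilon/(\epsilon^2+r^2))^{\frac{n+4}{2}}$. The error $\epsilon^{\frac n2}\delta^{-n-2}$ is controlled by $\epsilon^{\frac{n-4}{2}}\delta^{-3}$ when $\epsilon\ll\delta$, since $\epsilon^2\delta^{-n+1}\le 1$ for $\epsilon\le\delta^{(n-1)/2}$, which we impose. Alternatively it can be absorbed into $Cu_\epsilon\approx\epsilon^{\frac{n-4}{2}}\delta^{4-n}$ after enlarging $C$ for fixed $\delta$. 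The $O(u_\epsilon)$ and $O(\epsilon^{\frac{n-4}{2}}\delta^{-3})$ terms already appear on the right-hand side.

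The only delicate points are the bookkeeping of the $A_p$ constant term in the annulus, which is exactly why $A_pP_g\chi_\delta$ is subtracted, and making the dependence of $C$ on $\delta$ consistent with the regime $\epsilon\ll\delta$. We allow $C$ to depend on $\delta$ where needed. Adding the three regional bounds gives the proposition.
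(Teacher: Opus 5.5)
Your proposal is correct and follows the paper's proof. It uses the same three-region split: Lemma~\ref{lem:uepsilon} on $\{r\le\delta\}$; Lemma~\ref{lem:uepg} together with the crude $O(\epsilon^{\frac{n+4}{2}}\delta^{-n-4})$ bound on both nonlinear terms on the annulus; and $P_gG_p=0$ with $G_p\le Cr^{4-n}$ on the outer region.

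You are more careful than the paper about the $O(\epsilon^{\frac n2}\delta^{-n-2})$ remainder, which the paper drops without comment. Of your two absorptions, prefer the first one (into $\epsilon^{\frac{n-4}{2}}\delta^{-3}$ under $\epsilon\le\delta^{(n-1)/2}$). It keeps $C$ independent of $\delta$, and the subsequent quotient estimate needs that because it tracks the $\delta$-dependence explicitly.
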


\begin{proof}

Set
$U_1 = \big(\frac{\mfc(n) }{\mu_\infty} \big)^{\frac{n-4}{8}} \epsilon^{\frac{n-4}{2}} \chi_\delta (\epsilon^2 + r^2 )^{-\frac{n-4}{2}} $. On the region $\{ r \le \delta \}$, the estimate follows directly from Lemma~\ref{lem:uepsilon}.  
On the annulus $\{ \delta \le r \le 2\delta \}$, Lemma~\ref{lem:uepg} yields
\begin{align*}
&  P_{g} v_{p,\epsilon} -\mu_\infty v_{p,\epsilon}^{\frac{n+4}{n-4}} + \bigg( \frac{\mfc(n)}{\mu_\infty} \bigg)^{\frac{n-4}{8}} \epsilon^{\frac{n-4}{2}} A_p \cdot P_{g}  \chi_\delta  \\
= & \mu_\infty \chi_\delta^{-\frac{8}{n-4}} U_1^{\frac{n+4}{n-4}} - \mu_\infty  v_{p,\epsilon}^{\frac{n+4}{n-4}} + O(1) \epsilon^{\frac{n-4}{2}} u_{\epsilon} + O(\epsilon^{\frac{n}{2}} \delta^{-n-2} ) +O (  \epsilon^{\frac{n-4}{2}} \delta^{-3} ).
\end{align*}

Since
$$\mu_\infty \chi_\delta^{-\frac{8}{n-4}} U_1^{\frac{n+4}{n-4}} - \mu_\infty  v_{p,\epsilon}^{\frac{n+4}{n-4}} = O \bigg( \big( \frac{\epsilon}{\epsilon^2 + r^2}\big)^{\frac{n+4}{2}} \bigg),$$
the desired bound follows in this region.

Finally, on the region $\{ r \ge 2\delta \}$, we use the elementary estimate
$$\epsilon^{\frac{n-4}{2}} G_p\le C \big( \frac{\epsilon}{\epsilon^2 + r^2} \big)^{\frac{n-4}{2}} \text{ on the set $\{ r \ge \delta \}$ .}$$
which immediately yields the stated inequality.

\end{proof}

\begin{proposition}
If $\delta$ is sufficiently small, then the Paneitz--Sobolev quotient of the test function $v_{p,\epsilon}$ satisfies
\begin{equation*}
\cf[v_{p, \epsilon}] \le Y_4 (S^n) - c\epsilon^{n-4} + C\delta \epsilon^{n-4} + C\delta^{-n+2} \epsilon^{n-2}
\end{equation*}
for some constant $c>0$ independent of $\epsilon$ and $\delta$.
\end{proposition}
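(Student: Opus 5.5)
We follow the classical Schoen-type computation adapted to the Paneitz operator, as in \cite[Section~4]{MR3420504}. Since $\cf$ is scale invariant, we drop the normalizing constant $(\mfc(n)/\mu_\infty)^{\frac{n-4}{8}}$ and work with $\tilde v:=\chi_\delta u_\epsilon+(1-\chi_\delta)\epsilon^{\frac{n-4}{2}}G_p$. We then estimate the numerator $\int_M \tilde v P_g\tilde v\vol_g$ and the denominator $\int_M \tilde v^{\frac{2n}{n-4}}\vol_g$ separately.

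\textbf{Numerator.} We split $M$ into $B_\delta$, the annulus $B_{2\delta}\setminus B_\delta$, and $M\setminus B_{2\delta}$. On $M\setminus B_{2\delta}$ we have $P_gG_p=0$, so the integrand vanishes. On $B_\delta$ we use Lemma~\ref{lem:uepsilon}, $P_gu_\epsilon=\mfc(n)u_\epsilon^{\frac{n+4}{n-4}}+O(1)u_\epsilon$. The $O(1)u_\epsilon^2$ term integrates to $O(\epsilon^{n-4}\delta^{8-n})$ when $n\ge 9$; for $5\le n\le 7$ it integrates to $O(\epsilon^{n-4}\delta^{8-n})$, which is $\le C\delta\epsilon^{n-4}$ only if $8-n\ge1$. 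This holds for $n\le 7$, the range of this subsection.

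The essential step is to avoid estimating the pieces crudely and instead integrate by parts on $B_\delta$. We write $\int_{B_\delta}u_\epsilon\Delta^2u_\epsilon$ and compare it with the Green's function through a Green identity on $\partial B_\delta$, exactly as in $\mci(p,\delta)$ of Definition~\ref{def:mci}. Using the expansion $G_p=r^{4-n}+A_p+O(r)$ (with derivatives), the annulus contribution together with the boundary terms should produce
\[
\int_M\tilde vP_g\tilde v\vol_g=\mfc(n)\int_{B_\delta}u_\epsilon^{\frac{2n}{n-4}}dx-c_nA_p\epsilon^{n-4}+O(\delta\epsilon^{n-4})+O(\delta^{2-n}\epsilon^{n-2}),
\]
with $c_n>0$. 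The constant arises from $\int_{\partial B_\delta}$ of $r^{4-n}$ paired with the constant $A_p$: only $\Delta^2$ acting on $r^{4-n}$ produces the delta mass, which gives a multiple of $|S^{n-1}|$ times $A_p$. Equivalently, we can use Proposition~\ref{prop:approx} and pair it with $\tilde v$. The term $A_p\epsilon^{\frac{n-4}{2}}P_g\chi_\delta$ paired with $\tilde v\approx\epsilon^{\frac{n-4}{2}}(r^{4-n}+A_p)$ gives $-A_p\epsilon^{n-4}\int r^{4-n}\Delta^2\chi_\delta+\dots$. Since $\int r^{4-n}\Delta^2\chi_\delta=-c_n$ (as $\Delta^2 r^{4-n}=c_n\delta_0$), this is the negative mass term. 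The other error terms are controlled by the right-hand side of Proposition~\ref{prop:approx} and yield $O(\delta\epsilon^{n-4})$ and $O(\delta^{2-n}\epsilon^{n-2})$. The $\delta^{-3}\epsilon^{\frac{n-4}{2}}$ error integrated against $\tilde v\sim\epsilon^{\frac{n-4}{2}}\delta^{4-n}$ over volume $\delta^n$ gives $\delta\epsilon^{n-4}$.

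\textbf{Denominator.} We show $\int_M\tilde v^{\frac{2n}{n-4}}\vol_g\ge\int_{B_\delta}u_\epsilon^{\frac{2n}{n-4}}dx-C\epsilon^{n}\cdot(\text{small})$, using $\det g=1+O(r^N)$ and positivity of $\tilde v$. Moreover, $\int_{\mathbb{R}^n\setminus B_\delta}u_\epsilon^{\frac{2n}{n-4}}=O((\epsilon/\delta)^n)$, which is absorbed into $C\delta^{2-n}\epsilon^{n-2}$.

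\textbf{Conclusion.} Writing $V=\int_{B_\delta}u_\epsilon^{\frac{2n}{n-4}}$, the quotient is at most $\mfc(n)V^{4/n}-c_nA_p\epsilon^{n-4}V^{-\frac{n-4}{n}}+\dots$. Since $\mfc(n)V^{4/n}\le Y_4(S^n)$ and $V$ is bounded above and below, this gives the claim with $c=c_n\inf_pA_p\cdot(\sup V)^{-\frac{n-4}{n}}>0$, using the uniform positive lower bound on $A_p$ from the positive mass theorem of \cite{MR3420504}.

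The main obstacle is the exact bookkeeping of the annular/boundary term. One must verify that its leading part is precisely $-c_nA_p\epsilon^{n-4}$ with the correct sign, and that all cross terms between the $\epsilon^{\frac n2}\delta^{2-n}$ corrections and the cutoff derivatives are $O(\delta^{2-n}\epsilon^{n-2})$. We would handle this by integrating by parts onto $\Delta^2(r^{4-n})=0$ away from the origin.
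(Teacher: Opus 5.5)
Your proposal is essentially the paper's argument. You pair the approximate equation of Proposition~\ref{prop:approx} with the test function and extract the negative mass term from the $A_p\epsilon^{\frac{n-4}{2}}P_g\chi_\delta$ correction. The remaining errors go into $C\delta\epsilon^{n-4}+C\delta^{2-n}\epsilon^{n-2}$. You bound the denominator from below by the ball integral $\int_{B_\delta}u_\epsilon^{\frac{2n}{n-4}}$, where the paper bounds $\int_M v^{\frac{2n}{n-4}}$ from above. That is only a bookkeeping difference and works equally well.

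One sign slip needs fixing. $\Delta^2 r^{4-n}$ equals $c_n$ times the Dirac mass at the origin, with $c_n=2(n-2)(n-4)|S^{n-1}|>0$, and $\chi_\delta(0)=1$. Hence $\int r^{4-n}\Delta^2\chi_\delta\,dx=+c_n$, not $-c_n$. It is this positive value that makes $-A_p\epsilon^{n-4}\int r^{4-n}\Delta^2\chi_\delta$ the negative mass term; with your stated value it would have the wrong sign.
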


\begin{proof}

Using the estimates
$$\int_{\{r \le 2\delta \}} \big(\frac{\epsilon}{\epsilon^2 + r^2} \big)^{\frac{n-4}{2}} \le C \delta^{4} \epsilon^{\frac{n}{2} -2},  \hspace{2mm} \int_{\{r \le 2\delta \}} \big(\frac{\epsilon}{\epsilon^2 + r^2} \big)^{n-4} \le C \delta^{8-n}  \epsilon^{n-4},$$
and 
$$\int_{\{r \ge \delta \}} \big(\frac{\epsilon}{\epsilon^2 + r^2} \big)^{n} \le C \delta^{-n} \epsilon^{n},$$
together with Proposition~\ref{prop:approx}, we obtain
\begin{align*}
&\int_M v_{p,\epsilon} \big( P_{g} v_{p,\epsilon} - \mu_\infty v_{p,\epsilon}^{\frac{n+4}{n-4}} \big) dv_g \\
\le & - C A_p \int_M \epsilon^{\frac{n-4}{2}} v_{p,\epsilon} P_{g}   \chi_\delta   + C  (\delta + \delta^{-n+8}) \epsilon^{n-4} + C \delta^{-n+2} \epsilon^{n-2}.
\end{align*}

We now estimate the main term:
\begin{align*}
&\int_M \epsilon^{\frac{n-4}{2}} v_{p,\epsilon} P_{g}   \chi_\delta dv_g\\
= & C \epsilon^{\frac{n-4}{2}} \int_M \big( \chi_\delta (u_{\epsilon} -\epsilon^{\frac{n-4}{2}}  G_p ) + \epsilon^{\frac{n-4}{2}} G_p  \big)  P_{g}    \chi_\delta \\
= & C \epsilon^{n-4} \bigg[ C(n) + \int_M \bigg( \big(\frac{1}{\epsilon^2 + r^2}\big)^{\frac{n-4}{2}} - \frac{1}{r^{n-4}} - A_p + O(\delta) \bigg) P_g \chi_\delta \bigg] \\
=  &  C(n)\epsilon^{n-4}  + O(\delta^{n-4} \epsilon^{n-4}) + O( \delta^{-2}\epsilon^{n-2}),
\end{align*}
where $C(n)>0$ is a dimensional constant.

Consequently,
\begin{equation}\label{eq:48}
\int_M v_{p,\epsilon} \big( P_{g} v_{p,\epsilon} - \mu_\infty v_{p,\epsilon}^{\frac{n+4}{n-4}} \big) \le   (-C A_p +  C \delta) \epsilon^{n-4}  + C  \delta^{-n+2} \epsilon^{n-2}.
\end{equation}

Next, we estimate the denominator:
\begin{align*}
\int_M v_{p,\epsilon}^{\frac{2n}{n-4}} dv_g \le & \bigg(\frac{ \mfc(n)}{\mu_\infty} \bigg)^{\frac{n}{4}} \bigg[ \int_{\{r \le \delta\}} \epsilon^n ( \epsilon^2 + r^2 )^{-n} + C  \int_{\{r \ge \delta\}} \epsilon^n ( \epsilon^2 + r^2 )^{-n} \bigg] \\
\le &\bigg(\frac{ Y_4(S^n) }{\mu_\infty} \bigg)^{\frac{n}{4}} + C \delta^{-n} \epsilon^{n}.
\end{align*}
Combining this estimate with \eqref{eq:48} yields the desired inequality.

\end{proof}

\subsection{Concentration--compactness principle}

\begin{definition}[Test bubble]
Let $\phi_p$ be the conformal factor that produces conformal normal coordinates at the point $p\in M$. We define
\begin{equation*}
\bar{u}_{(p,\epsilon)} :=
\begin{cases}
\phi_p\, v_{p,\epsilon}, & \text{if $5\le n \le 7$, where $v_{p,\epsilon}$ is defined in \eqref{testfunction},}\\[1mm]
\phi_p\, v_{p,\epsilon,\delta}, & \text{if $n\ge 8$, where $v_{p,\epsilon,\delta}$ is defined in \eqref{testfunction2}.}
\end{cases}
\end{equation*}
\end{definition}

We note that $\phi_p$ may be chosen so that it admits uniform positive lower and upper bounds, independent of $p$.

We now verify that Struwe's decomposition applies to our non-local flow. Given a sequence of times $t_\nu\to\infty$, we write
\[
u_\nu := u(t_\nu), \qquad g_\nu := u_\nu^{\frac{4}{n-4}} g_0 .
\]

\begin{proposition}[{\cite{MR1867939}}]{\label{lem:concom}}
Let $t_\nu \rightarrow \infty$ be a sequence of times. After passing to a subsequence, there exist a nonnegative integer $m$, a nonnegative smooth function $u_\infty$, and a sequence of $m$-tuples
\[
(p_{k,\nu}^*, \epsilon_{k,\nu}^*)_{1 \le k \le m}
\]
with the following properties:

\begin{enumerate}[label=(\alph*),leftmargin=*]

\item 
The function $u_\infty$ satisfies the equation
\[
- u_\infty
+ \mu_\infty P_{g_0}^{-1}
\!\left( u_\infty^{\frac{n+4}{n-4}} \right) = 0 .
\]

\item 
For all $i \neq j$, we have
\[
\frac{\epsilon_{i,\nu}^*}{\epsilon_{j,\nu}^*}
+ \frac{\epsilon_{j,\nu}^*}{\epsilon_{i,\nu}^*}
+ \frac{ d(p_{i,\nu}^*, p_{j,\nu}^*)^2 }
{ \epsilon_{i,\nu}^* \epsilon_{j,\nu}^* }
\longrightarrow \infty .
\]

\item 
We have
\[
\Big\|
u_\nu - u_\infty
- \sum_{k=1}^{m}
\bar{u}_{(p_{k,\nu}^*, \epsilon_{k,\nu}^*)}
\Big\|_{W^{2,2}(M)}
\longrightarrow 0 .
\]

\item 
Either $u_\infty \equiv 0$ or $u_\infty > 0$ everywhere on $M$.

\item 
Moreover,
\begin{equation*}
\mathcal{F}_\infty
= \bigg(
\mathcal{F}[u_\infty]^{\frac{n}{4}}
+ m\, Y_4(S^n)^{\frac{n}{4}}
\bigg)^{\frac{4}{n}} .
\end{equation*}

\end{enumerate}

\end{proposition}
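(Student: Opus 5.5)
The plan is to reduce Proposition \ref{lem:concom} to the fourth-order Struwe decomposition of Hebey and Robert \cite{MR1867939} for Palais--Smale sequences of the functional associated with $P_{g_0}u=\mu_\infty u^{\frac{n+4}{n-4}}$. Then we upgrade the resulting profile decomposition, written in terms of rescaled standard bubbles, to one written in terms of our test bubbles $\bar u_{(p,\epsilon)}$.

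\emph{Step 1 (Palais--Smale).} By Lemma \ref{W22invariance}, $\|u_\nu\|_{W^{2,2}(M)}$ is constant. Since $\mu(t)\to\mu_\infty>0$, Proposition \ref{lem:LnormPf} gives $P_{g_0}u_\nu-\mu_\infty u_\nu^{\frac{n+4}{n-4}}\to 0$ in $L^{\frac{2n}{n+4}}(M)$, hence in $W^{-2,2}$. So $(u_\nu)$ is a nonnegative bounded Palais--Smale sequence.

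\emph{Step 2 (Hebey--Robert decomposition).} After passing to a subsequence, $u_\nu\rightharpoonup u_\infty$ weakly in $W^{2,2}(M)$. The theorem of \cite{MR1867939} yields $m\ge 0$ and points and scales $(p^*_{k,\nu},\epsilon^*_{k,\nu})$ with $\epsilon^*_{k,\nu}\to 0$ such that:
\begin{itemize}
\item[(i)] $u_\infty$ solves $P_{g_0}u_\infty=\mu_\infty u_\infty^{\frac{n+4}{n-4}}$, which is equivalent to (a) by invertibility of $P_{g_0}$;
\item[(ii)] the bubbles satisfy the usual separation condition (b);
\item[(iii)] $u_\nu-u_\infty-\sum_k B_{k,\nu}\to 0$ in $W^{2,2}$, where each $B_{k,\nu}$ is a cut-off rescaled solution of $\Delta^2 w=\mu_\infty w^{\frac{n+4}{n-4}}$ on $\mathbb{R}^n$ centred at $p^*_{k,\nu}$ with scale $\epsilon^*_{k,\nu}$;
\item[(iv)] $\|u_\nu\|^2_{W^{2,2}}\to\|u_\infty\|^2_{W^{2,2}}+\sum_k\|B\|^2$.
\end{itemize}
The only concern is that the limiting profiles might be sign-changing solutions. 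They are not: $u_\nu>0$, and positivity of $R$ is preserved along the flow, so $\Delta w\le0$ exactly as in the proof of Proposition \ref{lem:globalC0bound}. The Liouville theorem \cite{MR1679783} then forces each profile to be a standard bubble $(\mfc(n)/\mu_\infty)^{\frac{n-4}{8}}\big(\frac{\lambda}{1+\lambda^2|x|^2}\big)^{\frac{n-4}{2}}$. Smoothness of $u_\infty$ follows from elliptic regularity, since the exponent is critical but bootstrapping works for $W^{2,2}$ solutions (Van der Vorst type argument). Statement (d) follows from the strong maximum principle \cite[Theorem A]{MR3420504}, applied as in Proposition \ref{prop:C0impl}.

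\emph{Step 3 (replacing bubbles by test bubbles).} This is the main technical step. We must show that $\|B_{k,\nu}-\bar u_{(p^*_{k,\nu},\epsilon^*_{k,\nu})}\|_{W^{2,2}}\to0$, after absorbing into $\epsilon^*_{k,\nu}$ the harmless reparametrisations of centre and scale coming from conformal normal coordinates. For $5\le n\le7$, $v_{p,\epsilon}$ differs from the cut-off bubble by $\phi_p$ (which is $1+O(r^2)$ near $p$) and by the Green's function tail $\epsilon^{\frac{n-4}{2}}G_p$ away from $B_\delta$. Both differences are $o(1)$ in $W^{2,2}$ as $\epsilon\to0$; for the tail, its $W^{2,2}$-norm on $M\setminus B_\delta$ is $O((\epsilon/\delta)^{\frac{n-4}{2}})$. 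For $n\ge8$ we use Theorem \ref{thm:energy estimate}(d) together with the bound $|\partial^\alpha X|\le C(\epsilon+|x|)^{k+1-|\alpha|}$ from Lemma \ref{lem:brendle splitting}. These show that the correction satisfies $\|w_{\epsilon,\delta}\|_{W^{2,2}}=O(\delta^{2})$ uniformly in $\epsilon$, because the correction term scales like $|x|^{k}$ times a bubble. We therefore take $\delta=\delta_\nu\to0$ slowly enough that $\epsilon^*_{k,\nu}/\delta_\nu\to0$; if $\delta$ must stay fixed, the estimate still holds up to a fixed small error. I expect this uniformity to be the delicate point, and I would handle it by a diagonal choice of $\delta_\nu$.

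\emph{Step 4 (energy identity).} Finally, (e) follows from the energy splitting (iv) and the analogous Brezis--Lieb splitting of $\int u_\nu^{\frac{2n}{n-4}}$. Each bubble has energy $\mu_\infty\int B^{\frac{2n}{n-4}}=\int B\Delta^2B$ and volume $(Y_4(S^n)/\mu_\infty)^{n/4}$, while $u_\infty$ contributes $\mu_\infty\int u_\infty^{\frac{2n}{n-4}}$. Hence $V_\infty=(\cf[u_\infty]/\mu_\infty)^{n/4}+m(Y_4(S^n)/\mu_\infty)^{n/4}$, and using $\cf_\infty=\mu_\infty V_\infty^{4/n}$ we obtain the stated formula.
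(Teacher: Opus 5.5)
Your overall route is the paper's: Palais--Smale from Lemma \ref{W22invariance} and Proposition \ref{lem:LnormPf}, the Hebey--Robert decomposition \cite{MR1867939}, the strong maximum principle for (d), and the volume splitting for (e). The paper uses Lemma \ref{lem:interaction} where you use Brezis--Lieb, which amounts to the same thing. The paper only cites \cite{MR1867939} and never explains how the resulting profiles are replaced by the specific test bubbles $\bar u_{(p,\epsilon)}$. Your Step 3 therefore supplies something left implicit there, but as written it does not prove (c).

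\textbf{The gap in Step 3.} The problem is your bound $\|w_{\epsilon,\delta}\|_{W^{2,2}}=O(\delta^2)$ and the diagonal choice $\delta_\nu\to0$ built on it. The test bubbles in the statement use a fixed $\delta$, which is later fixed small independently of $\nu$ (e.g.\ in Proposition \ref{lem:venergy}). Letting $\delta$ depend on $\nu$ therefore proves (c) for a different family. Your fallback ``up to a fixed small error'' cannot give (c) either, since (c) asserts convergence to zero.

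\textbf{How to fix it.} The bound of Lemma \ref{lem:brendle splitting} gives much more than you extracted. Combined with $|\partial^\beta u_\epsilon|\le C(\epsilon+|x|)^{-|\beta|}u_\epsilon$, it yields $|\partial^\beta w_{\epsilon,\delta}|\le C\sum_k|H^{(k)}|(\epsilon+|x|)^{k-|\beta|}u_\epsilon$, hence
\[
\int_{B_{2\delta}}|\nabla^2 w_{\epsilon,\delta}|^2\,dx\le C\epsilon^{n-4}\sum_{k=2}^d|H^{(k)}|^2\int_{B_{2\delta}}(\epsilon+|x|)^{2k+4-2n}\,dx\le C\epsilon^{4}+C\epsilon^{n-4}\log\Big(2+\frac{\delta}{\epsilon}\Big).
\]
This holds because $2k\le 2d\le n-4$; the logarithm appears only when $2k=n-4$. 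The lower-order terms are handled by the same computation and also tend to zero. The terms in which derivatives fall on $\chi_\delta$ contribute $O\big((\epsilon/\delta)^{n-4}\delta^{2k}\big)$ to the squared norm. So the correction lives at scale $\epsilon$, not $\delta$, and $\|\chi_\delta w_{\epsilon,\delta}\|_{W^{2,2}}\to0$ as $\epsilon\to0$ with $\delta$ fixed. No diagonal argument is needed.

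\textbf{Normalization for $n\ge8$.} Theorem \ref{thm:energy estimate}(d) gives the profile $(1+|x|^2)^{-\frac{n-4}{2}}$, whereas your Hebey--Robert profiles carry the factor $(\mathfrak{c}(n)/\mu_\infty)^{\frac{n-4}{8}}$. For (c) to hold with unit coefficients, this factor must be included in $v_{p,\epsilon,\delta}$, as it is in \eqref{testfunction}. Definition \ref{def:test bubble Green} omits it, although Lemma \ref{lemma 5.8} presupposes it, so you should state this normalization explicitly.
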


\begin{proof}
For our flow, the quantity $\int_M u P_{g_0} u \, \vol_{g_0}$ is constant along the flow, while
$\int_M u^{\frac{2n}{n-4}} \, \vol_{g_0}$ remains uniformly bounded. Moreover, by
Proposition~\ref{lem:LnormPf},
\begin{equation*}
\int_M 
\big| P_{g_0} u_\nu - \mu_\infty u_\nu^{\frac{n+4}{n-4}} \big|^{\frac{2n}{n+4}}
\, \vol_{g_0}
\longrightarrow 0 .
\end{equation*}
Hence, $\{u_\nu\}$ is a Palais--Smale sequence, and the concentration--compactness result of
\cite{MR1867939} applies.

Property~(d) follows from the strong maximum principle established in \cite{MR3420504}.

To verify~(e), we use Lemma~\ref{lem:interaction} below and compute
\begin{align*}
V_\infty
&= \lim_{\nu \to \infty} \int_M u_\nu^{\frac{2n}{n-4}} \, \vol_{g_0} \\
&= \lim_{\nu \to \infty}
\bigg(
\int_M u_\infty^{\frac{2n}{n-4}} \, \vol_{g_0}
+ \sum_{k=1}^{m}
\int_M
\bar{u}_{(p_{k,\nu}^*, \epsilon_{k,\nu}^*)}^{\frac{2n}{n-4}}
\, \vol_{g_0}
\bigg) \\
&=
\Big( \frac{\mathcal{F}[u_\infty]}{\mu_\infty} \Big)^{\frac{n}{4}}
+ m \Big( \frac{Y_4(S^n)}{\mu_\infty} \Big)^{\frac{n}{4}} .
\end{align*}

\end{proof}

Below, we denote by $d(p,\cdot)$ the distance function from the point $p$ with respect to the metric $g_0$.

\begin{lemma}\label{lemma 5.8}
For $\epsilon$ sufficiently small, we have
\begin{align*}
&\bigg| P_{g_0 }  \bar{u}_{(p, \epsilon)} -\mu_\infty  \bar{u}_{(p, \epsilon)}^{\frac{n+4}{n-4}} \bigg|\nonumber\\
\le &  C \bigg(\frac{\epsilon}{\epsilon^2 + d(p, \cdot)^2} \bigg)^{\frac{n-4}{2}}  \1_{\{d(p, \cdot)\le 2\delta \}} + C \epsilon^{\frac{n-4}{2}} \delta^{-4}  \1_{\{\delta \le d(p, \cdot) \le 2\delta \}} \nonumber \\
& + C \bigg(\frac{\epsilon}{\epsilon^2 + d(p, \cdot)^2} \bigg)^{\frac{n+4}{2}} \1_{\{d(p, \cdot) \ge \delta \}}.
\end{align*}
\end{lemma}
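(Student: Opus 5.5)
The plan is to reduce Lemma~\ref{lemma 5.8} to the pointwise estimates already proved for the test functions in conformal normal coordinates, and then transfer them to $g_0$ using conformal covariance. Write $g_p=\phi_p^{\frac{4}{n-4}}g_0$ for the conformal normal coordinate metric at $p$. Conformal covariance gives
\[
P_{g_0}(\phi_p v)=\phi_p^{\frac{n+4}{n-4}}P_{g_p}v .
\]
Hence
\[
P_{g_0}\bar u_{(p,\epsilon)}-\mu_\infty\bar u_{(p,\epsilon)}^{\frac{n+4}{n-4}}
=\phi_p^{\frac{n+4}{n-4}}\Big(P_{g_p}v-\mu_\infty v^{\frac{n+4}{n-4}}\Big),
\]
where $v=v_{p,\epsilon}$ or $v_{p,\epsilon,\delta}$. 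Since $\phi_p$ is bounded above and below uniformly in $p$, it suffices to prove the claimed bound for $v$ with respect to $g_p$. Because $g_p$ and $g_0$ are uniformly equivalent, the distance functions $d_{g_p}(p,\cdot)$ and $d(p,\cdot)$ are comparable. The indicator regions therefore change only by fixed factors, which we absorb by slightly adjusting $\delta$ and the constants.

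\emph{Case $5\le n\le 7$.} This is essentially Proposition~\ref{prop:approx}. The only extra term there is
\[
\big(\tfrac{\mfc(n)}{\mu_\infty}\big)^{\frac{n-4}{8}}\epsilon^{\frac{n-4}{2}}A_pP_g\chi_\delta .
\]
Since $A_p$ is uniformly bounded and $P_g\chi_\delta=O(\delta^{-4})$ is supported in $\{\delta\le r\le 2\delta\}$, this term is bounded by $C\epsilon^{\frac{n-4}{2}}\delta^{-4}\1_{\{\delta\le r\le2\delta\}}$. The term $\epsilon^{\frac{n-4}{2}}\delta^{-3}$ from that proposition is dominated by the same quantity. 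The triangle inequality then gives the lemma.

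\emph{Case $n\ge8$.} Here $v=\chi_\delta(u_\epsilon+w)+(1-\chi_\delta)\epsilon^{\frac{n-4}{2}}G_p$, and I would treat three regions separately; the normalization constant $(\mfc(n)/\mu_\infty)^{\frac{n-4}{8}}$ must be inserted as in \eqref{testfunction} for the nonlinear term to match.

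\begin{itemize}
\item[(i)] On $\{r\le\delta\}$, write $P_gv-\mfc(n)v^{\frac{n+4}{n-4}}$ as the sum of $(P_g-\Delta^2)u_\epsilon$, of $P_gw-\tmfc(n)u_\epsilon^{\frac{8}{n-4}}w$, and of the Taylor remainder of $(u_\epsilon+w)^{\frac{n+4}{n-4}}$.
\begin{itemize}
\item[$\bullet$] By Lemma~\ref{lem:expansion of P} with $h=O(r^2)$, we have $|(P_g-\Delta^2)u_\epsilon|\le C\epsilon^{\frac{n-4}{2}}(\epsilon+r)^{2-n}$.
\item[$\bullet$] The linearized equation \eqref{eq:corrector pde}, together with $DP_{g_e}[T]u_\epsilon=0$ from Lemma~\ref{lem:barT barS}, cancels the leading $DP_{g_e}[H]u_\epsilon$ contribution. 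What remains is controlled by Lemma~\ref{lem:auxiliary lem1}-type bounds and $|\partial^\alpha w|\le C\epsilon^{\frac{n-4}{2}}(\epsilon+r)^{6-n-|\alpha|}$ from Lemma~\ref{lem:brendle splitting}. Each leftover is $O(\epsilon^{\frac{n-4}{2}}(\epsilon+r)^{2-n})$ or smaller.
\item[$\bullet$] The Taylor remainder is of order $u_\epsilon^{\frac{12-n}{n-4}}w^2$ when $n\le 12$, or is handled with $|w|\le C(\epsilon+r)^2u_\epsilon$ in general. Either way it is bounded by $C u_\epsilon^{\frac{8}{n-4}}(\epsilon+r)^4u_\epsilon\le Cu_\epsilon$.
\end{itemize}
All these errors are $\le C\epsilon^{\frac{n-4}{2}}(\epsilon+r)^{2-n}$. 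This is \emph{not} automatically bounded by $Cu_\epsilon$ near $r\sim\epsilon$. The step I expect to be the main obstacle is obtaining the sharp weight $u_\epsilon$ on the inner ball. I would first try to exploit the conformal normal coordinate structure $x_ih_{ij}=0$: this kills the top-order radial contributions, as in the proof of \eqref{eq:DP sphere vanish}. If that is not enough, I would accept a weaker weight and check that it suffices downstream.

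\item[(ii)] On the annulus $\{\delta\le r\le2\delta\}$, argue as in Lemma~\ref{lem:uepg}. The difference $u_\epsilon+w-\epsilon^{\frac{n-4}{2}}G_p$ and its derivatives are bounded by \eqref{eq:v and Green's function estimate} divided by $\delta^i$. Combined with $|\nabla^i\chi_\delta|=O(\delta^{-i})$, this gives the $C\epsilon^{\frac{n-4}{2}}\delta^{-4}$ term, since $\delta$ is fixed and small.

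\item[(iii)] On $\{r\ge2\delta\}$, we have $P_gv=0$, and $v^{\frac{n+4}{n-4}}\le C(\epsilon/(\epsilon^2+r^2))^{\frac{n+4}{2}}$ by the elementary Green's function bound.
\end{itemize}
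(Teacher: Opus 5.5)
Your reduction via $P_{g_0}(\phi_p v)=\phi_p^{\frac{n+4}{n-4}}P_{g_p}v$ and your treatment of $5\le n\le 7$ agree with the paper, which simply invokes Proposition~\ref{prop:approx}. One small correction there: $P_g\chi_\delta$ is not supported in the annulus, since it equals $\frac{n-4}{2}Q_g$ on $\{r\le\delta\}$; that piece is $O(\epsilon^{\frac{n-4}{2}})\le Cu_\epsilon$, so it is harmless. Your remark that the factor $(\mfc(n)/\mu_\infty)^{\frac{n-4}{8}}$ must be inserted into \eqref{testfunction2} is also correct.

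\textbf{The gap.} It is in step (i) for $n\ge 8$, exactly where you suspected, and it is the whole content of the lemma in that case. The statement claims an error $\le Cu_\epsilon$ on the inner ball. Your bullets only give $C\epsilon^{\frac{n-4}{2}}(\epsilon+r)^{2-n}\approx(\epsilon+r)^{-2}u_\epsilon$, and ``accepting a weaker weight'' proves a different statement. Sharpening the crude estimates cannot close this. Lemma~\ref{lem:expansion of P} with $h=O(r^2)$ contains terms such as $|h||\partial^4u_\epsilon|$ and $|\partial^2h||\partial^2u_\epsilon|$ that are as large as $(\epsilon+r)^{-2}u_\epsilon$. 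The same holds for $DP_{g_e}[S]u_\epsilon$ if you only use $|\partial^\alpha S|\le C(\epsilon+r)^{2-|\alpha|}$. If you combine your first two bullets into $(P_g-\Delta^2-DP_{g_e}[H])u_\epsilon$, the bound in the proof of Lemma~\ref{lem:auxiliary lem1} does give $O(u_\epsilon)$ when $d\ge 3$. But for $n=8,9$ the term $DP_{g_e}[h-H]u_\epsilon$ still loses a factor $(\epsilon+r)^{-1}$.

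\textbf{The missing idea.} You need a cancellation coming from the radial symmetry of $u_\epsilon$ combined with conformal normal coordinates, and $x_ih_{ij}=0$ alone is not enough: you also need $R_g=O(r^2)$.

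\emph{The bubble term.} For radial $f$, $x_ih_{ij}=0$ gives $g^{ij}x_j=x_i$, and $\det g=e^{\tr h}=1+O(r^{2d+2})$, so $\Delta_gf=\Delta f+O(r^{2d+1})f'$. In conformal normal coordinates one also has $R_g=O(r^2)$, $\nabla R_g=O(r)$ and $\ric_g(\partial_r,\partial_r)=O(r^2)$. Together these give $P_gf=\Delta^2f+O(r^2)f''+O(r)f'+O(1)f$, hence $(P_g-\Delta^2)u_\epsilon=O(u_\epsilon)$. This is exactly the mechanism of Lemma~\ref{lem:uepsilon}, whose proof does not use $n\le 7$.

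\emph{The corrector term.} On $\{\chi_\delta=1\}$ one has $DP_{g_e}[S]u_\epsilon=DP_{g_e}[H]u_\epsilon$, since $DP_{g_e}[T]u_\epsilon=0$ pointwise (Lemma~\ref{lem:barT barS}). For radial $u_\epsilon$:
\begin{itemize}
\item the two fourth-order terms of $DP_{g_e}[H]u_\epsilon$ vanish by $x_iH_{ij}=0$, as in the proof of \eqref{eq:DP sphere vanish};
\item $D\ric[H]_{ij}x_ix_j=0$, by the identities $H_{ij,kk}x_ix_j=2H_{kk}$, $H_{ik,jk}x_ix_j=H_{kk}-x_iH_{kk,i}$ and $\tr H=0$;
\item everything else involves only $DR[H]=H_{ij,ij}$, which is $O(r^2)$. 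Its constant and linear parts agree with those of $h_{ij,ij}=R_g+O(r^2)$, and $R_g=O(r^2)$.
\end{itemize}
Hence $DP_{g_e}[S]u_\epsilon=O(u_\epsilon)$ on its own, with no cancellation against $(P_g-\Delta^2)u_\epsilon$ needed.

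\emph{Closing (i).} The bound $(P_g-\Delta^2)w=O(u_\epsilon)$ does follow from the crude estimate, because $|\partial^\beta w|\le C(\epsilon+r)^{2-|\beta|}u_\epsilon$. Your Taylor-remainder bound is fine. With these, step (i) is complete. The paper's one-line proof via Lemmas~\ref{lem:brendle splitting} and~\ref{lem:correction term} leaves this step implicit.

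\textbf{A smaller point in (ii).} There, \eqref{eq:v and Green's function estimate} actually yields $C\epsilon^{\frac{n-4}{2}}\delta^{2-n}$, not $C\epsilon^{\frac{n-4}{2}}\delta^{-4}$. So your constant depends on $\delta$, which is acceptable only because $\delta$ is fixed.
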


\begin{proof}
For the case $5 \le n \le 7$, this estimate follows directly from Proposition~\ref{prop:approx}. The case $n\ge 8$ easily follows from Lemma \ref{lem:brendle splitting} and \ref{lem:correction term}.
\end{proof}

\begin{lemma} \label{lem:interaction}
We have the estimate:
\begin{equation*}
\int_M \bar{u}_{(p_i, \epsilon_i)} \bar{u}_{(p_j, \epsilon_j)}^{\frac{n+4}{n-4}} \dvol \le C \bigg( \frac{\epsilon_j^2 + d(p_i, p_j)^2 }{\epsilon_i \epsilon_j }\bigg)^{-\frac{n-4}{2}}.
\end{equation*}
\end{lemma}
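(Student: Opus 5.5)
The plan is to reduce the interaction integral to the standard interaction estimate for two Euclidean bubbles. The reduction uses pointwise upper bounds on the test bubbles and the comparability of $d(p,\cdot)$ with Euclidean distance in normal coordinates.

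\textbf{Step 1: pointwise bound.} For $5\le n\le 7$ this comes from \eqref{testfunction} together with the Green's function bound $G_p\le C d(p,\cdot)^{4-n}$. For $n\ge 8$ it comes from \eqref{testfunction2}, Lemma \ref{lem:brendle splitting} and \eqref{eq:w[s] proof}. In the latter case $|w_{\epsilon,\delta}|\le C\sum_k|H^{(k)}|(\epsilon+|x|)^{k}u_\epsilon\le C u_\epsilon$ on $B_{2\delta}$, because $|X|\lesssim(\epsilon+|x|)^{k+1}$ and $|\partial u_\epsilon|\lesssim u_\epsilon/(\epsilon+|x|)$. Since $\phi_p$ is uniformly bounded, in both cases we obtain, uniformly in $p$ and in $\epsilon\le\delta$,
\[
\bar u_{(p,\epsilon)}(y)\le C\Big(\frac{\epsilon}{\epsilon^2+d(p,y)^2}\Big)^{\frac{n-4}{2}}=:C\,U_{p,\epsilon}(y).
\]

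\textbf{Step 2: reduce to the model integral.} The quantity to estimate is then bounded by
\[
C\int_M U_{p_i,\epsilon_i}U_{p_j,\epsilon_j}^{\frac{n+4}{n-4}}\dvol ,
\]
and it suffices to bound this by $C\,\big(\frac{\epsilon_j^2+d(p_i,p_j)^2}{\epsilon_i\epsilon_j}\big)^{-\frac{n-4}{2}}$. Write $D:=\epsilon_j^2+d(p_i,p_j)^2$. Split $M$ into the ball $B_j:=\{d(p_j,y)\le \tfrac12\sqrt{D}\}$ and its complement.

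\textbf{Step 3: estimate each region.}
\begin{itemize}
\item[(i)] On $B_j$, with $\sqrt D$ small, the triangle inequality gives $\epsilon_i^2+d(p_i,y)^2\gtrsim \epsilon_i^2+d(p_i,p_j)^2$. It also gives $\int U_{p_j,\epsilon_j}^{\frac{n+4}{n-4}}\lesssim \epsilon_j^{\frac{n-4}{2}}$. This contributes $\lesssim \big(\frac{\epsilon_i\epsilon_j}{\epsilon_i^2+d(p_i,p_j)^2}\big)^{\frac{n-4}{2}}$.
\item[(ii)] On the complement, $U_{p_j,\epsilon_j}^{\frac{n+4}{n-4}}\lesssim \epsilon_j^{\frac{n+4}{2}}d(p_j,y)^{-n-4}$. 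Combining this with H\"older, or with the elementary Euclidean convolution bound $\int_{\mathbb R^n}|x-a|^{4-n}|x|^{-n-4}\mathbb 1_{|x|\ge r}\,dx\lesssim r^{-n}\min(r,|a|)^{\ldots}$, gives a contribution of the same order.
\end{itemize}
On a compact manifold, distances are bounded and normal coordinates compare $d$ with Euclidean distance up to constants. The non-small-scale region therefore only contributes $O(\epsilon_i^{\frac{n-4}{2}}\epsilon_j^{\frac{n+4}{2}})$, which is dominated.

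\textbf{Step 4: match the stated quantity.} The main obstacle is bookkeeping the asymmetric denominator $\epsilon_j^2+d^2$ against $\epsilon_i^2+d^2$. In the regime $\epsilon_i\gg\epsilon_j$ with $d\lesssim\epsilon_i$, I expect the sharp bound to be $(\epsilon_j/\epsilon_i)^{\frac{n-4}{2}}$, which is at most the stated right side. I would handle this by the rescaling $y\mapsto p_j+\epsilon_j z$. This turns the integral into the classical one, $\int U_{0,1}^{\frac{n+4}{n-4}}(z)\,U_{a,\lambda}(z)\,dz$ with $\lambda=\epsilon_i/\epsilon_j$ and $a=(p_i-p_j)/\epsilon_j$. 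That integral is computed up to constants by the explicit formula $U_{a,\lambda}*U^{\frac{n+4}{n-4}}\asymp U_{a,\lambda}(0)$-type estimates, namely $\lesssim(\lambda+\lambda^{-1}+|a|^2/\lambda)^{-\frac{n-4}{2}}$, which is symmetric in $\lambda,\lambda^{-1}$. It is then checked to be at most $C\big(\frac{\epsilon_j^2+d^2}{\epsilon_i\epsilon_j}\big)^{-\frac{n-4}{2}}$ in each of the cases $\epsilon_i\ge\epsilon_j$ and $\epsilon_i<\epsilon_j$.
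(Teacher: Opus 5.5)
Your Step~1 is fine: $\bar u_{(p,\epsilon)}\le C\big(\frac{\epsilon}{\epsilon^2+d(p,\cdot)^2}\big)^{\frac{n-4}{2}}$, including $|w_{\epsilon,\delta}|\le Cu_\epsilon$ for $n\ge 8$. Your treatment of the region far from $p_j$ is also fine. The lemma does follow from the sharp symmetric interaction bound $C\big(\frac{\epsilon_i\epsilon_j}{\epsilon_i^2+\epsilon_j^2+d(p_i,p_j)^2}\big)^{\frac{n-4}{2}}$ if you cite it as known.

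The argument you actually give for the model integral has a hole. It sits in the regime $\epsilon_i\ll\epsilon_j$, $d(p_i,p_j)\le C\epsilon_j$, i.e.\ a small bubble in the core of a large one. It is not in the regime $\epsilon_i\gg\epsilon_j$ you single out, where the stated bound is weak and easy.

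\emph{Step~3(i) is false as stated.} The inequality $\epsilon_i^2+d(p_i,y)^2\ge c\,(\epsilon_i^2+d(p_i,p_j)^2)$ on $B_j$ fails once $d(p_i,p_j)<\frac12\sqrt D$. Then $p_i\in B_j$, and at $y=p_i$ the left side is $\epsilon_i^2$, far smaller than $d(p_i,p_j)^2$ when $\epsilon_i\ll d(p_i,p_j)\ll\epsilon_j$. (For $d(p_i,p_j)\ge\epsilon_j$ the triangle inequality is valid, and since then $D\le 2d(p_i,p_j)^2$, region (i) already gives the stated bound.)

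\emph{The region (i) bound is also too weak.} Even the bound you record, $\big(\frac{\epsilon_i\epsilon_j}{\epsilon_i^2+d(p_i,p_j)^2}\big)^{\frac{n-4}{2}}$, exceeds the target by the unbounded factor $\big(\frac{\epsilon_j^2+d(p_i,p_j)^2}{\epsilon_i^2+d(p_i,p_j)^2}\big)^{\frac{n-4}{2}}$ in the bad regime. This is a real loss from bounding the small bubble pointwise, not bookkeeping.

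\emph{Step~4 does not recover it.} $U_{a,\lambda}(0)=(\lambda+|a|^2/\lambda)^{-\frac{n-4}{2}}$ is exactly that same quantity. So the ``$U_{a,\lambda}(0)$-type'' estimate cannot produce the $\lambda^{-1}$ term; you simply assert it.

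The missing idea is to use the mass of the small bubble on scale $\epsilon_j$. Combine $U_{p_j,\epsilon_j}^{\frac{n+4}{n-4}}\le\epsilon_j^{-\frac{n+4}{2}}$ with $\int_{B_{C\epsilon_j}(p_j)}U_{p_i,\epsilon_i}\le C\epsilon_i^{\frac{n-4}{2}}\epsilon_j^{4}$, which gives $(\epsilon_i/\epsilon_j)^{\frac{n-4}{2}}$.

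The cleanest repair is to center the splitting at $p_i$ rather than $p_j$, as the paper does in its proof of Lemma~\ref{lem:inter2}. Let $A=\{y:\,2d(p_i,y)\le\epsilon_j+d(p_i,p_j)\}$.

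On $A$ one has $\epsilon_j+d(p_j,y)\ge\frac12(\epsilon_j+d(p_i,p_j))$. Hence $\bar u_{(p_j,\epsilon_j)}^{\frac{n+4}{n-4}}\le C\epsilon_j^{\frac{n+4}{2}}(\epsilon_j+d(p_i,p_j))^{-n-4}$. Meanwhile $\int_A\bar u_{(p_i,\epsilon_i)}\le C\epsilon_i^{\frac{n-4}{2}}\int_A d(p_i,y)^{4-n}\le C\epsilon_i^{\frac{n-4}{2}}(\epsilon_j+d(p_i,p_j))^{4}$.

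On $M\setminus A$ one has $\bar u_{(p_i,\epsilon_i)}\le C\epsilon_i^{\frac{n-4}{2}}(\epsilon_j+d(p_i,p_j))^{4-n}$ and $\int_M\bar u_{(p_j,\epsilon_j)}^{\frac{n+4}{n-4}}\le C\epsilon_j^{\frac{n-4}{2}}$.

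Since $(\epsilon_j+d(p_i,p_j))^2\ge\epsilon_j^2+d(p_i,p_j)^2$, both products are at most $C\big(\frac{\epsilon_i\epsilon_j}{\epsilon_j^2+d(p_i,p_j)^2}\big)^{\frac{n-4}{2}}$. This uses only your Step~1 and $\mathrm{vol}(B_r)\le Cr^n$, with no rescaling to $\mathbb{R}^n$ and no symmetric formula. It also explains the asymmetric right-hand side: the argument only ever uses $\bar u_{(p_i,\epsilon_i)}\le C\epsilon_i^{\frac{n-4}{2}}d(p_i,\cdot)^{4-n}$, so $\epsilon_i^2$ never enters the denominator.
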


\begin{proof}
The proof is standard; see \cite[Lemma~B.4]{MR2168505}.
\end{proof}

\begin{lemma} \label{lem:inter2}
We have the estimate
\begin{align*}
&\int_M \bar{u}_{(p_i, \epsilon_i)} \big| \pa \bar{u}_{(p_j, \epsilon_j)} - \mu_\infty \bar{u}_{(p_j, \epsilon_j)}^{\frac{n+4}{n-4}}\big| \\
\le & C \left( \delta^4 + \delta^{n-4} + \frac{\epsilon_j^4}{\delta^4} \right) \left( \frac{\epsilon_j^2 + d(p_i, p_j)^2 }{\epsilon_i \epsilon_j } \right)^{-\frac{n-4}{2}}.
\end{align*}
\end{lemma}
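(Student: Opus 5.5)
The plan is to feed the pointwise bound of Lemma \ref{lemma 5.8} into the integral and then estimate each of the three resulting pieces against the weight $\bar{u}_{(p_i,\epsilon_i)}$, in the same spirit as the standard interaction estimate Lemma \ref{lem:interaction}. Since $\phi_{p}$ is bounded above and below uniformly, $\bar{u}_{(p_i,\epsilon_i)}\le C\,(\epsilon_i/(\epsilon_i^2+d(p_i,\cdot)^2))^{\frac{n-4}{2}}$ on $B_{2\delta}(p_i)$, and $\bar{u}_{(p_i,\epsilon_i)}\le C\epsilon_i^{\frac{n-4}{2}}G_{p_i}$ elsewhere. So it suffices to bound
\[
\mathrm{(A)}=\int_{\{d(p_j,\cdot)\le 2\delta\}} \bar u_i\, u_{\epsilon_j}, \quad
\mathrm{(B)}=\epsilon_j^{\frac{n-4}{2}}\delta^{-4}\int_{\{\delta\le d(p_j,\cdot)\le 2\delta\}}\bar u_i,\quad
\mathrm{(C)}=\int_{\{d(p_j,\cdot)\ge\delta\}}\bar u_i\,u_{\epsilon_j}^{\frac{n+4}{n-4}},
\]
where $u_{\epsilon_j}=(\epsilon_j/(\epsilon_j^2+d(p_j,\cdot)^2))^{\frac{n-4}{2}}$.

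Write $\Lambda=(\epsilon_j^2+d(p_i,p_j)^2)/(\epsilon_i\epsilon_j)$. The key comparison is the identity
\[
u_{\epsilon_j}=\epsilon_j^{4}\,(\epsilon_j^2+d^2)^{-4}\,u_{\epsilon_j}^{\frac{n+4}{n-4}}\cdot(\epsilon_j^2+d^2)^{0}\!\!\quad\text{i.e.}\quad u_{\epsilon_j}=\epsilon_j^{-4}(\epsilon_j^2+d^2)^{4}\,u_{\epsilon_j}^{\frac{n+4}{n-4}} .
\]
On $\{d(p_j,\cdot)\le 2\delta\}$ this gives $u_{\epsilon_j}\le C\delta^{8}\epsilon_j^{-4}u_{\epsilon_j}^{\frac{n+4}{n-4}}$, which is too crude near $p_j$. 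I would therefore split (A) dyadically in $d(p_j,\cdot)$ and apply the scaling argument of \cite[Lemma~B.4]{MR2168505} to the weight $u_{\epsilon_j}$. The exponent $\frac{n-4}{2}+\frac{n-4}{2}$ is smaller than the critical $n$, and the deficit of $4$ in homogeneity produces a factor $r^4$ on the scale $r\le 2\delta$. This should yield $\mathrm{(A)}\le C\delta^4\Lambda^{-\frac{n-4}{2}}$. The point to check is that the worst region is not $d\sim\epsilon_j$ but $d\sim\delta$, so that the gain is $\delta^4$ rather than $\epsilon_j^4$. This is consistent with the $\delta^4$ term in the claimed bound.

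For (C), on $\{d(p_j,\cdot)\ge\delta\}$ we have $u_{\epsilon_j}^{\frac{n+4}{n-4}}\le C\epsilon_j^{4}\delta^{-4}\epsilon_j^{-4}\cdot(\ldots)$. More concretely, $u_{\epsilon_j}^{\frac{n+4}{n-4}}\le (\epsilon_j^2/\delta^2)^{4}\,\epsilon_j^{-4}\ldots$. I would simply use $(\epsilon_j^2+d^2)^{-4}\le\delta^{-8}$ to get $u_{\epsilon_j}^{\frac{n+4}{n-4}}\le \epsilon_j^{4}\delta^{-8}u_{\epsilon_j}$ there, and then combine this with a bound of type (A) for $\int\bar u_i u_{\epsilon_j}$ over the exterior region. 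On that region the integral is controlled by $\delta^{?}$ times $\Lambda^{-\frac{n-4}{2}}$. Matching the powers should produce the $\epsilon_j^4/\delta^4$ term. An alternative is to apply Lemma \ref{lem:interaction} directly, restricted to the exterior, and use the decay of $u_{\epsilon_j}^{\frac{n+4}{n-4}}$ there to extract the extra factor $(\epsilon_j/\delta)^4$.

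For (B), on the annulus $u_{\epsilon_j}\approx\epsilon_j^{\frac{n-4}{2}}\delta^{4-n}$, so $\epsilon_j^{\frac{n-4}{2}}\delta^{-4}\approx\delta^{n-8}u_{\epsilon_j}$. Hence $\mathrm{(B)}\le C\delta^{n-8}\int_{\text{ann}}\bar u_i u_{\epsilon_j}$. The annulus version of the (A) estimate gains an extra $\delta^4$, which gives $\delta^{n-4}\Lambda^{-\frac{n-4}{2}}$.

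The main obstacle is the uniformity of the interaction estimate over all relative positions. There are three regimes to handle: $p_i$ inside versus outside $B_{2\delta}(p_j)$, and $\epsilon_i$ larger versus smaller than $\epsilon_j$. In each case one must verify that the weighted integrals really scale like $\Lambda^{-\frac{n-4}{2}}$ times the stated power of $\delta$. I would treat these cases as in Brendle's Appendix B: bound $\bar u_i$ by $\min\{\epsilon_i^{-\frac{n-4}{2}},\,\epsilon_i^{\frac{n-4}{2}}d(p_i,\cdot)^{4-n}\}$ and compute explicitly.
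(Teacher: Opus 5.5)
The structure of your proposal matches the paper: you apply Lemma~\ref{lemma 5.8}, split into the three pieces, and handle (B) by absorbing $\epsilon_j^{\frac{n-4}{2}}\delta^{-4}$ into $C\delta^{n-8}u_{\epsilon_j}$ on the annulus. Your claimed bound for (A) is also correct. The gap is in (C).

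The pointwise bound $u_{\epsilon_j}^{\frac{n+4}{n-4}}\le \epsilon_j^{4}\delta^{-8}u_{\epsilon_j}$ on $\{d(p_j,\cdot)\ge\delta\}$ is true. To reach $\epsilon_j^4/\delta^4$ from it, you would then need $\int_{\{d(p_j,\cdot)\ge\delta\}}\bar u_i\,u_{\epsilon_j}\le C\delta^4\Lambda^{-\frac{n-4}{2}}$, and that is false in general. The exterior region has size $O(1)$, so an (A)-type argument over it gains nothing in $\delta$.

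For a concrete case, take $\epsilon_i=\epsilon_j=\epsilon$ and $d(p_i,p_j)$ bounded below independently of $\epsilon,\delta$. Then $\Lambda\sim\epsilon^{-2}$. The ball $B_{d(p_i,p_j)/2}(p_i)$ alone contributes about $\epsilon^{\frac{n-4}{2}}\cdot\epsilon^{\frac{n-4}{2}}\sim\Lambda^{-\frac{n-4}{2}}$ to that integral.

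So your first route only yields $C\epsilon_j^{4}\delta^{-8}\Lambda^{-\frac{n-4}{2}}$. The pointwise reduction throws away the extra decay $(\epsilon_j+d(p_i,p_j))^{-8}$ that $u_{\epsilon_j}^{\frac{n+4}{n-4}}$ has near $p_i$. This weaker bound would still suffice in Proposition~\ref{lem:venergy}, where $\delta$ is fixed and $\epsilon_{j,\nu}\to 0$, but it is not the stated estimate. Your ``alternative'' points the right way, but you do not say how the factor $(\epsilon_j/\delta)^4$ is actually extracted.

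The missing device is the splitting of the domain according to whether $2d(p_i,y)\le\epsilon_j+d(p_i,p_j)$, as in Brendle's interaction lemma. The paper uses it for both (A) and (C), and it also removes the case analysis over relative positions that you identify as the main obstacle.

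On that set, $\epsilon_j+d(p_j,y)\ge\frac12(\epsilon_j+d(p_i,p_j))$. So the $j$-th factor is bounded by its value at distance $\epsilon_j+d(p_i,p_j)$, and you integrate $\bar u_i$ over $B_{(\epsilon_j+d(p_i,p_j))/2}(p_i)$ using $\int_{B_r(p_i)}u_{\epsilon_i}\le C\epsilon_i^{\frac{n-4}{2}}r^4$.

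On the complement, $\bar u_i\le C\epsilon_i^{\frac{n-4}{2}}(\epsilon_j+d(p_i,p_j))^{4-n}$, and you integrate the $j$-th factor alone. Use $\int_{B_{4\delta}(p_j)}u_{\epsilon_j}\le C\epsilon_j^{\frac{n-4}{2}}\delta^4$ in (A), and $\int_{\{d(p_j,\cdot)\ge\delta/2\}}u_{\epsilon_j}^{\frac{n+4}{n-4}}\le C\epsilon_j^{\frac{n+4}{2}}\delta^{-4}$ in (C).

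In (C), the first piece is nonempty only if $d(p_i,p_j)\ge c\,\delta$. There you keep the full decay $u_{\epsilon_j}^{\frac{n+4}{n-4}}\le C\epsilon_j^{\frac{n+4}{2}}(\epsilon_j+d(p_i,p_j))^{-n-4}$. The surplus factor $(\epsilon_j+d(p_i,p_j))^{-4}\le C\delta^{-4}$ then gives exactly $\epsilon_j^4/\delta^4$.

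Alternatively, your second idea can be made rigorous by a scale comparison. On $\{d(p_j,\cdot)\ge\delta\}$ one has $u_{\epsilon_j}^{\frac{n+4}{n-4}}\le C(\epsilon_j/\delta)^{\frac{n+4}{2}}u_{\delta}^{\frac{n+4}{n-4}}$, where $u_\delta$ is the bubble of scale $\delta$ at $p_j$. Then apply the standard interaction estimate (as in Lemma~\ref{lem:interaction}) at scales $\epsilon_i$ and $\delta$. This works because $(\epsilon_j/\delta)^{\frac{n+4}{2}}\big(\tfrac{\delta^2+d(p_i,p_j)^2}{\epsilon_i\delta}\big)^{-\frac{n-4}{2}}\le(\epsilon_j/\delta)^{4}\Lambda^{-\frac{n-4}{2}}$ whenever $\epsilon_j\le\delta$.
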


\begin{proof}
We only need to consider the case $5 \le n \le 7$. By Lemma~\ref{lemma 5.8}, we have
\begin{align*}
\bigg| P_{g_0 }  \bar{u}_{(p_j, \epsilon_j)} -\mu_\infty  \bar{u}_{(p_j, \epsilon_j)}^{\frac{n+4}{n-4}} \bigg| (y) \le &  C (1+ \delta^{n-8}) \bigg(\frac{\epsilon_j}{\epsilon_j^2 + d(p_j, y)^2} \bigg)^{\frac{n-4}{2}}  \1_{\{d(p_j, y)\le 4\delta \}}  \\
& + C \bigg(\frac{\epsilon_j}{\epsilon_j^2 + d(p_j, y)^2} \bigg)^{\frac{n+4}{2}} \1_{\{d(p_j, y) \ge \frac{\delta}{2} \}}.
\end{align*}

On the set $\{ y \mid 2d(p_i, y) \le \epsilon_j + d(p_i, p_j) \}\cap \{d(y, p_j )\le 4\delta \}$, we have
$$\epsilon_j + d(y, p_j) \ge  \epsilon_j +d(p_i, p_j) -d(p_i, y) \ge \frac{1}{2} ( \epsilon_j + d(p_i, p_j)).$$
Consequently,
$$d(p_i, y) \le \frac{1}{2}(\epsilon_j + d(p_i, p_j) ) \le \epsilon_j +d(y, p_j) \le 8 \delta.$$

Using this observation, we estimate
\begin{align*}
& \int_{\{d(y, p_j) \le 4\delta\}} \frac{\epsilon_i^{\frac{n-4}{2}}}{(\epsilon_i^2 + d(p_i, y)^2 )^{\frac{n-4}{2}}} \cdot \frac{\epsilon_j^{\frac{n-4}{2}}}{(\epsilon_j^2 + d(p_j, y)^2 )^{\frac{n-4}{2}}}  \dvol \\
\le & \int_{\{ y \mid 2d(p_i, y) \le \epsilon_j + d(p_i, p_j) \} \cap \{d(y, p_j )\le 4\delta \}} \frac{\epsilon_i^{\frac{n-4}{2}}}{(\epsilon_i^2 + d(p_i, y)^2 )^{\frac{n-4}{2}}} \cdot \frac{\epsilon_j^{\frac{n-4}{2}}}{(\epsilon_j^2 + d(p_j, y)^2 )^{\frac{n-4}{2}}} \\
& + \int_{\{ y \mid 2d(p_i, y) \ge \epsilon_j + d(p_i, p_j) \} \cap \{d(y, p_j )\le 4\delta \}} \frac{\epsilon_i^{\frac{n-4}{2}}}{(\epsilon_i^2 + d(p_i, y)^2 )^{\frac{n-4}{2}}} \cdot \frac{\epsilon_j^{\frac{n-4}{2}}}{(\epsilon_j^2 + d(p_j, y)^2 )^{\frac{n-4}{2}}} \\
\le & C \int_{\{ d(p_i, y) \le 8\delta \}} \frac{\epsilon_i^{\frac{n-4}{2}}}{(\epsilon_i^2 + d(p_i, y)^2 )^{\frac{n-4}{2}}} \cdot \frac{\epsilon_j^{\frac{n-4}{2}}}{(\epsilon_j^2 + d(p_i, p_j)^2 )^{\frac{n-4}{2}}} \\
& + C \int_{\{ d(y, p_j) \le 4 \delta \}} \frac{\epsilon_i^{\frac{n-4}{2}}}{(\epsilon_i^2 + d(p_i, p_j)^2 )^{\frac{n-4}{2}}} \cdot \frac{\epsilon_j^{\frac{n-4}{2}}}{(\epsilon_j^2 + d(p_j, y)^2 )^{\frac{n-4}{2}}} \\
\le &  C \delta^4 \frac{\epsilon_i^{\frac{n-4}{2}} \epsilon_j^{\frac{n-4}{2}}}{(\epsilon_j^2 + d(p_i, p_j)^2 )^{\frac{n-4}{2}}}.
\end{align*}

On the set $\{ y \mid 2d(p_i, y) \le \epsilon_j + d(p_i, p_j) \}\cap \{d(y, p_j )\ge \frac{\delta}{2} \}$, we again have
$$\epsilon_j + d(y, p_j) \ge  \epsilon_j +d(p_i, p_j) -d(p_i, y) \ge \frac{1}{2} ( \epsilon_j + d(p_i, p_j)),$$
and therefore
\begin{align*}
& \int_{\{d(y, p_j) \le 4\delta\}} \frac{\epsilon_i^{\frac{n-4}{2}}}{(\epsilon_i^2 + d(p_i, y)^2 )^{\frac{n-4}{2}}} \cdot \frac{\epsilon_j^{\frac{n+4}{2}}}{(\epsilon_j^2 + d(p_j, y)^2 )^{\frac{n+4}{2}}}  \dvol \\
\le & C \frac{\epsilon_j^4}{\delta^4} \frac{\epsilon_i^{\frac{n-4}{2}} \epsilon_j^{\frac{n-4}{2}}}{(\epsilon_j^2 + d(p_i, p_j)^2 )^{\frac{n-4}{2}}}.
\end{align*}
Combining the above estimates completes the proof.
\end{proof}

\section{Blow-up analysis}

\subsection{Second variation of the energy}

In this subsection, we establish a uniform estimate for the second variation of the Paneitz--Sobolev quotient along the flow.

We begin by recalling some basic spectral facts.

\begin{proposition}
Assume that $u_\infty \not\equiv 0$. Then there exist a sequence of smooth functions
$\{ \psi_a \}_{a\in\mathbb{N}}$ and a sequence of positive real numbers
$\{ \lambda_a \}_{a\in\mathbb{N}}$ with the following properties:

\begin{enumerate}[label=(\alph*), leftmargin=*]

\item
For each $a\in\mathbb{N}$, the function $\psi_a$ satisfies
\begin{equation*}
P_{g_0} \psi_a - \lambda_a\, u_\infty^{\frac{8}{n-4}} \psi_a = 0 .
\end{equation*}

\item
For all $a,b\in\mathbb{N}$, we have the orthonormality relation
\begin{equation*}
\int_M u_\infty^{\frac{8}{n-4}} \psi_a \psi_b \, \vol = \delta_{ab} .
\end{equation*}

\item
The linear span of $\{ \psi_a \mid a\in\mathbb{N} \}$ is dense in $L^2(M)$.

\item
The eigenvalues satisfy $\lambda_a \to \infty$ as $a \to \infty$.
\end{enumerate}
\end{proposition}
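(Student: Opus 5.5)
This is a weighted eigenvalue problem for $P_{g_0}$ with weight $\rho := u_\infty^{\frac{8}{n-4}}$. By Proposition \ref{lem:concom}(d), the assumption $u_\infty \not\equiv 0$ gives $u_\infty > 0$ on $M$. Since $u_\infty$ is smooth, $\rho$ is smooth and bounded between two positive constants. The plan is to apply the spectral theorem to a compact self-adjoint operator.

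\textbf{Step 1: the operator.} Work in the weighted space $L^2(M,\rho\,dv_{g_0})$. Because $\rho$ is bounded above and below by positive constants, this space is $L^2(M)$ with an equivalent norm. By \cite[Proposition~B]{MR3420504}, $P_{g_0}$ is positive, so it is invertible $W^{-2,2}\to W^{2,2}$, and our $W^{2,2}$-norm is equivalent to the standard one. Define
\[
K\phi := P_{g_0}^{-1}(\rho\,\phi).
\]

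\textbf{Step 2: properties of $K$.}
\begin{enumerate}[label=(\roman*),leftmargin=*]
\item $K$ is self-adjoint with respect to $\langle \phi,\psi\rangle_\rho = \int_M \rho\,\phi\,\psi \, dv_{g_0}$. Indeed, $\langle K\phi,\psi\rangle_\rho = \int_M P_{g_0}^{-1}(\rho\phi)\,\rho\psi$, and $P_{g_0}^{-1}$ is symmetric.
\item $K$ is positive: $\langle K\phi,\phi\rangle_\rho = \int_M f\,P_{g_0} f > 0$ for $f = P_{g_0}^{-1}(\rho\phi)$ whenever $\phi\neq 0$.
\item $K$ is compact. It maps $L^2$ boundedly into $W^{4,2}\subset W^{2,2}$, and $W^{2,2}\hookrightarrow L^2$ is compact by Rellich.
\item $K$ is injective, since $P_{g_0}^{-1}$ is injective and $\rho>0$.
\end{enumerate}

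\textbf{Step 3: spectral theorem.} Apply the spectral theorem for compact self-adjoint operators. This yields an orthonormal basis $\{\psi_a\}$ of $L^2(M,\rho\,dv_{g_0})$ consisting of eigenfunctions $K\psi_a = \lambda_a^{-1}\psi_a$ with $\lambda_a^{-1}>0$. No zero eigenvalue appears, by injectivity. Moreover $\lambda_a^{-1}\to 0$, since the space is infinite-dimensional; this gives (d).

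Unwinding the definition of $K$, the eigen-equation reads $P_{g_0}\psi_a = \lambda_a \rho\,\psi_a$, which is (a). Orthonormality in the weighted inner product is exactly (b). Density in the weighted $L^2$, together with the equivalence of norms, gives density in $L^2(M)$, which is (c).

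\textbf{Step 4: smoothness.} Smoothness of $\psi_a$ follows by elliptic bootstrapping. From $\psi_a\in L^2$ we get $\psi_a = \lambda_a P_{g_0}^{-1}(\rho\psi_a)\in W^{4,2}$, and iterating with $\rho$ smooth gives $\psi_a\in C^\infty$.

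\textbf{Main obstacle.} The only genuinely nontrivial input is that the weight $\rho$ is bounded below by a positive constant. This comes from the strong maximum principle of \cite{MR3420504}, through Proposition \ref{lem:concom}(d). Together with the positivity of $P_{g_0}$, this makes everything a standard compact-operator argument.
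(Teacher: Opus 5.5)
Your proposal is correct and follows the same route as the paper. The paper simply notes that $u_\infty^{-\frac{8}{n-4}}P_{g_0}$ is symmetric for the weighted inner product $\int_M u_\infty^{\frac{8}{n-4}}\phi_1\phi_2$ and cites standard spectral theory. Your compact, positive, self-adjoint inverse $K=P_{g_0}^{-1}\bigl(u_\infty^{\frac{8}{n-4}}\,\cdot\,\bigr)$, together with $u_\infty>0$ from the strong maximum principle, is exactly the standard way to make that one-line appeal rigorous.
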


\begin{proof}
Consider the linear operator
\[
\phi \longmapsto u_\infty^{-\frac{8}{n-4}}\, P_{g_0} \phi .
\]
This operator is symmetric with respect to the weighted inner product
\[
(\phi_1,\phi_2)
\longmapsto
\int_M u_\infty^{\frac{8}{n-4}} \phi_1 \phi_2 \, \vol .
\]
The result then follows from standard spectral theory for self-adjoint operators.
\end{proof}

Let $A \subset \mathbb{N}$ be the finite set defined by
\[
A := \Big\{ a \in \mathbb{N} \;:\; \lambda_a \le \frac{n+4}{n-4}\, \mu_\infty \Big\}.
\]
We define the projection operator $\Pi$ by
\begin{equation*}
\Pi \phi
:= \sum_{a \notin A}
\bigg( \int_M \psi_a \phi \, \vol \bigg)
u_\infty^{\frac{8}{n-4}} \psi_a
= \phi
- \sum_{a \in A}
\bigg( \int_M \psi_a \phi \, \vol \bigg)
u_\infty^{\frac{8}{n-4}} \psi_a .
\end{equation*}

\begin{lemma}\label{lp}
Assume that $u_\infty \not\equiv 0$. For every $1 \le p < \infty$, there exists a constant
$C>0$ (depending only on $p$) such that
\begin{equation*}
\| \phi \|_{L^p(M)}
\le
C \bigg\|
P_{g_0} \phi
- \frac{n+4}{n-4}\, \mu_\infty u_\infty^{\frac{8}{n-4}} \phi
\bigg\|_{L^p(M)}
+
C \sup_{a \in A}
\bigg|
\int_M u_\infty^{\frac{8}{n-4}} \psi_a \phi \, \vol
\bigg|.
\end{equation*}
\end{lemma}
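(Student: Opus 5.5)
We argue by contradiction, combining compactness with the spectral decomposition of the weighted operator. Write $L\phi := P_{g_0}\phi - \frac{n+4}{n-4}\mu_\infty u_\infty^{\frac{8}{n-4}}\phi$.

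Suppose the inequality fails for some $p$. Then there are $\phi_k$ with $\|\phi_k\|_{L^p(M)}=1$, $\|L\phi_k\|_{L^p(M)}\to 0$ and $\sup_{a\in A}|\int_M u_\infty^{\frac{8}{n-4}}\psi_a\phi_k\,\vol|\to 0$. (One should first restrict to $\phi$ for which $L\phi\in L^p$ makes sense, e.g.\ smooth $\phi$ by density.)

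The first step is an a priori bound. Since $u_\infty>0$ is smooth by Proposition \ref{lem:concom}, the coefficient $u_\infty^{\frac{8}{n-4}}$ is smooth and bounded. Also $P_{g_0}$ is invertible by the positivity result of \cite{MR3420504}. Standard $L^p$ elliptic estimates for the fourth-order operator $P_{g_0}$ then give
\[
\|\phi_k\|_{W^{4,p}(M)}\le C\big(\|P_{g_0}\phi_k\|_{L^p}+\|\phi_k\|_{L^p}\big)\le C\big(\|L\phi_k\|_{L^p}+\|\phi_k\|_{L^p}\big)\le C.
\]
By Rellich compactness, a subsequence satisfies $\phi_k\to\phi$ strongly in $L^p(M)$, so $\|\phi\|_{L^p}=1$. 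Moreover $\phi_k\rightharpoonup\phi$ weakly in $W^{4,p}$, hence $L\phi=0$ in the distributional sense. Bootstrapping elliptic regularity shows $\phi$ is smooth. The constraints pass to the limit because $u_\infty^{\frac{8}{n-4}}\psi_a$ is smooth, giving $\int_M u_\infty^{\frac{8}{n-4}}\psi_a\phi\,\vol=0$ for all $a\in A$.

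The second step identifies $\phi$. The equation $L\phi=0$ says $P_{g_0}\phi=\frac{n+4}{n-4}\mu_\infty u_\infty^{\frac{8}{n-4}}\phi$. Expand $\phi=\sum_a c_a\psi_a$ with $c_a=\int u_\infty^{\frac{8}{n-4}}\psi_a\phi$, using the weighted $L^2$ orthonormal basis from the preceding proposition. Pairing the equation with $\psi_b$ and using the symmetry of $P_{g_0}$ gives
\[
\lambda_b c_b=\frac{n+4}{n-4}\mu_\infty c_b .
\]
So $c_b=0$ unless $\lambda_b=\frac{n+4}{n-4}\mu_\infty$, and any such $b$ lies in $A$ by definition. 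The orthogonality constraints kill $c_b$ for $b\in A$. Hence every $c_b$ vanishes and, by density (c), $\phi\equiv0$. This contradicts $\|\phi\|_{L^p}=1$.

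The only delicate points are technical. The first is the elliptic $W^{4,p}$ estimate for $1<p<\infty$. The endpoint $p=1$ is not covered by Calderón–Zygmund theory, so there I would use the weaker estimate $\|\phi\|_{W^{3,q}}\le C\|P_{g_0}\phi\|_{L^1}+C\|\phi\|_{L^1}$ for $q<\frac{n}{n-1}$, obtained from the Green's function bounds, which still gives compactness in $L^1$. The second is justifying the pairing for a merely weak limit, which is immediate once regularity is established. I expect the $p=1$ compactness to be the main (mild) obstacle.
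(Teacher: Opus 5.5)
Your proof is correct, and its skeleton is the paper's. You argue by contradiction, pass to a limit, and then use
\[
\Bigl(\lambda_a-\tfrac{n+4}{n-4}\mu_\infty\Bigr)\int_M u_\infty^{\frac{8}{n-4}}\psi_a\phi\dvol=\int_M \phi\,\Bigl(P_{g_0}-\tfrac{n+4}{n-4}\mu_\infty u_\infty^{\frac{8}{n-4}}\Bigr)\psi_a\dvol
\]
together with the constraints for $a\in A$ to kill every coefficient.

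Where you differ is the compactness step. The paper treats only $p=1$. It takes a weak-$*$ limit $\Phi$ of the $\phi_i$ as signed Radon measures, tests it against $u_\infty^{\frac{8}{n-4}}\psi_a$, and concludes $\Phi=0$ from density of the eigenfunctions in $C^0(M)$. This route needs no elliptic estimates and no regularity of the limit. However, weak-$*$ convergence only gives $\|\Phi\|\le\liminf_i\|\phi_i\|_{L^1}$. So the normalization $\|\Phi\|=1$, which is what produces the contradiction, does not follow from measure compactness alone: the limits of $\phi_i^{+}$ and $\phi_i^{-}$ can cancel, as with oscillating $\phi_i$.

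Your route uses the equation itself to get strong convergence. For $1<p<\infty$ you use $W^{4,p}$ estimates. At $p=1$ you use the Green's function bound $|\nabla^3_x G(x,y)|\le C\,d(x,y)^{1-n}$, which gives uniform $W^{3,q}$ bounds for $q<\frac{n}{n-1}$ and hence precompactness in $L^1$. This forces $\|\phi\|_{L^p}=1$ for the limit, which is exactly the nontriviality the measure-theoretic argument leaves unjustified.

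The price is that you need the limit regular enough to expand it in the $\psi_a$. Bootstrapping supplies this at no real cost. Alternatively, you can apply the paper's density-in-$C^0$ step directly to your strong $L^1$ limit.
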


\begin{proof}
We prove the statement for the case $p=1$; the general case follows similarly.

Suppose the conclusion were false. Then there exists a sequence $\{\phi_i\}$ such that
\[
\|\phi_i\|_{L^1(M)} = 1,
\]
and
\[
\bigg\|
P_{g_0} \phi_i
- \frac{n+4}{n-4}\, \mu_\infty u_\infty^{\frac{8}{n-4}} \phi_i
\bigg\|_{L^1(M)}
+ \sup_{a \in A}
\bigg|
\int_M u_\infty^{\frac{8}{n-4}} \psi_a \phi_i \, \vol
\bigg|
\longrightarrow 0 .
\]

By compactness of Radon measures, there exists a signed Radon measure $\Phi$ such that
\[
\phi_i \rightharpoonup \Phi
\qquad \text{and} \qquad
\|\Phi\| = 1 .
\]
(Here we decompose $\phi_i = \phi_i^+ - \phi_i^-$ and apply compactness to each part.)

For each $a \in \mathbb{N}$, we compute
\begin{align*}
\bigg( \lambda_a - \frac{n+4}{n-4}\, \mu_\infty \bigg)
\Phi\!\left( u_\infty^{\frac{8}{n-4}} \psi_a \right)
&=
\lim_{i \to \infty}
\int_M
\phi_i
\bigg(
P_{g_0}
- \frac{n+4}{n-4}\, \mu_\infty u_\infty^{\frac{8}{n-4}}
\bigg) \psi_a
\, \vol \\
&= 0 .
\end{align*}

If $a \notin A$, then
\(
\lambda_a - \frac{n+4}{n-4}\mu_\infty > 0
\),
and hence
\[
\Phi\!\left( u_\infty^{\frac{8}{n-4}} \psi_a \right) = 0 .
\]
For $a \in A$, the same conclusion follows from the vanishing of the projection terms by assumption.
Therefore,
\[
\Phi\!\left( u_\infty^{\frac{8}{n-4}} \psi_a \right) = 0
\qquad \text{for all } a \in \mathbb{N}.
\]

Since finite linear combinations of the eigenfunctions $\{\psi_a\}$ are dense in $C^0(M)$,
this implies $\Phi = 0$, which contradicts $\|\Phi\| = 1$.
The contradiction completes the proof.

\end{proof}

\begin{lemma}\label{lem:lpestimate}
Assume that $u_\infty \not\equiv 0$.

\begin{enumerate}[label=(\alph*), leftmargin=*]

\item
There exists a constant $C>0$ such that
\begin{align*}
\| \phi\|_{L^{\frac{n+4}{n-4}}(M)}
\le\;&
C \bigg\|
\Pi \bigg(
P_{g_0} \phi
- \frac{n+4}{n-4}\, \mu_\infty u_\infty^{\frac{8}{n-4}} \phi
\bigg)
\bigg\|_{L^{\frac{n(n+4)}{n^2+16}}(M)}
\nonumber\\
&\quad
+ C \sup_{a \in A}
\bigg|
\int_M u_\infty^{\frac{8}{n-4}} \psi_a \phi \, \vol
\bigg|.
\end{align*}

\item
There exists a constant $C>0$ such that
\begin{align*}
\| \phi\|_{L^{1}(M)}
\le\;&
C \bigg\|
\Pi \bigg(
P_{g_0} \phi
- \frac{n+4}{n-4}\, \mu_\infty u_\infty^{\frac{8}{n-4}} \phi
\bigg)
\bigg\|_{L^{1}(M)}
\nonumber\\
&\quad
+ C \sup_{a \in A}
\bigg|
\int_M u_\infty^{\frac{8}{n-4}} \psi_a \phi \, \vol
\bigg|.
\end{align*}

\end{enumerate}
\end{lemma}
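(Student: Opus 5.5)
Both estimates are modelled on Lemma~\ref{lp}, and we will argue by contradiction and compactness in the same way. The new ingredient is that the projection $\Pi$ now acts on the right-hand side. Write
\[
L\phi := P_{g_0}\phi - \frac{n+4}{n-4}\mu_\infty u_\infty^{\frac{8}{n-4}}\phi .
\]
Since the $\psi_a$ are eigenfunctions, $L$ commutes with the spectral decomposition. For $\phi$ smooth,
\[
L\phi - \Pi L\phi = \sum_{a\in A}\Big(\int_M \psi_a L\phi\Big) u_\infty^{\frac{8}{n-4}}\psi_a ,
\]
and integrating by parts gives
\[
\int_M \psi_a L\phi = \Big(\lambda_a-\tfrac{n+4}{n-4}\mu_\infty\Big)\int_M u_\infty^{\frac{8}{n-4}}\psi_a\phi .
\]
Because $A$ is finite and each $u_\infty^{\frac{8}{n-4}}\psi_a$ is smooth (recall $u_\infty>0$ is smooth), for every $p$ we get
\[
\|L\phi\|_{L^p}\le \|\Pi L\phi\|_{L^p} + C\sup_{a\in A}\Big|\int_M u_\infty^{\frac{8}{n-4}}\psi_a\phi\Big| .
\]
So the projected quantity controls the unprojected one up to the same finite-dimensional term.

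Part (b) then follows immediately. Apply Lemma~\ref{lp} with $p=1$ to bound $\|\phi\|_{L^1}$ by $\|L\phi\|_{L^1}$ plus the projection terms, and then use the display above.

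Part (a) has a gain of integrability, from $L^{\frac{n(n+4)}{n^2+16}}$ to $L^{\frac{n+4}{n-4}}$. The exponents are chosen so that the Sobolev embedding for the fourth-order operator $P_{g_0}$ exactly matches: $\frac{1}{q}-\frac{4}{n}=\frac{n^2+16}{n(n+4)}-\frac{4}{n}=\frac{n-4}{n+4}$, so $W^{4,q}\hookrightarrow L^{\frac{n+4}{n-4}}$ with $q=\frac{n(n+4)}{n^2+16}$. The plan has three steps.
\begin{enumerate}
\item Since $P_{g_0}$ is invertible (it is positive, by \cite{MR3420504}), $L^q$ elliptic regularity gives
\[
\|\phi\|_{L^{\frac{n+4}{n-4}}}\le C\|P_{g_0}\phi\|_{L^q}\le C\|L\phi\|_{L^q}+C\|\phi\|_{L^q},
\]
using that $u_\infty$ is bounded.
\item Lemma~\ref{lp} with $p=q$ bounds $\|\phi\|_{L^q}$ by $\|L\phi\|_{L^q}$ plus the projection terms.
\item The reduction in the first paragraph replaces $\|L\phi\|_{L^q}$ by $\|\Pi L\phi\|_{L^q}$ plus the projection terms.
\end{enumerate}
Together these give (a) for smooth $\phi$, and the general case follows by density for $\phi$ in the natural domain.

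The only delicate point is that Lemma~\ref{lp} was proved in detail only for $p=1$. For $q>1$ the same argument works, and more simply: $L^q$ is reflexive, so we can extract a weak limit instead of a Radon measure. Elliptic estimates make the sequence precompact in $L^q$, so the limit has norm $1$. The limit is annihilated by testing against every $u_\infty^{\frac{8}{n-4}}\psi_a$, and density of the eigenfunctions then forces it to vanish, which is the contradiction. I expect no serious obstacle beyond checking this exponent bookkeeping.
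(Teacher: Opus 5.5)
Your proposal is correct and follows essentially the same route as the paper. Like the paper, you combine three ingredients: Lemma~\ref{lp}; elliptic $L^p$ estimates for $P_{g_0}$, using for part (a) the exponent match $W^{4,q}\hookrightarrow L^{\frac{n+4}{n-4}}$ with $q=\frac{n(n+4)}{n^2+16}$; and the triangle inequality applied to the decomposition $L\phi=\Pi L\phi+\sum_{a\in A}\big(\lambda_a-\frac{n+4}{n-4}\mu_\infty\big)\big(\int_M u_\infty^{\frac{8}{n-4}}\psi_a\phi\big)u_\infty^{\frac{8}{n-4}}\psi_a$. Your plus sign in that decomposition is the correct one; the paper's displayed identity has a harmless sign slip.
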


\begin{proof}
This follows from Lemma~\ref{lp}, standard $L^p$ estimates for elliptic operators, and the identity
\begin{align*}
& \bigg\|  \pa \phi -\frac{n+4}{n-4} \mu_\infty u_\infty^{\frac{8}{n-4}} \phi  \bigg \|_{L^p(M)} \\
= &  \bigg\| \Pi \bigg( \pa \phi -\frac{n+4}{n-4} \mu_\infty u_\infty^{\frac{8}{n-4}} \phi \bigg) - \sum_{a \in A} \big(\lambda_a - \frac{n+4}{n-4} \mu_\infty \big) \big( \int_M \wgt \psi_a \phi \big) \wgt \psi_a\bigg \|_{L^p(M)} \\
\le & \bigg\| \Pi \bigg( \pa \phi -\frac{n+4}{n-4} \mu_\infty u_\infty^{\frac{8}{n-4}} \phi \bigg) \bigg \|_{L^p(M)} + C\sup_{a \in A} \bigg| \int_M u_\infty^{\frac{8}{n-4}} \psi_a \phi \bigg|
\end{align*}
which holds by triangle inequality.
\end{proof}

\begin{lemma}
Assume that $u_\infty \not\equiv 0$. Then there exists a positive constant $\zeta>0$ with the following property: for every vector
$z=(z_a)_{a\in A}\in \mathbb{R}^A$ satisfying $|z|\le \zeta$, there exists a smooth function $\bar u_z$ such that
\begin{equation*}
\int_M u_\infty^{\frac{8}{n-4}} \big(\bar u_z - u_\infty\big)\,\psi_a \, \vol
= z_a
\qquad \text{for all } a\in A,
\end{equation*}
and
\begin{equation*}
\Pi\bigg(
P_{g_0}\bar u_z - \mu_\infty \bar u_z^{\frac{n+4}{n-4}}
\bigg)=0 .
\end{equation*}
Moreover, the map $z\mapsto \bar u_z$ is real analytic.
\end{lemma}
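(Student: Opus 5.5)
This is a Lyapunov--Schmidt reduction, done with the implicit function theorem in Banach spaces, in the spirit of Brendle's Yamabe-flow argument. Fix $\alpha\in(0,1)$ and work in $C^{4,\alpha}(M)$. Since $u_\infty>0$ is smooth by Proposition~\ref{lem:concom}(d), the map $u\mapsto u^{\frac{n+4}{n-4}}$ is real analytic on a $C^{4,\alpha}$-neighborhood of $u_\infty$. Write $L\phi:=P_{g_0}\phi-\frac{n+4}{n-4}\mu_\infty u_\infty^{\frac{8}{n-4}}\phi$ for the linearization of $u\mapsto P_{g_0}u-\mu_\infty u^{\frac{n+4}{n-4}}$ at $u_\infty$. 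Let $E:=\mathrm{span}\{\psi_a : a\in A\}$, which is finite dimensional because $A$ is finite. Consider the map
\[
\Phi: E\times E^{\perp}\times \mathbb{R}^A \to \Pi\big(C^{0,\alpha}(M)\big)\times\mathbb{R}^A,
\]
where $E^\perp$ is the $C^{4,\alpha}$-complement orthogonal to $E$ in the weighted inner product $\int u_\infty^{\frac{8}{n-4}}\phi_1\phi_2$. For $u=u_\infty+e+\eta$ with $e\in E$ and $\eta\in E^\perp$, set
\[
\Phi(e,\eta,z):=\Big(\Pi\big(P_{g_0}u-\mu_\infty u^{\frac{n+4}{n-4}}\big),\ \big(\textstyle\int_M u_\infty^{\frac{8}{n-4}}(u-u_\infty)\psi_a\,\vol - z_a\big)_{a\in A}\Big).
\]
Note that $\Pi$ maps $C^{0,\alpha}$ boundedly to itself because the $\psi_a$ are smooth. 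The map $\Phi$ is analytic and $\Phi(0,0,0)=0$, because $u_\infty$ solves $P_{g_0}u_\infty=\mu_\infty u_\infty^{\frac{n+4}{n-4}}$ by Proposition~\ref{lem:concom}(a).

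The key step is to show that the partial differential of $\Phi$ in $(e,\eta)$ at the origin is an isomorphism. It sends $(e,\eta)$ to $(\Pi L(e+\eta),(\int u_\infty^{\frac{8}{n-4}}(e+\eta)\psi_a)_a)$. By orthogonality, the second component equals the coefficient vector of $e$, so it determines $e$ bijectively.

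For $\eta\in E^\perp$, expand formally $\eta=\sum_{a\notin A}c_a\psi_a$. Then $L\eta=\sum_{a\notin A}(\lambda_a-\frac{n+4}{n-4}\mu_\infty)c_a u_\infty^{\frac{8}{n-4}}\psi_a$, which already lies in the range of $\Pi$. Also $\Pi Le=0$ for $e\in E$, since $Le=\sum_{a\in A}(\lambda_a-\frac{n+4}{n-4}\mu_\infty)c_a u_\infty^{\frac{8}{n-4}}\psi_a$ and $\Pi$ kills exactly these directions. So the first component reduces to $L|_{E^\perp}\to\Pi(C^{0,\alpha})$.

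\begin{itemize}
\item Injectivity: the multipliers $\lambda_a-\frac{n+4}{n-4}\mu_\infty$ are strictly positive for $a\notin A$. Quantitatively, Lemma~\ref{lp} gives $\|\eta\|_{L^p}\le C\|L\eta\|_{L^p}$ on $E^\perp$.
\item Surjectivity: given $f$ in the range of $\Pi$, solve $L\eta=f$ weakly in $W^{2,2}$ using the spectral decomposition, inverting the multipliers. The resulting $\eta$ is orthogonal to $E$. Schauder theory for the fourth-order operator $L$ upgrades $\eta$ to $C^{4,\alpha}$ with $\|\eta\|_{C^{4,\alpha}}\le C\|f\|_{C^{0,\alpha}}$, where the $L^p$ control from Lemma~\ref{lp} is used to absorb the lower-order term.
\end{itemize}

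The analytic implicit function theorem then gives $\zeta>0$ and an analytic map $z\mapsto(e(z),\eta(z))$ with $\Phi(e(z),\eta(z),z)=0$ for $|z|\le\zeta$. Setting $\bar u_z:=u_\infty+e(z)+\eta(z)$ yields both required identities. For small $\zeta$, $\bar u_z$ stays $C^{4,\alpha}$-close to $u_\infty$, hence positive.

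Smoothness follows by bootstrapping. The equation reads $P_{g_0}\bar u_z=\mu_\infty\bar u_z^{\frac{n+4}{n-4}}+\sum_{a\in A}\kappa_a u_\infty^{\frac{8}{n-4}}\psi_a$ for constants $\kappa_a$, and the right-hand side has smooth coefficients. Since $\bar u_z>0$, the nonlinearity is smooth, so elliptic regularity gives $\bar u_z\in C^\infty$.

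The main obstacle is setting up the function spaces so that $\Pi L$ is a genuine Banach isomorphism from $E^\perp$ onto the range of $\Pi$ in Hölder spaces, and not merely in $L^2$. Concretely, one must check that $\Pi$ preserves $C^{0,\alpha}$, that the complement is closed, and that the weak spectral inverse regularizes. I would handle this by combining Lemma~\ref{lem:lpestimate} (an a priori estimate, giving injectivity with bounds), Fredholm theory (giving index zero, since $L$ is a compact perturbation of the invertible $P_{g_0}$), and Schauder estimates.
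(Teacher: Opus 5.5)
Your proposal is correct and is the same argument as the paper's, which simply asserts that the conclusion follows from the (analytic) implicit function theorem near $u_\infty$. You supply the details the paper omits: the Lyapunov--Schmidt splitting $E\oplus E^\perp$; the check that $\Pi L$ restricted to the weighted complement is a Banach isomorphism onto the range of $\Pi$ in H\"older spaces, using the strict gap $\lambda_a>\frac{n+4}{n-4}\mu_\infty$ for $a\notin A$ together with Fredholm theory; and the elliptic bootstrap that gives smoothness of $\bar u_z$.
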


\begin{proof}
Near $u_\infty$, the Paneitz--Sobolev functional is real analytic, the conclusion follows directly from the implicit function theorem.
\end{proof}

We now refine Struwe's decomposition. In the case $u_\infty \not\equiv 0$, we proceed as follows.

For each $\nu\in\mathbb{N}$, let $\mathcal{A}_\nu$ denote the set of all tuples
\[
\big(z,(p_k,\epsilon_k,\alpha_k)_{1\le k\le m}\big)
\in \mathbb{R}^A \times (M\times \mathbb{R}_+ \times \mathbb{R}_+)^m
\]
such that
\begin{equation*}
|z|\le \zeta
\end{equation*}
and, for each $1\le k\le m$,
\begin{equation*}
d(p_k,p_{k,\nu}^*) \le \epsilon_{k,\nu}^*,
\qquad
\frac12 \le \frac{\epsilon_k}{\epsilon_{k,\nu}^*} \le 2,
\qquad
\frac12 \le \alpha_k \le 2 .
\end{equation*}

Then there exists a tuple
\[
\big(z_\nu,(p_{k,\nu},\epsilon_{k,\nu},\alpha_{k,\nu})_{1\le k\le m}\big)\in\mathcal{A}_\nu
\]
such that
\begin{align}\label{ineq:bestapprox}
& \bigg\|
u_\nu - \bar u_{z_\nu} - \sum_{k=1}^m \alpha_{k,\nu}\,\bar u_{(p_{k,\nu},\epsilon_{k,\nu})}
\bigg\|_{W^{2,2}(M)} \nonumber \\
\le & 
\bigg\|
u_\nu - \bar u_{z} - \sum_{k=1}^m \alpha_{k}\,\bar u_{(p_{k},\epsilon_{k})}
\bigg\|_{W^{2,2}(M)}
\end{align}
for all $\big(z,(p_k,\epsilon_k,\alpha_k)_{1\le k\le m}\big)\in \mathcal{A}_\nu$.

In the case $u_\infty \equiv 0$, this can be viewed as a special case of the above construction, with $z_\nu \equiv 0$ and $\bar u_{z_\nu}\equiv 0$. More precisely, we proceed as follows.

For each $\nu\in\mathbb{N}$, let $\mathcal{A}_\nu$ denote the set of all $m$-tuples
\[
(p_k,\epsilon_k,\alpha_k)_{1\le k\le m} \in (M\times \mathbb{R}_+ \times \mathbb{R}_+)^m
\]
such that
\begin{equation}\label{def:Anu}
d(p_k,p_{k,\nu}^*) \le \epsilon_{k,\nu}^*,
\qquad
\frac12 \le \frac{\epsilon_k}{\epsilon_{k,\nu}^*} \le 2,
\qquad
\frac12 \le \alpha_k \le 2,
\end{equation}
for all $1\le k\le m$. Since $\mathcal{A}_\nu$ is compact, there exists an $m$-tuple
\[
(p_{k,\nu},\epsilon_{k,\nu},\alpha_{k,\nu})_{1\le k\le m}\in\mathcal{A}_\nu
\]
such that
\begin{equation*}
\bigg\|
u_\nu - \sum_{k=1}^m \alpha_{k,\nu}\,\bar u_{(p_{k,\nu},\epsilon_{k,\nu})}
\bigg\|_{W^{2,2}(M)}
\le
\bigg\|
u_\nu - \sum_{k=1}^m \alpha_{k}\,\bar u_{(p_{k},\epsilon_{k})}
\bigg\|_{W^{2,2}(M)}
\end{equation*}
for all $(p_k,\epsilon_k,\alpha_k)_{1\le k\le m}\in \mathcal{A}_\nu$.

Without loss of generality, we assume that $\epsilon_{i,\nu} \le \epsilon_{j,\nu}$ whenever $i \le j$.

\begin{proposition}\label{lem:approx}
\begin{enumerate}[label=(\alph*), leftmargin=*]

\item
For all $i \neq j$, we have
\begin{equation*}
\frac{\epsilon_{i,\nu}}{\epsilon_{j,\nu}}
+ \frac{\epsilon_{j,\nu}}{\epsilon_{i,\nu}}
+ \frac{ d(p_{i,\nu}, p_{j,\nu})^2 }{ \epsilon_{i,\nu}\epsilon_{j,\nu} }
\longrightarrow \infty \qquad \text{as } \nu \to \infty .
\end{equation*}

\item
We have
\begin{equation*}
\bigg\|
u_\nu - \bar u_{z_\nu}
- \sum_{k=1}^{m} \alpha_{k,\nu}\,\bar u_{(p_{k,\nu},\epsilon_{k,\nu})}
\bigg\|_{W^{2,2}(M)}
\longrightarrow 0
\qquad \text{as } \nu \to \infty .
\end{equation*}

\end{enumerate}
\end{proposition}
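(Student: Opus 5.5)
The plan is to compare the minimizer with the specific admissible competitor furnished by the Struwe decomposition of Proposition~\ref{lem:concom}. I would prove (b) first, since (a) will follow from (b) together with the separation property of the original parameters.

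\textbf{Step 1: an admissible competitor.} Take $z=0$, so that $\bar u_0=u_\infty$; this holds by uniqueness in the implicit function theorem, because $u_\infty$ itself satisfies both defining conditions with $z=0$. Take $p_k=p_{k,\nu}^*$, $\epsilon_k=\epsilon_{k,\nu}^*$ and $\alpha_k=1$. This tuple lies in $\mathcal{A}_\nu$, so the minimizing property \eqref{ineq:bestapprox} gives
\[
\Big\| u_\nu-\bar u_{z_\nu}-\sum_k \alpha_{k,\nu}\bar u_{(p_{k,\nu},\epsilon_{k,\nu})}\Big\|_{W^{2,2}(M)}\le \Big\| u_\nu-u_\infty-\sum_k \bar u_{(p^*_{k,\nu},\epsilon^*_{k,\nu})}\Big\|_{W^{2,2}(M)}.
\]
The right-hand side tends to $0$ by Proposition~\ref{lem:concom}(c). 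This proves (b); when $u_\infty\equiv0$ the same argument applies with the $z$-component dropped.

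\textbf{Step 2: separation of the new parameters.} By the constraints defining $\mathcal{A}_\nu$, we have $\epsilon_{k,\nu}/\epsilon^*_{k,\nu}\in[\tfrac12,2]$ and $d(p_{k,\nu},p^*_{k,\nu})\le\epsilon^*_{k,\nu}$. The ratio terms $\epsilon_i/\epsilon_j+\epsilon_j/\epsilon_i$ therefore change by at most a factor $4$. For the distance term I would use the triangle inequality:
\[
d(p_{i,\nu},p_{j,\nu})\ge d(p^*_{i,\nu},p^*_{j,\nu})-\epsilon^*_{i,\nu}-\epsilon^*_{j,\nu}.
\]
Suppose the starred quantity in Proposition~\ref{lem:concom}(b) tends to infinity while the ratios $\epsilon^*_i/\epsilon^*_j$ stay bounded along a subsequence. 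Then $d(p^*_i,p^*_j)^2/(\epsilon^*_i\epsilon^*_j)\to\infty$, and since $\epsilon^*_i\sim\epsilon^*_j$, the subtracted terms $\epsilon^*_i+\epsilon^*_j$ are $O(\sqrt{\epsilon^*_i\epsilon^*_j})$. Hence $d(p_i,p_j)^2/(\epsilon_i\epsilon_j)\ge c\,d(p^*_i,p^*_j)^2/(\epsilon^*_i\epsilon^*_j)-C\to\infty$. If instead the ratios are unbounded, the ratio part already diverges for the unstarred parameters. Arguing by subsequences, this covers both cases and proves (a).

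I expect no serious obstacle. The only subtle point is justifying $\bar u_0=u_\infty$, which follows from the uniqueness part of the implicit function theorem, since $u_\infty$ solves $P_{g_0}u_\infty=\mu_\infty u_\infty^{\frac{n+4}{n-4}}$ and therefore $\Pi(\cdot)=0$ holds with zero projections.
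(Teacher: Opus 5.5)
Your proposal is correct and follows the paper's argument. For (b) you test the minimizing property \eqref{ineq:bestapprox} against the admissible competitor $\big(0,(p^*_{k,\nu},\epsilon^*_{k,\nu},1)_k\big)$ and use Proposition~\ref{lem:concom}(c); for (a) you combine the constraints \eqref{def:Anu} with Proposition~\ref{lem:concom}(b). You also supply details the paper leaves implicit, namely the identification $\bar u_0=u_\infty$ via uniqueness in the implicit function theorem and the triangle-inequality and subsequence argument for the separation.
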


\begin{proof}
Part~(a) follows immediately from \eqref{def:Anu}. Part~(b) follows from the minimizing property
\eqref{ineq:bestapprox} together with Proposition~\ref{lem:concom}.
\end{proof}

\begin{proposition}
We have
\begin{equation*}
|z_\nu| = o(1),
\end{equation*}
and
\begin{equation*}
d(p_{k,\nu}, p_{k,\nu}^*) \le o(1)\,\epsilon_{k,\nu}^*, 
\qquad 
\frac{\epsilon_{k,\nu}}{\epsilon_{k,\nu}^*} = 1 + o(1),
\qquad 
\alpha_{k,\nu} = 1 + o(1),
\end{equation*}
for all $1 \le k \le m$. In particular, the tuple
\[
(p_{k,\nu}, \epsilon_{k,\nu}, \alpha_{k,\nu})_{1\le k \le m}
\]
is an interior point of\/ $\mathcal{A}_\nu$ for all sufficiently large $\nu$.
\end{proposition}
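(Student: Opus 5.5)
The plan is to compare the minimizer with the Struwe data, using \eqref{ineq:bestapprox} and a nondegeneracy argument for the family of test bubbles. First I use the competitor $(z,(p_k,\epsilon_k,\alpha_k))=(0,(p_{k,\nu}^*,\epsilon_{k,\nu}^*,1))$, which lies in $\mathcal{A}_\nu$ since $\bar u_0=u_\infty$. By Proposition~\ref{lem:concom}(c), its $W^{2,2}$-distance to $u_\nu$ is $o(1)$. Hence, by minimality and the triangle inequality,
\[
\Big\| \bar u_{z_\nu}-u_\infty + \sum_{k=1}^m \big(\alpha_{k,\nu}\bar u_{(p_{k,\nu},\epsilon_{k,\nu})} - \bar u_{(p^*_{k,\nu},\epsilon^*_{k,\nu})}\big)\Big\|_{W^{2,2}(M)} = o(1).
\]

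The next step is to decouple the pieces. Each bubble term $\bar u_{(p,\epsilon)}$ with $\epsilon\to 0$ converges weakly to $0$ in $W^{2,2}$, while $\bar u_{z_\nu}-u_\infty$ stays in a fixed compact (finite-dimensional, analytic) family. Since $\epsilon^*_{k,\nu}\to0$, the bubble parts therefore contribute nothing to the pairing against $P_{g_0}(\bar u_{z_\nu}-u_\infty)$. This gives $\|\bar u_{z_\nu}-u_\infty\|_{W^{2,2}}=o(1)$.

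The constraint $\int u_\infty^{8/(n-4)}(\bar u_z-u_\infty)\psi_a=z_a$ is continuous in $W^{2,2}$, so $|z_\nu|=o(1)$. Similarly, cross terms between bubbles centered at different indices vanish in the limit. For distinct $k$, the pairs $(p_{k,\nu},\epsilon_{k,\nu})$ and $(p^*_{k,\nu},\epsilon^*_{k,\nu})$ differ from another index's pair by at most bounded factors, so the separation in Proposition~\ref{lem:approx}(a) persists. Interaction estimates in the style of Lemma~\ref{lem:interaction} then show the integrals $\int \bar u_i P_{g_0}\bar u_j$ tend to $0$. The conclusion is that for each $k$,
\[
\big\|\alpha_{k,\nu}\bar u_{(p_{k,\nu},\epsilon_{k,\nu})}-\bar u_{(p^*_{k,\nu},\epsilon^*_{k,\nu})}\big\|_{W^{2,2}}=o(1).
\]

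The remaining step is a rescaling and compactness argument at a single bubble. I blow up around $p^*_{k,\nu}$ at scale $\epsilon^*_{k,\nu}$, writing $x=\exp_{p^*_{k,\nu}}(\epsilon^*_{k,\nu}x)$ in conformal normal coordinates. Set $y_\nu=d(p_{k,\nu},p^*_{k,\nu})/\epsilon^*_{k,\nu}$ (taken as a vector, bounded by $1$) and $\lambda_\nu=\epsilon_{k,\nu}/\epsilon^*_{k,\nu}\in[\tfrac12,2]$, and pass to a subsequence with $y_\nu\to y$, $\lambda_\nu\to\lambda$, $\alpha_{k,\nu}\to\alpha$. By Theorem~\ref{thm:energy estimate}(d) (or directly from \eqref{testfunction} when $5\le n\le7$), together with the uniform bounds on $\phi_p$, the rescaled bubbles converge in $\dot W^{2,2}(\mathbb{R}^n)$ to standard bubbles. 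Here $\dot W^{2,2}$ norms are scale invariant, and the Green's-function tails are negligible since $\epsilon/\delta\to0$. The limit identity is then
\[
\alpha\,\lambda^{-\frac{n-4}{2}} U\big((x-y)/\lambda\big) = U(x), \qquad U(x)=(1+|x|^2)^{-\frac{n-4}{2}}.
\]
Comparing maxima and locations forces $y=0$, $\lambda=1$, $\alpha=1$. Since every subsequence has the same limit, the claimed $o(1)$ statements follow, and interiority in $\mathcal{A}_\nu$ is immediate. I expect the main obstacle to be this last step: justifying strong $\dot W^{2,2}$ convergence of the rescaled test bubbles uniformly in the nearby base point $p_{k,\nu}$ and in $n\ge8$, where the corrector $w_{\epsilon,\delta}$ and the conformal factor $\phi_p$ both depend on $p$. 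I would handle it using continuity of conformal normal coordinates in $p$, together with the bound on $X_{\epsilon,\delta}$ from Lemma~\ref{lem:brendle splitting}, which shows $w$ is $O(\delta)$-small in the rescaled energy norm.
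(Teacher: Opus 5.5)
Your proposal is correct and follows essentially the paper's route. The paper likewise combines the triangle inequality (minimality in $\mathcal{A}_\nu$ together with Struwe's decomposition) with the interaction estimate of Lemma~\ref{lem:interaction}, and your decoupling argument (weak convergence of the concentrating bubbles, plus the single-bubble rescaling and rigidity step) simply spells out what the paper leaves implicit. Note that the last step only needs local uniform (or $L^{\frac{2n}{n-4}}_{\mathrm{loc}}$) convergence of the rescaled test bubbles, not strong $\dot W^{2,2}$ convergence, so the obstacle you anticipate is milder than you fear.
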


\begin{proof}
The conclusion follows from the estimate
\begin{align*}
& \bigg\|
\bar u_{z_\nu} + \sum_{k=1}^m \alpha_{k, \nu} \bar u_{(p_{k,\nu},\epsilon_{k,\nu})}
- u_\infty - \sum_{k=1}^m  \bar u_{(p^*_{k,\nu},\epsilon^*_{k,\nu})}
\bigg\|_{W^{2,2}(M)} \\
\le\;&
\bigg\|
u_\nu - u_\infty - \sum_{k=1}^m  \bar u_{(p_{k,\nu}^*,\epsilon_{k,\nu}^*)}
\bigg\|_{W^{2,2}(M)}
+ 
\bigg\|
u_\nu - \bar u_{z_\nu} - \sum_{k=1}^m \alpha_{k, \nu} \bar u_{(p_{k,\nu},\epsilon_{k,\nu})}
\bigg\|_{W^{2,2}(M)} \\
=\;& o(1),
\end{align*}
together with Lemma~\ref{lem:interaction}.
\end{proof}

We decompose $u_\nu$ as
\begin{equation*}
u_\nu = v_\nu + w_\nu,
\end{equation*}
where
\begin{equation*}
v_\nu := \uznu + \sum_{k=1}^m \alpha_{k,\nu}\,\bar{u}_{(p_{k,\nu}, \epsilon_{k,\nu})},
\end{equation*}
and
\begin{equation*}
w_\nu := u_\nu - \uznu - \sum_{k=1}^m \alpha_{k,\nu}\,\bar{u}_{(p_{k,\nu}, \epsilon_{k,\nu})}.
\end{equation*}
By Proposition~\ref{lem:approx}, we have $\|w_\nu\|_{W^{2,2}(M)} = o(1)$.

We also remark that the case $u_\infty \equiv 0$ can be treated as a special case of the above decomposition by taking $\uznu \equiv 0$.

\begin{proposition}\label{lem:orth1}
We have:

\begin{enumerate}[label=(\alph*), leftmargin=*]

\item
For every $a \in A$, we have
\begin{equation*}
\bigg| \int_M u_\infty^{\frac{8}{n-4}} \psi_a\, w_\nu \, \dvol \bigg|
\le o(1)\int_M |w_\nu| \, \dvol .
\end{equation*}

\item
For every $1 \le k \le m$, we have
\begin{equation*}
\bigg| \int_M \bar{u}_{(p_{k,\nu},\epsilon_{k,\nu})}^{\frac{n+4}{n-4}}\, w_\nu \, \dvol \bigg|
\le o(1)
\bigg( \int_M |w_\nu|^{\frac{2n}{n-4}} \, \dvol \bigg)^{\frac{n-4}{2n}} .
\end{equation*}

\item
For every $1 \le k \le m$, we have
\begin{align*}
\bigg| &\int_M \bar{u}_{(p_{k, \nu}, \epsilon_{k, \nu})}^{\frac{n+4}{n-4}} \frac{\epsilon_{k, \nu}^2 - d(p_{k, \nu}, x)^2}{\epsilon_{k, \nu}^2 + d(p_{k, \nu}, x)^2} w_v \dvol \bigg| \nonumber\\
&\le o(1) \bigg( \int_M |w_\nu|^{\frac{2n}{n-4}} \dvol \bigg)^{\frac{n-4}{2n}}        .
\end{align*}

\item
For every $1 \le k \le m$, we have
\begin{align*}
\bigg| &\int_M \bar{u}_{(p_{k, \nu}, \epsilon_{k, \nu})}^{\frac{n+4}{n-4}} \frac{\epsilon_{k, \nu} \mathrm{exp}_{p_{k, \nu}}^{-1}(x)}{\epsilon_{k, \nu}^2 + d(p_{k, \nu}, x)^2} w_v \dvol \bigg| \nonumber\\
&\le o(1) \bigg( \int_M |w_\nu|^{\frac{2n}{n-4}} \dvol \bigg)^{\frac{n-4}{2n}}.       
\end{align*}

\end{enumerate}
\end{proposition}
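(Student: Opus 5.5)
This is the standard first-order optimality argument in the spirit of Brendle \cite{MR2168505}: the minimality \eqref{ineq:bestapprox}, taken at an interior point of $\mathcal{A}_\nu$ (guaranteed for large $\nu$ by the preceding proposition), makes $w_\nu$ $W^{2,2}$-orthogonal to the tangent space of the approximating family. We then convert these orthogonality relations into the stated integral bounds.

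\textbf{Step 1: the orthogonality relations.} Since the minimizer is interior and the parameters enter smoothly (analytically in $z$), differentiating $\|u_\nu-\bar u_z-\sum\alpha_k\bar u_{(p_k,\epsilon_k)}\|_{W^{2,2}}^2$ gives
\[
\int_M w_\nu\,\pa\big(\partial_{z_a}\bar u_{z}\big)\big|_{z=z_\nu}=0,\qquad \int_M w_\nu\,\pa\,\bar u_{(p_{k,\nu},\epsilon_{k,\nu})}=0,
\]
together with the analogous identities for the derivatives in $\epsilon_k$ and in the $n$ directions of $p_k$, the latter taken through $\exp_{p_{k,\nu}}$.

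\textbf{Step 2: part (a).} Because $|z_\nu|=o(1)$, $\bar u_z$ is analytic in $z$, and $\bar u_0=u_\infty$, we have $\partial_{z_a}\bar u_z|_{z_\nu}=\partial_{z_a}\bar u_z|_0+o(1)$ in $C^4$. Differentiating the constraint $\int u_\infty^{\frac{8}{n-4}}(\bar u_z-u_\infty)\psi_b=z_b$ and the equation $\Pi(P_{g_0}\bar u_z-\mu_\infty\bar u_z^{\frac{n+4}{n-4}})=0$ shows that $\partial_{z_a}\bar u_z|_0=\psi_a+\eta_a$. The correction satisfies $\Pi\big(P_{g_0}\eta_a-\frac{n+4}{n-4}\mu_\infty u_\infty^{\frac{8}{n-4}}\eta_a\big)=-\Pi\big((\lambda_a-\frac{n+4}{n-4}\mu_\infty)u_\infty^{\frac{8}{n-4}}\psi_a\big)=0$ and has vanishing $A$-moments. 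Lemma \ref{lem:lpestimate} then forces $\eta_a=0$, so $\partial_{z_a}\bar u_z|_0=\psi_a$. Hence
\[
0=\int w_\nu \pa\psi_a+o(1)\|w_\nu\|_{L^1}=\lambda_a\int u_\infty^{\frac{8}{n-4}}\psi_a w_\nu+o(1)\|w_\nu\|_{L^1},
\]
where the $o(1)$ is a sup-norm bound on $\pa$ of the $C^4$-small difference. Since $\lambda_a>0$, this gives (a).

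\textbf{Step 3: parts (b)--(d).} For the bubbles, write $\pa\bar u_{(p,\epsilon)}=\mu_\infty\bar u^{\frac{n+4}{n-4}}+E$, where $E$ is controlled by Lemma \ref{lemma 5.8}. The derivatives behave as follows: $\epsilon\partial_\epsilon$ applied to the model bubble yields, up to constants, $\bar u\cdot\frac{\epsilon^2-d^2}{\epsilon^2+d^2}$, and $\epsilon\partial_p$ yields $\bar u\cdot\frac{\epsilon\exp_p^{-1}(x)}{\epsilon^2+d^2}$. Consequently $\pa$ of these derivatives equals $\frac{n+4}{n-4}\mu_\infty\bar u^{\frac{8}{n-4}}$ times the same function, plus errors. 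Inserting this into the orthogonality relations and using H\"older, one needs the error terms to have $L^{\frac{2n}{n+4}}$ norm $o(1)$. From Lemma \ref{lemma 5.8}, that norm is $O(\epsilon\text{-powers}+\delta\text{-powers})$; since $\epsilon_{k,\nu}\to0$ and $\delta$ may be sent to $0$ afterwards, for $n\ge8$ one uses the explicit control of $w_{\epsilon,\delta}$ from Lemma \ref{lem:brendle splitting}. The identity $\int w_\nu\pa\bar u=0$ then gives (b), and the $\epsilon$- and $p$-derivative relations give (c) and (d).

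\textbf{Main obstacle.} The delicate point is the derivative error terms: differentiating the correction $w_{\epsilon,\delta}$, the cutoff, and the Green's-function tail in $\epsilon$ and $p$, and showing their contributions are $o(1)$ uniformly. This requires derivative versions of the bounds in Lemma \ref{lem:brendle splitting} and of the Green's-function expansion, which I would establish by differentiating the explicit formulas in $(p,\epsilon)$. A further issue is that the $\delta$-dependence must be absorbed by fixing $\delta$ small before letting $\nu\to\infty$; the $o(1)$ statements thus really mean $\limsup$ bounds that are small in $\delta$, so either $\delta=\delta_\nu\to0$ slowly must be arranged or a diagonal argument used.
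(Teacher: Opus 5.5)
Your proposal follows the paper's proof. The first variations of the minimization \eqref{ineq:bestapprox} in $z_a$, $\alpha_k$, $\epsilon_k$ and $p_k$ give the orthogonality relations. Part (a) then follows from $\lambda_a>0$ and the uniform smallness of $\pa(\partial_{z_a}\bar u_z|_{z=z_\nu}-\psi_a)$, and parts (b)--(d) follow from the approximate bubble equation of Lemma~\ref{lemma 5.8} together with H\"older's inequality. Your check via Lemma~\ref{lem:lpestimate} that $\partial_{z_a}\bar u_z|_{z=0}=\psi_a$ fills in a step the paper leaves implicit.

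One correction: you do not need $\delta=\delta_\nu\to 0$ or a diagonal argument, and varying $\delta$ would change the very family of test bubbles that defines $w_\nu$. For fixed $\delta$, each term on the right-hand side of Lemma~\ref{lemma 5.8} already has $L^{\frac{2n}{n+4}}(M)$-norm at most $C(\delta)\,\epsilon^{\beta}$ for some $\beta=\beta(n)>0$, so the $o(1)$ holds as $\nu\to\infty$.
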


\begin{proof}
Let $\tilde{\psi}_{a,z} := \frac{\partial}{\partial z_a}\bar u_z$. 
Taking the variation with respect to $z_a$, we obtain
\[
\int_M w_\nu\, \pa \tilde{\psi}_{a,z}\, \dvol = 0 .
\]
This implies
\begin{align*}
\lambda_a \int_M u_\infty^{\frac{8}{n-4}} \psi_a\, w_\nu \, \dvol
&= \int_M w_\nu\, \pa \psi_a \, \dvol \\
&= \int_M w_\nu\, \pa\big(\psi_a - \tilde{\psi}_{a,z}\big)\, \dvol .
\end{align*}
Since $\lambda_a>0$, this proves part~(a).

From the minimizing property \eqref{ineq:bestapprox}, taking the variation with respect to
$\alpha_{k,\nu}$ yields
\[
\int_M w_\nu\, P_{g_0}\bar u_{(p_{k,\nu},\epsilon_{k,\nu})}\, \dvol = 0 .
\]
Using Lemma~\ref{lemma 5.8}, we obtain
\[
\bigg\|
P_{g_0}\bar u_{(p_{k,\nu},\epsilon_{k,\nu})}
- \mu_\infty \bar u_{(p_{k,\nu},\epsilon_{k,\nu})}^{\frac{n+4}{n-4}}
\bigg\|_{L^{\frac{2n}{n+4}}(M)} = o(1).
\]
Consequently,
\begin{align*}
\bigg|
\int_M \bar u_{(p_{k,\nu},\epsilon_{k,\nu})}^{\frac{n+4}{n-4}}\, w_\nu \, \dvol
\bigg|
&=
\bigg|
\int_M
\bigg(
\bar u_{(p_{k,\nu},\epsilon_{k,\nu})}^{\frac{n+4}{n-4}}
- \frac{1}{\mu_\infty} P_{g_0}\bar u_{(p_{k,\nu},\epsilon_{k,\nu})}
\bigg)
w_\nu \, \dvol
\bigg| \\
&\le o(1)\,\|w_\nu\|_{L^{\frac{2n}{n-4}}(M)}
\end{align*}
by H\"older's inequality. This proves part~(b).

The proofs of parts~(c) and~(d) are analogous and follow from the same argument, using the
corresponding variations with respect to the scale and translation parameters.
\end{proof}

Now we prove a uniform estimate for the second variation operator of the Paneitz--Sobolev quotient at $v_\nu$.

\begin{proposition}\label{lem:secondv}
For $\nu$ sufficiently large, we have
\begin{equation*}
\frac{n+4}{n-4}\,\mu_\infty
\int_M
\bigg(
u_\infty^{\frac{8}{n-4}}
+ \sum_{k=1}^m \bar{u}_{(p_{k,\nu},\epsilon_{k,\nu})}^{\frac{8}{n-4}}
\bigg)
w_\nu^{2}\, \dvol
\le (1-c)\, \| w_\nu \|_{W^{2,2}(M)}^{2},
\end{equation*}
for some constant $c>0$ independent of $\nu$.
\end{proposition}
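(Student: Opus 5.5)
We argue by contradiction, following Brendle's argument for the Yamabe flow \cite{MR2168505}. Suppose that along a subsequence
\[
\frac{n+4}{n-4}\mu_\infty\int_M\Big(u_\infty^{\frac{8}{n-4}}+\sum_{k}\bar u_{(p_{k,\nu},\epsilon_{k,\nu})}^{\frac{8}{n-4}}\Big)w_\nu^2\,dv_{g_0}\ \ge\ (1-o(1))\,\|w_\nu\|_{W^{2,2}(M)}^2 .
\]
We normalize $\tilde w_\nu:=w_\nu/\|w_\nu\|_{W^{2,2}(M)}$, so that $\int_M \tilde w_\nu\pa\tilde w_\nu=1$. The orthogonality relations of Proposition~\ref{lem:orth1} are homogeneous in $w_\nu$, so they pass to $\tilde w_\nu$ with $o(1)$ errors.

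\textbf{Localization.} We split $\tilde w_\nu$ into a piece living at the macroscopic scale and pieces living at each bubble scale. Take cutoffs $\eta_{k,\nu}$ supported on annular regions $\{N^{-1}\epsilon_{k,\nu}\lesssim d(p_{k,\nu},\cdot)\lesssim N\epsilon_{k,\nu}\}$, with the regions of smaller bubbles removed. This is possible because of the separation property in Proposition~\ref{lem:approx}(a) and the ordering $\epsilon_{i,\nu}\le\epsilon_{j,\nu}$. Use logarithmic-type cutoffs so that the cross terms in $\int\eta w\,\pa(\eta w)$ are $O(1/\log N)$ in $W^{2,2}$. Since $\bar u^{8/(n-4)}_{(p_k,\epsilon_k)}$ decays like $\epsilon_k^4(\epsilon_k^2+d^2)^{-4}$ away from its concentration region, and by H\"older with the Sobolev inequality, the weighted integral splits up to errors $o_N(1)+o_\nu(1)$:
\[
\frac{n+4}{n-4}\mu_\infty\Big(\int u_\infty^{\frac{8}{n-4}}(\eta_{0}\tilde w_\nu)^2+\sum_k\int\bar u_{k}^{\frac{8}{n-4}}(\eta_k\tilde w_\nu)^2\Big),
\qquad \sum_{k=0}^m\|\eta_k\tilde w_\nu\|^2_{W^{2,2}}\le 1+o(1).
\]
It then suffices to prove coercivity separately for each piece.

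\textbf{Bubble pieces.} Rescale around $p_{k,\nu}$ by $\epsilon_{k,\nu}$. By Theorem~\ref{thm:energy estimate}(d) and Proposition~\ref{lem:concom}(e) (which gives $\mu_\infty\bar u^{8/(n-4)}\to\mfc(n)u_1^{8/(n-4)}$ after rescaling), the relevant quadratic form becomes the linearized operator $\Delta^2-\tmfc(n)u_1^{8/(n-4)}$ on $\mathbb R^n$. Its spectrum relative to the weight $u_1^{8/(n-4)}$, computed by pulling back to $S^n$, is as follows: the eigenvalue $\mfc(n)$ is attained by constants, the eigenvalue $\tmfc(n)$ has the $(n+1)$-dimensional kernel spanned by the dilation and translation modes, and the next eigenvalue is strictly larger than $\tmfc(n)$. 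Proposition~\ref{lem:orth1}(b)--(d) states that the rescaled limit of $\eta_k\tilde w_\nu$ is orthogonal, in the weighted $L^2$ sense, to exactly these $n+2$ functions. This gives a spectral gap $\ge c$ for each bubble piece. Any part of the piece that converges only weakly to zero loses its weighted mass by Rellich compactness, so it can only help the inequality.

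\textbf{Macroscopic piece, and the main obstacle.} When $u_\infty\not\equiv0$, the macroscopic piece is $\eta_0\tilde w_\nu\rightharpoonup\tilde w$ weakly in $W^{2,2}$. Compactness of the weighted $L^2$ embedding gives
\[
\frac{n+4}{n-4}\mu_\infty\int u_\infty^{\frac{8}{n-4}}\tilde w^2 \le \frac{n+4}{n-4}\mu_\infty\Big/\min_{a\notin A}\lambda_a\ \|\tilde w\|^2_{W^{2,2}},
\]
because Proposition~\ref{lem:orth1}(a) kills the components along $\psi_a$ for $a\in A$; by Lemma~\ref{lp} this holds with $o(1)$ error. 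The definition of $A$ makes the ratio strictly less than $1$. The weakly-null remainder contributes no weighted mass. When $u_\infty\equiv0$, this step is vacuous.

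The main obstacle is the localization step. We must check that the cutoff errors, and the interaction of the $\bar u_k^{8/(n-4)}$ weights on overlapping scales (bubbles stacked at the same point with $\epsilon_i/\epsilon_j\to0$), are genuinely $o(1)$. To do this we first use Lemma~\ref{lem:interaction}-type decay estimates together with the freedom to send $N\to\infty$ after $\nu\to\infty$. Summing the three estimates then yields total weighted mass $\le (1-c)+o(1)$, which contradicts the assumption.
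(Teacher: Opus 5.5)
Your proposal is correct and is essentially the paper's argument, which follows Brendle. Both argue by contradiction with the normalized $\tilde w_\nu$ and localize to $B_{N\epsilon_{j,\nu}}(p_{j,\nu})$ with the smaller bubbles removed. The rescaled weak limits inherit the orthogonality of Proposition~\ref{lem:orth1}(b)--(d), so the spectral gap from the degree-two spherical harmonics on $S^n$ applies, and the macroscopic weak limit is handled by the eigen-expansion in $\{\psi_a\}$ with $a\in A$ removed. The only difference is organizational: the paper restricts the energy density to the regions $\Omega_{j,\nu}$ and uses a dichotomy (some region must carry at least as much weighted mass as energy), rather than your cutoff decomposition and summation, which spares it the logarithmic-cutoff commutator estimates you identify as the main technical point.
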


\begin{proof}

Let us first consider the case $u_\infty \not\equiv 0$, and argue by contradiction.

After rescaling, we obtain a sequence of functions $\{\tilde w_\nu\}_{\nu\in\mathbb{N}}$ such that
$\|\tilde w_\nu\|_{W^{2,2}(M)}=1$ and
\begin{equation*}
\lim_{\nu\to\infty}
\frac{n+4}{n-4}\,\mu_\infty
\int_M
\bigg(
u_\infty^{\frac{8}{n-4}}
+ \sum_{k=1}^m \bar u_{(p_{k,\nu},\epsilon_{k,\nu})}^{\frac{8}{n-4}}
\bigg)\tilde w_\nu^{2}\, \dvol
\ge 1 .
\end{equation*}
Moreover,
\begin{equation*}
\int_M |\tilde w_\nu|^{\frac{2n}{n-4}} \, \dvol
\le Y_4(M,g_0)^{-\frac{n}{n-4}} .
\end{equation*}

We choose a sequence $\{N_\nu\}$ such that $N_\nu\to\infty$, $N_\nu \epsilon_{j,\nu}\to 0$ for all $j$, and
\begin{equation*}
\frac{1}{N_\nu}\,
\frac{\epsilon_{j,\nu}+d(p_{i,\nu},p_{j,\nu})}{\epsilon_{i,\nu}}
\longrightarrow \infty
\end{equation*}
for all $i<j$. For instance, one may take
\[
N_\nu
:= \min_{i<j}
\left\{
\left(\frac{\epsilon_{j,\nu}+d(p_{i,\nu},p_{j,\nu})}{\epsilon_{i,\nu}}\right)^{\!1/2},
\ \epsilon_{j,\nu}^{-1/2}
\right\}.
\]

Define
\begin{equation*}
\Omega_{j,\nu}
:= B_{N_\nu \epsilon_{j,\nu}}(p_{j,\nu})
\setminus \bigcup_{i=1}^{j-1} B_{N_\nu \epsilon_{i,\nu}}(p_{i,\nu}).
\end{equation*}

\medskip
\noindent
\textbf{Case 1.}
There exists an integer $1\le j\le m$ such that
\begin{equation*}
\lim_{\nu\to\infty}
\int_M
\bar u_{(p_{j,\nu},\epsilon_{j,\nu})}^{\frac{8}{n-4}}\, \tilde w_\nu^{2}\, \dvol
>0,
\end{equation*}
and
\begin{align*}
\lim_{\nu\to\infty}
\int_{\Omega_{j,\nu}}
&\Big(
(\Delta_{g_0}\tilde w_\nu)^2
- \big(4A_{g_0}-(n-2)\sigma_1(A_{g_0})\big)(\nabla \tilde w_\nu,\nabla \tilde w_\nu)
+ \frac{n-4}{2} Q_{g_0}\tilde w_\nu^2
\Big)\, \dvol
\nonumber\\
\le\;&
\lim_{\nu\to\infty}
\frac{n+4}{n-4}\,\mu_\infty
\int_M
\bar u_{(p_{j,\nu},\epsilon_{j,\nu})}^{\frac{8}{n-4}}\, \tilde w_\nu^{2}\, \dvol .
\end{align*}

We now define a sequence of functions $\hat w_\nu : T_{p_{j,\nu}}M \to \mathbb{R}$ by
\[
\hat w_\nu(\xi)
:= \epsilon_{j,\nu}^{\frac{n-4}{2}}\,
\tilde w_\nu\big(\exp_{p_{j,\nu}}(\epsilon_{j,\nu}\xi)\big),
\qquad \xi\in T_{p_{j,\nu}}M .
\]
By the scaling properties and Sobolev inequalities, we have
\begin{align}
\limsup_{\nu\to\infty}
\int_{\{ \xi\in T_{p_{j,\nu}}M : |\xi|\le N_\nu\}}
|\hat w_\nu(\xi)|^{\frac{2n}{n-4}} \, d\xi
&\le Y_4(M,g_0)^{-\frac{n}{n-4}},
\label{ineq30}\\
\limsup_{\nu\to\infty}
\int_{\{ \xi\in T_{p_{j,\nu}}M : |\xi|\le N_\nu\}}
|\nabla_{g_e} \hat w_\nu(\xi)|^{\frac{2n}{n-2}} \, d\xi
&\le C(M,g_0).
\label{ineq31}
\end{align}

We note that the quantity $\int_{\mathbb{R}^n} |\nabla_{g_e} \hat w_\nu(\xi)|^{\frac{2n}{n-2}}\, d\xi$ is also scale invariant. 
Estimates \eqref{ineq30} and \eqref{ineq31} therefore imply that, for every $L>0$, the norm
$\|\hat w_\nu\|_{W^{1,2}(B_L(0))}$ is uniformly bounded for all sufficiently large $\nu$.

Combining this with $\|\tilde w_\nu\|_{\wtt}=1$ and the scale invariance of the $W^{2,2}$-norm, we may extract a subsequence and find
$\hat w \in W^{2,2}_{\mathrm{loc}}(\mathbb{R}^n)$ such that
\[
\hat w_\nu \rightharpoonup \hat w
\qquad \text{weakly in } W^{2,2}_{\mathrm{loc}}(\mathbb{R}^n).
\]
(Here one uses a Bochner identity to control the $L^2$ norm of the Hessian by the $L^2$ norm of the Laplacian on bounded domains.)
Moreover, the following hold:
\begin{equation}\label{ineq32}
\int_{\mathbb{R}^n} \bigg( \frac{1}{1+|\xi|^2 }\bigg)^4 \hat w(\xi)^2\, d\xi > 0,
\end{equation}
and
\begin{equation}\label{ineq33}
\int_{\mathbb{R}^n} (\Delta_{g_e} \hat w)^2\, d\xi
\le n(n+4)\int_{\mathbb{R}^n} \bigg( \frac{1}{1+|\xi|^2 }\bigg)^4 \hat w(\xi)^2\, d\xi .
\end{equation}
Furthermore, by Proposition~\ref{lem:orth1}, we have
\begin{align}\label{ineq34}
& \int_{\mathbb{R}^n} \bigg( \frac{1}{1+|\xi|^2 }\bigg)^{\frac{n+4}{2}} \hat w(\xi)\, d\xi =0, \nonumber\\
& \int_{\mathbb{R}^n} \bigg( \frac{1}{1+|\xi|^2 }\bigg)^{\frac{n+4}{2}}
\frac{1 - |\xi|^2}{1+|\xi|^2}\, \hat w(\xi)\, d\xi = 0, \nonumber\\
& \int_{\mathbb{R}^n} \bigg( \frac{1}{1+|\xi|^2 }\bigg)^{\frac{n+4}{2}}
\frac{\xi}{1+|\xi|^2}\,\hat w(\xi)\, d\xi = 0 .
\end{align}

We may equivalently work on the unit sphere $\mathbb{S}^n \subset \mathbb{R}^{n+1}$ via stereographic projection from the north pole. 
Under this identification, define
\[
\hat v(\xi)
:= \bigg( \frac{2}{1+|\xi|^2} \bigg)^{-\frac{n-4}{2}} \hat w(\xi).
\]

Note that
\[
g_{\mathbb{S}^n}=\bigg(\frac{2}{1+|\xi|^2}\bigg)^{2}|d\xi|^2 .
\]
Moreover, since $P_{g_{\mathbb{S}^n}}$ is a polynomial in $\Delta_{g_{\mathbb{S}^n}}$, its eigenfunctions coincide with those of $\Delta_{g_{\mathbb{S}^n}}$.

Using the conformal covariance of the Paneitz operator, the inequality \eqref{ineq33} is equivalent to
\[
\int_{\mathbb{S}^n} \hat v\, P_{g_{\mathbb{S}^n}} \hat v \, \vol_{g_{\mathbb{S}^n}}
\le \frac{n(n^2-4)(n+4)}{16}\int_{\mathbb{S}^n} \hat v^{2}\, \vol_{g_{\mathbb{S}^n}} .
\]

The identities \eqref{ineq34} imply that $\hat v$ is $L^2(\mathbb{S}^n)$-orthogonal to
$1$ and to the coordinate functions $x_1,\dots,x_{n+1}$ on $\mathbb{R}^{n+1}$, which correspond to the first and second eigenspaces of $P_{g_{\mathbb{S}^n}}$.
It is well known that
\[
P_{g_{\mathbb{S}^n}}
= \Delta_{g_{\mathbb{S}^n}}^{2}
- \frac{n^{2}-2n-4}{2}\,\Delta_{g_{\mathbb{S}^n}}
+ \frac{n(n^{2}-4)(n-4)}{16},
\]
and that the next eigenvalue of $\Delta_{g_{\mathbb{S}^n}}$ after $0$ and $n$ is $2(n+1)$.
A straightforward computation then shows that, under the orthogonality conditions \eqref{ineq34},
\[
\int_{\mathbb{S}^n} \hat v\, P_{g_{\mathbb{S}^n}} \hat v \, \vol_{g_{\mathbb{S}^n}}
>
\frac{n(n^2-4)(n+4)}{16}\int_{\mathbb{S}^n} \hat v^{2}\, \vol_{g_{\mathbb{S}^n}},
\]
unless $\hat v\equiv 0$. This contradicts the previous inequality, and hence completes the argument.

\noindent\textbf{Case 2.}
Assume that
\begin{equation*}
\lim_{\nu\to\infty} \int_M u_\infty^{\frac{8}{n-4}} \tilde w_\nu^{2}\, \dvol > 0,
\end{equation*}
and that
\begin{align*}
\lim_{\nu\to\infty}
\int_{M\setminus \bigcup_{j=1}^m \Omega_{j,\nu}}
&\Big(
(\Delta_{g_0}\tilde w_\nu)^2
- \big(4A_{g_0}-(n-2)\sigma_1(A_{g_0})\big)(\nabla \tilde w_\nu,\nabla \tilde w_\nu)
+ \frac{n-4}{2} Q_{g_0}\tilde w_\nu^2
\Big)
\nonumber\\
\le\;&
\lim_{\nu\to\infty}
\frac{n+4}{n-4}\,\mu_\infty
\int_M
u_\infty^{\frac{8}{n-4}} \tilde w_\nu^{2}\, \dvol .
\end{align*}

Let $\tilde w$ denote the weak limit of the sequence $\{\tilde w_\nu\}$. Then
\[
\int_M u_\infty^{\frac{8}{n-4}} \tilde w^{2}\, \dvol > 0,
\]
and 
\[
\int_M \tilde w\, \pa \tilde w \, \dvol
\le
\frac{n+4}{n-4}\,\mu_\infty
\int_M u_\infty^{\frac{8}{n-4}} \tilde w^{2}\, \dvol .
\]
by the dominated convergence theorem. 
Equivalently,
\[
\sum_a \lambda_a
\bigg( \int_M u_\infty^{\frac{8}{n-4}} \psi_a \tilde w \, \dvol \bigg)^2
\le
\frac{n+4}{n-4}\,\mu_\infty
\int_M u_\infty^{\frac{8}{n-4}} \tilde w^{2}\, \dvol .
\]
However, by Proposition~\ref{lem:orth1}, we have
\[
\int_M u_\infty^{\frac{8}{n-4}} \psi_a \tilde w \, \dvol = 0
\qquad \text{for all } a\in A.
\]
It follows that $\tilde w \equiv 0$, which contradicts
\(
\int_M u_\infty^{\frac{8}{n-4}} \tilde w^{2}\, \dvol > 0.
\)

Finally, we note that the argument in \textbf{Case 1} applies verbatim to the case
$u_\infty \equiv 0$, and thus completes the proof in that setting as well.

\end{proof}

\begin{corollary}\label{lem:secondv2}
For $\nu$ sufficiently large, we have
\begin{equation*}
\frac{n+4}{n-4}\,\mu_\infty \int_M v_\nu^{\frac{8}{n-4}} w_\nu^2 \, \dvol
\le (1-c)\, \| w_\nu \|_{\wtt}^{2},
\end{equation*}
for some constant $c>0$ independent of $\nu$.
\end{corollary}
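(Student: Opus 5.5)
The plan is to deduce Corollary~\ref{lem:secondv2} from Proposition~\ref{lem:secondv} by comparing the weight $v_\nu^{\frac{8}{n-4}}$ with the sum $u_\infty^{\frac{8}{n-4}}+\sum_k \bar u_{(p_{k,\nu},\epsilon_{k,\nu})}^{\frac{8}{n-4}}$. Concretely, I will show
\begin{equation*}
\int_M v_\nu^{\frac{8}{n-4}} w_\nu^2 \dvol \le \int_M \bigg( u_\infty^{\frac{8}{n-4}} + \sum_{k=1}^m \bar u_{(p_{k,\nu},\epsilon_{k,\nu})}^{\frac{8}{n-4}}\bigg) w_\nu^2 \dvol + o(1)\,\|w_\nu\|_{\wtt}^2 .
\end{equation*}
Applying Proposition~\ref{lem:secondv} then gives the claim with $c$ replaced by $c/2$ once $\nu$ is large.

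First, I replace $\alpha_{k,\nu}$ by $1$ and $\uznu$ by $u_\infty$. Since $\alpha_{k,\nu}=1+o(1)$ and $|z_\nu|=o(1)$, the analytic dependence $z\mapsto \bar u_z$ gives $\|\uznu-u_\infty\|_{C^0}=o(1)$. The exponent $\frac{8}{n-4}\le 8/3$ may be less than $1$, but the map $s\mapsto s^{\frac{8}{n-4}}$ is uniformly continuous on bounded sets. Hence $\big|(\uznu+\Sigma)^{\frac{8}{n-4}}-(u_\infty+\Sigma)^{\frac{8}{n-4}}\big|$ is either $o(1)$ pointwise or bounded by $o(1)\,\Sigma^{\frac{8}{n-4}}$, where $\Sigma$ is the bubble sum. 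The same holds for the multiplicative factor $\alpha_{k,\nu}^{\frac{8}{n-4}}=1+o(1)$. By H\"older and Sobolev, $\int o(1)\,(1+\Sigma^{\frac{8}{n-4}})w_\nu^2\le o(1)\|w_\nu\|_{L^{\frac{2n}{n-4}}}^2\le o(1)\|w_\nu\|_{\wtt}^2$. Here I use that $\|\Sigma\|_{L^{\frac{2n}{n-4}}}$ is uniformly bounded.

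Second, and this is the main step, I handle the cross terms. I use the elementary inequality
\begin{equation*}
\Big(\sum_{i=0}^m a_i\Big)^{q}\le \sum_i a_i^{q} + C\sum_{i\ne j} \min(a_i,a_j)^{q}, \qquad q=\tfrac{8}{n-4}, \ a_i\ge0 .
\end{equation*}
For $q\le1$ this is subadditivity. For $q>1$ it follows from $(a+b)^q-a^q-b^q\le C\,a^{q-1}b$ with $b\le a$, together with $a^{q-1}b\le \min(a,b)^{q}+\ldots$; more carefully, I bound it by $C\,\min(a,b)^{\theta}\max(a,b)^{q-\theta}$ for some $0<\theta\le q$. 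So it suffices to show $\int_M \min(\bar u_i,\bar u_j)^{\theta}\max(\bar u_i,\bar u_j)^{q-\theta} w_\nu^2 \le o(1)\|w_\nu\|_{\wtt}^2$, where the bubbles $\bar u_i$ include $u_\infty$. By H\"older this reduces to $\|\min(\bar u_i,\bar u_j)^{\theta}\max(\bar u_i,\bar u_j)^{q-\theta}\|_{L^{n/4}}=o(1)$. That is an interaction integral of the type in Lemma~\ref{lem:interaction}: its integrand is at most $\bar u_i^{\frac{2n}{n-4}\cdot s}\bar u_j^{\frac{2n}{n-4}(1-s)}$ with $0<s<1$. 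By the separation property of Proposition~\ref{lem:approx}(a), and by $\epsilon_{k,\nu}\to0$ for the terms involving the bounded function $u_\infty$, it tends to zero. To justify this I would split $M$ into the regions where one bubble dominates and use the pointwise profile bounds $\bar u_{(p,\epsilon)}\le C\big(\frac{\epsilon}{\epsilon^2+d(p,\cdot)^2}\big)^{\frac{n-4}{2}}$, which follow from Theorem~\ref{thm:energy estimate}(d) and Lemma~\ref{lem:brendle splitting}.

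I expect the obstacle to be making these mixed interaction integrals quantitatively $o(1)$ uniformly. The tower case, with concentric bubbles at very different scales, is where the bound is most delicate. There I would argue directly by dominated convergence after rescaling at the smaller scale, where the larger bubble becomes an almost constant multiple of $\epsilon_j^{-(n-4)/2}$, hence negligible relative to the smaller one.
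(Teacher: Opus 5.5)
Your proposal is correct and follows essentially the paper's route: show that $v_\nu^{\frac{8}{n-4}}$ exceeds $u_\infty^{\frac{8}{n-4}}+\sum_k\bar u_{(p_{k,\nu},\epsilon_{k,\nu})}^{\frac{8}{n-4}}$ by an error that is $o(1)$ in $L^{\frac{n}{4}}(M)$, using bubble-interaction estimates. You then conclude with H\"older, the Paneitz--Sobolev inequality and Proposition~\ref{lem:secondv}. You spell out details the paper compresses into a citation of Lemma~\ref{lem:interaction}, namely the reductions $\alpha_{k,\nu}\to1$ and $\bar u_{z_\nu}\to u_\infty$ and the mixed-exponent interaction integrals; the only slip is that $\frac{8}{n-4}$ ranges up to $8$ (at $n=5$), not $\frac{8}{3}$, which is harmless since your case split $q\le1$ versus $q>1$ covers every exponent.
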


\begin{proof}
By definition of $v_\nu$ and Lemma \ref{lem:interaction}, we have
$$\int_M \big| v_\nu^{\frac{8}{n-4}} - \wgt - \sum_{j=1}^m \bar{u}_{(p_{k, \nu}, \epsilon_{k, \nu})}^{\frac{8}{n-4}} \big|^{\frac{n}{4}} \dvol = o(1).$$

The assertion follows from Proposition \ref{lem:secondv} and Paneitz--Sobolev inequality:
\begin{align*}
&\frac{n+4}{n-4} \mu_\infty \int_M v_\nu^{\frac{8}{n-4}} w_\nu^2 \\
\le& \frac{n+4}{n-4} \mu_\infty \int_M \big| v_\nu^{\frac{8}{n-4}} - \sum_{j=1}^m \bar{u}_{(p_{k, \nu}, \epsilon_{k, \nu})}^{\frac{8}{n-4}} \big|w_\nu^2  + (1-c)  \| w_\nu \|_{\wtt} \\
\le& \bigg| \int_M \big| v_\nu^{\frac{8}{n-4}} - \sum_{j=1}^m \bar{u}_{(p_{k, \nu}, \epsilon_{k, \nu})}^{\frac{8}{n-4}} \big|^{\frac{n}{4}} \bigg|^{\frac{4}{n}} \cdot \|w_\nu \|_{\wtt}^2  + (1-c)  \| w_\nu \|_{\wtt}^2. 
\end{align*}
\end{proof}

\subsection{Energy estimate}

In this subsection, we establish the energy estimate for $v_\nu$. We begin with a finite-dimensional \loja--Simon inequality.

\begin{lemma}\label{lem:finiteloja}
Assume that $u_{\infty} \not\equiv 0$. Then there exists a real number $0<\gamma<1$ such that
\begin{equation*}
\cf[\bar{u}_z] - \cf[u_\infty]
\le
C \sup_{a \in A}
\bigg|
\int_M \psi_a \big( \pa \bar{u}_z - \mu_\infty \bar{u}_z^{\frac{n+4}{n-4}} \big)
\bigg|^{1+\gamma}
\end{equation*}
provided that $z$ is sufficiently small.
\end{lemma}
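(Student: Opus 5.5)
This is the classical finite-dimensional Łojasiewicz inequality applied to a real-analytic function on a neighborhood of $0\in\mathbb{R}^A$. We define $\Phi(z):=\cf[\bar u_z]$ for $|z|\le\zeta$. By the preceding lemma, $z\mapsto\bar u_z$ is real analytic into $C^{5,\alpha}(M)$. For $\zeta$ small, $\bar u_z$ stays uniformly close to $u_\infty>0$, and hence is bounded below by a positive constant. On such functions $\cf$ is a real-analytic functional, since $v\mapsto v^{\frac{2n}{n-4}}$ is analytic on positive functions. So $\Phi$ is real analytic near $0$, and $\Phi(0)=\cf[u_\infty]$.

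\textbf{Step 1: relate $\nabla\Phi$ to the quantities on the right-hand side.} Differentiating in $z_a$ with $\tilde\psi_{a,z}=\partial_{z_a}\bar u_z$ gives
\[
\partial_{z_a}\Phi(z)=\frac{2}{\|\bar u_z\|_{L^{\frac{2n}{n-4}}}^{\frac{2(n-4)}{n}}}\int_M \tilde\psi_{a,z}\Big(\pa\bar u_z-\mu[\bar u_z]\,\bar u_z^{\frac{n+4}{n-4}}\Big)\dvol .
\]
Set $E_z:=\pa\bar u_z-\mu_\infty\bar u_z^{\frac{n+4}{n-4}}$. Because $\Pi E_z=0$, $E_z$ lies in the finite-dimensional span of $\{u_\infty^{\frac{8}{n-4}}\psi_a\}_{a\in A}$. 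Its coefficients are exactly $\int_M\psi_aE_z$, so $\|E_z\|\le C\sup_{a\in A}|\int_M\psi_aE_z|$ in any norm. We must also handle the replacement of $\mu[\bar u_z]$ by $\mu_\infty$. Pairing $E_z$ with $\bar u_z$ gives
\[
\big(\mu[\bar u_z]-\mu_\infty\big)\int_M\bar u_z^{\frac{2n}{n-4}}=\int_M\bar u_z E_z ,
\]
so $|\mu[\bar u_z]-\mu_\infty|\le C\|E_z\|$. It follows that
\[
|\nabla\Phi(z)|\le C\sup_{a\in A}\Big|\int_M\psi_a E_z\Big| ,
\]
using only that $\tilde\psi_{a,z}$ and $\bar u_z$ are bounded in $L^\infty$.

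\textbf{Step 2: invoke Łojasiewicz.} The classical Łojasiewicz gradient inequality for the analytic function $\Phi$ at $0$ gives $\theta\in(0,\tfrac12]$ with $|\Phi(z)-\Phi(0)|^{1-\theta}\le C|\nabla\Phi(z)|$ for small $z$. This holds whether or not $0$ is a critical point of $\Phi$. If $\nabla\Phi(0)\ne0$ the bound is trivial with exponent $1$, but in fact $\nabla\Phi(0)=0$ because $E_0=0$. Rearranging gives
\[
\Phi(z)-\Phi(0)\le C|\nabla\Phi(z)|^{\frac{1}{1-\theta}} .
\]
Setting $1+\gamma=\frac1{1-\theta}\in(1,2]$ and shrinking $\theta$ if necessary so that $\gamma<1$, we combine with Step 1. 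Lowering the exponent is allowed because the right-hand side is small.

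\textbf{Main obstacle.} The delicate point is Step 1. We must verify that $\Phi$ is genuinely analytic, which uses the positive lower bound on $\bar u_z$ coming from $u_\infty>0$ (Proposition~\ref{lem:concom}(d)). We must also control the discrepancy between $\mu[\bar u_z]$ and $\mu_\infty$, which the identity $\Pi E_z=0$ handles as shown.
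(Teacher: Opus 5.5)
Your proposal is correct and follows the same route as the paper, whose proof simply invokes the finite-dimensional \loja{} inequality for the real-analytic function $z\mapsto\cf[\bar u_z]$ near $z=0$. Your Step~1, which uses $\Pi E_z=0$ and the identity for $\mu[\bar u_z]-\mu_\infty$ to bound $|\nabla\Phi(z)|$ by $\sup_{a\in A}|\int_M\psi_a E_z\,\dvol|$, makes explicit the link to the right-hand side that the paper leaves implicit; the only slip is cosmetic, since the normalizing factor in $\partial_{z_a}\Phi$ should be $\big(\int_M\bar u_z^{\frac{2n}{n-4}}\dvol\big)^{\frac{n-4}{n}}=\|\bar u_z\|_{L^{\frac{2n}{n-4}}}^{2}$, which changes nothing because it is bounded above and below.
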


\begin{proof}
In a neighborhood of $u_\infty$, the Paneitz--Sobolev quotient is real analytic. Therefore, the \L ojasiewicz inequality applies to the Paneitz--Sobolev quotient restricted to the finite-dimensional space spanned by $\bar{u}_z$, which yields the desired estimate.
\end{proof}

We now estimate the difference $u_\nu-\bar{u}_{z_\nu}$. The key point is that it is more convenient to estimate
$u_\nu-\bar{u}_{z_\nu}+f_\nu$ rather than $u_\nu-\bar{u}_{z_\nu}$.

\begin{lemma}\label{lem:proj}
We have the following identity:
\begin{align*}
&\Pi \bigg( \pa (u_\nu - \bar{u}_{z_\nu} + f_\nu) -\frac{n+4}{n-4} \mu_\infty \wgt (u_\nu - \bar{u}_{z_\nu} +f_\nu ) \bigg)\nonumber \\  
= & \Pi \bigg( -\frac{n+4}{n-4}\mu_\infty \wgt f_\nu -(\mu_\infty -\mu(t_\nu))u_\nu^{\frac{n+4}{n-4}} \nonumber\\
&+\frac{n+4}{n-4} \mu_\infty (\bar{u}_{z_\nu}^{\frac{8}{n-4}} -u_\infty^{\frac{8}{n-4}} ) ( u_\nu- \bar{u}_{z_\nu} ) \nonumber \\
& - \mu_\infty\big(\bar{u}_{z_\nu}^{\frac{n+4}{n-4}} + \frac{n+4}{n-4} \bar{u}_{z_\nu}^{\frac{8}{n-4}} ( u_\nu - \bar{u}_{z_\nu} ) -u_\nu^{\frac{n+4}{n-4}} \big) \bigg).
\end{align*}
\end{lemma}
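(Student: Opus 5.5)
This is a purely algebraic identity, so the plan is to expand both sides and match terms, using only the flow equation and the defining property of $\bar u_{z_\nu}$.

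\textbf{Step 1: rewrite the flow at $t_\nu$.} From the identity $\pa f = -\pa u + \mu[u]\, u^{\frac{n+4}{n-4}}$ established in the introduction, at $t=t_\nu$ we get
\[
\pa(u_\nu+f_\nu)=\mu(t_\nu)\,u_\nu^{\frac{n+4}{n-4}}.
\]
Hence
\[
\pa(u_\nu-\bar u_{z_\nu}+f_\nu)=\mu(t_\nu)\,u_\nu^{\frac{n+4}{n-4}}-\pa\bar u_{z_\nu}.
\]

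\textbf{Step 2: remove $\pa\bar u_{z_\nu}$ under $\Pi$.} By the defining property of $\bar u_z$, $\Pi\big(\pa\bar u_{z_\nu}-\mu_\infty\bar u_{z_\nu}^{\frac{n+4}{n-4}}\big)=0$. Since $\Pi$ is linear, inside $\Pi$ we may replace $-\pa\bar u_{z_\nu}$ by $-\mu_\infty\bar u_{z_\nu}^{\frac{n+4}{n-4}}$. The left-hand side is therefore $\Pi$ applied to
\[
\mu(t_\nu)u_\nu^{\frac{n+4}{n-4}}-\mu_\infty\bar u_{z_\nu}^{\frac{n+4}{n-4}}-\tfrac{n+4}{n-4}\mu_\infty\wgt(u_\nu-\bar u_{z_\nu})-\tfrac{n+4}{n-4}\mu_\infty\wgt f_\nu .
\]

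\textbf{Step 3: regroup.} The $f_\nu$ term already matches the first term on the right-hand side. Write
\[
\mu(t_\nu)u_\nu^{\frac{n+4}{n-4}}=-(\mu_\infty-\mu(t_\nu))u_\nu^{\frac{n+4}{n-4}}+\mu_\infty u_\nu^{\frac{n+4}{n-4}} .
\]
Then add and subtract $\frac{n+4}{n-4}\mu_\infty\bar u_{z_\nu}^{\frac{8}{n-4}}(u_\nu-\bar u_{z_\nu})$. This turns the $\wgt$ term into
\[
\tfrac{n+4}{n-4}\mu_\infty\big(\bar u_{z_\nu}^{\frac{8}{n-4}}-\wgt\big)(u_\nu-\bar u_{z_\nu}),
\]
and leaves the remainder
\[
-\mu_\infty\Big(\bar u_{z_\nu}^{\frac{n+4}{n-4}}+\tfrac{n+4}{n-4}\bar u_{z_\nu}^{\frac{8}{n-4}}(u_\nu-\bar u_{z_\nu})-u_\nu^{\frac{n+4}{n-4}}\Big),
\]
which is the Taylor remainder term on the right-hand side. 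Collecting terms gives exactly the stated identity.

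The only step needing any care is Step 2: one must apply $\Pi$ before cancelling $\pa\bar u_{z_\nu}$, since the equation for $\bar u_{z_\nu}$ holds only after projection. Everything else is bookkeeping, with $u_\nu>0$ ensuring the powers are well defined. In the case $u_\infty\equiv0$ (with $\bar u_{z_\nu}\equiv 0$) the statement is not needed, since it is only used when $u_\infty\not\equiv 0$.
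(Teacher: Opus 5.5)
Your proof is correct and follows essentially the same route as the paper. Both arguments use the flow identity $\pa(u_\nu+f_\nu)=\mu(t_\nu)u_\nu^{\frac{n+4}{n-4}}$ and the projected equation $\Pi\big(\pa\bar u_{z_\nu}-\mu_\infty\bar u_{z_\nu}^{\frac{n+4}{n-4}}\big)=0$, followed by regrouping. The paper first derives the identity for $u_\nu-\bar u_{z_\nu}$ alone and then adds $\Pi\big(\pa f_\nu-\frac{n+4}{n-4}\mu_\infty\wgt f_\nu\big)$ to both sides, which is equivalent to your direct bookkeeping.
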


\begin{proof}
Using the identities
$$- \pa u_\nu +\mu_\infty u_\nu^{\frac{n+4}{n-4}} = \pa f_\nu + (\mu_\infty - \mu(t_\nu) ) u_\nu^{\frac{n+4}{n-4}}$$
and 
$$\Pi \big( \pa \bar{u}_{z_\nu} - \mu_\infty \bar{u}_{z_\nu}^{\frac{n+4}{n-4}} \big) = 0,$$
we obtain
\begin{align*}
&\Pi \bigg( \pa (u_\nu - \bar{u}_{z_\nu}) -\frac{n+4}{n-4} \mu_\infty \wgt (u_\nu - \bar{u}_{z_\nu}) \bigg) \\  
= & \Pi \bigg( -\pa f -(\mu_\infty -\mu(t_\nu))u_\nu^{\frac{n+4}{n-4}} +\frac{n+4}{n-4} \mu_\infty (\bar{u}_{z_\nu}^{\frac{8}{n-4}} -u_\infty^{\frac{8}{n-4}} ) ( u_\nu- \bar{u}_{z_\nu} ) \\
& - \mu_\infty\big(\bar{u}_{z_\nu}^{\frac{n+4}{n-4}} + \frac{n+4}{n-4} \bar{u}_{z_\nu}^{\frac{8}{n-4}} ( u_\nu - \bar{u}_{z_\nu} ) -u_\nu^{\frac{n+4}{n-4}} \big) \bigg).
\end{align*}
Adding $\Pi\big(\pa f_\nu - \frac{n+4}{n-4}\mu_\infty \wgt f_\nu\big)$ to both sides yields the stated identity.
\end{proof}

\begin{lemma}\label{lem:estimate1}
The difference $u_\nu - \bar{u}_{z_\nu}$ satisfies the estimate
\begin{equation*}
\| u_\nu - \bar{u}_{z_\nu} \|^{\frac{n+4}{n-4}}_{L^{\frac{n+4}{n-4}}(M)} \le  C \bigg( \| f_\nu\|_{\wtt}^{\frac{n+4}{n-4}} + (\mu(t_\nu) -\mu_\infty )^{\frac{n+4}{n-4}} + \sum_{k=1}^m \epsilon_{k, \nu}^{\frac{n-4}{2}} \bigg).
\end{equation*}
\end{lemma}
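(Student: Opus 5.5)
The plan is to apply Lemma~\ref{lem:lpestimate}(a) to $\phi := u_\nu - \bar u_{z_\nu} + f_\nu$. This gives
\[
\|u_\nu - \bar u_{z_\nu} + f_\nu\|_{L^{\frac{n+4}{n-4}}} \le C\|\Pi(\cdots)\|_{L^{\frac{n(n+4)}{n^2+16}}} + C\sup_{a\in A}\Big|\int u_\infty^{\frac{8}{n-4}}\psi_a \phi\Big|.
\]
We will then recover $u_\nu-\bar u_{z_\nu}$ by the triangle inequality. The extra $f_\nu$ term is harmless, since $\|f_\nu\|_{L^{\frac{n+4}{n-4}}}\le C\|f_\nu\|_{L^{\frac{2n}{n-4}}}\le C\|f_\nu\|_{\wtt}$ by Sobolev.

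\textbf{Projection terms.} By Proposition~\ref{lem:orth1}(a) and the definition of $z_\nu$,
\[
\int u_\infty^{\frac{8}{n-4}}\psi_a(u_\nu-\bar u_{z_\nu}) = \int u_\infty^{\frac{8}{n-4}}\psi_a w_\nu + \sum_k \alpha_{k,\nu}\int u_\infty^{\frac{8}{n-4}}\psi_a \bar u_{(p_{k,\nu},\epsilon_{k,\nu})}.
\]
We bound each piece as follows. The first is $o(1)\|w_\nu\|_{L^1}$, and $\|w_\nu\|_{L^1}\le \|u_\nu-\bar u_{z_\nu}\|_{L^1}+C\sum_k\epsilon_{k,\nu}^{\frac{n-4}{2}}$. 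The bubble integrals are $O(\epsilon_{k,\nu}^{\frac{n-4}{2}})$, since $\bar u_{(p,\epsilon)}$ has $L^1$ norm $\sim\epsilon^{\frac{n-4}{2}}$. The $f_\nu$ contribution is bounded by $\|f_\nu\|_{\wtt}$. The $o(1)\|u_\nu-\bar u_{z_\nu}\|_{L^1}$ piece will be absorbed into the left side at the end, using $L^1\le C L^{\frac{n+4}{n-4}}$.

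\textbf{Right side of Lemma~\ref{lem:proj}.} We estimate in $L^{q}$ with $q=\frac{n(n+4)}{n^2+16}$; $\Pi$ is bounded on $L^q$ because $A$ is finite and the $\psi_a$ are smooth. Writing $\psi:=u_\nu-\bar u_{z_\nu}$, the terms are handled one by one.
\begin{itemize}
\item $\frac{n+4}{n-4}\mu_\infty u_\infty^{\frac{8}{n-4}}f_\nu$ is bounded by $C\|f_\nu\|_{\wtt}$.
\item $(\mu_\infty-\mu(t_\nu))u_\nu^{\frac{n+4}{n-4}}$ is bounded by $C|\mu(t_\nu)-\mu_\infty|$, since $u_\nu^{\frac{n+4}{n-4}}$ is bounded in $L^{\frac{2n}{n+4}}\subset L^q$. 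Note $q<\frac{2n}{n+4}$: indeed $n(n+4)^2<2n(n^2+16)$ iff $n^2-8n+16>0$, which holds for $n\neq 4$.
\item $(\bar u_{z_\nu}^{\frac{8}{n-4}}-u_\infty^{\frac{8}{n-4}})\psi = O(|z_\nu|)\psi$, which is $o(1)\|\psi\|_{L^{\frac{n+4}{n-4}}}$ (using $q\le\frac{n+4}{n-4}$) and is absorbed.
\item The Taylor remainder $\bar u^{p}+p\bar u^{p-1}\psi-(\bar u+\psi)^{p}$, with $p=\frac{n+4}{n-4}$, is pointwise bounded by $C(\bar u^{p-2}\psi^2+|\psi|^{p})$ if $p\ge2$, or by $C|\psi|^{p}$ if $p<2$. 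This is the main obstacle: we must extract the exponent on the right from $|\psi|^{p}$ with $\psi = w_\nu+\sum\alpha_k\bar u_k$. The bubble part gives $\||\bar u_k|^{p}\|_{L^q}=\|\bar u_k\|_{L^{pq}}^{p}$, and since $pq=\frac{n(n+4)^2}{(n-4)(n^2+16)}$ is below $\frac{2n}{n-4}$ (by the inequality above), this norm is a positive power of $\epsilon_{k,\nu}$; we will aim to show it is at most $C\epsilon_{k,\nu}^{\frac{n-4}{2}}$ after choosing the exponent bookkeeping. The $w_\nu$ part is superlinear: $\||w_\nu|^{p}\|_{L^q}\le \|w_\nu\|^{p-1}_{L^{\frac{2n}{n-4}}}\|w_\nu\|_{L^{r}}$ with $\|w_\nu\|_{L^{\frac{2n}{n-4}}}=o(1)$, so this is $o(1)$ times a norm controlled by $\|\psi\|$ plus bubble terms. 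Here I would try splitting $\psi=w_\nu+$ bubbles in Hölder carefully, so that every term is either absorbable or of the form $\epsilon_{k,\nu}^{\frac{n-4}{2}}$.
\end{itemize}

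\textbf{Conclusion.} Collecting the bounds gives
\[
\|\psi\|_{L^{\frac{n+4}{n-4}}}\le o(1)\|\psi\|_{L^{\frac{n+4}{n-4}}}+C\Big(\|f_\nu\|_{\wtt}+|\mu(t_\nu)-\mu_\infty|+\sum_k\epsilon_{k,\nu}^{\frac{n-4}{n+4}\cdot\frac{n-4}{2}}\Big).
\]
Absorbing the $o(1)$ term and raising to the power $\frac{n+4}{n-4}$ yields the statement. The bubble exponent must come out so that its $\frac{n+4}{n-4}$-th power is $\epsilon_{k,\nu}^{\frac{n-4}{2}}$, which matches the $L^{\frac{n+4}{n-4}}$-scaling of a single bubble. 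In the case $u_\infty\equiv0$ there is no projection: $\bar u_{z_\nu}=0$, and the estimate is just $\|u_\nu\|^{\frac{n+4}{n-4}}_{L^{\frac{n+4}{n-4}}}\le C\sum_k\epsilon_{k,\nu}^{\frac{n-4}{2}}+C\|w_\nu\|^{\frac{n+4}{n-4}}_{L^{\frac{n+4}{n-4}}}$, with $w_\nu$ handled through the equation for $u_\nu+f_\nu$ as above.
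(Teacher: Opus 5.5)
Your skeleton is the paper's. You apply Lemma~\ref{lem:lpestimate}(a) to $\phi=u_\nu-\bar u_{z_\nu}+f_\nu$ through Lemma~\ref{lem:proj}. You handle the projection terms by Proposition~\ref{lem:orth1}(a), absorbing $o(1)\|u_\nu-\bar u_{z_\nu}\|_{L^1}$, and you finish with the triangle inequality. Your treatment of the linear terms is fine, including the $(\bar u_{z_\nu}^{\frac{8}{n-4}}-u_\infty^{\frac{8}{n-4}})$ term, which the paper absorbs silently.

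The difference is the Taylor remainder. The paper splits $M$ spatially. On $\bigcup_k B_{N\epsilon_{k,\nu}}(p_{k,\nu})$, H\"older against the ball volume and the uniform $L^{\frac{2n}{n+4}}$ bound give $(N\epsilon_{k,\nu})^{\frac{(n-4)^2}{2(n+4)}}$. On the complement, $\|u_\nu-\bar u_{z_\nu}\|_{L^{\frac{2n}{n-4}}}\le CN^{-\frac{n-4}{2}}+o(1)$ makes the $L^{\frac n4}$ factor small, so that part is absorbed. You instead split the function, $\psi=u_\nu-\bar u_{z_\nu}=w_\nu+\sum_k\alpha_{k,\nu}\bar u_k$ with $\bar u_k=\bar u_{(p_{k,\nu},\epsilon_{k,\nu})}$. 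That route also closes and needs no auxiliary $N$, since all smallness comes from $\|w_\nu\|_{W^{2,2}(M)}=o(1)$. The price is computing bubble norms explicitly. However, you stopped at exactly the step you call the main obstacle, and one claim there is wrong.

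Write $p=\frac{n+4}{n-4}$ and $q=\frac{n(n+4)}{n^2+16}$. Three points need fixing.

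(i) The bubble part cannot be made $O(\epsilon_{k,\nu}^{\frac{n-4}{2}})$. Since $pq>\frac{n}{n-4}$, $\|\bar u_k^{p}\|_{L^q}=\|\bar u_k\|_{L^{pq}}^{p}$ is of exact order $\epsilon_{k,\nu}^{\frac{(n-4)^2}{2(n+4)}}$. That is the exponent your conclusion line correctly uses, and it becomes $\epsilon_{k,\nu}^{\frac{n-4}{2}}$ only after raising to the power $p$. Aiming for $\epsilon_{k,\nu}^{\frac{n-4}{2}}$ at the unpowered level would fail.

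(ii) The H\"older exponent you left open is $r=p$, because $\frac1q=\frac4n+\frac{n-4}{n+4}$. Hence $\||w_\nu|^{p}\|_{L^q}\le\|w_\nu\|_{L^{\frac{2n}{n-4}}}^{\frac{8}{n-4}}\|w_\nu\|_{L^p}$. Moreover $\|w_\nu\|_{L^p}\le\|\psi\|_{L^p}+C\sum_k\|\bar u_k\|_{L^p}$, and $\|\bar u_k\|_{L^p}\le C\epsilon_{k,\nu}^{\frac{(n-4)^2}{2(n+4)}}$ as well.

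(iii) For $5\le n\le 12$ (where $p\ge 2$) you never estimate the quadratic term $\bar u_{z_\nu}^{p-2}\psi^2$.
\begin{itemize}
\item The $w_\nu$ part is handled by $\|w_\nu^2\|_{L^q}\le\|w_\nu\|_{L^{\frac n4}}\|w_\nu\|_{L^p}$, where $\frac n4\le\frac{2n}{n-4}$ holds precisely when $n\le 12$.
\item The bubble part is handled by $\|\bar u_k\|_{L^{2q}}^2\le C\epsilon_{k,\nu}^{\min\{n-4,\frac{32}{n+4}\}}\le C\epsilon_{k,\nu}^{\frac{(n-4)^2}{2(n+4)}}$, which again holds exactly for $n\le 12$.
\end{itemize}

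With (i)--(iii) you get
\[
\|\psi\|_{L^p}\le o(1)\|\psi\|_{L^p}+C\Big(\|f_\nu\|_{W^{2,2}(M)}+|\mu(t_\nu)-\mu_\infty|+\sum_k\epsilon_{k,\nu}^{\frac{(n-4)^2}{2(n+4)}}\Big),
\]
and the lemma follows. Your extra remark on $u_\infty\equiv0$ is unnecessary: this lemma and Lemma~\ref{lem:lpestimate} are only used when $u_\infty\not\equiv 0$.
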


\begin{proof}
By Lemmas~\ref{lem:proj} and~\ref{lem:lpestimate}, we estimate
\begin{align*}
& \|u_\nu - \uznu +f_\nu \|_{L^{\frac{n+4}{n-4}}(M) } \\
\le & C \sup_{a \in A} \bigg| \int_M \wgt \psi_a (u_\nu - \uznu + f_\nu ) \dvol \bigg| \\ 
& + C \| \wgt f_\nu \|_{L^{\frac{n(n+4)}{n^2+16}}(M)} + C \| (\mu_\infty -\mu(t_\nu))u_\nu^{\frac{n+4}{n-4}} \|_{L^{\frac{n(n+4)}{n^2+16}}(M)}  \\
& + C \|(\bar{u}_{z_\nu}^{\frac{8}{n-4}} -u_\infty^{\frac{8}{n-4}} ) ( u_\nu- \bar{u}_{z_\nu} )  \|_{L^{\frac{n(n+4)}{n^2+16}}(M)} \\
& + C \bigg\| \bar{u}_{z_\nu}^{\frac{n+4}{n-4}} + \frac{n+4}{n-4} \bar{u}_{z_\nu}^{\frac{8}{n-4}} ( u_\nu - \bar{u}_{z_\nu} ) -u_\nu^{\frac{n+4}{n-4}} \bigg\|_{L^{\frac{n(n+4)}{n^2+16}}(M)} \\
\le &  C \sup_{a \in A} \bigg| \int_M \wgt \psi_a (u_\nu - \uznu  ) \dvol \bigg| + C \| f_\nu \|_{\wtt} + C (\mu(t_\nu) - \mu_\infty) \\
& + C \bigg\| \bar{u}_{z_\nu}^{\frac{n+4}{n-4}} + \frac{n+4}{n-4} \bar{u}_{z_\nu}^{\frac{8}{n-4}} ( u_\nu - \bar{u}_{z_\nu} ) -u_\nu^{\frac{n+4}{n-4}} \bigg\|_{L^{\frac{n(n+4)}{n^2+16}}(M)}
\end{align*}
where we used that $\frac{n(n+4)}{n^2+16}< \frac{n+4}{n-4}$ and that $\| \uznu - u_\infty\|_{C^\infty (M)}$ is small.

Using the pointwise estimate
\begin{align*}
&\bigg| \bar{u}_{z_\nu}^{\frac{n+4}{n-4}} + \frac{n+4}{n-4} \bar{u}_{z_\nu}^{\frac{8}{n-4}} ( u_\nu - \bar{u}_{z_\nu} ) -u_\nu^{\frac{n+4}{n-4}} \bigg| \\
\le& C \uznu^{\max\{0, \frac{8}{n-4} -1\}} |u_\nu - \uznu |^{\min\{\frac{n+4}{n-4}, 2\}} + C |u_\nu -\uznu |^{\frac{n+4}{n-4}},
\end{align*}
we obtain
\begin{align*}
& \bigg\| \bar{u}_{z_\nu}^{\frac{n+4}{n-4}} + \frac{n+4}{n-4} \bar{u}_{z_\nu}^{\frac{8}{n-4}} ( u_\nu - \bar{u}_{z_\nu} ) -u_\nu^{\frac{n+4}{n-4}} \bigg\|_{L^{\frac{n(n+4)}{n^2+16}}(M)} \\
\le & C\bigg\| |u_\nu - \uznu |^{\min\{\frac{n+4}{n-4}, 2\}} + |u_\nu -\uznu |^{\frac{n+4}{n-4}} \bigg\|_{L^{\frac{n(n+4)}{n^2+16}}(M)} \\
\le & C \bigg\| |u_\nu - \uznu |^{\min\{\frac{n+4}{n-4}, 2\}} + |u_\nu -\uznu |^{\frac{n+4}{n-4}} \bigg\|_{L^{\frac{n(n+4)}{n^2+16}}(\cup_{k=1}^m B_{N\epsilon_{k, \nu}}(p_{k, \nu}))} \\
& + C\bigg\| |u_\nu - \uznu |^{\min\{\frac{n+4}{n-4}, 2\}} + |u_\nu -\uznu |^{\frac{n+4}{n-4}} \bigg\|_{L^{\frac{n(n+4)}{n^2+16}}(M\setminus \cup_{k=1}^m B_{N\epsilon_{k, \nu}}(p_{k, \nu}))} \\
& \le C \sum_{k=1}^m (N\epsilon_{k, \nu})^{\frac{(n-4)^2}{2(n+4)}} \bigg\| |u_\nu - \uznu |^{\min\{\frac{n+4}{n-4}, 2\}} + |u_\nu -\uznu |^{\frac{n+4}{n-4}} \bigg\|_{L^{\frac{2n}{n+4 }}(M)} \\
& + C\bigg\| |u_\nu - \uznu |^{\min\{\frac{8}{n-4}, 1\}} + |u_\nu -\uznu |^{\frac{8}{n-4}} \bigg\|_{L^{\frac{n}{4}}(M\setminus \cup_{k=1}^m B_{N\epsilon_{k, \nu}}(p_{k, \nu}))} \\
& \cdot  \| u_\nu - \uznu \|_{L^{\frac{n+4}{n-4}}(M\setminus \cup_{k=1}^m B_{N\epsilon_{k, \nu}}(p_{k, \nu}))}
\end{align*}
by H\"older's inequality.
Since 
\begin{align*}
&\|u_\nu -\uznu \|_{L^{\frac{2n}{n-4}}(M\setminus \cup_{k=1}^m B_{N\epsilon_{k, \nu}}(p_{k, \nu}))} \\
\le & \sum_{k=1}^m \alpha_{k, \nu} \| \bar{u}_{(p_{k, \nu}, \epsilon_{k, \nu})} \|_ {L^{\frac{2n}{n-4}}(M\setminus \cup_{k=1}^m B_{N\epsilon_{k, \nu}}(p_{k, \nu}))} + \|w_\nu \|_{L^{\frac{2n}{n-4}}(M)} \\
\le & CN^{-\frac{n-4}{2}} + o(1),
\end{align*}
it follows that 
\begin{align*}
&\bigg\| \bar{u}_{z_\nu}^{\frac{n+4}{n-4}} + \frac{n+4}{n-4} \bar{u}_{z_\nu}^{\frac{8}{n-4}} ( u_\nu - \bar{u}_{z_\nu} ) -u_\nu^{\frac{n+4}{n-4}} \bigg\|_{L^{\frac{n(n+4)}{n^2+16}}(M)} \\
\le& C\sum_{k=1}^m (N\epsilon_{k, \nu})^{\frac{(n-4)^2}{2(n+4)}} + (CN^{-2} +o(1) ) \|u_\nu - \uznu \|_{L^{\frac{n+4}{n-4}}(M)}.
\end{align*}

Finally, we estimate 
\begin{align*}
& \sup_{a \in A} \bigg| \int_M \wgt \psi_a (u_\nu - \uznu  ) \dvol \bigg| \\
= & \sup_{a \in A} \bigg| \int_M \wgt \psi_a \bigg(\sum_{k=1}^m \alpha_{k, \nu} \bar{u}_{(p_{k, \nu}, \epsilon_{k, \nu})} + w_\nu \bigg) \dvol \bigg| \\
\le & C \sum_{k=1}^m \epsilon_{k, \nu}^{\frac{n-4}{2}} + o(1) \| w_\nu \|_{L^1 (M)} \\
\le & C \sum_{k=1}^m \epsilon_{k, \nu}^{\frac{n-4}{2}} + o(1) \|u_\nu - \uznu  -\sum_{k=1}^m \alpha_{k, \nu} \bar{u}_{(p_{k, \nu}, \epsilon_{k, \nu})}  \|_{L^1 (M)} \\
\le & C \sum_{k=1}^m \epsilon_{k, \nu}^{\frac{n-4}{2}} + o(1) \|u_\nu - \uznu    \|_{L^1 (M)}.
\end{align*}

Collecting the above estimates, we conclude that
\begin{align*}
&\|u_\nu -\uznu +f \|_{L^{\frac{n+4}{n-4}}(M)} \\
\le&  C \| f_\nu \|_{\wtt} + C (\mu(t_\nu) - \mu_\infty) +C \sum_{k=1}^m \epsilon_{k, \nu}^{\frac{n-4}{2}}  \\
& +C\sum_{k=1}^m (N\epsilon_{k, \nu})^{\frac{(n-4)^2}{2(n+4)}} + (CN^{-2} +o(1) ) \|u_\nu - \uznu \|_{L^{\frac{n+4}{n-4}}(M)}.
\end{align*}
Choosing $N$ sufficiently large and applying the triangle inequality, we obtain the desired estimate. Indeed,
$$  \|u_\nu -\uznu  \|_{L^{\frac{n+4}{n-4}}(M)} \le \|u_\nu -\uznu  +f \|_{L^{\frac{n+4}{n-4}}(M)} + \| f\|_{\wtt}. $$

\end{proof}

\begin{lemma} \label{lem:estimate2}
The difference $u_\nu -\bar{u}_{z_\nu}$ satisfies the estimate
\begin{equation*}
\|u_z - \bar{u}_{z_\nu}\|_{L^1(M)} \le C ( \| f\|_{\wtt} + \mu(t_\nu) - \mu_\infty) + C \sum_{k=1}^m \epsilon_{k, \nu}^{\frac{n-4}{2}} 
\end{equation*}
for all sufficiently large $\nu$.
\end{lemma}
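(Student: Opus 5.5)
The plan is to repeat the proof of Lemma~\ref{lem:estimate1} with the exponent $p=1$ in place of $\frac{n+4}{n-4}$, using part (b) of Lemma~\ref{lem:lpestimate} instead of part (a). We apply Lemma~\ref{lem:lpestimate}(b) to $\phi = u_\nu - \bar u_{z_\nu} + f_\nu$ and use the identity of Lemma~\ref{lem:proj} for $\Pi(\pa\phi - \frac{n+4}{n-4}\mu_\infty u_\infty^{\frac{8}{n-4}}\phi)$. This bounds $\|\phi\|_{L^1(M)}$ by the sum of three quantities:
\begin{itemize}
\item the projection terms $\sup_{a\in A}|\int_M u_\infty^{\frac{8}{n-4}}\psi_a \phi|$;
\item the $L^1$ norms of $u_\infty^{\frac{8}{n-4}} f_\nu$, of $(\mu_\infty-\mu(t_\nu))u_\nu^{\frac{n+4}{n-4}}$, and of $(\bar u_{z_\nu}^{\frac{8}{n-4}}-u_\infty^{\frac{8}{n-4}})(u_\nu-\bar u_{z_\nu})$;
\item the $L^1$ norm of the Taylor remainder $\bar u_{z_\nu}^{\frac{n+4}{n-4}} + \frac{n+4}{n-4}\bar u_{z_\nu}^{\frac{8}{n-4}}(u_\nu-\bar u_{z_\nu}) - u_\nu^{\frac{n+4}{n-4}}$.
\end{itemize}
Since $\Pi$ is a finite-rank perturbation of the identity with smooth kernel, it is bounded on $L^1$.

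The linear terms are routine. We have $\|u_\infty^{\frac{8}{n-4}} f_\nu\|_{L^1}\le C\|f_\nu\|_{W^{2,2}}$ by Sobolev. Next, $\|(\mu_\infty-\mu(t_\nu))u_\nu^{\frac{n+4}{n-4}}\|_{L^1}\le C(\mu(t_\nu)-\mu_\infty)$, because the volume is bounded. Finally, the term involving $\bar u_{z_\nu}^{\frac{8}{n-4}}-u_\infty^{\frac{8}{n-4}}$ is $o(1)\|u_\nu-\bar u_{z_\nu}\|_{L^1}$, since $\bar u_{z_\nu}\to u_\infty$ in $C^\infty$ as $z_\nu\to 0$; it can be absorbed. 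The projection terms are handled exactly as at the end of the proof of Lemma~\ref{lem:estimate1}. Writing $u_\nu-\bar u_{z_\nu}=\sum_k\alpha_{k,\nu}\bar u_{(p_{k,\nu},\epsilon_{k,\nu})}+w_\nu$, the bubble contributions are $O(\epsilon_{k,\nu}^{\frac{n-4}{2}})$, because $\int_M \bar u_{(p,\epsilon)}\,dv = O(\epsilon^{\frac{n-4}{2}})$. By Proposition~\ref{lem:orth1}(a), the $w_\nu$ contribution is $o(1)\|w_\nu\|_{L^1}\le o(1)\|u_\nu-\bar u_{z_\nu}\|_{L^1}+C\sum_k\epsilon_{k,\nu}^{\frac{n-4}{2}}$. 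The contribution of $f_\nu$ is again $\le C\|f_\nu\|_{W^{2,2}}$.

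The main obstacle is the nonlinear remainder in $L^1$. Pointwise it is bounded by $C|u_\nu-\bar u_{z_\nu}|^{\min\{\frac{n+4}{n-4},2\}}+C|u_\nu-\bar u_{z_\nu}|^{\frac{n+4}{n-4}}$. Here we do not want a smallness factor multiplying $\|u_\nu-\bar u_{z_\nu}\|_{L^1}$, since $L^1$ cannot control superlinear powers. Instead, we bound this term directly by quantities already controlled by Lemma~\ref{lem:estimate1}. Because $\|u_\nu-\bar u_{z_\nu}\|_{L^{\frac{2n}{n-4}}}$ is bounded, H\"older's inequality on the compact manifold gives
\[
\int_M |u_\nu-\bar u_{z_\nu}|^{q}\,dv \le C\|u_\nu-\bar u_{z_\nu}\|_{L^{\frac{n+4}{n-4}}}^{\frac{n+4}{n-4}}
\]
for $1<q\le\frac{n+4}{n-4}$. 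When $q=2<\frac{n+4}{n-4}$, interpolation gives at worst a power $\theta\le 1$ of the same quantity, and when $q=\frac{n+4}{n-4}$ the bound holds directly. Lemma~\ref{lem:estimate1} then bounds this by $C(\|f_\nu\|_{W^{2,2}}^{\frac{n+4}{n-4}}+(\mu(t_\nu)-\mu_\infty)^{\frac{n+4}{n-4}}+\sum_k\epsilon_{k,\nu}^{\frac{n-4}{2}})$. Since $\|f_\nu\|_{W^{2,2}}\to 0$ and $\mu(t_\nu)-\mu_\infty\to 0$, these powers are dominated by the first powers.

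The delicate point is the interpolation step in the case $q=2<\frac{n+4}{n-4}$ (dimensions $n<12$). Here the power $\theta$ may be less than one, which would give $\sum_k\epsilon_{k,\nu}^{\theta\frac{n-4}{2}}$ rather than $\sum_k\epsilon_{k,\nu}^{\frac{n-4}{2}}$. If this happens, I would instead split $M$ into the bubble balls $B_{N\epsilon_{k,\nu}}$ and their complement, as in Lemma~\ref{lem:estimate1}. On the complement, $u_\nu-\bar u_{z_\nu}$ is small in $L^{\frac{2n}{n-4}}$, so the quadratic term is $(CN^{-2}+o(1))\|u_\nu-\bar u_{z_\nu}\|_{L^{1+}}$. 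On the balls, the quadratic term is bounded using the explicit bubble profile, whose square has integral $O(\epsilon^{n-4}\cdot\epsilon^{4-n}\dots)$, with the $w_\nu$ part absorbed.

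Collecting everything gives
\[
\|\phi\|_{L^1}\le C(\|f_\nu\|_{W^{2,2}}+\mu(t_\nu)-\mu_\infty+\textstyle\sum_k\epsilon_{k,\nu}^{\frac{n-4}{2}})+o(1)\|u_\nu-\bar u_{z_\nu}\|_{L^1}.
\]
Using $\|u_\nu-\bar u_{z_\nu}\|_{L^1}\le\|\phi\|_{L^1}+C\|f_\nu\|_{W^{2,2}}$ and absorbing the $o(1)$ term then yields the claim.
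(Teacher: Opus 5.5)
\textbf{Gap: the quadratic Taylor remainder for $5\le n\le 11$.} Your overall plan matches the paper's, whose proof just says it is ``analogous to Lemma~\ref{lem:estimate1}''. Your treatment of the linear terms, the projection terms, the pure $|u_\nu-\bar u_{z_\nu}|^{\frac{n+4}{n-4}}$ part of the remainder, and the final absorption is fine, so for $n\ge 12$ your argument is complete. Write $v:=u_\nu-\bar u_{z_\nu}$ and $p:=\frac{n+4}{n-4}$, so that $p>2$ exactly when $5\le n\le 11$. In that range the remainder also contains $\int_M \bar u_{z_\nu}^{\frac{12-n}{n-4}}v^2$, and you do not handle it.

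\begin{itemize}
\item Your displayed H\"older bound is false for $q<p$. H\"older gives $\int_M|v|^q\le C\|v\|_{L^p}^{q}$, and since $\|v\|_{L^p}=o(1)$ this is larger than $\|v\|_{L^p}^p$. An upper bound on $\|v\|_{L^{\frac{2n}{n-4}}}$ cannot reverse this.
\item Your fallback splitting does not close. On the balls $B_{N\epsilon_{k,\nu}}$ it is harmless, since there $\int v^2\le C(N\epsilon_{k,\nu})^4\le C_N\epsilon_{k,\nu}^{\frac{n-4}{2}}$ for $n\le 12$. On the complement, however, you only obtain a small factor times a norm strictly stronger than $L^1$ (your ``$L^{1+}$'', e.g.\ $L^{\frac{2n}{n+4}}$). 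That cannot be absorbed into the $L^1$ norm on the left, and in the final collecting step you silently replace it by $\|v\|_{L^1}$.
\item If you instead bound that stronger norm through Lemma~\ref{lem:estimate1}, you get $\epsilon_{k,\nu}^{\frac{(n-4)^2}{2(n+4)}}$, which is worse than $\epsilon_{k,\nu}^{\frac{n-4}{2}}$.
\item The real obstruction is the $w_\nu$ component. You only know $\|w_\nu\|_{\wtt}=o(1)$, with no $L^\infty$ control, so no pointwise splitting will dominate $\int_M w_\nu^2$ by $o(1)\|w_\nu\|_{L^1}$.
\end{itemize}

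\textbf{Missing step.} Interpolate between the quantity you are estimating and the one Lemma~\ref{lem:estimate1} controls, then apply Young's inequality. Using H\"older and then Young, both with the conjugate exponents $\frac{8}{12-n}$ and $\frac{8}{n-4}$,
\[
\int_M v^2\,\dvol\le \|v\|_{L^1(M)}^{\frac{12-n}{8}}\,\|v\|_{L^{\frac{n+4}{n-4}}(M)}^{\frac{n+4}{8}}\le \eta\,\|v\|_{L^1(M)}+C_\eta\,\|v\|_{L^{\frac{n+4}{n-4}}(M)}^{\frac{n+4}{n-4}} .
\]
For small $\eta$, the first term is absorbed exactly like your other $o(1)\|v\|_{L^1}$ terms. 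By Lemma~\ref{lem:estimate1}, the second term is at most $C\big(\|f_\nu\|_{\wtt}^{\frac{n+4}{n-4}}+(\mu(t_\nu)-\mu_\infty)^{\frac{n+4}{n-4}}+\sum_k\epsilon_{k,\nu}^{\frac{n-4}{2}}\big)$. This is why Lemma~\ref{lem:estimate1} is stated for the $\frac{n+4}{n-4}$-th power of the norm. With this step in place of your splitting, the rest of your proof goes through.
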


\begin{proof}
The proof is analogous to that of Lemma~\ref{lem:estimate1}.

\end{proof}

\begin{lemma}\label{lem:lowereigen}
For $\nu$ sufficiently large, we have
\begin{equation*}
\sup_{a \in A} \bigg| \int_M \psi_a \big( \pa \bar{u}_{z_\nu} - \mu_\infty \bar{u}_{z_\nu}^{\frac{n+4}{n-4}} \big) \dvol \big| \le C ( \| f \|_{\wtt} + \mu(t_\nu) - \mu_\infty )+ C \sum_{k=1}^m \epsilon_{k, \nu}^{\frac{n-4}{2}}.
\end{equation*}

\end{lemma}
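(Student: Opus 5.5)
We bound the $A$-modes of the defect $\pa\bar u_{z_\nu}-\mu_\infty\bar u_{z_\nu}^{\frac{n+4}{n-4}}$ by expressing it through the flow equation and the defect $u_\nu-\bar u_{z_\nu}$. For $a\in A$ we use that $\psi_a$ is smooth and fixed, and write
\begin{align*}
\pa \bar u_{z_\nu} - \mu_\infty \bar u_{z_\nu}^{\frac{n+4}{n-4}}
=& \big(\pa u_\nu - \mu_\infty u_\nu^{\frac{n+4}{n-4}}\big)
- \pa (u_\nu - \bar u_{z_\nu}) \\
&+ \mu_\infty\big(u_\nu^{\frac{n+4}{n-4}} - \bar u_{z_\nu}^{\frac{n+4}{n-4}}\big).
\end{align*}
The first bracket equals $-\pa f_\nu - (\mu_\infty-\mu(t_\nu))u_\nu^{\frac{n+4}{n-4}}$ by the identity used in the proof of Lemma~\ref{lem:proj}. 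Testing against $\psi_a$ and moving $\pa$ onto $\psi_a$ by self-adjointness, the $f_\nu$ term gives $|\int f_\nu \pa\psi_a|\le C\|f_\nu\|_{L^1}\le C\|f_\nu\|_{\wtt}$. The $\mu$ term gives $C(\mu(t_\nu)-\mu_\infty)$, since $\mu$ is monotone decreasing by Lemma~\ref{volmon} and the volume is bounded.

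For the remaining part we get $-\lambda_a\int \wgt\psi_a(u_\nu-\bar u_{z_\nu}) + \mu_\infty\int\psi_a(u_\nu^{\frac{n+4}{n-4}}-\bar u_{z_\nu}^{\frac{n+4}{n-4}})$. We linearize the second integral around $\bar u_{z_\nu}$. The linear part is $\frac{n+4}{n-4}\mu_\infty\int\psi_a\bar u_{z_\nu}^{\frac{8}{n-4}}(u_\nu-\bar u_{z_\nu})$, and we replace $\bar u_{z_\nu}^{\frac{8}{n-4}}$ by $\wgt$ at cost $o(1)\|u_\nu-\bar u_{z_\nu}\|_{L^1}$. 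The total linear contribution is then bounded by $C\|u_\nu-\bar u_{z_\nu}\|_{L^1}$, which Lemma~\ref{lem:estimate2} controls by $C(\|f_\nu\|_{\wtt}+\mu(t_\nu)-\mu_\infty)+C\sum_k\epsilon_{k,\nu}^{\frac{n-4}{2}}$. Since $\psi_a$ is bounded, the nonlinear remainder is controlled by the pointwise bound from the proof of Lemma~\ref{lem:estimate1}:
\[
\int |u_\nu-\bar u_{z_\nu}|^{\min\{\frac{n+4}{n-4},2\}} + \int|u_\nu-\bar u_{z_\nu}|^{\frac{n+4}{n-4}}.
\]

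\textbf{Main obstacle.} This remainder is superlinear, and we must keep it from reintroducing bubble mass. Using the $L^1$-norm from Lemma~\ref{lem:estimate2} and the $L^{\frac{n+4}{n-4}}$-norm from Lemma~\ref{lem:estimate1} is not enough by itself: the latter only gives powers of $\epsilon_{k,\nu}^{\frac{n-4}{2}}$ with the wrong homogeneity. A better route is to avoid expanding $u_\nu^{\frac{n+4}{n-4}}$ globally. We split $u_\nu-\bar u_{z_\nu}=\sum_k\alpha_{k,\nu}\bar u_{(p_{k,\nu},\epsilon_{k,\nu})}+w_\nu$ and handle each part as follows.
\begin{itemize}
\item The bubble contributions $\int\psi_a\,\bar u_{(p_k,\epsilon_k)}^{\frac{n+4}{n-4}}$ and the bubble–$\bar u_{z_\nu}$ cross terms are bounded directly by $C\epsilon_{k,\nu}^{\frac{n-4}{2}}$. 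This follows from the explicit profile, together with Lemma~\ref{lem:interaction} applied with $\psi_a$ bounded.
\item The $w_\nu$ contributions are treated by interpolation. The estimate $\int|w_\nu|^{q}\le \|w_\nu\|_{L^{\frac{2n}{n-4}}}^{q-1}\|w_\nu\|_{L^1}$ gives $o(1)\|w_\nu\|_{L^1}$ in the quadratic regime, and for $n\ge12$ the same interpolation handles the exponent $\frac{n+4}{n-4}<2$.
\item The mixed bubble–$w_\nu$ terms are handled by Hölder's inequality with the smallness $\|w_\nu\|_{\wtt}=o(1)$, which gives a bound $o(1)\|w_\nu\|_{L^1}+C\sum_k\epsilon_{k,\nu}^{\frac{n-4}{2}}$.
\end{itemize}
Finally, $\|w_\nu\|_{L^1}\le\|u_\nu-\bar u_{z_\nu}\|_{L^1}+C\sum_k\epsilon_{k,\nu}^{\frac{n-4}{2}}$. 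Combining this with Lemma~\ref{lem:estimate2} and taking the supremum over the finite set $A$ yields the claim.
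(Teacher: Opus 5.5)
Your opening identity matches the paper's proof. Your treatment of the $f_\nu$ term, the $\mu$ term, and the linear part via Lemma~\ref{lem:estimate2} is also fine. The gap is in the superlinear remainder.

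First, the ``main obstacle'' you describe is not real. Lemma~\ref{lem:estimate1} bounds exactly $\int_M|u_\nu-\bar u_{z_\nu}|^{\frac{n+4}{n-4}}$ (the $\frac{n+4}{n-4}$-th power, not the norm) by $C\big(\|f_\nu\|_{W^{2,2}}^{\frac{n+4}{n-4}}+(\mu(t_\nu)-\mu_\infty)^{\frac{n+4}{n-4}}+\sum_k\epsilon_{k,\nu}^{\frac{n-4}{2}}\big)$. That is precisely the homogeneity in $\epsilon_{k,\nu}$ you need. The first two terms are at most $\|f_\nu\|_{W^{2,2}}$ and $\mu(t_\nu)-\mu_\infty$ once $\nu$ is large.

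Second, your replacement argument for the $w_\nu$-part fails. The inequality $\int_M|w_\nu|^q\le\|w_\nu\|_{L^{\frac{2n}{n-4}}}^{q-1}\|w_\nu\|_{L^1}$ is false for $q>1$. Hölder gives it only with $L^\infty$ in place of $L^{\frac{2n}{n-4}}$, and $w_\nu$ has no uniform $L^\infty$ bound. Testing on $\lambda\mathbf{1}_{B_\rho}$, the left side divided by the right side is $\rho^{-(n-4)(q-1)/2}\to\infty$. Correct Hölder only gives $\int|w_\nu|^{\frac{n+4}{n-4}}\le\|w_\nu\|_{L^{\frac{n}{n-4}}}\|w_\nu\|_{L^{\frac{2n}{n-4}}}^{\frac{8}{n-4}}$, and Lemma~\ref{lem:estimate2} does not control $L^{\frac{n}{n-4}}$.

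The gap cannot be closed with your other tools either. Take $w=\lambda\phi(\cdot/\rho)$ with $\lambda\rho^{\frac{n-4}{2}}=\eta$ small. Then $\|w\|_{W^{2,2}}\approx\eta$, while $\int|w|^{\frac{n+4}{n-4}}/\|w\|_{L^1}\approx\eta^{\frac{8}{n-4}}\rho^{-4}$ is unbounded. So smallness in $W^{2,2}$ plus $L^1$ control cannot bound $\int|w_\nu|^{\frac{n+4}{n-4}}$ by $C\|w_\nu\|_{L^1}$ plus the allowed terms. A PDE-based $L^{\frac{n+4}{n-4}}$ estimate is indispensable, and that is exactly what Lemma~\ref{lem:estimate1} provides via Lemmas~\ref{lem:proj} and~\ref{lem:lpestimate}. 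Your mixed bubble--$w_\nu$ bullet is also only sketched.

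The fix is simpler than your route. Do not linearize. Use the mean-value bound
\[
\big|u_\nu^{\frac{n+4}{n-4}}-\bar u_{z_\nu}^{\frac{n+4}{n-4}}\big|\le C\bar u_{z_\nu}^{\frac{8}{n-4}}|u_\nu-\bar u_{z_\nu}|+C|u_\nu-\bar u_{z_\nu}|^{\frac{n+4}{n-4}}.
\]
Since $\psi_a$ and $\bar u_{z_\nu}$ are bounded, everything reduces to $\|u_\nu-\bar u_{z_\nu}\|_{L^1}$ and $\|u_\nu-\bar u_{z_\nu}\|_{L^{\frac{n+4}{n-4}}}^{\frac{n+4}{n-4}}$. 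Lemmas~\ref{lem:estimate2} and~\ref{lem:estimate1} then finish the proof; this is the paper's argument. Even if you keep your quadratic term for $n\le 11$, it is pointwise dominated by $|x|+|x|^{\frac{n+4}{n-4}}$. So no splitting into bubbles and $w_\nu$ is needed.
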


\begin{proof}
By integration by parts,
\begin{align*}
& \int_M \psi_a \big( \pa  \bar{u}_{z_\nu} - \mu_\infty \bar{u}_{z_\nu}^{\frac{n+4}{n-4}} \big)\\
=&    \int_M \psi_a \big( \pa u_\nu - \mu_\infty u_\nu^{\frac{n+4}{n-4}} \big) + \lambda_a \int_M \wgt \psi_a (\bar{u}_{z_\nu} - u_\nu ) \\
&+ \mu_\infty \int_M \psi_a ( u_\nu^{\frac{n+4}{n-4}} - \bar{u}_{z_\nu}^{\frac{n+4}{n-4}} ) \\
=& \int_M \psi_a \big( - \pa f - ( \mu_\infty - \mu(t_\nu)) u_\nu^{\frac{n+4}{n-4}} \big) + \lambda_a \int_M \wgt \psi_a (\bar{u}_{z_\nu} - u_\nu ) \\
& + \mu_\infty \int_M \psi_a ( u_\nu^{\frac{n+4}{n-4}} - \bar{u}_{z_\nu}^{\frac{n+4}{n-4}} ).
\end{align*}

Using the pointwise estimate
$$|u_\nu^{\frac{n+4}{n-4}} - \bar{u}_{z_\nu}^{\frac{n+4}{n-4}} | \le C \bar{u}_{z_\nu}^{\frac{8}{n-4}} |u_z - \bar{u}_{z_\nu}| + C |u_z - \bar{u}_{z_\nu}|^{\frac{n+4}{n-4}},$$
we prove that 
\begin{align*}
& \bigg| \int_M \psi_a \big( \pa  \bar{u}_{z_\nu} - \mu_\infty \bar{u}_{z_\nu}^{\frac{n+4}{n-4}} \big) \bigg| \\
\le& C \bigg( \| f\|_{\wtt}  + (\mu(t_\nu) - \mu_\infty) + \|\bar{u}_{z_\nu} -u_z \|_{L^1(M)} + \| \bar{u}_{z_\nu} - u_z \|_{L^{\frac{n+4}{n-4}}(M)}^{\frac{n+4}{n-4}}\bigg) .
\end{align*}

The desired estimate now follows from Lemmas~\ref{lem:estimate1} and~\ref{lem:estimate2}.

\end{proof}

\begin{proposition}\label{lem:uznuenergy}
For $\nu$ sufficiently large, the Paneitz--Sobolev quotient of $\uznu$ satisfies
\begin{equation*}
\cf[\uznu] - \cf[u_\infty] \le C \bigg( \| f_\nu \|_{\wtt}^{1+\gamma} + (\mu(t_\nu) - \mu_\infty)^{1+\gamma} +   \sum_{k=1}^m \epsilon_{k, \nu}^{\frac{n-4}{2}(1+\gamma)} \bigg).
\end{equation*}
\end{proposition}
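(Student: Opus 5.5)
This is a direct combination of the finite-dimensional \loja{} inequality (Lemma~\ref{lem:finiteloja}) with the bound on the low-mode projections of the Euler--Lagrange residual (Lemma~\ref{lem:lowereigen}).

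First, for $\nu$ large we have $|z_\nu| = o(1)$. Hence $z_\nu$ lies in the neighborhood where Lemma~\ref{lem:finiteloja} applies, and we obtain
\[
\cf[\uznu] - \cf[u_\infty] \le C \sup_{a\in A}\bigg|\int_M \psi_a\big(\pa\uznu - \mu_\infty \uznu^{\frac{n+4}{n-4}}\big)\dvol\bigg|^{1+\gamma}.
\]
If the left side is negative, there is nothing to prove, so we may assume it is nonnegative.

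Second, Lemma~\ref{lem:lowereigen} bounds the supremum by
\[
C\Big(\|f_\nu\|_{\wtt} + \mu(t_\nu)-\mu_\infty + \sum_{k=1}^m \epsilon_{k,\nu}^{\frac{n-4}{2}}\Big).
\]
Here $\mu(t_\nu)-\mu_\infty\ge 0$ because $\mu$ is decreasing by Lemma~\ref{volmon}, so every summand is nonnegative. Raising this sum to the power $1+\gamma$ and using the elementary inequality $(x_1+\dots+x_N)^{1+\gamma}\le N^{\gamma}(x_1^{1+\gamma}+\dots+x_N^{1+\gamma})$ for $x_i\ge 0$ distributes the exponent over each term, including each $\epsilon_{k,\nu}^{\frac{n-4}{2}}$ individually. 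This gives exactly the stated bound.

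The real content lies in the two cited lemmas, so the only delicate point is their application. Lemma~\ref{lem:finiteloja} needs $z$ small, with a uniform constant $C$ and exponent $\gamma$. This follows from the real analyticity of $z\mapsto \cf[\bar u_z]$ near $0$, via the implicit-function construction of $\bar u_z$, together with the classical \loja{} inequality for the finite-dimensional analytic function $z\mapsto\cf[\bar u_z]$. Its gradient is comparable to the vector $\big(\int_M\psi_a(\pa\bar u_z-\mu_\infty\bar u_z^{\frac{n+4}{n-4}})\big)_{a\in A}$, because the $\Pi$-component of the residual vanishes. I would also check that the normalization constant relating $\mu_\infty$ to the Lagrange multiplier of $\cf$ at $\bar u_z$ only changes $C$. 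In the case $u_\infty\equiv 0$ the statement is vacuous, since $\uznu\equiv 0$.
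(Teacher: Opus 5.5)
Your proposal is correct and follows the paper's argument exactly: the paper also just combines Lemma~\ref{lem:finiteloja} with Lemma~\ref{lem:lowereigen}. Your added details ($|z_\nu|=o(1)$, $\mu(t_\nu)\ge\mu_\infty$ by monotonicity, and distributing the power $1+\gamma$ over the nonnegative summands) are the right ones, with one small overstatement in your side remark: the $z$-gradient of $\cf[\bar u_z]$ is only bounded \emph{above} by the low-mode residual projections, not comparable to them, since $u_\infty$ is an eigenfunction with $\lambda=\mu_\infty$ and so the scaling direction lies in the span of $\{\psi_a\}_{a\in A}$, where $\cf$ is constant but the residual with fixed $\mu_\infty$ is not; the upper bound is all the \loja{} step needs.
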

\begin{proof}
This is an immediate consequence of Lemmas~\ref{lem:finiteloja} and~\ref{lem:lowereigen}.
\end{proof}

\begin{proposition}\label{lem:venergy}
The Paneitz--Sobolev quotient of $v_\nu$ satisfies the estimate
\begin{equation*}
\cf[v_\nu] \le \bigg( \cf[\uznu]^{\frac{n}{4}} + \sum_{k=1}^m \cf[\bar{u}_{(p_{k, \nu}, \epsilon_{k, \nu})}]^{\frac{n}{4}} \bigg)^{\frac{4}{n}} - c \sum_{k=1}^m \epsilon_{k, \nu}^{\frac{n-4}{2}}
\end{equation*}
for all sufficiently large $\nu$.
\end{proposition}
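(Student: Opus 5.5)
This follows Brendle's scheme for the Yamabe flow (\cite[Proposition~6.7]{MR2168505}, \cite{MR2357502}). Write
\[
N[v]:=\int_M v\,\pa v\,\dvol, \qquad D[v]:=\int_M |v|^{\frac{2n}{n-4}}\dvol .
\]
I would expand $N[v_\nu]$ and $D[v_\nu]$ into self-terms plus interaction terms, show that the interactions in the numerator are matched (up to small errors) by those in the denominator, and then extract the gain $-c\sum_k\epsilon_{k,\nu}^{\frac{n-4}{2}}$ from the cross terms between $\uznu$ and each bubble.

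Set $\bar u_k:=\bar u_{(p_{k,\nu},\epsilon_{k,\nu})}$, so that $v_\nu=\uznu+\sum_k\alpha_{k,\nu}\bar u_k$. The numerator expands as
\[
N[v_\nu]=N[\uznu]+\sum_k\alpha_{k,\nu}^2N[\bar u_k]+2\sum_k\alpha_{k,\nu}\int_M\bar u_k\,\pa\uznu\,\dvol+2\sum_{i<j}\alpha_{i,\nu}\alpha_{j,\nu}\int_M\bar u_i\,\pa\bar u_j\,\dvol .
\]

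\textbf{Numerator cross terms.}
\begin{itemize}
\item Bubble--$\uznu$ terms. Since $\Pi(\pa\uznu-\mu_\infty\uznu^{\frac{n+4}{n-4}})=0$ and $\uznu$ is $C^\infty$-close to $u_\infty$, replace $\pa\uznu$ by $\mu_\infty\uznu^{\frac{n+4}{n-4}}$. The error is a finite combination of the $u_\infty^{\frac{8}{n-4}}\psi_a$ with coefficients $o(1)$, integrated against $\bar u_k$, hence $o(1)\,\epsilon_{k,\nu}^{\frac{n-4}{2}}$.
\item Bubble--bubble terms. Replace $\pa\bar u_j$ by $\mu_\infty\bar u_j^{\frac{n+4}{n-4}}$. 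By Lemma~\ref{lem:inter2} the error is $C(\delta^4+\delta^{n-4}+\epsilon_j^4\delta^{-4})$ times the interaction quantity of Lemma~\ref{lem:interaction}, which is a small multiple of the main interaction once $\delta$ is fixed small.
\item Self terms. By construction, $\cf[\bar u_k]=N[\bar u_k]/D[\bar u_k]^{\frac{n-4}{n}}$, and I keep $N[\bar u_k]$ exactly.
\end{itemize}

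\textbf{Denominator expansion.} Expand $D[v_\nu]$ with the elementary inequality
\[
(a+b)^{\frac{2n}{n-4}}\ge a^{\frac{2n}{n-4}}+b^{\frac{2n}{n-4}}+\tfrac{2n}{n-4}\bigl(a^{\frac{n+4}{n-4}}b+ab^{\frac{n+4}{n-4}}\bigr)-C(\cdots),
\]
applied region by region: near $p_{k,\nu}$ the bubble dominates, and elsewhere $\uznu$ dominates. This gives
\[
D[v_\nu]\ge D[\uznu]+\sum_k\alpha_{k,\nu}^{\frac{2n}{n-4}}D[\bar u_k]+\tfrac{2n}{n-4}\sum_k\alpha_{k,\nu}\int_M\uznu^{\frac{n+4}{n-4}}\bar u_k\,\dvol+\tfrac{2n}{n-4}\sum_k\alpha_{k,\nu}^{\frac{n+4}{n-4}}\int_M\uznu\,\bar u_k^{\frac{n+4}{n-4}}\,\dvol+(\text{bubble--bubble terms})-o\Bigl(\sum_k\epsilon_{k,\nu}^{\frac{n-4}{2}}\Bigr).
\]
The term $\int_M\uznu\,\bar u_k^{\frac{n+4}{n-4}}\dvol$ is bounded below by $c\,\epsilon_{k,\nu}^{\frac{n-4}{2}}$, because $\uznu\approx u_\infty>0$ and $\bar u_k^{\frac{n+4}{n-4}}$ has mass of order $\epsilon_{k,\nu}^{\frac{n-4}{2}}$. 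The term $\int_M\uznu^{\frac{n+4}{n-4}}\bar u_k\,\dvol$ is also of order $\epsilon_{k,\nu}^{\frac{n-4}{2}}$. The numerator cross term $\int_M\bar u_k\,\pa\uznu\,\dvol\approx\mu_\infty\int_M\uznu^{\frac{n+4}{n-4}}\bar u_k\,\dvol$ has the same form.

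\textbf{Combining via the algebra of $\cf$.} Put $x_0:=N[\uznu]$, $x_k:=\alpha_{k,\nu}^2N[\bar u_k]$ and $y_0:=D[\uznu]$, $y_k:=\alpha_{k,\nu}^{\frac{2n}{n-4}}D[\bar u_k]$, and note that $N/D\approx\mu_\infty$ for each piece. A first-order expansion of
\[
(x_0+\textstyle\sum_k x_k+X)\,(y_0+\textstyle\sum_k y_k+Y)^{-\frac{n-4}{n}}
\]
in the cross terms $X$ (numerator) and $Y$ (denominator) shows the following. The cross terms of type $\int\uznu^{\frac{n+4}{n-4}}\bar u_k$ contribute $2\mu_\infty-\frac{n-4}{n}\cdot\frac{2n}{n-4}\mu_\infty=0$ to leading order. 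The bubble--bubble interactions cancel in the same way. The denominator term $\frac{2n}{n-4}\int\uznu\,\bar u_k^{\frac{n+4}{n-4}}$ has no numerator counterpart, so it yields a strict decrease of order $-c\,\epsilon_{k,\nu}^{\frac{n-4}{2}}$. Finally, Hölder's inequality gives
\[
\frac{\sum_{k\ge0} x_k}{\bigl(\sum_{k\ge0} y_k\bigr)^{\frac{n-4}{n}}}\le\Bigl(\sum_{k\ge0}\bigl(x_k/y_k^{\frac{n-4}{n}}\bigr)^{\frac n4}\Bigr)^{\frac4n},
\]
and the right-hand side equals $\bigl(\cf[\uznu]^{\frac n4}+\sum_k\cf[\bar u_k]^{\frac n4}\bigr)^{\frac4n}$ since $\cf$ is scale invariant. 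When $u_\infty\equiv0$ there is no gain from the first step, but the claimed inequality then only asks that the bubble--bubble interaction errors be nonpositive up to $o\bigl(\sum_k\epsilon_{k,\nu}^{\frac{n-4}{2}}\bigr)$; if that fails, I would read the $c$-term as absent in that case.

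\textbf{Main obstacle.} The hardest step is the positive $\int_M\uznu\,\bar u_k^{\frac{n+4}{n-4}}$ gain together with showing that all errors are $o(\epsilon_{k,\nu}^{\frac{n-4}{2}})$. This matters especially for $n\ge8$, where $\bar u_k$ carries the correction $w_{\epsilon,\delta}$ and the Green's-function tail. I would handle these using Lemma~\ref{lemma 5.8} and the pointwise bounds $\bar u_k\le C\bigl(\frac{\epsilon_k}{\epsilon_k^2+d^2}\bigr)^{\frac{n-4}{2}}$. The bubble--bubble error also needs care, since its cancellation is only approximate; there I would rely on the smallness factor from Lemma~\ref{lem:inter2} after fixing $\delta$ small.
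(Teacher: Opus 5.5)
Your argument is correct, but it is organized differently from the paper's. The paper never lower-bounds $\int_M v_\nu^{\frac{2n}{n-4}}$ and never Taylor-expands the quotient. It multiplies the target by $\big(\int_M v_\nu^{\frac{2n}{n-4}}\big)^{\frac{n-4}{n}}$ and applies a single weighted H\"older inequality,
\[
\Big(\cf[\uznu]^{\frac n4}+\sum_{k}\cf[\bar u_k]^{\frac n4}\Big)^{\frac4n}\Big(\int_M v_\nu^{\frac{2n}{n-4}}\dvol\Big)^{\frac{n-4}{n}}\ge\int_M W\,v_\nu^2\dvol,
\qquad
W:=\Big(\mu[\uznu]^{\frac n4}\uznu^{\frac{2n}{n-4}}+\sum_{k}\mu[\bar u_k]^{\frac n4}\bar u_k^{\frac{2n}{n-4}}\Big)^{\frac4n}.
\]
This works because $\int_M W^{n/4}\dvol=\sum\cf^{n/4}$ exactly. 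It then expands $v_\nu^2$ and bounds $W\uznu\bar u_k$ and $W\bar u_i\bar u_j$ pointwise from below. Each lower bound is the term matching the numerator cross term, namely $\mu[\uznu]\uznu^{\frac{n+4}{n-4}}\bar u_k$ (resp. $\mu[\bar u_j]\bar u_i\bar u_j^{\frac{n+4}{n-4}}$), plus a gain supported on the bubble core $\{d(p_{k,\nu},\cdot)\le\epsilon_{k,\nu}\}$, where $W\ge\mu[\bar u_k]\bar u_k^{\frac{8}{n-4}}$ dominates. That gain is the core portion of your leftover term $\frac{2n}{n-4}\int_M\uznu\bar u_k^{\frac{n+4}{n-4}}\dvol$, so the mechanism is the same.

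What the paper's route buys is that the H\"older combination and the denominator bound come from one exact inequality. It needs neither quadratic Taylor remainders nor an expansion of $\int_M v_\nu^{\frac{2n}{n-4}}$. That expansion is delicate for $n>12$, where $\frac{2n}{n-4}<3$ and $(a+b)^p\ge a^p+b^p+p(a^{p-1}b+ab^{p-1})$ fails pointwise. You must then split regions and carry $o(\varepsilon_{ij})$ bubble--bubble errors, where $\varepsilon_{ij}:=\big(\frac{\epsilon_{j,\nu}^2+d(p_{i,\nu},p_{j,\nu})^2}{\epsilon_{i,\nu}\epsilon_{j,\nu}}\big)^{-\frac{n-4}{2}}$; you recorded only $o(\sum_k\epsilon_{k,\nu}^{\frac{n-4}{2}})$. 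Your route, in exchange, makes the cancellation $2\mu_\infty-\frac{n-4}{n}\cdot\frac{2n}{n-4}\mu_\infty=0$ transparent and recovers the whole interaction as gain, not just its core part.

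\textbf{Bubble--bubble residual.} Two points need tightening. First, for the bubble--bubble terms, "cancel in the same way" must mean that a residual survives. For $\epsilon_{i,\nu}\le\epsilon_{j,\nu}$ it is $-2\mu_\infty\alpha_{i,\nu}^{\frac{n+4}{n-4}}\alpha_{j,\nu}\int_M\bar u_i^{\frac{n+4}{n-4}}\bar u_j\dvol\le-c\,\varepsilon_{ij}$, and you need $c$ independent of $\delta$. Both the Lemma~\ref{lem:inter2} error and your expansion errors are multiples of $\varepsilon_{ij}$. For a tower, $\varepsilon_{ij}=(\epsilon_{i,\nu}/\epsilon_{j,\nu})^{\frac{n-4}{2}}$ need not be $O(\sum_k\epsilon_{k,\nu}^{\frac{n-4}{2}})$. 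So these errors can only be absorbed by this residual, which is exactly the paper's remark that $c$ does not depend on $\delta$.

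\textbf{The case $u_\infty\equiv0$.} Second, your justification here is off. The claim would still demand the strict gain $-c\sum_k\epsilon_{k,\nu}^{\frac{n-4}{2}}$, not merely nonpositive bubble--bubble errors. For $m=1$ it is false outright, since $\cf[v_\nu]=\cf[\bar u_1]$ by scale invariance. The proposition must be read under the standing assumption $u_\infty\not\equiv0$, as the paper tacitly does; the case $u_\infty\equiv0$ is treated separately in Corollary~\ref{lem:nonzeroenergy} without the $c$-term. Your fallback of dropping the $c$-term is therefore the right one.
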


\begin{proof}
A direct computation yields
\begin{align*}
& \cf[v_\nu] \bigg( \int_M v_\nu^{\frac{2n}{n-4}} \bigg)^{\frac{n}{n-4}}  \\
= & \int_M \mu[\uznu]\uznu^{\frac{2n}{n-4}}  +\int_M \sum_{k=1}^m \alpha_{k, \nu}^2 \mu[\bar{u}_{(p_{k, \nu}, \epsilon_{k, \nu})}] \bar{u}_{(p_{k, \nu}, \epsilon_{k, \nu})}^{\frac{2n}{n-4}} \\
& +2 \int_M \sum_{k=1}^m \alpha_{k, \nu} \uznu \pa \bar{u}_{(p_{k, \nu}, \epsilon_{k, \nu})} + 2 \int_M \sum_{i< j} \alpha_{i, \nu} \alpha_{j, \nu} \bar{u}_{(p_{i, \nu}, \epsilon_{i, \nu})} \pa \bar{u}_{(p_{j, \nu}, \epsilon_{j, \nu})}.
\end{align*}

Moreover,
\begin{align*}
& \bigg( \cf[\uznu]^{\frac{n}{4}} +  \sum_{k=1}^m \cf[\bar{u}_{(p_{k, \nu}, \epsilon_{k, \nu})}]^{\frac{n}{4}} \bigg)^{\frac{4}{n}} \bigg( \int_M v_\nu^{\frac{2n}{n-4}} \dvol \bigg)^{\frac{n-4}{n}} \\
= & \bigg( \int_M \big( \mu[\uznu]^{\frac{n}{4}} \uznu^{\frac{2n}{n-4}} + \sum_{k=1}^m \mu[\bar{u}_{(p_{k, \nu}, \epsilon_{k, \nu})}]^{\frac{n}{4}} \bar{u}_{(p_{k, \nu}, \epsilon_{k, \nu})}^{\frac{2n}{n-4}} \big)\dvol \bigg)^\frac{4}{n} \cdot \bigg( \int_M v_\nu^{\frac{2n}{n-4}} \dvol \bigg)^{\frac{n-4}{n}} \\
\ge &  \int_M \bigg( \mu[\uznu]^{\frac{n}{4}} \uznu^{\frac{2n}{n-4}} + \sum_{k=1}^m \mu[\bar{u}_{(p_{k, \nu}, \epsilon_{k, \nu})}]^\frac{n}{4} \bar{u}_{(p_{k, \nu}, \epsilon_{k, \nu})}^{\frac{2n}{n-4}} \bigg)^{\frac{4}{n}} v_\nu^2 \dvol \\
\ge&  \int_M  \mu[\uznu] \uznu^{\frac{2n}{n-4}} + \sum_{k=1}^m \alpha_{k, \nu}^2 \mu[\bar{u}_{(p_{k, \nu}, \epsilon_{k, \nu})}] \bar{u}_{(p_{k, \nu}, \epsilon_{k, \nu})}^{\frac{2n}{n-4}}  \\
& + 2 \int_M \sum_{k=1}^m \alpha_{k, \nu} \bigg(\mu[\uznu]^{\frac{n}{4}} \uznu^{\frac{2n}{n-4}} + \mu[\bar{u}_{(p_{k, \nu}, \epsilon_{k, \nu})}]^{\frac{n}{4}} \bar{u}_{(p_{k, \nu}, \epsilon_{k, \nu})}^{\frac{2n}{n-4}} \bigg)^{\frac{4}{n}} \uznu \bar{u}_{(p_{k, \nu}, \epsilon_{k, \nu})} \\
& + 2 \int_M \sum_{i<j} \alpha_{i, \nu} \alpha_{j, \nu} \bigg( \mu[\bar{u}_{(p_{i, \nu}, \epsilon_{i, \nu})}]^{\frac{n}{4}} \bar{u}_{(p_{i, \nu}, \epsilon_{i, \nu})}^{\frac{2n}{n-4}} + \mu[\bar{u}_{(p_{j, \nu}, \epsilon_{j, \nu})}]^{\frac{n}{4}} \bar{u}_{(p_{j, \nu}, \epsilon_{j, \nu})}^{\frac{2n}{n-4}} \bigg)^{\frac{4}{n}} \bar{u}_{(p_{i, \nu}, \epsilon_{i, \nu})} \bar{u}_{(p_{j, \nu}, \epsilon_{j, \nu})} 
\end{align*}
where we used H\"older's inequality.

Fix $1 \le k \le m$. Using the inequality
\begin{align*}
& \bigg(\mu[\uznu]^{\frac{n}{4}} \uznu^{\frac{2n}{n-4}} + \mu[\bar{u}_{(p_{k, \nu}, \epsilon_{k, \nu})}]^{\frac{n}{4}} \bar{u}_{(p_{k, \nu}, \epsilon_{k, \nu})}^{\frac{2n}{n-4}} \bigg)^{\frac{4}{n}} \uznu \bar{u}_{(p_{k, \nu}, \epsilon_{k, \nu})} \\
& \ge \mu[\uznu] \uznu^{\frac{n+4}{n-4}}  \bar{u}_{(p_{k, \nu}, \epsilon_{k, \nu})} + c\epsilon_{k, \nu}^{-\frac{n+4}{2}} \1_{\{d(p_{k, \nu}, x) \le \epsilon_{k, \nu}\}},
\end{align*}
we obtain
\begin{align*}
& \int_M \bigg(\mu[\uznu]^{\frac{n}{4}} \uznu^{\frac{2n}{n-4}} + \mu[\bar{u}_{(p_{k, \nu}, \epsilon_{k, \nu})}]^{\frac{n}{4}} \bar{u}_{(p_{k, \nu}, \epsilon_{k, \nu})}^{\frac{2n}{n-4}} \bigg)^{\frac{4}{n}} \uznu \bar{u}_{(p_{k, \nu}, \epsilon_{k, \nu})} \\
& \ge \int_M \mu[\uznu] \uznu^{\frac{n+4}{n-4}}  \bar{u}_{(p_{k, \nu}, \epsilon_{k, \nu})} \dvol + c\epsilon_{k, \nu}^{\frac{n-4}{2}} 
\end{align*}
for $\nu$ sufficiently large.

Consider a pair $i<j$. We can find positive constants $c$ and $C$, independent of $\nu$, such that
$$\bar{u}_{(p_{i, \nu}, \epsilon_{i, \nu})} (x)^{\frac{n+4}{n-4}} \bar{u}_{(p_{j, \nu}, \epsilon_{j, \nu})} (x) \ge c \bigg(\frac{\epsilon_{j, \nu}^2 + d(p_{i, \nu}, p_{j, \nu} )^2}{\epsilon_{i, \nu} \epsilon_{j, \nu}} \bigg)^{-\frac{n-4}{2}} \epsilon_{i, \nu}^{-n}$$
and
$$\bar{u}_{(p_{i, \nu}, \epsilon_{i, \nu})} (x) \bar{u}_{(p_{j, \nu}, \epsilon_{j, \nu})} (x)^{\frac{n+4}{n-4}} \le C \bigg(\frac{\epsilon_{j, \nu}^2 + d(p_{i, \nu}, p_{j, \nu} )^2}{\epsilon_{i, \nu} \epsilon_{j, \nu}} \bigg)^{-\frac{n-4}{2}} \epsilon_{i, \nu}^{-n}$$
whenever $d(p_{i, \nu}, x) \le \epsilon_{i, \nu}$ and $\nu$ is sufficiently large. It follows that
\begin{align*}
& \bigg( \mu[\bar{u}_{(p_{i, \nu}, \epsilon_{i, \nu})}]^{\frac{n}{4}} \bar{u}_{(p_{i, \nu}, \epsilon_{i, \nu})}^{\frac{2n}{n-4}} + \mu[\bar{u}_{(p_{j, \nu}, \epsilon_{j, \nu})}]^{\frac{n}{4}} \bar{u}_{(p_{j, \nu}, \epsilon_{j, \nu})}^{\frac{2n}{n-4}} \bigg)^{\frac{4}{n}} \bar{u}_{(p_{i, \nu}, \epsilon_{i, \nu})} \bar{u}_{(p_{j, \nu}, \epsilon_{j, \nu})}\\
\ge & \mu[\bar{u}_{(p_{j, \nu}, \epsilon_{j, \nu})}] \bar{u}_{(p_{i, \nu}, \epsilon_{i, \nu})} \bar{u}_{(p_{j, \nu}, \epsilon_{j, \nu})}^{\frac{n+4}{n-4}} + c \bigg(\frac{\epsilon_{j, \nu}^2 + d(p_{i, \nu}, p_{j, \nu} )^2}{\epsilon_{i, \nu} \epsilon_{j, \nu}} \bigg)^{-\frac{n-4}{2}} \epsilon_{i, \nu}^{-n} \1_{\{d(p_{i\nu}, x ) \le \epsilon_{i, \nu} \}}
\end{align*}
and
\begin{align*}
& \int_M \bigg( \mu[\bar{u}_{(p_{i, \nu}, \epsilon_{i, \nu})}]^{\frac{n}{4}} \bar{u}_{(p_{i, \nu}, \epsilon_{i, \nu})}^{\frac{2n}{n-4}} + \mu[\bar{u}_{(p_{j, \nu}, \epsilon_{j, \nu})}]^{\frac{n}{4}} \bar{u}_{(p_{j, \nu}, \epsilon_{j, \nu})}^{\frac{2n}{n-4}} \bigg)^{\frac{4}{n}} \bar{u}_{(p_{i, \nu}, \epsilon_{i, \nu})} \bar{u}_{(p_{j, \nu}, \epsilon_{j, \nu})}\\
\ge & \int_M \mu[\bar{u}_{(p_{j, \nu}, \epsilon_{j, \nu})}] \bar{u}_{(p_{i, \nu}, \epsilon_{i, \nu})} \bar{u}_{(p_{j, \nu}, \epsilon_{j, \nu})}^{\frac{n+4}{n-4}} \dvol + c \bigg(\frac{\epsilon_{j, \nu}^2 + d(p_{i, \nu}, p_{j, \nu} )^2}{\epsilon_{i, \nu} \epsilon_{j, \nu}} \bigg)^{-\frac{n-4}{2}} 
\end{align*}
for $\nu$ sufficiently large.

Summing the inequalities above, we obtain
\begin{align*}
& \bigg( \cf[\uznu]^{\frac{n}{4}} +  \sum_{k=1}^m \cf[\bar{u}_{(p_{k, \nu}, \epsilon_{k, \nu})}]^{\frac{n}{4}} \bigg)^{\frac{4}{n}} \bigg( \int_M v_\nu^{\frac{2n}{n-4}} \dvol \bigg)^{\frac{n-4}{n}} \\
\ge&  \int_M  \bigg( \mu[\uznu] \uznu^{\frac{2n}{n-4}} + \sum_{k=1}^m \alpha_{k, \nu}^2 \mu[\bar{u}_{(p_{k, \nu}, \epsilon_{k, \nu})}] \bar{u}_{(p_{k, \nu}, \epsilon_{k, \nu})}^{\frac{2n}{n-4}} + 2 \alpha_{k, \nu} \mu[\uznu] \uznu^{\frac{n+4}{n-4}} \bar{u}_{(p_{k, \nu}, \epsilon_{k, \nu})} \\
& + 2  \sum_{i<j} \alpha_{i, \nu} \alpha_{j, \nu} \mu[\bar{u}_{(p_{j, \nu}, \epsilon_{j, \nu})}] \bar{u}_{(p_{i, \nu}, \epsilon_{i, \nu})}\bar{u}_{(p_{j, \nu}, \epsilon_{j, \nu})}^{\frac{n+4}{n-4}} \bigg) \dvol\\
& + c \sum_{k=1}^m \epsilon_{k, \nu}^{\frac{n-4}{2}} + c \sum_{i<j} \bigg(\frac{\epsilon_{j, \nu}^2 + d(p_{i, \nu}, p_{j, \nu} )^2}{\epsilon_{i, \nu} \epsilon_{j, \nu}} \bigg)^{-\frac{n-4}{2}}.
\end{align*}

Hence,
\begin{align*}
&\cf[v_\nu] \bigg( \int_M v_\nu^{\frac{2n}{n-4}} \bigg)^{\frac{n}{n-4}}  \\
\le & \bigg( \cf[\uznu]^{\frac{n}{4}} +  \sum_{k=1}^m \cf[\bar{u}_{(p_{k, \nu}, \epsilon_{k, \nu})}]^{\frac{n}{4}}  \bigg)^{\frac{4}{n}} \bigg( \int_M v_\nu^{\frac{2n}{n-4}} \dvol \bigg)^{\frac{n-4}{n}} \\
& + 2 \int_M \sum_{k=1}^m \alpha_{k, \nu}\bigg( \pa \uznu - \mu[\uznu] \uznu^{\frac{n+4}{n-4}} \bigg)\bar{u}_{(p_{k, \nu}, \epsilon_{k, \nu})} \\
& +2 \int_M \sum_{i<j} \alpha_{i, \nu} \alpha_{j, \nu} \bar{u}_{(p_{i, \nu}, \epsilon_{i, \nu})} \bigg(\pa \bar{u}_{(p_{j, \nu}, \epsilon_{j, \nu})} - \mu[\bar{u}_{(p_{j, \nu}, \epsilon_{j, \nu})}]  \bar{u}_{(p_{j, \nu}, \epsilon_{j, \nu})}^{\frac{n+4}{n-4}}  \bigg)\dvol \\
& - c \sum_{k=1}^m \epsilon_{k, \nu}^{\frac{n-4}{2}} - c \sum_{i<j} \bigg(\frac{\epsilon_{j, \nu}^2 + d(p_{i, \nu}, p_{j, \nu} )^2}{\epsilon_{i, \nu} \epsilon_{j, \nu}} \bigg)^{-\frac{n-4}{2}}.
\end{align*}

We note that
\begin{equation*}
\int_M \bigg| \pa \uznu - \mu[\uznu] \uznu^{\frac{n+4}{n-4}} \bigg| \bar{u}_{(p_{k, \nu}, \epsilon_{k, \nu})} \dvol \le o(1) \epsilon_{k, \nu}^{\frac{n-4}{2}}.
\end{equation*}

Since $\mu [ \bar{u}_{(p_{j, \nu}, \epsilon_{j, \nu})}] = \mu_\infty + o(1)$, it follows from Lemma~\ref{lem:inter2} that
\begin{align*}
&\int_M \sum_{i<j} \bar{u}_{(p_{i, \nu}, \epsilon_{i, \nu})} \bigg|\pa \bar{u}_{(p_{j, \nu}, \epsilon_{j, \nu})} - \mu[\bar{u}_{(p_{j, \nu}, \epsilon_{j, \nu})}]  \bar{u}_{(p_{j, \nu}, \epsilon_{j, \nu})}^{\frac{n+4}{n-4}}  \bigg|\dvol \\
\le & \int_M \sum_{i<j} \bar{u}_{(p_{i, \nu}, \epsilon_{i, \nu})} \bigg|\pa \bar{u}_{(p_{j, \nu}, \epsilon_{j, \nu})} - \mu_\infty  \bar{u}_{(p_{j, \nu}, \epsilon_{j, \nu})}^{\frac{n+4}{n-4}}  \bigg|\dvol \\
& + \big|\mu[\bar{u}_{(p_{j, \nu}, \epsilon_{j, \nu})}] - \mu_\infty\big| \int_m \bar{u}_{(p_{i, \nu}, \epsilon_{i, \nu})} \bar{u}_{(p_{j, \nu}, \epsilon_{j, \nu})}^{\frac{n+4}{n-4}}  \dvol \\
\le & C\left( \delta^4  + \delta^{n-4} + \frac{\epsilon_{j, \nu}^4}{\delta^4} \right) \bigg(\frac{\epsilon_{j, \nu}^2 + d(p_{i, \nu}, p_{j, \nu} )^2}{\epsilon_{i, \nu} \epsilon_{j, \nu}} \bigg)^{-\frac{n-4}{2}} + o(1) \bigg(\frac{\epsilon_{j, \nu}^2 + d(p_{i, \nu}, p_{j, \nu} )^2}{\epsilon_{i, \nu} \epsilon_{j, \nu}} \bigg)^{-\frac{n-4}{2}} 
\end{align*}
for $i<j$.
Choosing $\delta$ sufficiently small, and noting that the constant $c$ is independent of $\delta$, we obtain the desired estimate.
\end{proof}

\begin{corollary} \label{lem:nonzeroenergy}
For $\nu$ sufficiently large, the Paneitz--Sobolev constant of $v_\nu$ satisfies
\begin{equation*}
\cf[v_\nu] \le \bigg( \cf[u_\infty]^{\frac{n}{4}} + m Y_4(S^n)^{\frac{n}{4}} \bigg)^{\frac{4}{n}} +  C \bigg( \| f_\nu \|_{\wtt}^{1+\gamma} + (\mu(t_\nu) - \mu_\infty)^{1+\gamma}  \bigg).
\end{equation*}

\end{corollary}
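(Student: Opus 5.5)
We plan to combine Proposition~\ref{lem:venergy} with Proposition~\ref{lem:uznuenergy} and with the test-bubble estimates (Theorem~\ref{thm:energy estimate} for $n\ge 8$, and the preceding proposition for $5\le n\le 7$). The point is that the negative term $-c\sum_k \epsilon_{k,\nu}^{\frac{n-4}{2}}$ from Proposition~\ref{lem:venergy} absorbs both the error $\sum_k\epsilon_{k,\nu}^{\frac{n-4}{2}(1+\gamma)}$ in Proposition~\ref{lem:uznuenergy} and any excess of $\cf[\bar u_{(p_{k,\nu},\epsilon_{k,\nu})}]$ over $Y_4(S^n)$.

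\textbf{Step 1: the bubbles.} We first show $\cf[\bar u_{(p_{k,\nu},\epsilon_{k,\nu})}]\le Y_4(S^n)+o(1)\,\epsilon_{k,\nu}^{\frac{n-4}{2}}$. In fact the test-bubble estimates give the stronger bound $\cf\le Y_4(S^n)+C\epsilon^{n-4}\cdot(\text{small})$, and since $\epsilon^{n-4}=o(\epsilon^{\frac{n-4}{2}})$ the crude upper bound already suffices. The factor $\phi_p$ in the definition of $\bar u$ is a conformal factor, so by conformal covariance the quotient with respect to $g_0$ equals the quotient of $v_{p,\epsilon}$ (respectively $v_{p,\epsilon,\delta}$) with respect to the conformal-normal-coordinate metric.
\begin{itemize}
\item[$5\le n\le 7$:] The previous proposition gives $\cf\le Y_4(S^n)-c\epsilon^{n-4}+C\delta\epsilon^{n-4}+C\delta^{2-n}\epsilon^{n-2}$.
\item[$n\ge 8$:] Part (a) of Theorem~\ref{thm:energy estimate} bounds the excess over $Y_4(S^n)$ by $|\mci(p,\delta)|\epsilon^{n-4}+C\epsilon^{n-4}+C\epsilon^{n-4}\log^{-1}$, where $\mci(p,\delta)$ is bounded on the compact manifold for fixed $\delta$. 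We also normalize by the volume $\big(\int v^{\frac{2n}{n-4}}\big)^{\frac{n-4}{n}}$, which is bounded below.
\end{itemize}
Either way, $\cf[\bar u_{(p_{k,\nu},\epsilon_{k,\nu})}]\le Y_4(S^n)+C\epsilon_{k,\nu}^{n-4}$.

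\textbf{Step 2: assembling.} We insert Step~1 and Proposition~\ref{lem:uznuenergy} into Proposition~\ref{lem:venergy}. The map $(a,b_1,\dots,b_m)\mapsto (a^{n/4}+\sum b_k^{n/4})^{4/n}$ is Lipschitz on bounded sets. The quantities $\cf[\uznu]$ and $\cf[\bar u]$ are bounded, and $\cf[\uznu]\ge 0$ near $\cf[u_\infty]>0$; when $u_\infty\equiv 0$ we simply drop the $\uznu$ term. This gives
\begin{align*}
\cf[v_\nu]\le{}&\Big(\cf[u_\infty]^{\frac n4}+mY_4(S^n)^{\frac n4}\Big)^{\frac4n}
+C\Big(\|f_\nu\|_{\wtt}^{1+\gamma}+(\mu(t_\nu)-\mu_\infty)^{1+\gamma}\Big)\\
&+C\sum_k\Big(\epsilon_{k,\nu}^{\frac{n-4}{2}(1+\gamma)}+\epsilon_{k,\nu}^{n-4}\Big)-c\sum_k\epsilon_{k,\nu}^{\frac{n-4}{2}}.
\end{align*}
Since $\epsilon_{k,\nu}\to0$ and both exponents $\frac{n-4}{2}(1+\gamma)$ and $n-4$ exceed $\frac{n-4}{2}$, for large $\nu$ the last line is nonpositive, which yields the claim.

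\textbf{Expected main obstacle.} Step~1 is where I expect the difficulty: we need the bubble quotients to be bounded uniformly in $p$, with $\delta$ fixed. This is needed both to apply the test-bubble results with constants independent of $p_{k,\nu}$, and to handle the interplay between $\delta$, the constant $c$ in Proposition~\ref{lem:venergy} (which is independent of $\delta$), and the smallness of $\epsilon_{k,\nu}$. I would first fix $\delta$ small as required in Proposition~\ref{lem:venergy}, and only then let $\nu\to\infty$.
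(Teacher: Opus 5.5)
You go wrong in the case $u_\infty\equiv 0$. For $u_\infty\not\equiv 0$ your argument is correct, and it takes a different route from the paper's. The paper inserts the sharp bound $\cf[\bar u_{(p,\epsilon)}]\le Y_4(S^n)$ directly into Propositions \ref{lem:venergy} and \ref{lem:uznuenergy}; that bound rests on positive mass and the Weyl term in Theorem \ref{thm:energy estimate}, and on $A_p>0$ when $5\le n\le 7$. You observe that a crude excess $C\epsilon^{n-4}$ over $Y_4(S^n)$ suffices, because it is absorbed by the gain $-c\sum_k\epsilon_{k,\nu}^{\frac{n-4}{2}}$.

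That gain comes only from the cross terms between $\bar u_{z_\nu}$ and each bubble in the proof of Proposition \ref{lem:venergy}. There one uses that $\bar u_{z_\nu}$ is bounded below on $B_{\epsilon_{k,\nu}}(p_{k,\nu})$ to produce $c\,\epsilon_{k,\nu}^{-\frac{n+4}{2}}$ on that ball. When $\bar u_{z_\nu}\equiv 0$ the gain is simply absent, so dropping the $\bar u_{z_\nu}$ term does not preserve it.

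\textbf{The case $m=1$.} Here $v_\nu=\alpha_{1,\nu}\bar u_{(p_{1,\nu},\epsilon_{1,\nu})}$, so $\cf[v_\nu]=\cf[\bar u_{(p_{1,\nu},\epsilon_{1,\nu})}]$. Proposition \ref{lem:venergy} with that gain would read $\cf[\bar u]\le \cf[\bar u]-c\,\epsilon_{1,\nu}^{\frac{n-4}{2}}$, which is false. The claim to prove is $\cf[\bar u_{(p_{1,\nu},\epsilon_{1,\nu})}]\le Y_4(S^n)+C\big(\|f_\nu\|_{\wtt}^{1+\gamma}+(\mu(t_\nu)-\mu_\infty)^{1+\gamma}\big)$. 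An excess of order $\epsilon_{1,\nu}^{n-4}$ cannot be controlled by that right-hand side.

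\textbf{The case $m\ge 2$.} Only the bubble–bubble gain $c\sum_{i<j}\big(\frac{\epsilon_{j,\nu}^2+d(p_{i,\nu},p_{j,\nu})^2}{\epsilon_{i,\nu}\epsilon_{j,\nu}}\big)^{-\frac{n-4}{2}}$ remains. It need not dominate the excess $C\epsilon_{j,\nu}^{n-4}$. For example, two well-separated bubbles with $\epsilon_{1,\nu}\ll\epsilon_{2,\nu}$ give a gain of order $(\epsilon_{1,\nu}\epsilon_{2,\nu})^{\frac{n-4}{2}}\ll\epsilon_{2,\nu}^{n-4}$.

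\textbf{What is needed instead.} In this case you need exactly what your Step 1 discards: the sharp bound $\cf[\bar u_{(p,\epsilon)}]\le Y_4(S^n)$ for small $\epsilon$, not merely $Y_4(S^n)+C\epsilon^{n-4}$. For $n\ge 8$ this comes from Theorem \ref{thm:energy estimate}(a)--(c). When $p\in\mcz$ the positive mass theorem (Proposition \ref{PMT2}) gives $\mci(p)>0$, and when $p\notin\mcz$ the Weyl term dominates. For $5\le n\le 7$ it comes from $A_p>0$. This is where the hypothesis that $M$ is not conformal to $S^n$ is genuinely used.

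Combine this with the bubble–bubble part of the proof of Proposition \ref{lem:venergy}, where the gain beats the error from Lemma \ref{lem:inter2} once $\delta$ is small. That yields $\cf[v_\nu]\le m^{\frac4n}Y_4(S^n)$, which is how the paper handles $u_\infty\equiv 0$.
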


\begin{proof}
By Propositions~\ref{lem:uznuenergy} and~\ref{lem:venergy}, we have
\begin{align*}
\cf[v_\nu] \le &  \bigg( \cf[\uznu]^{\frac{n}{4}} + \sum_{k=1}^m \cf[\bar{u}_{(p_{k, \nu}, \epsilon_{k, \nu})}]^{\frac{n}{4}} \bigg)^{\frac{4}{n}} - c \sum_{k=1}^m \epsilon_{k, \nu}^{\frac{n-4}{2}} \\
\le&   \bigg[  \bigg(\cf[u_\infty] + C\big( \| f_\nu \|_{\wtt}^{1+\gamma} + (\mu(t_\nu) - \mu_\infty)^{1+\gamma} +   \sum_{k=1}^m \epsilon_{k, \nu}^{\frac{n-4}{2}(1+\gamma)} \big)\bigg)^{\frac{n}{4}} \\
& + m Y_4(S^n)^{\frac{n}{4}} \bigg]^{\frac{4}{n}}  - c \sum_{k=1}^m \epsilon_{k, \nu}^{\frac{n-4}{2}}.
\end{align*}
Using the elementary algebraic inequalities
$$(x+y)^{\frac{n}{4}} \le x^{\frac{n}{4}} + C y x^{\frac{n}{4} -1}, \hspace{4mm} (x+y)^{\frac{4}{n}} \le x^{\frac{4}{n}} + C y x^{\frac{4}{n}-1}$$
which hold for $0<y\ll x$, we obtain the desired estimate.

We also note that in the case $u_\infty \equiv 0$, the same argument yields the stronger inequality
\begin{equation*}
\cf[v_\nu] \le \big( m Y_4(S^n)^{\frac{n}{4}} \big)^{\frac{4}{n}}.
\end{equation*}

\end{proof}

\section{Proof of Theorem \ref{thm:globalconv}}

\begin{lemma}\label{lem:algebraic}
Let $x \ge y > 0$ be real numbers. Then the following inequalities hold:
\begin{align}
\frac{1}{y} - \frac{1}{x}
\le\;&
\frac{n}{n-4}
\bigg(
\frac{1}{y^{\frac{n}{n-4}}}
-
\frac{1}{x^{\frac{n}{n-4}}}
\bigg)
\,y^{-\frac{4}{n}},
\label{eq:algebraic lemma 1}\\
x^{\frac{4}{n}} - y^{\frac{4}{n}}
\le\;&
\frac{4}{n-4}
\bigg(
\frac{1}{y^{\frac{n}{n-4}}}
-
\frac{1}{x^{\frac{n}{n-4}}}
\bigg)
\,x .
\label{eq:algebraic lemma 2}
\end{align}
\end{lemma}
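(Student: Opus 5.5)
The plan is to reduce both inequalities to a comparison of derivatives via the mean value theorem. Set $p=\frac{n}{n-4}>1$, and for $x\ge y>0$ write each side as an integral over $[y,x]$:
\[
\frac1y-\frac1x=\int_y^x s^{-2}\,ds,\qquad
\frac{1}{y^{p}}-\frac{1}{x^{p}}=p\int_y^x s^{-p-1}\,ds,\qquad
x^{\frac4n}-y^{\frac4n}=\frac4n\int_y^x s^{\frac4n-1}\,ds .
\]
Then \eqref{eq:algebraic lemma 1} follows from the pointwise bound $s^{-2}\le \frac{n}{n-4}\,p\,s^{-p-1}y^{-\frac4n}$ on $[y,x]$. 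Likewise \eqref{eq:algebraic lemma 2} follows from $\frac4n s^{\frac4n-1}\le \frac{4}{n-4}\,p\,s^{-p-1}x$ on $[y,x]$, using $s\le x$ to absorb the factor $x$. Both pointwise bounds are monotonicity statements about the single power $s^{p+1-2}$ (respectively $s^{p+\frac4n}$), and I would verify them by comparing exponents.

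The main obstacle is already visible at this step, and I expect the argument to fail there. The two sides of each inequality are not homogeneous of the same degree, so the pointwise comparison cannot hold on all of $(0,\infty)$.

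\textbf{Counterexample to \eqref{eq:algebraic lemma 1}.} Take $n=8$, so $p=2$, and let $x\to\infty$. The left side tends to $1/y$, while the right side tends to $2y^{-5/2}$. For $y=100$ this gives $10^{-2}>2\cdot10^{-5}$, so the inequality fails.

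\textbf{Counterexample to \eqref{eq:algebraic lemma 2}.} Linearize at $x=y+h$ with $h\to0$. The inequality then requires $y^{\frac4n+\frac4{n-4}}\le \frac{n^2}{(n-4)^2}$, which fails for large $y$.

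So the statement, read literally for all $x\ge y>0$, is false. The plan above therefore proves it only on the range where the derivative comparison holds, namely $y$ (hence $x$) bounded by a dimensional constant. In the intended application $x,y$ are flow quantities such as the volumes $V(t)$, which are bounded by Lemma~\ref{volmon}, and they could be normalized into that range.

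The step I would attempt first, to stay faithful to the paper's wording, is to check whether a different choice of exponents makes the two sides homogeneous of the same degree. For example, in \eqref{eq:algebraic lemma 1} the factor $y^{-\frac4n}$ would have to become $y^{\frac{4}{n-4}}$. With a homogeneous version, the same mean-value argument closes for all $x\ge y>0$ with exactly the constants $\frac{n}{n-4}$ and $\frac{4}{n-4}$.
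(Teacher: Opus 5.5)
Your counterexamples are correct. With the exponent $\frac{n}{n-4}$ as printed, neither inequality holds for all $x\ge y>0$; for the second one, $n=8$, $y=4$, $x=9$ gives $1\le 0.45$. The gap is in your repair. The misprint is the exponent inside the parentheses: it should be $\frac{n-4}{n}$ in both inequalities, with the factors $y^{-\frac4n}$, $x$ and the constants left unchanged.

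The paper's own proof silently uses this corrected exponent. It asserts that the first inequality is equivalent to $x-y\le\frac{n}{n-4}\big(x^{\frac{n-4}{n}}-y^{\frac{n-4}{n}}\big)x^{\frac4n}$, which is what multiplying by $xy$ gives only when the exponent is $\frac{n-4}{n}$. With $y=1$, $x=1+\alpha$ this reduces to Bernoulli's inequality $(1+\alpha)^{\frac4n}\le 1+\frac4n\alpha$.

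The corrected version is also exactly what Proposition~\ref{keyestnonzero} needs. There the right-hand side must be a multiple of $V(t_\nu)^{-\frac{n-4}{n}}-V_\infty^{-\frac{n-4}{n}}$, which equals $\big(\int_M u\,\pa u\big)^{-1}\big(\cf(t_\nu)-\cf_\infty\big)$ because $\int_M u\,\pa u$ is conserved. Both of your variants keep the power $\frac{n}{n-4}$: restricting to a bounded range, and replacing $y^{-\frac4n}$ by $y^{\frac{4}{n-4}}$. They would feed into that argument only after an extra comparison using two-sided bounds on $V$.

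Your claim that the mean-value argument closes your modified first inequality is also inaccurate. With $p=\frac{n}{n-4}$, the pointwise bound $s^{-2}\le p^2y^{p-1}s^{-p-1}$ fails once $s>p^{2/(p-1)}y$. The integrated inequality happens to be true, since it reduces to $1-\sigma\le p(1-\sigma^p)$ for $\sigma=y/x\in(0,1]$, but you would need that separate argument.

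With the correct exponent, your integral comparison works immediately and with exactly the stated constants, because $\frac{n}{n-4}\cdot\frac{n-4}{n}=1$. The first inequality becomes
\[
\int_y^x s^{-2}\,ds\le y^{-\frac4n}\int_y^x s^{-2+\frac4n}\,ds,
\]
which holds pointwise because $s\ge y$. The second becomes
\[
\frac4n\int_y^x s^{\frac4n-1}\,ds\le \frac4n\,x\int_y^x s^{\frac4n-2}\,ds,
\]
which holds pointwise because $s\le x$. This is the same concavity fact as the paper's Bernoulli step, written in integrated form.
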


\begin{proof}
\eqref{eq:algebraic lemma 1} is equivalent to
$$x-y \le \frac{n}{n-4} \bigg( x^{\frac{n-4}{n}} - y^{\frac{n-4}{n}} \bigg) x^{\frac{4}{n}}.$$
Normalizing the inequality by setting $y=1$, $ x= 1+\alpha$,
the inequality is equivalent to
$$(1+\alpha)^{\frac{4}{n}} \le 1+ \frac{4}{n} \alpha,$$
which holds by Bernoulli's inequality.  The proof of \eqref{eq:algebraic lemma 2} follows in the same way.
\end{proof}

\begin{proposition}\label{keyestnonzero}
Let $\{t_\nu\}$ be a sequence with $t_\nu \to \infty$. Then there exist a number $0<\gamma<1$ and a constant $C>0$ such that, after passing to a subsequence, we have
\begin{equation}\label{eq:keyestnonzero}
\mathcal{F}(t_\nu)-\mathcal{F}_\infty
\le
C\bigg(\int_M f(t_\nu)\,\pa f(t_\nu)\,\dvol\bigg)^{\frac{1+\gamma}{2}}.
\end{equation}
\end{proposition}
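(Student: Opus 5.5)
We combine the Struwe-type decomposition $u_\nu = v_\nu + w_\nu$ with the two main ingredients: the coercivity of the second variation (Corollary~\ref{lem:secondv2}) and the energy bound for $v_\nu$ (Corollary~\ref{lem:nonzeroenergy}). The idea is to Taylor expand $\cf$ around $v_\nu$ in the direction $w_\nu$. The first-order term is controlled by $\|f_\nu\|_{\wtt}$, and the second-order term has a favorable sign. The difference $\mathcal{F}(t_\nu)-\cf[v_\nu]$ is then absorbed using the algebraic inequalities of Lemma~\ref{lem:algebraic}.

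\textbf{Step 1 (numerator and volume expansion).} Since $\int u_\nu\pa u_\nu$ is constant, write
\[
\int u_\nu\pa u_\nu=\int v_\nu\pa v_\nu+2\int w_\nu\pa v_\nu+\|w_\nu\|_{\wtt}^2 .
\]
Using the identity $\pa u_\nu=-\pa f_\nu+\mu(t_\nu)u_\nu^{\frac{n+4}{n-4}}$, replace $2\int w_\nu\pa v_\nu$ by
\[
2\int w_\nu\pa u_\nu-2\|w_\nu\|^2_{\wtt},
\]
so the cross term becomes $-2\int w_\nu \pa f_\nu+2\mu(t_\nu)\int w_\nu u_\nu^{\frac{n+4}{n-4}}$. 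Expand $\int u_\nu^{\frac{2n}{n-4}}$ around $v_\nu$ to second order:
\[
\int u_\nu^{\frac{2n}{n-4}}\ \ge\ \int v_\nu^{\frac{2n}{n-4}}+\tfrac{2n}{n-4}\int v_\nu^{\frac{n+4}{n-4}}w_\nu+\tfrac{n(n+4)}{(n-4)^2}\int v_\nu^{\frac{8}{n-4}}w_\nu^2-o(\|w_\nu\|^2_{\wtt}).
\]
The remainder is $o(\|w_\nu\|^2)$ by Sobolev and the smallness of $w_\nu$ in $L^{\frac{2n}{n-4}}$; when $n>12$ this needs the usual pointwise care. Combining with Corollary~\ref{lem:secondv2}, the quadratic terms give
\[
\cf[v_\nu]-\cf(t_\nu)\ \ge\ c\,\|w_\nu\|^2_{\wtt}\,V^{-\frac{n-4}{n}} - C\|f_\nu\|_{\wtt}\|w_\nu\|_{\wtt} - C|\mu(t_\nu)-\mu_\infty|\,\|w_\nu\| .
\]
Lemma~\ref{lem:algebraic} is used here to pass cleanly between $\mu$-normalized and $\cf$-normalized forms, with $x,y$ the two volumes.

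\textbf{Step 2 (absorb and conclude).} By Young's inequality, $\mathcal{F}(t_\nu)\le \cf[v_\nu]+C\|f_\nu\|^2_{\wtt}+C(\mu(t_\nu)-\mu_\infty)^2$. Inserting Corollary~\ref{lem:nonzeroenergy} and Proposition~\ref{lem:concom}(e), which identifies $\big(\cf[u_\infty]^{n/4}+mY_4(S^n)^{n/4}\big)^{4/n}=\cf_\infty$, gives
\[
\mathcal{F}(t_\nu)-\cf_\infty\le C\|f_\nu\|^{1+\gamma}_{\wtt}+C(\mu(t_\nu)-\mu_\infty)^{1+\gamma},
\]
using $\|f_\nu\|\to0$ so that squares are dominated by $(1+\gamma)$-powers.

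\textbf{Main obstacle.} The term $(\mu(t_\nu)-\mu_\infty)^{1+\gamma}$ must be removed. Since $\int u\pa u$ is conserved, $\mu(t)-\mu_\infty$ and $\cf(t)-\cf_\infty$ are comparable: both are monotone functions of $V$, and $\mu-\mu_\infty\le C(\cf-\cf_\infty)$ by \eqref{eq:algebraic lemma 1}–\eqref{eq:algebraic lemma 2}. Hence
\[
(\mu(t_\nu)-\mu_\infty)^{1+\gamma}\le C(\cf(t_\nu)-\cf_\infty)^{1+\gamma}\le \tfrac12(\cf(t_\nu)-\cf_\infty)
\]
for large $\nu$, since $\cf(t_\nu)\to\cf_\infty$. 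This term is absorbed into the left-hand side, which yields \eqref{eq:keyestnonzero}. I expect the delicate part to be the rigorous second-order expansion of the volume term in high dimensions, together with checking that the constant $c$ survives the normalization factors.
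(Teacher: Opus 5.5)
Your proposal is correct and follows essentially the paper's argument. It uses the same identity $\int u_\nu\pa u_\nu=\int v_\nu\pa v_\nu+2\int w_\nu\pa u_\nu-\int w_\nu\pa w_\nu$ with $\pa u_\nu=-\pa f_\nu+\mu(t_\nu)u_\nu^{\frac{n+4}{n-4}}$, the second-order volume expansion, Corollary~\ref{lem:secondv2} with Young's inequality, then Corollary~\ref{lem:nonzeroenergy} with Proposition~\ref{lem:concom}(e), and finally absorbs $(\mu(t_\nu)-\mu_\infty)^{1+\gamma}\le C(\cf(t_\nu)-\cf_\infty)^{1+\gamma}$.

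The difference is only bookkeeping: you compare $\cf(t_\nu)$ with $\cf[v_\nu]$ first, while the paper compares directly with $\cf_\infty$ and carries $o(1)(\cf(t_\nu)-\cf_\infty)$ errors. Your Step~1 is cleanest if you note that the linear terms in $w_\nu$ cancel exactly because $\mu(t_\nu)=\int u_\nu\pa u_\nu/\int u_\nu^{\frac{2n}{n-4}}$; a weighted AM--GM step then gives $\cf(t_\nu)\le\cf[v_\nu]+C\|f_\nu\|_{\wtt}^2$ directly, with no need for Lemma~\ref{lem:algebraic} there.
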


\begin{proof}
We compute:
\begin{align*}
&\cf (t_\nu) \bigg(\int_M u_\nu^{\frac{2n}{n-4}} \bigg)^{\frac{n-4}{n}} \\
=&  \int_M v_\nu \pa v_\nu + 2 w_\nu \pa u_\nu -  w_\nu \pa w_\nu \\
= & \int_M v_\nu \pa v_\nu +  2 w_\nu \big(-\pa f(t_\nu) + \mu(t_\nu) u_\nu^{\frac{n+4}{n-4}}\big) -  w_\nu \pa w_\nu \\
= & \cf [v_\nu] \bigg(\int_M v_\nu^{\frac{2n}{n-4}} \bigg)^{\frac{n-4}{n}} + 2 \int_M \ \mu(t_\nu) u_\nu^{\frac{n+4}{n-4}} w_\nu -  \frac{n+4}{n-4} \mu_\infty v_\nu^{\frac{8}{n-4}} w_\nu^2  \\
& -2 \int_M w_\nu\pa f(t_\nu)  -  \int_M w_\nu \bigg(  \pa w_\nu  -  \frac{n+4}{n-4} \mu_\infty v_\nu^{\frac{8}{n-4}} w_\nu \bigg).
\end{align*}

Next, we estimate
\begin{align*}
&  \cf [v_\nu] \bigg(\int_M v_\nu^{\frac{2n}{n-4}} \bigg)^{\frac{n-4}{n}} + 2 \int_M \ \mu(t_\nu) u_\nu^{\frac{n+4}{n-4}} w_\nu -  \frac{n+4}{n-4} \mu_\infty v_\nu^{\frac{8}{n-4}} w_\nu^2 \\
=&  \cf_\infty \bigg(\int_M u_\nu^{\frac{2n}{n-4}} \bigg)^{\frac{n-4}{n}}   + 2 (\mu(t_\nu) - \mu_\infty) \int_M u_\nu^{\frac{n+4}{n-4}} w_\nu \\
& + \mu_\infty \int_M 2 u_\nu^{\frac{n+4}{n-4}} w_\nu -  \frac{n+4}{n-4}  v_\nu^{\frac{8}{n-4}} w_\nu^2  + \frac{n-4}{n} v_\nu^{\frac{2n}{n-4}} - \frac{n-4}{n} u_\nu^\frac{2n}{n-4} \\
& + \bigg[ \cf [v_\nu]- \frac{n-4}{n} \mu_\infty \bigg(\int_M v_\nu^{\frac{2n}{n-4}} \bigg)^{\frac{4}{n}} \bigg] \bigg(\int_M v_\nu^{\frac{2n}{n-4}} \bigg)^{\frac{n-4}{n}} - \frac{4}{n} \mu_\infty \int_M u_\nu^{\frac{2n}{n-4}}  \\
& - \bigg[ \cf_\infty - \mu_\infty \bigg(\int_M u_\nu^{\frac{2n}{n-4}}  \bigg)^{\frac{4}{n}} \bigg] \bigg(\int_M u_\nu^{\frac{2n}{n-4}} \bigg)^{\frac{n-4}{n}}.
\end{align*}

Recalling that $\int_M u \pa u$ is preserved along the flow, by Lemma~\ref{lem:algebraic} we have
\begin{align*}
(\mu(t_\nu) - \mu_\infty) \int_M u_\nu^{\frac{n+4}{n-4}} w_\nu &= o(1)  \bigg(\frac{1}{V(t_\nu)}  - \frac{1}{V_\infty} \bigg) \int_M u \pa u \\
& \le o(1)  \bigg(\frac{1}{V(t_\nu)^{\frac{n-4}{n}}}  - \frac{1}{V_\infty^{\frac{n-4}{n}} }\bigg) \int_M u \pa u \\
& = o(1) ( \cf ( t_\nu) - \cf_\infty ).
\end{align*}

Moreover, by Young’s inequality,
\begin{align*}
& \bigg[ \cf [v_\nu]- \frac{n-4}{n} \mu_\infty \bigg(\int_M v_\nu^{\frac{2n}{n-4}} \bigg)^{\frac{4}{n}} \bigg] \bigg(\int_M v_\nu^{\frac{2n}{n-4}} \bigg)^{\frac{n-4}{n}} - \frac{4}{n} \mu_\infty \int_M u_\nu^{\frac{2n}{n-4}} \\
= & ( \cf [v_\nu]-  \cf_\infty  ) \bigg(\int_M v_\nu^{\frac{2n}{n-4}} \bigg)^{\frac{n-4}{n}} -  \mu_\infty \bigg[ \int_M  \frac{4}{n} u_\nu^{\frac{2n}{n-4}} + \frac{n-4}{n} v_\nu^{\frac{2n}{n-4}} - V_\infty^{\frac{4}{n}} \bigg(\int_M v_\nu^{\frac{2n}{n-4}} \bigg)^{\frac{n-4}{n}} \bigg] \\
\le & ( \cf [v_\nu]-  \cf_\infty  ) \bigg(\int_M v_\nu^{\frac{2n}{n-4}} \bigg)^{\frac{n-4}{n}} + \mu_\infty (V_\infty^{\frac{4}{n}} - V(t_\nu)^{\frac{4}{n}} ) \bigg(\int_M v_\nu^{\frac{2n}{n-4}} \bigg)^{\frac{n-4}{n}}.
\end{align*}

Then, again by Lemma~\ref{lem:algebraic}, we estimate
\begin{align*}
& \mu_\infty (V_\infty^{\frac{4}{n}} - V(t_\nu)^{\frac{4}{n}} ) \bigg(\int_M v_\nu^{\frac{2n}{n-4}} \bigg)^{\frac{n-4}{n}} - \bigg[ \cf_\infty - \mu_\infty \bigg(\int_M u_\nu^{\frac{2n}{n-4}}  \bigg)^{\frac{4}{n}} \bigg] \bigg(\int_M u_\nu^{\frac{2n}{n-4}} \bigg)^{\frac{n-4}{n}} \\
= & \mu_\infty (V_\infty^{\frac{4}{n}} - V(t_\nu)^{\frac{4}{n}} ) \bigg[ \bigg(\int_M v_\nu^{\frac{2n}{n-4}} \bigg)^{\frac{n-4}{n}} - \bigg(\int_M u_\nu^{\frac{2n}{n-4}} \bigg)^{\frac{n-4}{n}}\bigg] \\
\le & C (\cf(t_\nu) -\cf_\infty) \cdot \bigg| \bigg(\int_M u_\nu^{\frac{2n}{n-4}} \bigg) - \bigg(\int_M v_\nu^{\frac{2n}{n-4}} \bigg)\bigg| \cdot \min\bigg\{\int_M v_\nu^{\frac{2n}{n-4}}, \int_M u_\nu^{\frac{2n}{n-4}} \bigg\}^{-\frac{4}{n}} \\
\le & C (\cf(t_\nu) -\cf_\infty) \cdot \bigg|\int_M w_\nu v_\nu^{\frac{n+4}{n-4}} + w_\nu^2 v_\nu^{\frac{8}{n-4}} + |w_\nu|^{\frac{2n}{n-4}}  \bigg| \cdot \min\bigg\{\int_M v_\nu^{\frac{2n}{n-4}}, \int_M u_\nu^{\frac{2n}{n-4}} \bigg\}^{-\frac{4}{n}} \\
= & o(1) (\cf(t_\nu) -\cf_\infty).
\end{align*}

Using a purely algebraic inequality (which may be viewed as a Taylor expansion with respect to $w_\nu$),
\begin{align*}
&\bigg | 2 (v_\nu + w_\nu)^{\frac{n+4}{n-4}} w_\nu - \frac{n+4}{n-4}   v_\nu^{\frac{8}{n-4}} w_\nu^2  + \frac{n-4}{n} v_\nu^{\frac{2n}{n-4}} - \frac{n-4}{n} (v_\nu + w_\nu)^\frac{2n}{n-4} \bigg| \\
\le & C v_\nu ^{\max \{0, \frac{8}{n-4}-1 \}} |w_\nu|^{\min \{\frac{2n}{n-4}, 3 \}} + C |w_\nu | ^{\frac{2n}{n-4}},
\end{align*}
we obtain
\begin{align*}
\mu [v_\nu]  & \bigg(\int_M v_\nu^{\frac{2n}{n-4}} \bigg)  + \int_M \frac{2n}{n-4} \mu(t_\nu) u_\nu^{\frac{n+4}{n-4}} w_\nu - \frac{n(n+4)}{(n-4)^2}  \mu_\infty v_\nu^{\frac{8}{n-4}} w_\nu^2 \\
&\le C \bigg(\int_M |w_\nu|^{\frac{2n}{n-4}} \dvol \bigg)^{\frac{n-4}{n} \min \{ \frac{n}{n-4}, \frac{3}{2} \}} \\
& \le o(1) \int_M w_\nu \pa w_\nu.
\end{align*}

Applying Corollary~\ref{lem:secondv2} and collecting the above estimates, we conclude that
\begin{align*}
&\cf (t_\nu) \bigg(\int_M u_\nu^{\frac{2n}{n-4}} \bigg)^{\frac{n-4}{n}} \\
\le & \cf_\infty \bigg(\int_M u_\nu^{\frac{2n}{n-4}} \bigg)^{\frac{n-4}{n}} + C(\cf [v_\nu]- \cf_\infty ) \bigg(\int_M v_\nu^{\frac{2n}{n-4}} \bigg)^{\frac{n-4}{n}}    \\
& - C \int_M w_\nu \pa w_\nu + C \bigg( \int_M f(t_\nu) \pa f(t_\nu) \bigg)^{\tfrac{1}{2}} \bigg( \int_M w_\nu \pa w_\nu \bigg)^{\tfrac{1}{2}}   \\
\le & \cf_\infty \bigg(\int_M u_\nu^{\frac{2n}{n-4}} \bigg)^{\frac{n-4}{n}} + C(\cf [v_\nu]- \cf_\infty ) \bigg(\int_M v_\nu^{\frac{2n}{n-4}} \bigg)^{\frac{n-4}{n}} \\
& + C  \int_M f(t_\nu) \pa f(t_\nu).
\end{align*}

Finally, by Corollary~\ref{lem:nonzeroenergy}, we conclude that
\begin{align*}
\cf (t_\nu) - \cf_\infty \le &  C \| f_\nu\|_{\wtt}^{1+\gamma} + C (\mu(t_\nu) -\mu_\infty )^{1+\gamma} + C  \| f_\nu \|_{\wtt}^2 \\
\le & C \| f_\nu\|_{\wtt}^{1+\gamma} + C (\cf (t_\nu) - \cf_\infty )^{1+\gamma}.
\end{align*}
This completes the proof.
\end{proof}

\begin{proposition}\label{7.3}
There exist real numbers $0<\gamma<1$ and $t_0>0$ such that
\begin{equation*}
\cf(t) - \cf_\infty \le \|f(t) \|_{\wtt}^{1+\gamma}
\end{equation*}
for all $t \ge t_0$.
\end{proposition}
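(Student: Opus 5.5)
The plan is to upgrade the sequential estimate of Proposition~\ref{keyestnonzero} to a uniform-in-time statement by a contradiction and compactness argument. The main point is that the exponent $\gamma$ and the constant $C$ there may depend on the subsequence.

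\textbf{Step 1: reduce to a sequential statement with fixed data.} The limit profiles $u_\infty$ produced by Proposition~\ref{lem:concom} all satisfy $\mathcal{F}_\infty=(\cf[u_\infty]^{n/4}+mY_4(S^n)^{n/4})^{4/n}$. Here $\cf_\infty$ is a single number fixed by the flow, so the bubble number $m$ is bounded. The set of possible weak limits $u_\infty$ (positive solutions of $P_{g_0}u=\mu_\infty u^{\frac{n+4}{n-4}}$ with bounded energy) is compact in $C^{5,\alpha}$ by elliptic regularity and the strong maximum principle. For each such $u_\infty$, Lemma~\ref{lem:finiteloja} provides an exponent $\gamma(u_\infty)$ valid in a neighborhood. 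By compactness I would extract finitely many neighborhoods and take $\gamma_0$ to be the minimum exponent. Lowering the exponent is harmless, since for $\|f\|_{W^{2,2}}\le1$ we have $\|f\|^{1+\gamma}\le\|f\|^{1+\gamma_0}$ when $\gamma\ge\gamma_0$. With this, every application of Proposition~\ref{keyestnonzero} yields the inequality with this fixed $\gamma_0$, although the constant $C$ still depends on the subsequence.

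\textbf{Step 2: absorb the constant by shrinking the exponent.} Fix $\gamma<\gamma_0$. Suppose the claim fails for this $\gamma$. Then there is a sequence $t_\nu\to\infty$ with
\[
\cf(t_\nu)-\cf_\infty>\|f(t_\nu)\|_{W^{2,2}(M)}^{1+\gamma}.
\]
Applying Proposition~\ref{keyestnonzero}, together with Step 1, to a subsequence gives
\[
\cf(t_\nu)-\cf_\infty\le C\|f(t_\nu)\|_{W^{2,2}(M)}^{1+\gamma_0}.
\]
By Corollary~\ref{lem:flim}, $\|f(t_\nu)\|_{W^{2,2}}\to0$. Hence
\[
C\|f(t_\nu)\|^{1+\gamma_0}=C\|f(t_\nu)\|^{\gamma_0-\gamma}\,\|f(t_\nu)\|^{1+\gamma}\le\|f(t_\nu)\|^{1+\gamma}
\]
for $\nu$ large, which contradicts the assumed failure.

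One case needs separate care. If $f(t_\nu)=0$, then $u(t_\nu)$ is stationary, and by uniqueness the flow is constant from then on, so $\cf(t)=\cf_\infty$ for all later $t$ and the claim holds trivially. The case $\cf(t_\nu)=\cf_\infty$ is also trivial, since then the left side is zero.

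\textbf{Main obstacle.} The delicate point is Step 1: ensuring that the exponent from Proposition~\ref{keyestnonzero} can be taken independent of the subsequence. If compactness of the set of limits $u_\infty$ is problematic, I would first try the following. The set of limits of the flow at the fixed energy level $\cf_\infty$ can be treated via the Łojasiewicz inequality on a compact real-analytic critical set, which gives a uniform exponent on a neighborhood of the whole critical set. I would use that uniform exponent in place of the pointwise one from Lemma~\ref{lem:finiteloja}.
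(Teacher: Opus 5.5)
Your Step 2 mechanism, absorbing the subsequence-dependent constant $C$ into a surplus power of $\|f(t_\nu)\|_{W^{2,2}(M)}\to0$, is the right one. But it rests on Step 1, and Step 1 is a genuine gap.

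You claim the possible weak limits $u_\infty$ form a compact subset of $C^{5,\alpha}$ ``by elliptic regularity and the strong maximum principle''. Elliptic regularity makes each individual solution smooth. A uniform $C^{5,\alpha}$ bound, however, needs a uniform $L^\infty$ bound, and that is exactly the compactness problem for the constant $Q$-curvature equation. Nothing in your argument prevents a family of solutions of $P_{g_0}u=\mu_\infty u^{\frac{n+4}{n-4}}$ with bounded, or even fixed, energy from bubbling. Such a family would converge weakly to a lower-energy solution plus standard bubbles, fully consistent with the energy identity. Compactness is a deep theorem known only in a range of dimensions, and it fails in high dimensions (the Wei--Zhao examples cited in the introduction), while the theorem is stated for all $n\ge5$. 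Your fallback of a ``compact real-analytic critical set'' presupposes the same compactness.

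Even granting compactness, there is a second problem. The exponent in Lemma~\ref{lem:finiteloja} is attached to the reduction ($A$, $\psi_a$, $\Pi$, $\bar u_z$) built at one specific weak limit. Proposition~\ref{keyestnonzero} applied to a sequence with a different limit $u_\infty'$ rebuilds all of this at $u_\infty'$. So a locally uniform exponent across nearby critical points would still need its own argument.

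None of this is needed. The paper does not fix the exponent in the contradiction hypothesis; it lets it degenerate. If the statement fails, then for every $\nu$ there is $t_\nu\ge\nu$ with $\cf(t_\nu)-\cf_\infty>\|f(t_\nu)\|_{W^{2,2}(M)}^{1+1/\nu}$. Apply Proposition~\ref{keyestnonzero} once to this single sequence. It gives a subsequence, some $\gamma\in(0,1)$ and some $C>0$ (both allowed to depend on the subsequence) with $\cf(t_\nu)-\cf_\infty\le C\|f(t_\nu)\|_{W^{2,2}(M)}^{1+\gamma}$. Hence $1<C\|f(t_\nu)\|_{W^{2,2}(M)}^{\gamma-1/\nu}\le C\|f(t_\nu)\|_{W^{2,2}(M)}^{\gamma/2}$ once $1/\nu\le\gamma/2$ and $\|f(t_\nu)\|_{W^{2,2}(M)}\le1$, which contradicts Corollary~\ref{lem:flim}. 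If $f(t_\nu)=0$, the strict inequality already contradicts Proposition~\ref{keyestnonzero}, so you do not need uniqueness for the flow. Deleting Step 1 and replacing your fixed $\gamma<\gamma_0$ by $1/\nu$ in Step 2 turns your proposal into a complete proof.
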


\begin{proof}
Suppose, to the contrary, that the statement is false. Then there exists a sequence of times $\{t_\nu\}$ with $t_\nu \ge \nu$ such that
\[
\cf(t_\nu ) - \cf_\infty \ge  \|f(t_\nu) \|_{\wtt}^{1+\frac{1}{\nu}} .
\]
Applying Proposition~\ref{keyestnonzero} to this sequence, we obtain an infinite subset $I \subset \mathbb{N}$, a real number $0<\gamma<1$, and a constant $C>0$ such that
\[
\cf(t_\nu ) - \cf_\infty \le C \|f(t_\nu) \|_{\wtt}^{1+\gamma}
\]
for all $\nu \in I$. Consequently,
\[
1 \le C \|f(t_\nu) \|_{\wtt}^{\gamma - \frac{1}{\nu}} .
\]
On the other hand, this contradicts the fact that $\|f(t)\|_{\wtt} \to 0$ as $t \to \infty$.
\end{proof}

\begin{proposition}\label{7.4}
We have
\begin{equation*}
\int_0^\infty \| f(t, \cdot)\|_{\wtt} \, dt < \infty .
\end{equation*}
\end{proposition}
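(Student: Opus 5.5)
We plan to run the standard Łojasiewicz--Simon argument, differentiating a suitable power of $\cf(t)-\cf_\infty$ along the flow.

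\textbf{Step 1: the energy identity.} First we record how $\cf$ decays. Since $\int_M u\pa u$ is constant (Lemma~\ref{W22invariance}), we have $\cf(t)=\big(\int_M u\pa u\big)V(t)^{-\frac{n-4}{n}}$. Combining this with \eqref{dVdt} gives
\[
-\frac{d}{dt}\cf(t)=\frac{n-4}{n}\,\cf(t)\,V^{-1}\frac{dV}{dt}=2\,\frac{\cf(t)}{\mu(t)V(t)}\,\|f(t)\|_{\wtt}^2 = 2\,V(t)^{-\frac{n-4}{n}}\|f(t)\|_{\wtt}^2 .
\]
Because $V$ is increasing and bounded above by \eqref{Vabove}, we obtain $-\frac{d}{dt}\cf\ge c\|f\|_{\wtt}^2$ for a constant $c>0$.

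\textbf{Step 2: the differential inequality.} Set $\theta:=\frac{\gamma}{1+\gamma}\in(0,\tfrac12)$, where $\gamma$ and $t_0$ are as in Proposition~\ref{7.3}. If $\cf(t_1)=\cf_\infty$ for some $t_1$, then monotonicity forces $\cf$ to be constant afterwards. Step 1 then gives $f\equiv0$ for $t\ge t_1$, and the integral is trivially finite. Otherwise $\cf(t)>\cf_\infty$ for all $t$, and for $t\ge t_0$ we compute
\[
-\frac{d}{dt}\big(\cf(t)-\cf_\infty\big)^{\theta}=\theta\big(\cf-\cf_\infty\big)^{\theta-1}\Big(-\frac{d\cf}{dt}\Big)\ge c\,\theta\big(\cf-\cf_\infty\big)^{-\frac{1}{1+\gamma}}\|f\|_{\wtt}^2 .
\]
Proposition~\ref{7.3} gives $(\cf-\cf_\infty)^{\frac{1}{1+\gamma}}\le\|f\|_{\wtt}$. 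Hence the right-hand side is at least $c\,\theta\|f\|_{\wtt}$.

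\textbf{Step 3: integration.} Integrating from $t_0$ to $T$ yields
\[
\int_{t_0}^T\|f\|_{\wtt}\,dt\le C\big(\cf(t_0)-\cf_\infty\big)^{\theta},
\]
uniformly in $T$. On the bounded interval $[0,t_0]$ the integral is finite by Cauchy--Schwarz together with Corollary~\ref{STCor}. Together these give $\int_0^\infty\|f\|_{\wtt}\,dt<\infty$.

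\textbf{Main obstacle.} The only delicate point is the exact form of Step 1. We need the precise constant relating $\frac{d\cf}{dt}$ to $\int f\pa f$, and in particular a uniform positive lower bound on the coefficient. This follows from $\mu$ and $V$ being bounded above and below: $V$ is nondecreasing and positive at $t=0$, and $\mu\le\mu(0)$. Once that bound is secured, the rest is routine.
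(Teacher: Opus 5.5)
Your proposal is correct and is essentially the paper's proof: differentiate $(\cf(t)-\cf_\infty)^{\theta}$ with $\theta=\frac{\gamma}{1+\gamma}$, combine the decay of $\cf$ coming from Lemma~\ref{volmon} with the inequality of Proposition~\ref{7.3}, and integrate from $t_0$ to $\infty$. Your write-up is a bit more careful than the paper's. You make the identity $-\frac{d}{dt}\cf=2V^{-\frac{n-4}{n}}\|f\|_{\wtt}^2$ explicit, and you handle both the degenerate case $\cf(t_1)=\cf_\infty$ and the initial interval $[0,t_0]$.
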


\begin{proof}
Let $\theta = \frac{\gamma}{1+\gamma} \in \big(0, \tfrac{1}{2}\big)$. By Lemma~\ref{volmon} and Proposition~\ref{lem:LSineq}, we compute
\begin{align*}    
- \frac{d}{dt} \big( \mathcal{F}(t) - \mathcal{F}_\infty \big)^\theta
&= - \theta \big( \mathcal{F}(t) - \mathcal{F}_\infty \big)^{\theta -1 } \frac{d\mathcal{F}(t)}{dt} \\
&\ge - C \big\| f(t) \big\|_{W^{2,2}(M)}^{-1} \frac{d\mathcal{F}(t)}{dt} \\ 
&\ge C \big\| f(t) \big\|_{W^{2,2}(M)},
\end{align*}
where $C$ denotes a constant independent of $t$.

Integrating this inequality, we obtain
\[
C \int_{T_0}^{\infty} \| f \|_{W^{2,2}(M)} \, dt
\le \big( \mathcal{F}(T_0) - \mathcal{F}_\infty \big)^\theta < \infty
\]
for sufficiently large $T_0$.
\end{proof}

\begin{proposition}\label{lem:novolcon}
Given any $\eta_0>0$, there exists a real number $r>0$ such that
\[
\int_{B_r(x)} u(t)^{\frac{2n}{n-4}} \le \eta_0
\]
for all $x \in M$ and all $t \ge 0$.
\end{proposition}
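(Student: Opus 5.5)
The proof uses Proposition~\ref{7.4}, i.e.\ $\int_0^\infty \|f(t)\|_{\wtt}\,dt<\infty$, to show that $u(t)$ has a limit in $L^{\frac{2n}{n-4}}(M)$. Uniform absolute continuity of the family $\{u(t)^{\frac{2n}{n-4}}\}$ then gives the statement. The idea is that the $W^{2,2}$-length of the whole trajectory is finite, so the flow cannot move mass into a concentrating bubble at late times.

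First, by the Sobolev inequality, $\|u(t_2)-u(t_1)\|_{L^{\frac{2n}{n-4}}(M)}\le C\|u(t_2)-u(t_1)\|_{\wtt}\le C\int_{t_1}^{t_2}\|f(t)\|_{\wtt}\,dt$. By Proposition~\ref{7.4}, given $\eta>0$ we can choose $T$ so that $\int_T^\infty\|f\|_{\wtt}\,dt<\eta$. Hence
\[
\|u(t)-u(T)\|_{L^{\frac{2n}{n-4}}(M)}\le C\eta \quad\text{for all } t\ge T .
\]

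Next, control the finite time interval $[0,T]$. The flow is smooth and positive on $[0,T]\times M$ by the long-time existence result (Proposition 3.4 of \cite{MR3420504}, recalled above). Therefore $\sup_{[0,T]\times M}u\le C_T<\infty$, and $\int_{B_r(x)}u(t)^{\frac{2n}{n-4}}\le C_T^{\frac{2n}{n-4}}\,\mathrm{vol}(B_r(x))\le C_T' r^n$ uniformly for $t\in[0,T]$ and $x\in M$. The same bound holds for the fixed function $u(T)$.

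For $t\ge T$, apply Minkowski's inequality on $B_r(x)$:
\[
\Big(\int_{B_r(x)}u(t)^{\frac{2n}{n-4}}\Big)^{\frac{n-4}{2n}}\le \|u(t)-u(T)\|_{L^{\frac{2n}{n-4}}(M)}+\Big(\int_{B_r(x)}u(T)^{\frac{2n}{n-4}}\Big)^{\frac{n-4}{2n}}\le C\eta + (C_T' r^n)^{\frac{n-4}{2n}}.
\]
Fix $\eta$ with $(2C\eta)^{\frac{2n}{n-4}}\le \eta_0$; this determines $T$. Then choose $r$ small so that the remaining terms are also small, which gives the bound $\eta_0$ for all $t\ge0$ and all $x\in M$.

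The main obstacle is a possible circularity. Proposition~\ref{7.4} relies on Proposition~\ref{7.3}, which in turn uses the \L ojasiewicz inequality built from the decomposition of Proposition~\ref{lem:concom} and the test-bubble estimates, including the case where bubbles form. That decomposition does not assume absence of concentration, so the argument is not circular, but this should be checked. The Proposition~\ref{lem:LSineq} cited inside the proof of Proposition~\ref{7.4} is really Proposition~\ref{7.3}. Once the claim is proved, the second alternative of Proposition~\ref{lem:globalC0bound} yields a uniform $C^0$ bound. Proposition~\ref{prop:C0impl} then gives $C^{5,\alpha}$ bounds and a positive lower bound, and finite length gives convergence.
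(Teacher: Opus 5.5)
Your proof is correct. It uses the same key input as the paper, namely the finite $W^{2,2}$-length $\int_0^\infty\|f\|_{\wtt}\,dt<\infty$ from Proposition~\ref{7.4}, but the mechanics differ.

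The paper differentiates the local mass, writing $\partial_t u^{\frac{2n}{n-4}}=\frac{2n}{n-4}\mu^{-1}f\,(P_{g_0}f+P_{g_0}u)$. It then uses H\"older together with uniform bounds on $\|P_{g_0}f\|_{L^{\frac{2n}{n+4}}}$ and $\|P_{g_0}u\|_{L^{\frac{2n}{n+4}}}$ (from Proposition~\ref{lem:LnormPf} and Lemma~\ref{volmon}). This gives $\int_{B_r(x)}u(T)^{\frac{2n}{n-4}}\le\int_{B_r(x)}u(t)^{\frac{2n}{n-4}}+C\int_t^\infty\|f\|_{\wtt}\,ds$ for $t\le T$. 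You instead work at the level of norms: $u(t)-u(T)=\int_T^t f\,ds$, the triangle inequality in $\wtt$, Sobolev embedding, and Minkowski on $B_r(x)$.

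Your route has some advantages:
\begin{itemize}
\item it needs no information about $P_{g_0}f$ or $P_{g_0}u$;
\item it shows outright that $u(t)$ is Cauchy, hence convergent, in $\wtt$;
\item it spells out the treatment of the compact interval $[0,T]$ and the final choice of $r$, which the paper leaves implicit.
\end{itemize}
The paper's version instead follows the pattern of Brendle's Yamabe-flow argument, estimating the time derivative of the local volume directly.

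You are also right that the proof of Proposition~\ref{7.4} should invoke Proposition~\ref{7.3} rather than the $C^0$-conditional Proposition~\ref{lem:LSineq}. That is precisely what keeps the chain of arguments non-circular.
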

\begin{proof}
We observe that
$$\frac{\partial u^{\frac{2n}{n-4}}}{\partial t} = \frac{2n}{n-4} f u^{\frac{n+4}{n-4}} = \frac{2n}{n-4} f\big(\frac{P_{g_0}f + P_{g_0} u}{\mu(t)}\big)$$
which implies that, for $0 \le t \le T$,
\begin{align*}
&\int_{B_r(x)} u(T)^{\frac{2n}{n-4}}- \int_{B_r(x)} u(t)^{\frac{2n}{n-4}} \\
= & C(n) \int_t^T \bigg[ \int_{B_r(x)} \frac{1}{\mu(t)} f (P_{g_0}f + P_{g_0} u ) \bigg]dt \\
\le& \frac{C(n) }{\mu_\infty} \int_t^\infty \bigg[ \int_M  |f P_{g_0}f| + |f| |P_{g_0} u| \bigg]dt \\
\le& C \int_t^\infty \bigg[ \| P_{g_0} f \|_{\ltwo} \|f\|_{\lone} + \|f\|_{\wtt}  \| P_{g_0} u \|_{\ltwo}  \bigg] dt.
\end{align*}

By the triangle inequality, we estimate
\begin{align*}
\| P_{g_0} u \|_{\ltwo} \le& \| P_{g_0} f \|_{\ltwo} + C \| u^{\frac{n+4}{n-4}} \|_{\ltwo} \\
=& \| P_{g_0} f \|_{\ltwo} + C \| u \|_{\lone}^{\frac{n+4}{n-4}} 
\end{align*}
which is uniformly bounded by Lemma~\ref{volmon} and Proposition~\ref{lem:LnormPf}.

Consequently, we obtain
$$\int_{B_r(x)} u(T)^{\frac{2n}{n-4}} \le \int_{B_r(x)} u(t)^{\frac{2n}{n-4}}+ C \int_t^\infty  \|f\|_{\wtt}   dt.$$

By Proposition~\ref{7.4}, $\int_t^\infty \|f\|_{W^{2,2}} \, dt$ can be made arbitrarily small for all sufficiently large $t$. This completes the proof.

\end{proof}

\begin{proof}[Proof of Theorem \ref{thm:globalconv}]

By Propositions~\ref{lem:novolcon} and~\ref{lem:globalC0bound}, we obtain
\[
\sup_{[0,\infty)\times M} u < \infty .
\]
Then, by Proposition~\ref{prop:C0impl}, both $\|u(t)\|_{C^{5,\alpha}(M)}$ and 
$\big\|\tfrac{\partial u(t)}{\partial t}\big\|_{C^{5,\alpha}(M)}$ are uniformly bounded. 
It follows that for any sequence of times there exists a subsequence converging to a smooth, positive limit. 
Proposition~\ref{7.4} ensures the uniqueness of such a limit. 
Hence, the flow converges to a unique limit, which solves the constant $Q$-curvature equation.

\end{proof}

\medskip \noindent \textbf{Acknowledgments.}
J. Wei is partially supported by GRF grant entitled ``On critical and super-critical Fujita equations''.

L. Gong wants to thank Seunghyeok Kim and Jingang Xiong for their interests and encouragements to this work. The authors are grateful Mingxiang Li for numerous insightful discussions and bringing Lin and Yuan's papers \cite{MR3529121,MR4380034} to our attention.

\bibliographystyle{abbrv}
\bibliography{Ref}

\end{document}